\renewcommand*{\backref}[1]{}
\newcolumntype{C}{>{$}c<{$}}
\def\th@plain{
  \thm@notefont{}
  \itshape
}
\def\th@definition{
  \thm@notefont{}
  \normalfont
}
\newtheorem*{thmmain}{Main Theorem}
\newtheorem{theorem}{Theorem}[section]
\newtheorem{corollary}[theorem]{Corollary}
\newtheorem{proposition}[theorem]{Proposition}
\newtheorem{lemma}[theorem]{Lemma}
\theoremstyle{definition}
\newtheorem{definition}[theorem]{Definition}
\newtheorem{remark}[theorem]{Remark}
\DeclarePairedDelimiter{\abs}\lvert\rvert
\newcommand\restr[2]{{
  \left.\kern-\nulldelimiterspace
  #1
  \vphantom{\big|}
  \right|_{#2}
  }}
\newcommand*\bigcdot{\mathpalette\bigcdot@{.5}}
\newcommand*\bigcdott{\mathpalette\bigcdot@{.7}}
\newcommand*\bigcdot@[2]{\mathbin{\vcenter{\hbox{\scalebox{#2}{$\m@th#1\bullet$}}}}}
\g@addto@macro\bfseries{\boldmath}
\newcommand{\eop}[1]{\ensuremath{\vec{e}^{\,\mathrm{op}}}}
\newcommand{\mc}[1]{\ensuremath{\mathcal{#1}}}
\newcommand{\mf}[1]{\ensuremath{\mathfrak{#1}}}
\newcommand{\R}{\ensuremath{\mathbb{R}}}
\newcommand{\N}{\ensuremath{\mathbb{N}}}
\newcommand{\Z}{\ensuremath{\mathbb{Z}}}
\newcommand{\C}{\ensuremath{\mathbb{C}}}
\newcommand{\intd}{\ensuremath{\,\mathrm{d}}}
\date{\today}
\begin{document}
\pagenumbering{arabic}
\title{A pairing formula for resonant states on finite regular graphs}
\author[Arends]{Christian Arends}
\address{Department of Mathematics, Aarhus University, Ny Munkegade 118,
        8000 Aarhus C, Denmark}
        \email{arends@math.au.dk}
        \author[Frahm]{Jan Frahm}
\address{Department of Mathematics, Aarhus University, Ny Munkegade 118,
        8000 Aarhus C, Denmark}
        \email{frahm@math.au.dk}
\author[Hilgert]{Joachim Hilgert}
\address{Institut f\"ur Mathematik, Universit\"at Paderborn, Warburger Str. 100,
        33098 Paderborn, Germany}
        \email{hilgert@math.upb.de}

\begin{abstract}
On a finite regular graph, (co)resonant states are eigendistributions of the transfer operator associated to the shift on one-sided infinite non-backtracking paths. We introduce two pairings of resonant and coresonant states, the \emph{vertex pairing} which involves only the dependence on the initial/terminal vertex of the path, and the \emph{geodesic pairing} which is given by integrating over all geodesics the evaluation of the coresonant state on the first half of the geodesic times the resonant state on the second half. The main result is that these two pairings coincide up to a constant which depends on the resonance, i.e.\@ the corresponding eigenvalue of the transfer operator.
\end{abstract}

\maketitle

\section*{Introduction}

Regular graphs can be viewed as either toy models or as non-Archimedean analogs of rank one locally symmetric spaces. In the first case one views them as metric spaces with universal coverings which are homogeneous trees and as such share a lot of properties with Riemannian symmetric spaces. In the second case one considers homogeneous trees as generalizations of Bruhat--Tits trees associated with $p$-adic groups. Typically, analytic questions become easier when transferred from locally symmetric spaces to graphs. In this way, regular graphs can serve as testing grounds for the analysis on locally symmetric spaces.

We are particularly interested in the spectral geometry of locally symmetric spaces and their graph analogs. This includes the spectral theory of various kinds of Laplacians as well as operators related to the classical dynamics such as geodesic flows. Relations between these two kinds of spectral theories have been studied ever since the Poisson summation formula was established. In recent years a number of interesting new connections were found in the context of resonances for geodesic flows on locally symmetric spaces \cite{DFG,GHWa,GHWb,HWW21,AH21} as well as regular graphs, \cite{LP16,An17,BHW21,BHW23,AFH23}.

The spectral theory of geodesic flows is more subtle than the spectral theory of Laplacians as the generators of geodesic flows are not elliptic. This is where resonances enter the picture. For locally symmetric spaces they are derived from the hyperbolic structure of the geodesic flow on the sphere bundle, for graphs such an interpretation is expected (see \cite[Rem.~8.5]{BHW23}), but not yet available. In the latter case resonances are defined in terms of a one-sided shift dynamics which is expected to reflect the expansive part of the hypothetical hyperbolic dynamics. The shift dynamics lives on the spaces of one-sided infinite non-backtracking paths in the graphs, the shift being simply taking one step in the path. From this shift dynamics one derives a transfer operator acting on functions of such paths by shifting the argument and summing over values of preimages. The resonances are then the eigenvalues of the dual operators of these transfer operators, whereas the resonant states are the corresponding eigenvectors. Reversing the orientation of paths one arrives at the concept of coresonant states.

In this paper we study pairings between resonant states and coresonant states in the case of finite regular graphs. As in the case of (compact) rank one locally symmetric spaces there are more than one natural ways to define such pairings and one wants to understand how these different pairings are related. In fact, in the case of locally symmetric spaces it was possible to establish a formula relating two natural pairings which turned out to be a crucial technical tool in establishing an equidistribution result for invariant Ruelle densities,  \cite{GHWb}. In the case of finite graphs the two pairings come about by restricting the tensor product of a resonant and a coresonant state to different naturally chosen subsets of the set of pairs of paths. In the first pairing which we call the \emph{vertex pairing} as it can be expressed as a sum over all vertices, one considers pairs of paths with the same initial vertex (see Definition~\ref{rem:finite graphs}). In the second pairing, the \emph{geodesic pairing}, one considers pairs of paths that can be joined at their ends to give a  non-backtracking bi-infinite path (which, when lifted to the covering tree is a bi-infinite geodesic), see Definition~\ref{rem:pairing1}. Our main result is that these two pairings are constant multiples of each other, the constant depending on the resonance and the degree of the regular graph.

Let us describe our results in more detail.

\subsection*{(Co)resonances and (co)resonant states on finite regular graphs}

For a finite $(q+1)$-regular graph $\mf G$ we denote by ${\mf P}^+$ the space of non-backtracking chains $(\vec e_1,\vec e_2,\ldots)$ of edges $\vec e_j$. It can be endowed with a natural topology (see Definition~\ref{def:ShiftSpace}). The transfer operator $\mathcal L_+$ acts on the space $C^\mathrm{lc}({\mf P}^+)$ of locally constant functions on ${\mf P}^+$ by
$$ \mathcal L_+F(\vec e_1,\vec e_2,\ldots) = \sum_{(\vec e_0,\vec e_1,\vec e_2,\ldots)\in{\mf P}^+}F(\vec e_0,\vec e_1,\vec e_2,\ldots) $$
and is associated to the shift dynamics given by $(\vec e_0,\vec e_1,\vec e_2,\ldots)\mapsto(\vec e_1,\vec e_2,\ldots)$. The dual operator $\mathcal L_+'$ acts on the algebraic dual space $\mc D'({\mf P}^+)$ of $C^\mathrm{lc}({\mf P}^+)$, which we view as a space of distributions on ${\mf P}^+$. Eigenvalues of $\mathcal L_+'$ on $\mc D'({\mf P}^+)$ are called \emph{resonances} and the corresponding eigendistributions are called \emph{resonant states}. Similarly, one can define \emph{coresonant states} as eigendistributions of a transfer operator $\mathcal L_-'$ acting on distributions on the space ${\mf P}^-$ of non-backtracking chains $(\ldots,\vec e_2,\vec e_1)$ of edges $\vec e_j$ in a similar way as $\mathcal L_+$.

\subsection*{The vertex pairing}

Consider the projections $\pi_\pm:{\mf P}^\pm\to\mf X$ onto the set $\mf X$ of vertices, mapping a chain of edges to the initial/terminal vertex. The associated pushforwards $\pi_{\pm,*}:\mc D'({\mf P}^\pm)\to\mathrm{Maps}(\mf X,\C)$ map a distribution on ${\mf P}^\pm$ to the map on $\mf X$ given by averaging over all chains with a fixed initial/terminal vertex. Note that (co)resonance states associated to resonances $z\not\in\{-1,0,1\}$ are uniquely determined by their pushforwards to $\mf X$ (see \cite[Theorem 11.5]{BHW23}). This gives rise to the \emph{vertex pairing} of  two distributions $u_\pm\in \mc D'({\mf P}^\pm)$:
$$ \langle u_+,u_-\rangle_{(\mf X)} = \sum_{x\in\mf X}\pi_{+,*}u_+(x)\cdot\pi_{-,*}u_-(x). $$
We remark that, viewing ${\mf P}^\pm$ as the analog of the sphere bundle of a Riemannian manifold, the function $\pi_{\pm,*}u$ plays the role of the zeroth Fourier mode of $u$.

\subsection*{The geodesic pairing}

On the other hand, we can integrate the tensor product distribution $u_+\otimes u_-\in \mc D'({\mf P}^+)\otimes \mc D'({\mf P}^-) \cong C^\mathrm{lc}({\mf P}^-\times{\mf P}^+)'$ over the subset ${\mf P}\subseteq{\mf P}^-\times{\mf P}^+$ of pairs of chains that share the same final/terminal vertex and can be joined to form a geodesic in $\mf G$. This defines the \emph{geodesic pairing}
$$ \langle u_+,u_-\rangle_\textup{geod} = \int_{{\mf P}} (u_+\otimes u_-)|_{{\mf P}} \coloneqq \langle u_+\otimes u_-,\mathbbm{1}_{{\mf P}}\rangle. $$
It should be noted that the restriction of distributions to the subset ${\mf P}\subseteq{\mf P}^-\times{\mf P}^+$ is possible in this context since ${\mf P}$ is open, which is a lot simpler than the wavefront set conditions one has to guarantee in the context of smooth manifolds.

\subsection*{The pairing formula}

While the vertex pairing and the geodesic pairing are quite different in nature and in general not related to each other, they coincide up to a constant when restricted to (co)resonant states:

\begin{thmmain}[see Theorem~\ref{thm:pairing formula}]
Let $0\neq z\in\C$ be a resonance of the finite $(q+1)$-regular graph $\mf G$ and $u_\pm\in \mc D'({\mf P}^\pm)$ associated (co)resonant states. Then the following pairing formula holds:
$$ (z^2-q)\langle u_+,u_-\rangle_{(\mf X)} = (z^2-1)\langle u_+,u_-\rangle_\textup{geod}  $$
\end{thmmain}

The proof is in fact modeled on the proof of the pairing formula given in \cite{GHWb}. This means in particular that we need to use the universal covering of the graph which is a tree, a number of integral formulas for the corresponding group of tree automorphisms, and a subtle limiting process for the integrals involved. It would, of course, be desirable to have a direct combinatorial proof of the pairing formula.

\subsection*{Outlook}

A graph analog for the equidistribution results built on the pairing formula in \cite{GHWb} would have to be dealing with sequences of graphs in the spirit of \cite{AM15}. Given the extent to which the quantum-classical correspondence, on which the proof of the equidistribution result in \cite{GHWb} also depends, could be transferred to graphs (see \cite{BHW21,BHW23,AFH23}), we expect such an application to be possible. However, as of now one other crucial tool used in \cite{GHWb}, the Patterson-Sullivan distributions, have not yet been established for graphs.

\subsection*{Structure of the paper}

In Section~\ref{sec:DefiningThePairings} we review resonances and (co)resonant states and define several different pairings for them, among them the vertex and the geodesic pairing. Section~\ref{sec:trees} introduces some technical tools for the universal covering of a finite regular graph which is a homogeneous tree, in particular several subgroups of automorphisms of the tree. These tools are used in Section~\ref{sec:coverings} to lift resonances on the finite graph to its universal cover and express the pairings in terms of integrals associated with the homogeneous tree. Finally, we state and prove our main theorem, the pairing formula, in Section~\ref{sec:pairingformula}.

Some, but by no means all, of the needed integral formulas for the group of tree automorphisms we did find in the literature. For the convenience of the readers we collected the formulas together with their proofs in Appendix~\ref{sec:integralformulas} even though we assume that most of them are known to the experts.

\subsection*{Acknowledgements}

The first named author was partially supported by a research grant from the Aarhus University Research Foundation (Grant No. AUFF-E-2022-9-34). The second named author was partially supported by a research grant from the Villum Foundation (Grant No. 00025373). The third named author was supported by the Deutsche Forschungsgemeinschaft (DFG, German Research Foundation) via the grant SFB-TRR 358/1 2023 -- 491392403.

\subsection*{Notation}

For a set $X$ we write $\mathrm{Maps}(X,\C)$ for the vector space of maps $f:X\to\C$ with the pointwise operations. If $X$ carries a topology, then $\mathrm{Maps}_c(X,\C)$ denotes the subspace of maps $f$ with compact support, i.e.\@ the closure of the set $\{x\in X:f(x)\neq0\}$ is compact.

\section{Defining the pairings}\label{sec:DefiningThePairings}

In this section we introduce the pairings of resonant and coresonant states we want to compare. This requires some preparation.

\subsection{Resonances}

Let $\mf G\coloneqq(\mf X,\mf E)$ be a connected \emph{graph} consisting of a set $\mf X$ of \emph{vertices} and a set $\mf E\subseteq \mf X^2$ of \emph{directed edges}. We assume that $\mf G$ is $(q+1)$-regular, i.e.\@ every vertex has $q+1$ \emph{neighbors} (vertices connected to $x$ by an edge).

We will always assume that $\mf E$ is symmetric under the switch of vertices, that $\mf G$ has  \emph{no loops}, i.e.\@  $\mf E\cap \{(x,x)\mid x\in \mf X\}=\emptyset$, and that $\mathfrak{G}$ has no multiple edges. For each directed edge $\vec{e}\coloneqq(a,b)$ we call $a\eqqcolon\iota(\vec{e})$ the \emph{initial} and $b\eqqcolon\tau(\vec{e})$ the \emph{terminal point} of $\vec{e}$. Moreover, let $\eop{e}\coloneqq(b,a)$ denote the \emph{opposite edge of $\vec{e}$}.

\begin{definition}\label{def:ShiftSpace}
A \emph{chain of edges} in $\mf G$ is an infinite sequence $(\vec e_1,\vec e_2,\ldots)$ of directed edges  that are \emph{concatenated}, i.e.\@ $\tau(\vec e_j) = \iota(\vec e_{j+1})$ for all $j\in \N$, and \emph{non-backtracking}, i.e.\@ $\tau(\vec e_{j+1})\neq \iota (\vec e_j)$ for all $j\in \N$.
The \emph{shift space} ${\mf P}^+$ is the set of chains. On the shift space we can introduce the \emph{shift operator}
\begin{gather*}
 \sigma : {\mf P}^+\to {\mf P}^+,\quad (\vec e_1,\vec e_2,\vec e_3,\ldots)\mapsto (\vec e_2,\vec e_3,\ldots).
\end{gather*}
By reversing the orientation we obtain the \emph{opposite shift space} ${\mf P}^-$ which is given by the set of infinite sequences of concatenated edges $(\ldots,\vec{e}_2,\vec{e}_1)$. We equip ${\mf P}^\pm$ with the topology generated by the sets of all chains of edges $(\vec e_1,\vec e_2,\ldots)$ resp. $(\ldots,\vec e_2,\vec e_1)$ such that $(\vec e_1,\ldots,\vec e_n)$ is equal to some fixed tuple of edges and $n\in\N$ (also referred to as district topology in \cite[\S~5]{BHW23}). With respect to this topology, we consider
\begin{gather*}
   C_c^{\mathrm{lc}}({\mf P}^\pm)\coloneqq\{f\in \mathrm{Maps}({\mf P}^\pm,\mathbb{C})\mid f \text{ locally constant with compact support}\}.
\end{gather*}
\end{definition}

If the graph happens to be a tree, chains can be interpreted as geodesic rays, so the shift space ${\mf P}^+$ is one possible graph-analog of the sphere bundle of a Riemannian manifold. If we adopt this interpretation, the shift operator can be viewed as the \emph{geodesic flow} on the tree. Associated to this dynamics we have the following natural \emph{transfer operator}:

\begin{definition}[Transfer operators {cf.\@ \cite[\S~3]{BHW23}}]
\label{def:RuelleTO}
The \emph{(Ruelle) transfer operator} $\mathcal L\coloneqq\mathcal L_+: \mathrm{Maps}({\mf P}^+, \C)\to \mathrm{Maps}({\mf P}^+, \C)$
associated to the shift $\sigma$ is given by
\begin{equation}\label{eq:RuelleTO}
(\mathcal L F)(\vec{\mathbf e})\ \coloneqq\ \sum_{\sigma(\vec{\mathbf e}\,')=\vec{\mathbf e}}  F(\vec{\mathbf e}\,').
\end{equation}
We can rewrite the action of $\mathcal{L}$ as
\[
 (\mathcal LF)(\vec e_1,\vec e_2,\ldots) = \sum_{\vec e_0: \tau(\vec e_0)=\iota(\vec e_1)} F(\vec e_0, \vec e_1, \vec e_2,\ldots).
\]
Similar to the shift dynamics, reversing the orientation leads to the \emph{opposite transfer operator}
\begin{gather*}
\mathcal{L} _{-}\colon \mathrm{Maps}({\mf P}^-,\C)\to\mathrm{Maps}({\mf P}^-,\C),\quad (\mathcal{L} _{-}F)(\ldots,\vec{e}_2,\vec{e}_1)\coloneqq \sum_{\vec{e}_0\colon\tau(\vec{e}_1)=\iota(\vec{e}_0)} F(\ldots,\vec{e}_2,\vec{e}_1,\vec{e}_0).
\end{gather*}
\end{definition}

Note that $\mathcal{L}_\pm$ leaves $C_c^{\mathrm{lc}}({\mf P}^\pm)$ invariant so that we can consider its dual operator
\begin{gather*}
   \mathcal{L}_\pm'\colon \mc D'({\mf P}^\pm)\to \mc D'({\mf P}^\pm),
\end{gather*}
where, motivated by the Archimedean analog, we write $\mc D'({\mf P}^\pm)$ for the algebraic dual $C_c^{\mathrm{lc} }({\mf P}^\pm)'$ of $C_c^{\mathrm{lc}}({\mf P}^\pm)$.

\begin{definition}[Resonant and coresonant states]
   For each $z\in\mathbb{C}$ we define the eigenspaces
\begin{gather*}
 \mathcal E_z({\mathcal L}_\pm';\mc D'({\mf P}^\pm)) \coloneqq \{u \in \mc D'({\mf P}^\pm)\mid {\mathcal L}_{\pm}' u = zu\}.
\end{gather*}
We call non-zero elements of $\mathcal E_z({\mathcal L}_+';\mc D'({\mf P}^+))$ resp.\@ $\mathcal E_z({\mathcal L}_{-}';\mc D'({\mf P}^-))$ \emph{resonant} resp.\@ \emph{coresonant states}. Moreover, we say that $z\in\mathbb{C}$ is a \emph{(co)resonance} if there exists a non-zero (co)resonant state for $z$.
\end{definition}

\subsection{Base projections and the vertex pairing}\label{subsec:base_pr} In order to define pairings of resonant and coresonant states we introduce base projections relating functions on chains to functions on edges and vertices.

\begin{definition}[Base projections]
The \emph{base projections} $\pi_\pm^{\mf E}: {\mf P}^\pm\to \mathfrak{E}$  and $\pi_\pm:=\pi_\pm^{\mf X} : {\mf P}^\pm\to \mathfrak{X}$ are given by
$\pi_+^{\mf E}(\vec e_1,\ldots)= \vec e_1$ and $\pi_-^{\mf E}(\ldots,\vec e_1)= \vec e_1$, respectively $\pi_+(\vec e_1,\ldots)= \iota(\vec e_1)$ and $\pi_-(\ldots,\vec e_1)= \tau(\vec e_1)$, i.e.\@ $\pi_+=\iota\circ \pi_+^{\mf E}$ and $\pi_-=\tau\circ \pi_-^{\mf E}$.
\end{definition}

The maps $\pi_\pm$ and $\pi_\pm^{\mf E}$ induce pullback maps
\[\pi_\pm^*:  \mathrm{Maps}(\mathfrak X,\mathbb C)\to\mathrm{Maps}({\mf P}^\pm,\mathbb C) , \quad f\mapsto f\circ \pi_\pm\]
and
\[\pi_\pm^{\mf E,*}:  \mathrm{Maps}(\mathfrak E,\mathbb C)\to\mathrm{Maps}({\mf P}^\pm,\mathbb C) , \quad f\mapsto f\circ \pi^{\mf E}_\pm\]
satisfying
\[\pi_+^*=\pi_+^{\mf E,*}\circ \iota^*\quad\text{and}\quad
\pi_-^*=\pi_-^{\mf E,*}\circ \tau^*,
\]
where $\iota^*,\tau^*:\mathrm{Maps}(\mathfrak X,\mathbb C)\to\mathrm{Maps}(\mathfrak E,\mathbb C)$ denote the pullback maps induced by $\iota$ and $\tau$.

\begin{proposition}\label{prop:push-forwards_dist_proj}
By dualization, $\pi_\pm^*$ and $\pi_\pm^{\mf E,*}$ induce well-defined pushforwards
\begin{align*}
  \pi_{\pm,*}\coloneqq(\pi_\pm^*)'&\colon  \mc D'({\mf P}^\pm)\to  \mathrm{Maps}_c({\mathfrak X},\C)'\\
   \pi^{\mf E}_{\pm,*}\coloneqq(\pi_\pm^{\mf E,*})'&\colon  \mc D'({\mf P}^\pm)\to  \mathrm{Maps}_c({\mathfrak E},\C)'
\end{align*}
\end{proposition}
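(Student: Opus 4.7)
The statement is essentially a bookkeeping lemma: the pushforwards $\pi_{\pm,*}$ and $\pi^{\mf E}_{\pm,*}$ are defined by dualization, so the real content is to verify that the pullback operators restrict to well-defined maps
\[\pi_\pm^*\colon \mathrm{Maps}_c(\mf X,\C)\to C_c^{\mathrm{lc}}({\mf P}^\pm), \qquad \pi_\pm^{\mf E,*}\colon \mathrm{Maps}_c(\mf E,\C)\to C_c^{\mathrm{lc}}({\mf P}^\pm),\]
compatible with the function spaces appearing in the definition of $\mc D'({\mf P}^\pm)=C_c^{\mathrm{lc}}({\mf P}^\pm)'$. Once this is done, transposition gives the claimed pushforwards automatically.

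The first step is continuity and local constancy. Equipping $\mf X$ and $\mf E$ with the discrete topology, I would observe that the preimage $(\pi_+^{\mf E})^{-1}(\vec e)$ is exactly the basic cylinder set of chains whose first edge is $\vec e$, and is therefore open in the district topology; the same holds for $\pi_-^{\mf E}$. Hence $\pi_\pm^{\mf E}$ is continuous, and so is $\pi_\pm=\iota\circ\pi_+^{\mf E}$ (respectively $\tau\circ\pi_-^{\mf E}$). Since the targets are discrete, every function on $\mf X$ or $\mf E$ is continuous, and consequently every pullback $\pi_\pm^* f$ or $\pi_\pm^{\mf E,*} f$ is locally constant on ${\mf P}^\pm$.

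The second step, which is the only point requiring a real argument, is preservation of compact support. For $f\in\mathrm{Maps}_c(\mf X,\C)$ the support of $\pi_\pm^* f$ is contained in $\pi_\pm^{-1}(\on{supp} f)$, and since $\on{supp} f$ is a finite subset of the discrete space $\mf X$, it suffices to show that each fiber $\pi_\pm^{-1}(x)$ is compact. For $\pi_+^{-1}(x)$ this is a finite union of cylinder sets $C_{\vec e}$ over the $q+1$ edges $\vec e$ with $\iota(\vec e)=x$, and each $C_{\vec e}$ is a closed subset of a product of finite sets (by $(q+1)$-regularity there are at most $q$ choices at every subsequent step), hence compact by Tychonoff. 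The argument for $\pi_-^{-1}(x)$ and for the edge projections is the same. Once this restriction is in place, the algebraic dual yields $(\pi_\pm^*)'\colon C_c^{\mathrm{lc}}({\mf P}^\pm)'\to\mathrm{Maps}_c(\mf X,\C)'$ and analogously for $(\pi_\pm^{\mf E,*})'$, which is the content of the proposition. The main (and only) obstacle is thus the compactness of the fibers, which hinges on finite branching, not on $\mf G$ being finite; everything else is routine topology.
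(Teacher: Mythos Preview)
Your proposal is correct and follows essentially the same approach as the paper: both arguments show that the fibers $\pi_\pm^{-1}(x)$ and $(\pi_\pm^{\mf E})^{-1}(\vec e)$ are open and compact, conclude that the pullbacks map $\mathrm{Maps}_c$ into $C_c^{\mathrm{lc}}$, and then dualize. You simply supply more detail (Tychonoff for compactness, explicit cylinder-set description for openness) where the paper states the open-and-compact property without further justification.
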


  \begin{proof}
  Note that the sets $\pi_\pm^{-1}(x)\subseteq {\mf P}^\pm$ and  $(\pi^{\mf E}_\pm)^{-1}(\vec{e})\subseteq {\mf P}^\pm$
  are open and compact for each $x\in \mf X$ and $\vec{e}\in \mf E$. Therefore, the images of $\pi_\pm^*$ and $\pi_\pm^{\mf E, *}$ are contained in $C^\mathrm{lc}({\mf P}^\pm)$.  Moreover, this shows that functions with compact supports get mapped to functions with compact supports. Thus we have
  $\pi_\pm^{*}: \mathrm{Maps}_c({\mathfrak X},\C)\to C^\mathrm{lc}_c({\mf P}^\pm)$ and
  $\pi_\pm^{\mf E,*}: \mathrm{Maps}_c({\mathfrak X},\C)\to C^\mathrm{lc}_c({\mf P}^\pm)$. Passing to the dual maps shows the claim.
  \end{proof}

\begin{remark}\label{rem:push_forward_restr}
	We identify $\mathrm{Maps}_c(\mf X,\C)'\simeq\mathrm{Maps}(\mf X,\C)$ by means of the non-degenerate bilinear pairing
	$$ \mathrm{Maps}(\mf X,\C)\times\mathrm{Maps}_c(\mf X,\C)\to\C, \quad (f,h)\mapsto\langle f,h\rangle_{\mf X}\coloneqq\sum_{x\in\mf X}f(x)h(x). $$
	Then $u\in\mathrm{Maps}_c(\mf X,\C)'$ is identified with the function $x\mapsto\langle u,\delta_x\rangle$, where $\delta_x$ is the indicator function of $\{x\}\subseteq\mf X$. We also use the analogous identification $\mathrm{Maps}_c(\mf E,\C)'\simeq\mathrm{Maps}(\mf E,\C)$ by means of the pairing
	$$ \mathrm{Maps}(\mf E,\C)\times\mathrm{Maps}_c(\mf E,\C)\to\C, \quad (f,h)\mapsto\langle f,h\rangle_{\mf E}\coloneqq\sum_{\vec e\in\mf E}f(\vec e)h(\vec e). $$
	Under these identifications, the maps in Proposition~\ref{prop:push-forwards_dist_proj} can be written as
	\begin{gather}
		\pi_{\pm,*}u(x)=\langle u,\mathbbm{1}_{\pi_+^{-1}(x)}\rangle \qquad (x\in\mf X,u\in \mc D'({\mf P}^\pm)).\label{eq:ExplicitFormulaPushForwardFromSigmaToX}
	\end{gather}
\end{remark}

For finite graphs, this allows us to define the vertex and edge pairing of distributions $u_\pm\in \mc D'({\mf P}^\pm)$ in terms of their push-forwards $\pi_{\pm,*}u_\pm\in\mathrm{Maps}(\mf X,\C)$ and $\pi_{\pm,*}^{\mf E}u_\pm\in\mathrm{Maps}(\mf E,\C)$.

\begin{definition}[Vertex and edge pairing]\label{rem:finite graphs}
Assume that $\mf G$ is finite. For distributions $u_\pm\in \mc D'({\mf P}^\pm)$ we define their \emph{vertex pairing} by
\begin{gather*}
\langle u_+,u_-\rangle_{(\mf X)}
\coloneqq \langle \pi_{+,*}u_+,\pi_{-,*}u_-\rangle_{\mf X} = \sum_{x\in \mf X}\big(\pi_{+,*}u_+\big)(x) \big(\pi_{-,*}u_-\big)(x)
\end{gather*}
and their \emph{edge pairing} by
\begin{gather*}
\langle u_+,u_-\rangle_{(\mf E)}
\coloneqq \langle \pi^{\mf E}_{+,*}u_+,\pi^{\mf E}_{-,*}u_-\rangle_{\mf E} =  \sum_{\vec{e}\in \mf E}\big(\pi^{\mf E}_{+,*}u_+\big)(\vec{e}) \big(\pi^{\mf E}_{-,*}u_-\big)(\vec{e}).
\end{gather*}
For later use, we also introduce the \emph{modified edge pairing}
\[\langle u_+,u_-\rangle_{(\mathrm{op},\mf E)}\coloneqq\langle \mathrm{op}_*\pi^{\mf E}_{+,*}u_+,\pi^{\mf E}_{-,*}u_-\rangle_{\mf E},
\]
where $\mathrm{op}_*\colon \mathrm{Maps}(\mf E,\mathbb C)'\to \mathrm{Maps}(\mf E,\mathbb C)'$ is dual to the pullback map $\mathrm{op}^*\colon \mathrm{Maps}(\mf E,\mathbb C)\to \mathrm{Maps}(\mf E,\mathbb C)$ with respect to the reversal of orientation $\mathrm{op}\colon \mf E\to \mf E$.
\end{definition}

\subsection{The geodesic pairing}\label{subsec:finite graphs}

In this subsection we define for a finite graph $\mathfrak{G}$ the geodesic pairing of distributions on ${\mf P}^\pm$. This construction uses the tensor product of distributions in $\mc D'({\mf P}^+)$ and $\mc D'({\mf P}^-)$. Since ${\mf P}^\pm$ is compact, the algebraic tensor product $C^\mathrm{lc}({\mf P}^+)\otimes C^\mathrm{lc}({\mf P}^-)$ can be canonically identified with $C^\mathrm{lc}({\mf P}^-\times{\mf P}^+)$. This gives rise to the tensor product $u_+\otimes u_-\in C^\mathrm{lc}({\mf P}^-\times{\mf P}^+)'$ of two distributions $u_\pm\in \mc D'({\mf P}^\pm)$. Note that for $\varphi\in  C^\mathrm{lc}({\mf P}^-\times {\mf P}^+)$ we have
\begin{equation}
	\langle u_+\otimes u_-,\varphi\rangle = \big\langle u_+,c_+ \mapsto \langle u_-,\varphi(\bigcdot,c_+)\rangle\big\rangle.\label{eq:AlternativeDefinitionTensorProduct}
\end{equation}

The tensor product $u_+\otimes u_-$ can be evaluated on indicator functions of open and closed subsets of ${\mf P}^-\times {\mf P}^+$. We interpret this operation as integrating $u_+\otimes u_-$ over this subset. Let us consider two particular cases of this construction.

First, let
\[{\mf P}^{(2)}\coloneqq\{(c_-,c_+)\in {\mf P}^-\times {\mf P}^+\mid \pi_-(c_-)=\pi_+(c_+)\}.\]
Note that the union of $c_-$ and $c_+$ need not be a chain as $c_+$ might backtrack $c_-$ up to a certain vertex or even indefinitely. By contrast, let ${\mf P}$ be the set of bi-infinite chains:
$$ {\mf P} \coloneqq \{(c_-,c_+)\in{\mf P}^{(2)}\mid \pi_-^\mf E(c_-)\neq\pi_+^\mf E(c_+)^\mathrm{op}\}. $$
Both ${\mf P}^{(2)}$ and ${\mf P}$ are open and closed in ${\mf P}^-\times{\mf P}^+$, so their indicator functions are contained in $C^{\mathrm{lc}}({\mf P}^-\times{\mf P}^+)$.

\begin{definition}[The geodesic pairing]\label{rem:pairing1}
We define the \emph{geodesic pairing} of $u_\pm\in \mc D'({\mf P}^\pm)$ by
$$ \langle u_+,u_-\rangle_\mathrm{geod} \coloneqq \langle u_+\otimes u_-,\mathbbm{1}_{{\mf P}}\rangle. $$
\end{definition}

The geodesic pairing behaves nicely with respect to the action of the transfer operators.

\begin{lemma}
	The transfer operators $\mathcal L_+'$ and $\mathcal L_-'$ are dual to each other with respect to the geodesic pairing, i.e.\@ for $u_\pm\in \mc D'({\mf P}^\pm)$ we have
	$$ \langle \mathcal{L}_{+}'u_+,u_-\rangle_\mathrm{geod} = \langle u_+,\mathcal{L}_{-}'u_-\rangle_\mathrm{geod}. $$
	In particular, $\langle u_+,u_-\rangle_\mathrm{geod}=0$ if $u_+\in\mathcal E_{z_1}({\mathcal L}_{+}';\mc D'({\mf P}^+))$ and $u_-\in\mathcal E_{z_2}({\mathcal L}_{-}';\mc D'({\mf P}^-))$ are (co)resonant states for $z_{1}\neq z_{2}$.
\end{lemma}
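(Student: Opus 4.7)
The plan is to reduce the duality statement $\langle \mathcal{L}_+'u_+, u_-\rangle_\mathrm{geod} = \langle u_+, \mathcal{L}_-'u_-\rangle_\mathrm{geod}$ to a purely combinatorial symmetry between the two one-step extensions of a compatible pair $(c_-,c_+)$. First I would unfold the left-hand side using \eqref{eq:AlternativeDefinitionTensorProduct} and the defining duality $\langle \mathcal{L}_+'u_+, g\rangle = \langle u_+, \mathcal{L}_+ g\rangle$ on $C_c^\mathrm{lc}({\mf P}^+)$, yielding
$$\langle \mathcal{L}_+'u_+, u_-\rangle_\mathrm{geod} = \langle u_+, \mathcal{L}_+ \varphi\rangle \quad\text{with}\quad \varphi(c_+) \coloneqq \langle u_-, \mathbbm{1}_{\mf P}(\cdot, c_+)\rangle.$$
Because the sum defining $\mathcal{L}_+$ has only finitely many (at most $q$) terms, it commutes with the evaluation of $u_-$, so
$$(\mathcal{L}_+\varphi)(c_+) = \Big\langle u_-,\ c_- \mapsto \sum_{\sigma(c_+')=c_+} \mathbbm{1}_{\mf P}(c_-, c_+')\Big\rangle.$$

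The heart of the argument is then the pointwise identity
$$\sum_{\sigma(c_+')=c_+} \mathbbm{1}_{\mf P}(c_-, c_+') = \bigl(\mathcal{L}_- \mathbbm{1}_{\mf P}(\cdot, c_+)\bigr)(c_-)$$
for all $c_\pm \in {\mf P}^\pm$. Writing $c_+ = (\vec e_1, \vec e_2, \ldots)$ and $c_- = (\ldots, \vec f_2, \vec f_1)$, the left-hand sum is over prepended edges $\vec e_0$ satisfying $\tau(\vec e_0)=\iota(\vec e_1)$, $\tau(\vec e_1)\neq \iota(\vec e_0)$ (chain plus non-backtracking of $c_+'$), $\iota(\vec e_0)=\tau(\vec f_1)$, and $\vec f_1\neq \vec e_0^{\mathrm{op}}$ (the two ${\mf P}$-conditions); the right-hand sum is over appended edges $\vec f_0$ satisfying $\tau(\vec f_1)=\iota(\vec f_0)$, $\iota(\vec f_1)\neq \tau(\vec f_0)$, $\tau(\vec f_0)=\iota(\vec e_1)$, and $\vec f_0\neq \vec e_1^{\mathrm{op}}$. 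In both cases the connectivity conditions pin $\vec e_0 = \vec f_0$ down as the unique edge from $\tau(\vec f_1)$ to $\iota(\vec e_1)$ (if such an edge exists at all, since $\mf G$ has no multiple edges), while the two remaining non-backtracking conditions reduce symmetrically to $\tau(\vec f_1)\neq \tau(\vec e_1)$ and $\iota(\vec f_1)\neq \iota(\vec e_1)$. I expect this symmetric reduction of the non-backtracking constraints to be the main, though elementary, obstacle.

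Once this pointwise identity is established, dualizing $\mathcal{L}_-$ across $u_-$ yields $(\mathcal{L}_+\varphi)(c_+) = \langle \mathcal{L}_-'u_-, \mathbbm{1}_{\mf P}(\cdot, c_+)\rangle$, and one further application of \eqref{eq:AlternativeDefinitionTensorProduct} rewrites $\langle u_+, \mathcal{L}_+\varphi\rangle$ as $\langle u_+, \mathcal{L}_-'u_-\rangle_\mathrm{geod}$, completing the duality. For the second assertion, if $u_\pm$ are eigenvectors with distinct eigenvalues $z_1\neq z_2$, then
$$z_1\langle u_+,u_-\rangle_\mathrm{geod} = \langle \mathcal{L}_+'u_+, u_-\rangle_\mathrm{geod} = \langle u_+, \mathcal{L}_-'u_-\rangle_\mathrm{geod} = z_2\langle u_+,u_-\rangle_\mathrm{geod},$$
forcing $\langle u_+,u_-\rangle_\mathrm{geod}=0$.
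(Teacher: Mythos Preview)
Your proposal is correct and follows the same approach as the paper's proof: both unfold the geodesic pairing via \eqref{eq:AlternativeDefinitionTensorProduct}, push $\mathcal L_+$ through the finite sum, and reduce to the pointwise identity $\sum_{\sigma(c_+')=c_+}\mathbbm{1}_{\mf P}(c_-,c_+')=(\mathcal L_-\mathbbm{1}_{\mf P}(\cdot,c_+))(c_-)$. The paper simply asserts this identity in one line, whereas you supply the explicit edge-by-edge verification (identifying the unique bridging edge and checking that the non-backtracking and $\mf P$-conditions on each side reduce to the symmetric pair $\tau(\vec f_1)\neq\tau(\vec e_1)$, $\iota(\vec f_1)\neq\iota(\vec e_1)$); this added detail is sound and a worthwhile clarification.
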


\begin{proof}
	By Definition~\ref{rem:pairing1} and \eqref{eq:AlternativeDefinitionTensorProduct} we get
	\begin{align*}
		\langle \mathcal{L}_{+}'u_+,u_-\rangle_\mathrm{geod}&=\langle u_+,c_{+} \mapsto \sum_{\vec{e}\colon \tau(\vec{e})=\iota(\pi_+^{\mf E}(c_{+}))}\langle u_-,\mathbbm{1}_{{\mf P}}(\bigcdot,(\vec{e},c_{+}))\rangle\rangle\\
		&=\langle u_+,c_{+} \mapsto \langle u_-,\sum_{\vec{e}\colon\tau(\pi_-^{\mf E}(\bigcdot))=\iota(\vec{e})}\mathbbm{1}_{{\mf P}}((\bigcdot,\vec{e}),c_{+})\rangle\rangle\\
		&=\langle u_+,c_{+} \mapsto \langle \mathcal{L}_{-}'u_-,\mathbbm{1}_{{\mf P}}(\bigcdot,c_{+})\rangle\rangle=\langle u_+,\mathcal{L}_{+}'u_-\rangle_\mathrm{geod}.\qedhere
	\end{align*}
\end{proof}

\begin{remark}[Relations between some pairings]
We can just as well consider the pairing given by
$$ \mc D'({\mf P}^+)\times \mc D'({\mf P}^-)\to\C, \quad (u_+,u_-)\mapsto\langle u_+\otimes u_-,\mathbbm{1}_{{\mf P}^{(2)}}\rangle. $$
In the following we collect some elementary relations between this pairing, the geodesic pairing and the vertex and edge pairings of Definition~\ref{rem:finite graphs}.
\begin{enumerate}
\item[(i)] Since $\mathbbm{1}_{{\mf P}^{(2)}}(\bigcdot, c_+)=\mathbbm{1}_{\pi_-^{-1}(\pi_+(c_+))}$, we obtain with \eqref{eq:ExplicitFormulaPushForwardFromSigmaToX}:
\[
\langle u_-, \mathbbm{1}_{{\mf P}^{(2)}}(\bigcdot, c_+)\rangle
= \langle  u_-,\mathbbm{1}_{\pi_-^{-1}(\pi_+(c_+))}\rangle
=  (\pi_{-,*}u_-)(\pi_+(c_+))
= ((\pi_{-,*}u_-)\circ \pi_+)(c_+).
\]
Thus we find
\begin{align*}
\langle u_+\otimes u_-,\mathbbm{1}_{{\mf P}^{(2)}}\rangle&\overset{\eqref{eq:AlternativeDefinitionTensorProduct}}{=} \langle u_+, \big(\pi_{-,*}u_-\big)\circ \pi_+ \rangle= \langle \pi_{+,*}u_+, \pi_{-,*}u_- \rangle_{\mf X}\overset{\ref{rem:finite graphs}}{=} \langle u_+,u_-\rangle_{(\mf X)}.
\end{align*}

\item[(ii)]
Note that
\begin{align*}
\mathbbm{1}_{{\mf P}}(\bigcdot, c_+)
&=\mathbbm{1}_{\pi_-^{-1}(\pi_+(c_+))} (\mathbbm{1}- \mathbbm{1}_{(\iota\circ \pi_-^{\mf E})^{-1}(\tau\circ \pi_+^{\mf E} (c_+))} )\\
&=\mathbbm{1}_{(\tau\circ \pi_-^{\mf E})^{-1}(\iota\circ \pi_+^{\mf E} (c_+))} (\mathbbm{1}- \mathbbm{1}_{(\iota\circ \pi_-^{\mf E})^{-1}(\tau\circ \pi_+^{\mf E} (c_+))} )
\end{align*}
and
\[\big(\pi^{\mf E}_{\pm,*}u_\pm\big)(\vec{e})
= \langle\pi^{\mf E}_{\pm,*}u_\pm,\delta_{\vec{e}}\rangle
= \langle u_\pm, \delta_{\vec{e}}\circ \pi^{\mf E}_\pm\rangle
= \langle u_\pm, \mathbbm{1}_{(\pi^{\mf E}_\pm)^{-1}(\vec{e})}\rangle.
\]
In view of
\[
\mathbbm{1}_{(\tau\circ \pi_-^{\mf E})^{-1}(\iota\circ \pi_+^{\mf E} (c_+))}
\mathbbm{1}_{(\iota\circ \pi_-^{\mf E})^{-1}(\tau\circ \pi_+^{\mf E} (c_+))}
=\mathbbm{1}_{(\pi^{\mf E}_-)^{-1}(\pi^{\mf E}_+(c_+)^\mathrm{op})},
\]
we find
\begin{gather*}
\langle u_-, \mathbbm{1}_{(\pi^{\mf E}_-)^{-1}(\pi^{\mf E}_+(c_+)^\mathrm{op})}\rangle
=(\pi^{\mf E}_{-,*}u_-)(\pi^{\mf E}_+(c_+)^\mathrm{op})
=((\pi^{\mf E}_{-,*}u_-)\circ\mathrm{op}\circ \pi^{\mf E}_+)(c_+)
\end{gather*}
and
\begin{align*}
\langle u_-, \mathbbm{1}_{{\mf P}}(\bigcdot, c_+)\rangle
&=\langle u_-,\mathbbm{1}_{(\tau\circ \pi_-^{\mf E})^{-1}(\iota\circ \pi_+^{\mf E} (c_+))} (\mathbbm{1}- \mathbbm{1}_{(\iota\circ \pi_-^{\mf E})^{-1}(\tau\circ \pi_+^{\mf E} (c_+))} )\rangle
  \\
&= \langle u_-, \mathbbm{1}_{\pi_-^{-1}(\pi_+(c_+))}\rangle -
\langle u_-,\mathbbm{1}_{(\tau\circ \pi_-^{\mf E})^{-1}(\iota\circ \pi_+^{\mf E} (c_+))}
\mathbbm{1}_{(\iota\circ \pi_-^{\mf E})^{-1}(\tau\circ \pi_+^{\mf E} (c_+))} )\rangle\\
&= \langle u_-, \mathbbm{1}_{\pi_-^{-1}(\pi_+(c_+))}\rangle -
\langle u_-,\mathbbm{1}_{(\pi_-^{\mf E})^{-1}(\pi_+^{\mf E} (c_+)^\mathrm{op})} \rangle\\
&= \langle u_-, \mathbbm{1}_{\pi_-^{-1}(\pi_+(c_+))}\rangle -
((\pi^{\mf E}_{-,*}u_-)\circ\mathrm{op}\circ \pi^{\mf E}_+)(c_+).
\end{align*}
By \eqref{eq:AlternativeDefinitionTensorProduct} and Definition~\ref{rem:finite graphs} this yields
\begin{align*}
\langle u_+,u_-\rangle_\mathrm{geod} &=\langle u_+\otimes u_-,\mathbbm{1}_{{\mf P}}\rangle\\
&=\langle u_+,u_-\rangle_{(\mf X)}-\langle u_+, (\pi^{\mf E}_{-,*}u_-)\circ\mathrm{op}\circ\pi^{\mf E}_+\rangle\\
&=\langle u_+,u_-\rangle_{(\mf X)}-\langle \mathrm{op}_*\pi^{\mf E}_{+,*} u_+,\pi^{\mf E}_{-,*}u_-\rangle\\
&\stackrel{\mathclap{\ref{rem:finite graphs}}}{=}\langle u_+,u_-\rangle_{(\mf X)}-\langle u_+,u_-\rangle_{(\mathrm{op},\mf E)}.
\end{align*}
\end{enumerate}
Items (i) and (ii) yield the pairing formulae
\begin{align}
\langle u_+\otimes u_-,\mathbbm{1}_{{\mf P}^{(2)}}\rangle
&=\langle u_+,u_-\rangle_{(\mf X)}\\
\langle u_+,u_-\rangle_\mathrm{geod} = \langle u_+\otimes u_-,\mathbbm{1}_{{\mf P}}\rangle
&=\langle u_+,u_-\rangle_{(\mf X)}-\langle u_+,u_-\rangle_{(\mathrm{op},\mf E)}\label{eq:pairing_right_edge}.
\end{align}
\end{remark}

\section{Tools available for trees}\label{sec:trees}

In this section we provide background material and technical tools available for trees. It will be used later in the study of the universal covering of finite regular graphs. So for the remainder of this section we assume $\mf G$ to be a homogeneous tree, i.e.\@ that $\mathfrak{G}$ has no cycles.

\subsection{Resonances on the boundary and Poisson transforms}
If $\mf G$ is a tree, two infinite chains (i.e.\@ non-backtracking paths) are called \emph{equivalent} if they are eventually equal, i.e.\@ if there is an index from which the edges on the paths match except for a shift. We call the set of equivalence classes $[\vec e_1,\vec e_2,\ldots)]$ of such chains the \emph{boundary at infinity} $\Omega$ of $\mf G$ (see e.g.\@ \cite[\S 2]{BHW21}). It is endowed with the topology consisting of the sets
\begin{equation}
	\partial_+\vec e \coloneqq \{\omega\in\Omega\mid \exists \text{ chain } \vec e,\vec e_2,\ldots : [\vec e,\vec e_2,\ldots]=\omega\}.\label{eq:DefinitionPartial+}
\end{equation}

We equip $\mf X$ with the metric $d$ which assigns the minimal length of concatenated sequences  $\vec e_1,\ldots,\vec e_\ell$ of directed edges connecting a pair of vertices. In particular, $\mf X$ carries the discrete topology and the canonical isomorphism ${\mf P}^\pm\cong \mathfrak{X}\times\Omega$ is a homeomorphism. Moreover, the algebraic dual $\mathcal{D}'(\Omega):= C^{\mathrm{lc}}(\Omega)'$ of $C^{\mathrm{lc}}(\Omega)$ is naturally isomorphic to the space of finitely additive measures on $\Omega$ (see \cite[Prop.\@ 3.9]{BHW21}).

We now recall from \cite{AFH23} how to realize (co)resonant states in $\mathcal{D}'(\Omega)$. This realization allows us to reduce several problems to the compact space $\Omega$.

\begin{definition}[End point maps]
The \emph{end point map} is given by
\begin{equation}
 \label{eq:B}
 B_+:{\mf P}^+\cong {\mathfrak X}\times \Omega \to \Omega,\quad (\vec{e}_1,\vec{e}_2,\ldots) \mapsto [(\vec{e}_1,\vec{e}_2,\ldots)].
\end{equation}
Similarly, we have an end point map $B_{-}$ for the opposite shift space,
\begin{gather*}
        B_-\colon {\mf P}^-\to\Omega,\quad(\ldots,\vec{e}_2,\vec{e}_1)\to[(\eop{e}_1,\eop{e}_2,\ldots)].
\end{gather*}
We define a (surjective) pushforward by $B_+$ via integration of compactly supported locally constant functions against fibers:
\begin{gather*}
        B_{+,\star}\colon C_c^{\mathrm{lc}}({\mf P}^+)\to C^{\mathrm{lc}}(\Omega),\quad f\mapsto\left(\omega \mapsto \sum_{x\in\mathfrak{X}}f(x,\omega)\right),
\end{gather*}
and, by duality, an (injective) pullback (cf.\@ \cite[Prop.~3.9]{BHW21})
\begin{equation}\label{eq:Bstar'}
B_+^\star \colon \mathcal D'(\Omega) \to  \mc D'({\mf P}^+),\quad B_+^{\star}(\varphi)(f)\coloneqq\varphi(B_{+,\star}(f)).
\end{equation}
Similarly, we obtain the pushforward
\begin{gather*}
        B_{-,\star}\colon C_c^{\mathrm{lc}}({\mf P}^-)\to C^{\mathrm{lc}}(\Omega),\quad f\mapsto\left(\omega \mapsto \sum_{x\in\mathfrak{X}}f(\omega,x)\right),
\end{gather*}
where $(\omega,x)\in {\mf P}^-$ denotes the unique path $\mathbf{e}\coloneqq(\ldots,\vec{e}_2,\vec{e}_1)$ such that $B_-(\mathbf{e})=\omega$ and $\tau(\vec{e}_1)=x$ and the pullback
\begin{gather*}
        B_{-}^{\star}\colon\mathcal{D}'(\Omega)\to \mc D'({\mf P}^-),\quad B_{-}^{\star}(\varphi)(f)\coloneqq \varphi(B _{-,\star}(f)).
\end{gather*}
\end{definition}

In order to realize (co)resonant states on the boundary, we use the Poisson kernel which requires some more notation. Let us fix a base point $o\in\mathfrak X$. For vertices $x,y\in\mf X$ and $\omega_{1}\neq\omega_{2}\in\Omega$ we write $[x,y]$ for the (unique) chain connecting $x$ and $y$, $[x,\omega_{1}[$ for the representative of $\omega_{1}$ starting at $x$ and $]\omega_{1},\omega_{2}[$ for the (unique up to shift) geodesic from $\omega_{1}$ to $\omega_{2}$.
Now let $\omega\in\Omega$ and $x\in\mf X$. Then there exists a unique $y\in\mf X$ such that $[o,\omega[\cap[x,\omega[=[y,\omega[$. We call the map
\begin{gather*}
\langle\bigcdot,\bigcdot\rangle\colon\mf X\times\Omega\to\Z, \quad \langle x,\omega\rangle\coloneqq d(o,y)-d(x,y)
\end{gather*}
the \emph{horocycle bracket}. It is locally constant and fulfills the \emph{horocycle identity}
\begin{gather}\label{eq:horocycle_id}
\langle gx,g\omega\rangle=\langle x,\omega\rangle+\langle go,g\omega\rangle
\end{gather}
for every automorphism $g$ of $\mf G$, see e.g.\@ \cite[(15)]{BHW23}.

\begin{definition}[Poisson kernel]
   For $z\in\mathbb{C}\setminus\{0\}$ we define the \emph{Poisson kernel}
\begin{gather*}
   p_{z,+}\coloneqq p_z\colon {\mf P}^+\to\mathbb{C},\quad \mathbf{e}\coloneqq(\vec{e}_1,\vec{e}_2,\ldots) \mapsto z^{\langle \iota(\vec{e}_1),B_{+}(\mathbf{e})\rangle}
\end{gather*}
and the \emph{opposite Poisson kernel}
\begin{gather*}
    p_{z,-}\colon {\mf P}^-\to \mathbb{C},\quad \mathbf{e}\coloneqq(\ldots,\vec{e}_2,\vec{e}_1) \mapsto z^{\langle\tau(\vec{e}_1),B_-(\mathbf{e})\rangle}.
\end{gather*}
\end{definition}

\begin{proposition}[{\cite[Prop.\@ 2.7, La.\@ 2.22]{AFH23}}]\label{prop:cpt_picture}
   For $z\neq0$ we have isomorphisms
  \begin{gather*}
    p_{z,\pm}B_{\pm}^\star\colon\mathcal{D}'(\Omega)\cong\mathcal E_z({\mathcal L}_{\pm}';\mc D'({\mf P}^\pm)).
\end{gather*}
Moreover, let $N_z(g,\omega)\coloneqq z^{-\langle go,g\omega\rangle}$ for each $g\in G,\ \omega\in \Omega,$ and define a representation $\pi_z$ on $\mathcal{D}'(\Omega)$ by
\begin{gather*}
    \pi_z(g)\mu\coloneqq g_{*}(N_z(g,\bigcdot)\mu).
\end{gather*}
Then $p_{z,\pm}B_{\pm}^\star$ intertwines $\pi_{z}$ with the left regular representation on $\mc D'({\mf P}^\pm)$.
\end{proposition}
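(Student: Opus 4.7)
The plan is to treat the ``$+$'' case, the ``$-$'' case being analogous. Since $\mf X$ is discrete, the homeomorphism ${\mf P}^+\cong\mf X\times\Omega$ lets me decompose any $u\in \mc D'({\mf P}^+)$ into its vertex-slices $u_x\in\mc D'(\Omega)$ defined by $\langle u_x,g\rangle\coloneqq\langle u,\delta_x\otimes g\rangle$, where $\delta_x\otimes g$ denotes the locally constant function $(y,\omega)\mapsto\delta_{y,x}g(\omega)$. The equation $u=p_{z,+}B_+^\star\mu$ is equivalent to $u_x=z^{\langle x,\bigcdot\rangle}\mu$ for every $x\in\mf X$, so the natural candidate for the inverse of $p_{z,+}B_+^\star$ should send $u$ to $u_o$ for a chosen base point $o$.

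First I would check that $p_{z,+}B_+^\star$ lands in $\mathcal E_z(\mathcal L_+';\mc D'({\mf P}^+))$, which by dualisation reduces to the pointwise identity
\[
B_{+,\star}(p_{z,+}\cdot\mathcal L_+f)=z\cdot B_{+,\star}(p_{z,+}f)
\]
on $\Omega$. Expanding $\mathcal L_+$ and swapping the order of summation, the computation rests on the tree fact that for neighbouring vertices $x,y$ and $\omega\in\Omega$, exactly one of $y=x^+(\omega)$ or $x=y^+(\omega)$ holds, where $v^+(\omega)$ denotes the first vertex on $[v,\omega[$ after $v$; the non-backtracking condition $y\neq x^+(\omega)$ implicit in $\mathcal L_+$ then forces $x=y^+(\omega)$, which via $\langle y^+(\omega),\omega\rangle=\langle y,\omega\rangle+1$ produces the global factor of $z$.

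Injectivity is then immediate: for each $g\in C^{\mathrm{lc}}(\Omega)$, the test function $f\coloneqq\delta_o\otimes g$ lies in $C_c^{\mathrm{lc}}({\mf P}^+)$ and satisfies $B_{+,\star}(p_{z,+}f)=g$, giving $\langle p_{z,+}B_+^\star\mu,f\rangle=\langle\mu,g\rangle$. For surjectivity, given $u\in\mathcal E_z$ I would set $\mu\coloneqq u_o$ and prove $u_x=z^{\langle x,\bigcdot\rangle}\mu$ by induction on $d(o,x)$. Letting $x_0$ be the neighbour of $x$ on $[o,x]$ and $\vec e\coloneqq(x_0,x)$, any $g\in C^{\mathrm{lc}}(\Omega)$ decomposes as $g_1+g_2$ with $g_1=\mathbbm 1_{\partial_+\vec e}\,g$ and $g_2=\mathbbm 1_{\partial_+\eop e}\,g$. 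A direct calculation gives $\mathcal L_+(\delta_{x_0}\otimes g_1)=\delta_x\otimes g_1$ and $\mathcal L_+(\delta_x\otimes g_2)=\delta_{x_0}\otimes g_2$, so the eigenequation yields
\[
\langle u,\delta_x\otimes g_1\rangle=z\langle u,\delta_{x_0}\otimes g_1\rangle\quad\text{and}\quad \langle u,\delta_{x_0}\otimes g_2\rangle=z\langle u,\delta_x\otimes g_2\rangle.
\]
Combined with the inductive hypothesis at $x_0$ and the elementary identities $\langle x,\omega\rangle=\langle x_0,\omega\rangle+1$ on $\partial_+\vec e$ and $\langle x_0,\omega\rangle=\langle x,\omega\rangle+1$ on $\partial_+\eop e$, both pieces yield $\langle u,\delta_x\otimes g_j\rangle=\langle u,\delta_o\otimes z^{\langle x,\bigcdot\rangle}g_j\rangle$, closing the induction; the hypothesis $z\neq 0$ enters exactly when inverting $z$ in the second piece.

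Finally, for the intertwining property, a direct unwinding of the definitions reduces the claim to the pointwise identity
\[
z^{\langle g^{-1}y,\omega\rangle}=N_z(g,\omega)\,z^{\langle y,g\omega\rangle}\qquad(y\in\mf X,\,\omega\in\Omega),
\]
i.e.\ $\langle g^{-1}y,\omega\rangle=\langle y,g\omega\rangle-\langle go,g\omega\rangle$, which is the horocycle identity \eqref{eq:horocycle_id} with $x$ replaced by $g^{-1}y$. I expect the main obstacle to be the surjectivity induction: it requires exploiting the tree structure through the partition $\Omega=\partial_+\vec e\sqcup\partial_+\eop e$ in tandem with the eigenequation to propagate the known value $\mu=u_o$ to each $u_x$.
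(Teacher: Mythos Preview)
Your proof is correct. The paper does not actually supply a proof of this proposition: it is stated with a citation to \cite[Prop.~2.7, La.~2.22]{AFH23} and used as an input, so there is no in-paper argument to compare against. Your slicewise approach---reducing the eigenspace condition to the relation $u_x=z^{\langle x,\bigcdot\rangle}u_o$ and propagating it along the tree via the partition $\Omega=\partial_+\vec e\sqcup\partial_+\eop e$---is a clean and self-contained way to establish the isomorphism, and your reduction of the intertwining property to the horocycle identity is exactly the right mechanism. One small remark: in the step showing that $p_{z,+}B_+^\star$ lands in the eigenspace, the identity $B_{+,\star}(p_{z,+}\cdot\mathcal L_+f)=z\,B_{+,\star}(p_{z,+}f)$ is indeed equivalent to $\mathcal L_+'(p_{z,+}B_+^\star\mu)=z\,p_{z,+}B_+^\star\mu$ precisely because multiplication by $p_{z,+}$ commutes with duality in the obvious way; you use this implicitly but it is worth making explicit when writing the argument out in full.
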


From the Poisson kernels one obtains the Poisson transforms by integration.

\begin{definition}[Poisson transforms]
For $z\in\mathbb{C}\setminus\{0\}$ we define the \emph{Poisson transform}
\begin{gather*}
\mc P_{z}\coloneqq\mathcal{P}_{z,+}\colon\mc D'(\Omega)\to\mathrm{Maps}({\mf X};\C),\quad\mu\mapsto\int_\Omega p_{z}(\bigcdot,\omega)\intd\mu(\omega)
\end{gather*}
and the \emph{opposite Poisson transform}
\begin{gather*}
\mc P_{z,-}\colon\mc D'(\Omega)\to\mathrm{Maps}({\mf X};\C),\quad\mu\mapsto\int_\Omega p_{z,-}(\omega,\bigcdot)\intd\mu(\omega).
\end{gather*}
\end{definition}

The Poisson transforms can also be written as the composition of the isomorphism from Proposition~\ref{prop:cpt_picture} with the pushforward $\pi_{\pm,*}$ by the base projection.

\begin{proposition}[{\cite[Prop.\@ 2.11]{AFH23}}]\label{prop:poisson_proj}
        For each $z\in\C\setminus\{0\}$ we have
\begin{gather*}
\mc P_{z,\pm}=\pi_{\pm,*}\circ p_{z,\pm}B_{\pm}^\star.
\end{gather*}
\end{proposition}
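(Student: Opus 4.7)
The plan is to follow the template of \cite{GHWb}: lift everything to the universal cover and realize both pairings as explicit integrals against the boundary distributions attached to $u_\pm$, then compare the resulting kernels. Write $\mf G=\Gamma\backslash \mc T$ where $\mc T$ is the homogeneous $(q+1)$-regular tree and $\Gamma$ is a cocompact torsion-free lattice in $G=\mathrm{Aut}(\mc T)$. Lifting $u_\pm$ produces $\Gamma$-invariant (co)resonant states on $\mc T$, which by Proposition~\ref{prop:cpt_picture} are of the form $p_{z,\pm}B_\pm^\star \mu_\pm$ for $\Gamma$-equivariant boundary distributions $\mu_\pm\in \mc D'(\Omega)$ transforming under $\pi_z$. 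The goal is to express both pairings as bilinear functionals in $(\mu_+,\mu_-)$ over $\Omega\times\Omega$ and to read off the proportionality constant from the resulting kernels.

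For the vertex pairing, Proposition~\ref{prop:poisson_proj} immediately gives, with $\mc F\subseteq\mc T$ a fundamental domain for $\Gamma$ on the vertex set,
$$
 \langle u_+,u_-\rangle_{(\mf X)} = \sum_{x\in\mc F}\int_{\Omega\times\Omega} z^{\langle x,\omega_+\rangle+\langle x,\omega_-\rangle}\intd\mu_+(\omega_+)\intd\mu_-(\omega_-).
$$
For the geodesic pairing, the key observation is that a pair $(c_-,c_+)\in\mf P$ on $\mf G$ lifts to a bi-infinite non-backtracking chain on $\mc T$ together with a distinguished splitting vertex $x=\pi_-(c_-)=\pi_+(c_+)$; such data is parameterized by triples $(x,\omega_-,\omega_+)$ with $\omega_+\neq\omega_-$ and $x$ lying on the geodesic $]\omega_-,\omega_+[$. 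Unfolding $\langle u_+\otimes u_-,\mathbbm{1}_{\mf P}\rangle$ via this parametrization and using the defining identity $p_{z,\pm}(c_\pm)=z^{\langle x,B_\pm(c_\pm)\rangle}$ when $c_\pm$ begins/ends at $x$ should yield
$$
 \langle u_+,u_-\rangle_\mathrm{geod} = \sum_{x\in\mc F}\iint_{x\in\,]\omega_-,\omega_+[} z^{\langle x,\omega_+\rangle+\langle x,\omega_-\rangle}\intd\mu_+(\omega_+)\intd\mu_-(\omega_-).
$$
The difference of the two integrands is supported on pairs $(\omega_+,\omega_-)$ for which $x$ lies off the geodesic $]\omega_-,\omega_+[$ (including the diagonal $\omega_+=\omega_-$). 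Exploiting the $\Gamma$-equivariance of $\mu_\pm$ together with the integral formulas from Appendix~\ref{sec:integralformulas}, this off-geodesic contribution should reorganize into a geometric series over the distance $k$ from the geodesic, weighted by the horocyclic factor $z^{-2k}$ and the tree count $(q-1)q^{k-1}$ of vertices at distance $k\geq 1$; the explicit summation is what produces the universal factor $(z^2-1)/(z^2-q)$ relating the two pairings.

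The main obstacle is analytic rather than combinatorial: the sums and integrals displayed above do not converge absolutely when $\mu_\pm$ are only distributions, so the unfolding must be justified by a truncation/limit procedure, as in \cite{GHWb}. Concretely, I would work with balls $B_R(o)\subset\mc T$ of radius $R$, establish approximate versions of both formulas with explicit error terms, and show that the errors vanish as $R\to\infty$ using the specific structure of the boundary realization $p_{z,\pm}B_\pm^\star\mu_\pm$. Extracting the exact constants $z^2-q$ and $z^2-1$ then amounts to summing the resulting geometric series, which at the critical values $z^2=q$ or $z^2=1$ must be interpreted via analytic continuation in $z$. Managing this limit uniformly in $(u_+,u_-)$, and correctly accounting for the contribution of the diagonal $\omega_+=\omega_-$, will be the technical crux of the argument.
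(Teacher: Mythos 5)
Your proposal does not prove the statement at hand. The statement is Proposition~\ref{prop:poisson_proj}, the identity $\mc P_{z,\pm}=\pi_{\pm,*}\circ p_{z,\pm}B_{\pm}^\star$ relating the Poisson transform on the tree to the base-point pushforward of the boundary realization of (co)resonant states. What you have written is a strategy sketch for the \emph{main pairing formula} (Theorem~\ref{thm:pairing formula}), and in fact you invoke Proposition~\ref{prop:poisson_proj} as an ingredient in your first displayed formula --- so the object you were asked to prove appears in your argument as an assumed tool. Nothing in your text establishes the claimed identity of operators $\mc D'(\Omega)\to\mathrm{Maps}(\mf X,\C)$.

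The actual proof is a short unwinding of definitions and requires none of the covering-space or truncation machinery you describe. For $\mu\in\mc D'(\Omega)$ and $x\in\mf X$ one has, by \eqref{eq:ExplicitFormulaPushForwardFromSigmaToX} and \eqref{eq:Bstar'},
\begin{gather*}
\pi_{+,*}\bigl(p_{z,+}B_+^\star\mu\bigr)(x)=\langle B_+^\star\mu,\ p_{z,+}\mathbbm{1}_{\pi_+^{-1}(x)}\rangle=\mu\bigl(B_{+,\star}(p_{z,+}\mathbbm{1}_{\pi_+^{-1}(x)})\bigr),
\end{gather*}
and since $B_{+,\star}(p_{z,+}\mathbbm{1}_{\pi_+^{-1}(x)})(\omega)=\sum_{y\in\mf X}p_{z,+}(y,\omega)\mathbbm{1}_{\pi_+^{-1}(x)}(y,\omega)=z^{\langle x,\omega\rangle}$, this equals $\int_\Omega z^{\langle x,\omega\rangle}\intd\mu(\omega)=\mc P_{z}\mu(x)$; the case of $\mc P_{z,-}$ is identical. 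You should redo the exercise with this definitional computation rather than the global pairing argument.
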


\subsection{Tree automorphisms}\label{subsec:tree_autom}
In this section we study various properties of the automorphism group $G\coloneqq \mathrm{Aut}(\mathfrak{X})$ of $\mathfrak{G}$ which leads to integral formulas that play a crucial role in the proof of the pairing formula.
 Fix a base point $o\in\mathfrak{X}$ and a reference geodesic $]\omega_-,\omega_+[{}\subseteq \mathfrak X$ containing $o$. Then, for $K\coloneqq \mathrm{Stab}_G(o)$, we can view $\mathfrak{X}$ as the $G$-homogeneous space $G/K$. 

We first introduce some more notation.

\begin{definition}\label{def:horocycle}
    For $x\in\mathfrak{X}$ and $\omega\in\Omega$ we write
\begin{gather*}
H_\omega(x)\coloneqq\{y\in\mathfrak{X}\mid\langle x,\omega\rangle=\langle y,\omega\rangle\}
\end{gather*}
for the \emph{horocycle} passing through $x$ and $\omega$.
\end{definition}

\begin{figure}
\tikzstyle{circle}=[shape=circle,draw,inner sep=1.5pt]
\usetikzlibrary{shapes.geometric}
\begin{tikzpicture}[scale=2]
\node (0) at (0,0) [label=left:$\omega_-$]{};
\draw (1,0) node[circle,fill=black] (1) {};
\draw[dotted,thick] (0) to (1);
\path (1) ++(30:1) node (2) [regular polygon, regular polygon sides=3,inner sep=1.5pt, fill=black] {};
\path (1) ++(-30:1) node (3) [regular polygon, regular polygon sides=3,inner sep=1.5pt, fill=black] {};
\draw[-,thick] (1) to (2);
\draw[-,thick] (1) to (3);
\path (2) ++(30:0.5) node (4) [regular polygon, regular polygon sides=4,inner sep=1.5pt, fill=black] {};
\path (2) ++(-30:0.5) node (6) [regular polygon, regular polygon sides=4,inner sep=1.5pt, fill=black] {};
\path (1) ++(-30:1.5) node (5) [regular polygon, regular polygon sides=4,inner sep=1.5pt, fill=black] {};
\path (3) ++(30:0.5) node (7) [regular polygon, regular polygon sides=4,inner sep=1.5pt, fill=black] {};
\draw[-,thick] (2) to (6);
\draw[-,thick] (3) to (7);
\draw[-,thick] (2) to (4);
\draw[-,thick] (3) to (5);
\path (4) ++(-30:0.25) node (8) [regular polygon, regular polygon sides=5,inner sep=1.5pt, fill=black] {};
\path (6) ++(30:0.25) node (9) [regular polygon, regular polygon sides=5,inner sep=1.5pt, fill=black] {};
\path (6) ++(-30:0.25) node (10) [regular polygon, regular polygon sides=5,inner sep=1.5pt, fill=black] {};
\path (7) ++(30:0.25) node (11) [regular polygon, regular polygon sides=5,inner sep=1.5pt, fill=black] {};
\path (7) ++(-30:0.25) node (12) [regular polygon, regular polygon sides=5,inner sep=1.5pt, fill=black] {};
\path (5) ++(30:0.25) node (13) [regular polygon, regular polygon sides=5,inner sep=1.5pt, fill=black] {};
\draw[-,thick] (4) to (8);
\draw[-,thick] (6) to (9);
\draw[-,thick] (6) to (10);
\draw[-,thick] (7) to (11);
\draw[-,thick] (7) to (12);
\draw[-,thick] (5) to (13);
\path (1) ++(30:1.75) node (6) [regular polygon, regular polygon sides=5,inner sep=1.5pt, fill=black] {};
\path (1) ++(-30:1.75) node (7) [regular polygon, regular polygon sides=5,inner sep=1.5pt, fill=black] {};
\draw[thick] (4) to (6);
\draw[thick] (5) to (7);
\path (6) ++(-30:0.2) node (14) {};
\path (6) ++(30:0.2) node (15) {};
\draw[-,thick] (6) to (14);
\draw[-,thick] (6) to (15);
\path (8) ++(-30:0.2) node (16) {};
\path (8) ++(30:0.2) node (17) {};
\draw[-,thick] (8) to (16);
\draw[-,thick] (8) to (17);
\path (9) ++(-30:0.2) node (18) {};
\path (9) ++(30:0.2) node (19) {};
\draw[-,thick] (9) to (18);
\draw[-,thick] (9) to (19);
\path (10) ++(-30:0.2) node (20) {};
\path (10) ++(30:0.2) node (21) {};
\draw[-,thick] (10) to (20);
\draw[-,thick] (10) to (21);
\path (11) ++(-30:0.2) node (22) {};
\path (11) ++(30:0.2) node (23) {};
\draw[-,thick] (11) to (22);
\draw[-,thick] (11) to (23);
\path (12) ++(-30:0.2) node (24) {};
\path (12) ++(30:0.2) node (25) {};
\draw[-,thick] (12) to (24);
\draw[-,thick] (12) to (25);
\path (13) ++(-30:0.2) node (26) {};
\path (13) ++(30:0.2) node (27) {};
\path (27) ++(-90:0.04) node (30) {};
\path (30) ++(0:1) node [label=right:$\omega_+$](31) {};
\draw[dotted,thick] (30) to (31);
\draw[-,thick] (13) to (26);
\draw[-,thick] (13) to (27);
\path (7) ++(-30:0.2) node (28) {};
\path (7) ++(30:0.2) node (29) {};
\draw[-,thick] (7) to (28);
\draw[-,thick] (7) to (29);

\end{tikzpicture}
\caption{Horocycles of $\omega_-$ for a $3$-adic tree}
\label{fig:Omega_graduierung}
\end{figure}
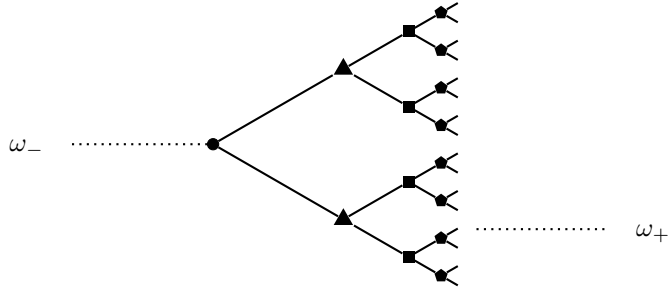

 This also gives a reference horocycle $H_{\omega_-}(o)$. The group
\[M:=\{\gamma\in K\mid \gamma|_{]\omega_-,\omega_+[}=\mathrm{id}\}=\mathrm{Stab}_G(o)\cap \mathrm{Stab}_G(\omega_-)\cap \mathrm{Stab}_G(\omega_+)\]
acts on $H_{\omega_-}(o)$, so that $G$ acts on the associated bundle $G\times_M H_{\omega_-}(o)$.
Following \cite[\S I.8]{FTN91} (see also \cite[\S 3]{Ve02}) we introduce the groups
\[B_{\omega_\pm}:=\{\gamma\in \mathrm{Aut}(\mathfrak X)\mid \gamma(\omega_\pm)=\omega_\pm , \exists\,x\in \mathfrak X: \gamma(x)=x\}.\]
Further we pick a $1$-step shift $\tau$ on $]\omega_-,\omega_+[$, i.e.\@ an automorphism of $\mathfrak X$ which acts on $]\omega_-,\omega_+[$ by mapping each vertex to one of its neighbors (see \cite[p. 157]{Ve02}), and let $\langle \tau\rangle$ be the cyclic group generated by $\tau$. Note that $\tau$ is unique up to inversion and multiplication with elements of $M$.

The following proposition from \cite[Thm.\@ 3.5 and Rem.\@ 3.6]{Ve02} will be used on various occasions. In particular it will give us another useful identity for the horocycle bracket.

\begin{proposition}[Iwasawa ``decomposition‘‘]
$G=B_{\omega_\pm}\langle\tau\rangle K$ so that the multiplication map
\[B_{\omega_\pm}\times \mathbb Z \times K \to G,\quad (u, j, k)\mapsto u\tau^j k\]
is surjective. Further, it satisfies
\[ u\tau^j k=u'\tau^{j'} k' \quad \Longrightarrow\quad j=j',\ k(k')^{-1}\in B_{\omega_\pm}\cap K,\ u(u')^{-1}\in B_{\omega_\pm}\cap K_{\tau^j(o)},\]
where we denoted $K_{\tau^j(o)}\coloneqq \mathrm{Stab}_G(\tau^j(o))=\tau^jK\tau^{-j}$.
\label{prop:Iwasawa}
\end{proposition}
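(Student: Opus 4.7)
The plan is to prove surjectivity first and then the uniqueness statements; I focus on $B_{\omega_+}$, the case $B_{\omega_-}$ being symmetric. I normalize $\tau$ so that $\langle\tau^jo,\omega_+\rangle=j$ for all $j\in\Z$.

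For surjectivity, given $g\in G$, the idea is to successively strip off a rotation in $K$, a translation $\tau^j$, and a rotation fixing a ray toward $\omega_+$. Since $K$ acts transitively on the boundary of a homogeneous tree (a standard fact, using that $K$ permutes the $q+1$ neighbors of $o$ freely at each level), one first picks $k\in K$ with $k(g^{-1}\omega_+)=\omega_+$, so that $h\coloneqq gk^{-1}$ fixes $\omega_+$. Setting $j\coloneqq\langle ho,\omega_+\rangle\in\Z$ and $u\coloneqq h\tau^{-j}$, the horocycle identity \eqref{eq:horocycle_id}, combined with $h\omega_+=\omega_+=\tau\omega_+$, yields
\[ \langle uo,\omega_+\rangle \;=\; \langle h\tau^{-j}o,\omega_+\rangle \;=\; \langle\tau^{-j}o,\omega_+\rangle+\langle ho,\omega_+\rangle \;=\; -j+j \;=\; 0. \]
It then remains to show $u\in B_{\omega_+}$, which is the geometric core of the argument. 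Since $u$ fixes $\omega_+$ and $o,uo$ lie on a common horocycle of $\omega_+$, the geodesic rays $[o,\omega_+[$ and $[uo,\omega_+[$ merge at some vertex $p$ with $d(o,p)=d(uo,p)$. As $u$ is a tree isometry sending the first ray to the second while fixing $\omega_+$, it must send the unique vertex on $[o,\omega_+[$ at distance $d(o,p)$ from $o$ to the unique vertex on $[uo,\omega_+[$ at the same distance from $uo$, namely $p$. Hence $u$ fixes $p$, so $u\in B_{\omega_+}$ and $g=u\tau^jk$.

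For uniqueness, the same merging argument actually yields the useful characterization $B_{\omega_+}=\{\gamma\in G:\gamma\omega_+=\omega_+,\ \langle\gamma o,\omega_+\rangle=0\}$, exhibiting $B_{\omega_+}$ as a subgroup whose elements preserve horocycle brackets of $\omega_+$ (via the horocycle identity). Given $u\tau^jk=u'\tau^{j'}k'$, applying both sides to $o$ and taking the horocycle bracket with $\omega_+$ forces $j=j'$, since both $u,u'\in B_{\omega_+}$ preserve this bracket while $\tau^j o$ and $\tau^{j'}o$ sit on different horocycles for $j\neq j'$. Rearranging then produces an element $u^{-1}u'=\tau^j(k(k')^{-1})\tau^{-j}$ lying simultaneously in $B_{\omega_+}$ (closed under products) and in $\tau^jK\tau^{-j}=K_{\tau^j(o)}$. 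Conjugating this identity by $\tau^{-j}$ gives $k(k')^{-1}\in K\cap\tau^{-j}B_{\omega_+}\tau^j=K\cap B_{\omega_+}$, using that $\tau$ normalizes $B_{\omega_+}$ because $\tau$ fixes $\omega_+$ and sends fixed vertices of any element of $B_{\omega_+}$ to fixed vertices of its $\tau$-conjugate.

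The single non-trivial obstacle I foresee is the geometric step of producing a fixed vertex for $u$; once the characterization $B_{\omega_+}=\{\gamma:\gamma\omega_+=\omega_+,\langle\gamma o,\omega_+\rangle=0\}$ is in hand, the rest of the argument is routine bookkeeping with the horocycle identity and the group structure on $K$, $B_{\omega_+}$, and $\langle\tau\rangle$.
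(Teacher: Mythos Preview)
Your argument is correct. The paper's own proof is largely by citation: surjectivity and the conclusions $j=j'$, $k(k')^{-1}\in B_{\omega_\pm}\cap K$ are quoted from \cite[Thm.~3.5, Rem.~3.6]{Ve02}, and only the final clause about $u(u')^{-1}$ is derived by writing $k=bk'$ with $b\in B_{\omega_\pm}\cap K$ and observing that $\tau^jb\tau^{-j}$ fixes both $\omega_\pm$ and $\tau^j(o)$. Your route is genuinely different in that it is self-contained: you replace the citation by the geometric characterization
\[
B_{\omega_+}=\{\gamma\in G:\gamma\omega_+=\omega_+,\ \langle\gamma o,\omega_+\rangle=0\},
\]
proved via the ``merging of rays'' argument, and then read off both surjectivity and uniqueness from the horocycle identity. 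This buys you a proof that does not rely on \cite{Ve02} and makes explicit why $B_{\omega_+}$ is precisely the horocycle-preserving stabilizer of $\omega_+$; the paper's approach is shorter on the page but opaque without the reference.

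One small remark, not a flaw in your reasoning: what you (and, in fact, the paper's own proof) actually establish is $u^{-1}u'=\tau^j\bigl(k(k')^{-1}\bigr)\tau^{-j}\in B_{\omega_\pm}\cap K_{\tau^j(o)}$, whereas the statement as printed asserts this for $u(u')^{-1}$. These are different group elements in general, and $u(u')^{-1}$ need only fix $u\tau^j(o)$ rather than $\tau^j(o)$. The discrepancy is a typo in the statement; your conclusion matches the paper's proof and is the correct one.
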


\begin{proof}
The formula $G=B_{\omega_\pm}\langle\tau\rangle K$ is given in \cite[Thm.\@ 3.5]{Ve02} and the conclusions $j=j'$ and $k(k')^{-1}\in B_{\omega_\pm}\cap K$ are given in \cite[Rem.\@ 3.6]{Ve02}. For the conclusion $uB_{\omega_\pm}\cap K_{\tau^j(o)}=u'B_{\omega_\pm}\cap K_{\tau^j(o)}$ we observe that writing $k=bk'$ with $b\in  B_{\omega_\pm}\cap K$ yields $u\tau^j b =u'\tau^j$, whence $u\tau^j b \tau^{-j}=u'$. As $\tau^j b \tau^{-j}$ fixes $\omega_\pm$ and $\tau^j(o)$, we obtain $\tau^j b \tau^{-j}\in B_{\omega_\pm}\cap K_{\tau^j(o)}$, which proves the claim.
\end{proof}

Taking inverses in Proposition \ref{prop:Iwasawa} implies that $G=K\langle\tau\rangle B_{\omega_\pm}$. Moreover, $\langle\tau\rangle$ normalizes $B_{\omega_\pm}$, because if $u\in B_{\omega_\pm}$ fixes $x\in\mathfrak{X}$ then $\tau^ju\tau^{-j}$ fixes $\tau^jx$. Thus, $\langle\tau\rangle B_{\omega_\pm}$ is a group and we may also write $G$ as $\langle\tau\rangle B_{\omega_\pm}K=K B_{\omega_\pm}\langle\tau\rangle$. We now investigate the uniqueness of the latter decomposition.

\begin{corollary}\label{cor:KNA}
    We have $G=KB_{\omega_\pm}\langle\tau\rangle$ and
\[ku\tau^j=k'u'\tau^{j'}\Longrightarrow j=j',\ \exists\, b\in K\cap B_{\omega_\pm}\colon k=k'b^{-1},u=bu'.\]
In particular, the product $ku$ is uniquely defined.
\end{corollary}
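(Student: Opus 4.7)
The plan is to derive this as a direct consequence of Proposition \ref{prop:Iwasawa} by inverting and using that $\langle\tau\rangle$ normalizes $B_{\omega_\pm}$. In fact the existence of the decomposition $G = KB_{\omega_\pm}\langle\tau\rangle$ is essentially already established in the paragraph preceding the corollary (combine $G = K\langle\tau\rangle B_{\omega_\pm}$ with the normalization of $B_{\omega_\pm}$ by $\langle\tau\rangle$), so only the uniqueness assertion and the ``in particular'' statement need a separate argument.

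For the uniqueness, I would start from an identity $ku\tau^j = k'u'\tau^{j'}$ and invert both sides to obtain $\tau^{-j}u^{-1}k^{-1} = \tau^{-j'}(u')^{-1}(k')^{-1}$. Using that $\tau$ normalizes $B_{\omega_\pm}$, I commute the $\tau^{-j}$ past $u^{-1}$: the element $\tilde u \coloneqq \tau^{-j}u^{-1}\tau^{j}$ again lies in $B_{\omega_\pm}$, and likewise for the primed side. This rewrites the inverted equation in the form $\tilde u\,\tau^{-j}\,k^{-1} = \tilde u'\,\tau^{-j'}\,(k')^{-1}$, which now fits into the Iwasawa pattern $B_{\omega_\pm}\langle\tau\rangle K$ to which Proposition \ref{prop:Iwasawa} applies. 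The proposition then forces $-j = -j'$, hence $j = j'$.

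Once $j = j'$ is known, cancelling $\tau^j$ on the right of the original identity gives $ku = k'u'$, which is precisely the ``in particular'' claim. Setting $b \coloneqq k^{-1}k' = u(u')^{-1}$, the first expression exhibits $b$ as a product of elements of $K$ while the second exhibits it as a product of elements of $B_{\omega_\pm}$, so $b \in K \cap B_{\omega_\pm}$. The relations $k = k'b^{-1}$ and $u = bu'$ then follow immediately.

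There is no substantive obstacle here; the statement is really a bookkeeping consequence of the previous proposition. The only thing to be careful about is the direction of the conjugations when transporting $\tau^{\pm j}$ past $u^{\pm 1}$, and correctly identifying the common element $b$ of $K$ and $B_{\omega_\pm}$ from the two expressions for $k^{-1}k'$ and $u(u')^{-1}$.
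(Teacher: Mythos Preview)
Your proposal is correct and follows essentially the same approach as the paper: both invert to reduce to the Iwasawa form of Proposition~\ref{prop:Iwasawa}, use that $\langle\tau\rangle$ normalizes $B_{\omega_\pm}$, and read off $j=j'$. Your extraction of $b$ directly from $ku=k'u'$ is in fact slightly cleaner than the paper's, which instead tracks the element $b$ through the full uniqueness clause of Proposition~\ref{prop:Iwasawa} applied to the inverted decomposition.
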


\begin{proof}
For $g\in G$, Proposition \ref{prop:Iwasawa} allows us to write $g^{-1}=u\tau^jk$. Thus,
\[g=k^{-1}\tau^{-j}u^{-1}=k^{-1}\tau^{-j}u^{-1}\tau^j\tau^{-j},\]
where $\tau^{-j}u^{-1}\tau^j$ is contained in $B_{\omega_{\pm}}$ by \cite[La.\@ 3.8]{Ve02}. Moreover, if we write $g^{-1}=u'\tau^jk$, Proposition \ref{prop:Iwasawa} shows that there exists some $b\in B_{\omega_{\pm}}$ such that $k=bk'$ and $u\tau^jb\tau^{-j}=u'$. Therefore, $k^{-1}=k'^{-1}b^{-1}$ and $\tau^{-j}u'^{-1}\tau^j=b^{-1}\tau^{-j}u^{-1}\tau^j$ so that in particular
\begin{gather*}
k^{-1}\tau^{-j}u^{-1}\tau^j=k'^{-1}b^{-1}\tau^{-j}u^{-1}\tau^j=k'^{-1}\tau^{-j}u'^{-1}\tau^j.\qedhere
\end{gather*}
\end{proof}

For each $G\ni g=ku\tau^j\in KB_{\omega_{\pm}}\langle\tau\rangle$ we may thus define
\begin{gather*}
    k_{\pm}(g)\coloneqq k(K\cap B_{\omega_{\pm}}),\ u_{\pm}(g)\coloneqq (K\cap B_{\omega_{\pm}})u\quad\text{and}\quad H_{\pm}(g)\coloneqq j.
\end{gather*}
Moreover, we introduce the abbreviations $k(g)\coloneqq k_+(g),\ u(g)\coloneqq u_+(g)$ and $H(g)\coloneqq H_+(g)$. Note that the $\langle\tau\rangle$-projection is closely related to the horocycle bracket in the following way.

\begin{lemma}\label{la:horo_H}
Let $K_{\omega_+}\coloneqq \mathrm{Stab}_K(\omega_+)$ so that $K/K_{\omega_+}\cong\Omega,\ kK_{\omega_+} \mapsto k\omega_+$. Then the horocycle bracket is given by
\begin{gather*}
\langle\cdot,\cdot\rangle\colon G/K\times K/K_{\omega_+}\cong{\mf X}\times\Omega\to\Z,\quad \langle go,k\omega_+\rangle=-H(g^{-1}k).
\end{gather*}
\end{lemma}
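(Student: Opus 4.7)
My plan is to first reduce the claimed identity to the special case $k=e$ (where the right-hand vertex in $\Omega$ is just $\omega_+$), and then to verify the resulting statement $\langle h^{-1}o,\omega_+\rangle = -H(h)$ by plugging in the $KB_{\omega_+}\langle\tau\rangle$-decomposition of Corollary~\ref{cor:KNA} and unwinding the horocycle identity \eqref{eq:horocycle_id} three times.

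For the reduction, I would apply \eqref{eq:horocycle_id} to the automorphism $k^{-1}\in K$ and the pair $(go,k\omega_+)$: this gives
\[
\langle k^{-1}go,\omega_+\rangle \;=\; \langle go,k\omega_+\rangle + \langle k^{-1}o,\omega_+\rangle,
\]
and the last summand vanishes because $k^{-1}o=o$ and $\langle o,\omega_+\rangle=0$. Setting $h:=g^{-1}k$, so that $k^{-1}go = h^{-1}o$, the desired formula becomes
\[
\langle h^{-1}o,\omega_+\rangle \;=\; -H(h) \qquad (h\in G).
\]

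Next, I would write $h = k'u\tau^{H(h)}$ via Corollary~\ref{cor:KNA} with $k'\in K$ and $u\in B_{\omega_+}$, so that $h^{-1}o = \tau^{-H(h)}u^{-1}o$. A second application of \eqref{eq:horocycle_id}, this time with $\tau^{-H(h)}$ (which fixes $\omega_+$ pointwise), splits the bracket into
\[
\langle h^{-1}o,\omega_+\rangle \;=\; \langle u^{-1}o,\omega_+\rangle + \langle \tau^{-H(h)}o,\omega_+\rangle.
\]
The first summand I would kill by a third use of \eqref{eq:horocycle_id}, now with the automorphism $u^{-1}\in B_{\omega_+}$: by definition of $B_{\omega_+}$ there is a vertex $x_u\in\mf X$ fixed by $u$, and applying the cocycle identity to the pair $(x_u,\omega_+)$, both of which are fixed by $u^{-1}$, yields $\langle x_u,\omega_+\rangle=\langle x_u,\omega_+\rangle+\langle u^{-1}o,\omega_+\rangle$, hence $\langle u^{-1}o,\omega_+\rangle=0$.

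What remains is the explicit computation $\langle \tau^{-j}o,\omega_+\rangle = -j$, which I would read off directly from Definition~\ref{def:horocycle}: with the convention that $\tau$ shifts along $]\omega_-,\omega_+[$ toward $\omega_+$, the rays $[o,\omega_+[$ and $[\tau^{-j}o,\omega_+[$ merge at $\tau^{\min(0,-j)}o$, and the two distances in Definition~\ref{def:horocycle} telescope to $-j$. I expect the main obstacle to be not mathematical depth but bookkeeping: the $1$-step shift $\tau$ in Proposition~\ref{prop:Iwasawa} is only determined up to inversion (and multiplication by $M$), so I must fix once and for all the direction of $\tau$ consistently with the sign convention for $H$ in order for the minus sign in $-H(h)$ to come out correctly. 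Apart from pinning down this convention, every step is a routine invocation of the horocycle identity.
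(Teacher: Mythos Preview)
Your argument is correct and follows essentially the same route as the paper: reduce to $k=e$ via the horocycle identity, then plug in an Iwasawa-type decomposition and compute the bracket along the reference geodesic. The only real difference is in the endgame: the paper invokes the limit description $\langle x,\omega_+\rangle=\lim_i(i-d(x,\tau^io))$ together with \cite[La.~3.1]{Ve02} to dispose of the $B_{\omega_+}$-factor, whereas you do this more intrinsically by two further applications of the cocycle identity~\eqref{eq:horocycle_id} (once for $\tau^{-H(h)}$, once for $u^{-1}$). Your version is a touch more self-contained; the paper's is marginally shorter once the external reference is granted.

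Two small cosmetic points: the merge vertex of $[o,\omega_+[$ and $[\tau^{-j}o,\omega_+[$ is $\tau^{\max(0,-j)}o$, not $\tau^{\min(0,-j)}o$ (your conclusion $-j$ is unaffected); and what you call ``Definition~\ref{def:horocycle}'' is in fact the definition of the horocycle \emph{set}~$H_\omega(x)$---the bracket itself is defined in the unnumbered paragraph just before it.
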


\begin{proof}
Let $g\in G$ and $k\in K$. Using the horocycle identity \eqref{eq:horocycle_id} we first obtain
\begin{gather*}
    \langle go,k\omega_+\rangle=\langle k^{-1}go,\omega_+\rangle+\langle o,k\omega_+\rangle=\langle k^{-1}go,\omega_+\rangle.
\end{gather*}
We now write $k^{-1}g=u\tau^jk_1$ as in Proposition \ref{prop:Iwasawa}. Since $g^{-1}k=k_1^{-1}\tau^{-j}u^{-1}$ implies $H(g^{-1}k)=-j$ we have to show $\langle u\tau^jo,\omega_+\rangle=j$. However, for each $x\in\mathfrak{X}$,
\begin{gather*}
    \langle x,\omega_+\rangle=\lim_{i\to\infty} d(o,\tau^io)-d(x,\tau^io)=\lim_{i\to\infty}i-d(x,\tau^io)
\end{gather*}
so that \cite[La.\@ 3.1]{Ve02} implies
\begin{gather*}
 \langle u\tau^jo,\omega_+\rangle=\langle \tau^jo,\omega_+\rangle=\lim_{i\to\infty}i-d(\tau^jo,\tau^io)=\lim_{i\to\infty}i-(i-j)=j.\qedhere
\end{gather*}
\end{proof}

Now we can derive the announced additional identity for the horocycle bracket.

\begin{lemma}\label{la:horo_id_2}
For each $g\in G$ and $\omega\in\Omega$ we have
\begin{gather*}
\langle go,g\omega\rangle=-\langle g^{-1}o,\omega\rangle.
\end{gather*}
\end{lemma}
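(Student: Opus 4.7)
The plan is to reduce the identity to a direct application of the horocycle identity \eqref{eq:horocycle_id}. I first note the base-point normalization $\langle o,\omega\rangle=0$ for every $\omega\in\Omega$: in the definition of the horocycle bracket, when $x=o$ the intersection vertex $y$ equals $o$, so $\langle o,\omega\rangle = d(o,o)-d(o,o)=0$. (Alternatively, setting $g=e$ and $x=o$ in \eqref{eq:horocycle_id} gives the same conclusion.)

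Now I would apply \eqref{eq:horocycle_id} with the substitution $x := g^{-1}o$, for which $gx=o$. This yields
$$\langle o, g\omega\rangle \;=\; \langle g^{-1}o,\omega\rangle \,+\, \langle go, g\omega\rangle.$$
Since the left-hand side vanishes by the base-point observation, rearranging gives the asserted identity $\langle go,g\omega\rangle = -\langle g^{-1}o,\omega\rangle$.

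I do not anticipate any obstacle: the whole argument is a one-line consequence of the cocycle-type formula \eqref{eq:horocycle_id}, and the only ingredient beyond it is the normalization at the base point, which is immediate from the definition. As a sanity check one could alternatively re-derive this through Lemma~\ref{la:horo_H} by writing $\omega=k\omega_+$ with $k\in K$ and expanding $\langle go,gk\omega_+\rangle$ and $\langle g^{-1}o,k\omega_+\rangle$ in terms of the $\langle\tau\rangle$-component $H$ via Proposition~\ref{prop:Iwasawa}, but that route is strictly more cumbersome than the direct application of \eqref{eq:horocycle_id} above.
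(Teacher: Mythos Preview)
Your proof is correct and is genuinely simpler than the paper's. The paper proceeds via Lemma~\ref{la:horo_H}: writing $\omega=k\omega_+$, it expresses both sides in terms of the $\langle\tau\rangle$-projection $H$ of the Iwasawa decomposition, reducing the statement to showing $H(h)=-H(h^{-1}k_+(h))$ for $h=gk$, which is then checked by an explicit manipulation in $KB_{\omega_+}\langle\tau\rangle$. Your argument avoids all of this machinery by observing that the identity is an immediate consequence of the cocycle relation~\eqref{eq:horocycle_id} together with the normalization $\langle o,\omega\rangle=0$. This is precisely the ``more cumbersome'' alternative you allude to in your final paragraph---except that the paper in fact chose that route. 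Your direct argument is preferable here; the paper's approach does have the side benefit of exercising the Iwasawa formalism that is used elsewhere, but for this lemma alone it is unnecessary.
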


\begin{proof}
Let $\omega=k\omega_+$ for some $k\in K$. Then, by Lemma \ref{la:horo_H},
\begin{gather*}
 \langle go,g\omega\rangle=-H(g^{-1}k_+(gk))\qquad\text{and}\qquad-\langle g^{-1}o,\omega\rangle=H(gk).
\end{gather*}
Writing $h\coloneqq gk$ we thus have to prove $H(h)\stackrel{!}{=}-H(kh^{-1}k_+(h))=-H(h^{-1}k_+(h))$. But if $h=k_1u_1\tau^{H(h)}$ we obtain
\begin{gather*}
    h^{-1}k_+(h)=\tau^{-H(h)}u_1^{-1}(K\cap B_{\omega_+})
\end{gather*}
which implies $H(h^{-1}k_+(h))=-H(h)$ since $\langle\tau\rangle$ normalizes $B_{\omega_+}$.
\end{proof}

The following propositions will be used to introduce suitable equivariant coordinates on coverings  of finite graphs.

\begin{proposition}\label{prop:G-coord1}
The map
\[\phi: G/M\langle \tau\rangle \to (\Omega\times \Omega)\setminus \mathrm{diag}(\Omega),
\quad gM\langle \tau\rangle\mapsto (g\omega_-,g\omega_+)\]
is a $G$-equivariant bijection, where $G$ acts diagonally on $\Omega\times \Omega$.
\end{proposition}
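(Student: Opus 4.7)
I would split the proof into three checks: $G$-equivariance, well-definedness on $G/M\langle\tau\rangle$, and bijectivity. The first is immediate from $(g'g)\omega_\pm=g'(g\omega_\pm)$. Well-definedness amounts to seeing that $M\langle\tau\rangle$ stabilizes the ordered pair $(\omega_-,\omega_+)$: $M$ fixes both $\omega_\pm$ by definition, while $\tau$ fixes each endpoint individually because it translates the parametrization of $]\omega_-,\omega_+[$ along a fixed direction (as is visible from $\langle\tau^j o,\omega_+\rangle=j$, computed in the proof of Lemma~\ref{la:horo_H}).

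For injectivity I will show $\mathrm{Stab}_G(\omega_-,\omega_+)=M\langle\tau\rangle$. If $g$ fixes both $\omega_\pm$, it preserves the bi-infinite geodesic $]\omega_-,\omega_+[$ setwise and with orientation, so it acts on its vertex-parametrization $\Z$ by an edge-preserving, orientation-preserving bijection, i.e.\ a translation. Thus there is $j\in\Z$ such that $g\tau^{-j}$ fixes every vertex of the reference geodesic, in particular $o$, so $g\tau^{-j}\in K$; since it still fixes $\omega_\pm$ it lies in $M$, giving $g\in M\tau^j\subseteq M\langle\tau\rangle$. The reverse inclusion has already been noted.

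Surjectivity is the main content; I would derive it from the Iwasawa decomposition of Corollary~\ref{cor:KNA} in the form $G=KB_{\omega_+}\langle\tau\rangle$. For $g=ku\tau^j$ one checks $g\omega_+=k\omega_+$ and $g\omega_-=ku\omega_-$ by using $\tau\omega_\pm=\omega_\pm$ and $u\omega_+=\omega_+$. Given distinct $(\omega_1,\omega_2)\in\Omega\times\Omega$, the identification $\Omega\cong K/K_{\omega_+}$ from Lemma~\ref{la:horo_H} produces $k\in K$ with $k\omega_+=\omega_2$, reducing the task to finding $u\in B_{\omega_+}$ with $u\omega_-=k^{-1}\omega_1\in\Omega\setminus\{\omega_+\}$.

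The hard step is therefore transitivity of $B_{\omega_+}$ on $\Omega\setminus\{\omega_+\}$. For any $\omega\neq\omega_+$, the geodesics $]\omega_-,\omega_+[$ and $]\omega,\omega_+[$ both terminate at $\omega_+$ and so merge beyond a common branching vertex $v$, with three pairwise distinct neighbors $v^+,v^-,v_\omega$ of $v$ lying on $[v,\omega_+[$, $[v,\omega_-[$, $[v,\omega[$ respectively. Removing $v$ splits $\mf X$ into $q+1$ rooted subtrees, all pairwise isomorphic; by the strong homogeneity of the full automorphism group of a $(q+1)$-regular tree, any isomorphism between the $v^-$-branch and the $v_\omega$-branch matching $[v,\omega_-[$ with $[v,\omega[$ extends by the identity on the remaining $q-1$ branches and on the $v^+$-branch to a tree automorphism $u$. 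This $u$ fixes $v$, hence lies in $B_{\omega_+}$ (and fixes $\omega_+$), and sends $\omega_-$ to $\omega$ by construction. The principal obstacle is this last combinatorial move, which is the only place where the regularity of $\mf G$ (as opposed to mere transitivity of $G$ on $\mf X$) is used.
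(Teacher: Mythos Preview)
Your argument is correct and follows essentially the same route as the paper: injectivity via $\mathrm{Stab}_G(\omega_-,\omega_+)=M\langle\tau\rangle$, and surjectivity via transitivity of $K$ on $\Omega$ combined with transitivity of $B_{\omega_\pm}$ on $\Omega\setminus\{\omega_\pm\}$ (the paper uses $B_{\omega_-}$ where you use $B_{\omega_+}$, and simply asserts the transitivity that you construct explicitly). One small wording issue in your branch construction: to obtain a bijection of the tree you must also send the $v_\omega$-branch back to the $v^-$-branch via the inverse isomorphism rather than leaving it fixed, and the ``remaining $q-1$ branches'' already include the $v^+$-branch, so the extra clause is redundant.
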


\begin{proof}
The equivariance is obvious.

To prove surjectivity, note first that $B_{\omega_-}$ acts transitively on $\Omega\setminus \{\omega_-\}$ and $K$ acts transitively on $\Omega$. So, if $(\omega,\omega')\in  (\Omega\times \Omega)\setminus \mathrm{diag}(\Omega)$, find a $k\in K$ such that $k\cdot (\omega,\omega')= (\omega_-,\omega'')$ with $\omega''\not=\omega_-$. Then we find a $b\in B_{\omega_-}$ such that $b\cdot \omega''=\omega_+$. Together we have $bk\cdot  (\omega,\omega')=(\omega_-,\omega_+)$ which implies that $G$ acts transitively on $(\Omega\times \Omega)\setminus \mathrm{diag}(\Omega)$. This together with the equivariance proves that $\phi$ is surjective.

On the other hand, if $g\cdot \omega_-=\omega_-$ and $g\cdot \omega_+=\omega_+$, then the action of $g$ on $]\omega_-,\omega_+[$ agrees with the action of some power $\tau^j$ of $\tau$. But then $g^{-1}\tau^j$ fixes the reference geodesic pointwise, so it is contained in $M$. Thus $g\in M\langle \tau\rangle$ and $\phi$ is injective.
\end{proof}

Note that $B_{\omega_-}$ and $B_{\omega_-}\cap K$ are stable under conjugation by $M$. Thus $M$ acts on $B_{\omega_-}/(B_{\omega_-}\cap K)$ and we have the associated bundle $G\times_M  B_{\omega_-}/(B_{\omega_-}\cap K)$.

\begin{proposition}\label{prop:G-coord2}
The map
\[\psi: G\times_M  B_{\omega_-}/(B_{\omega_-}\cap K) \to G/K \times  G/M\langle \tau\rangle,
\quad [g, \bar{u}(B_{\omega_-}\cap K)] \mapsto (g\bar{u}K,gM\langle \tau\rangle)\]
is a $G$-equivariant bijection, where $G$ acts diagonally on $G/K \times  G/M\langle \tau\rangle$.
\end{proposition}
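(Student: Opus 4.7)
The plan is to derive bijectivity of $\psi$ from the decomposition $G=KB_{\omega_-}\langle\tau\rangle$ of Corollary~\ref{cor:KNA} together with its uniqueness statement, exploiting that both $M$ and $\langle\tau\rangle$ normalize $B_{\omega_-}$ (hence $B_{\omega_-}\cap K$). Well-definedness is routine: the fiber coset $\bar u(B_{\omega_-}\cap K)$ enters the first output slot only through $\bar u K$, which is right-$K$-invariant, and the $M$-equivalence $(gm,\bar u(B_{\omega_-}\cap K))\sim(g,m\bar um^{-1}(B_{\omega_-}\cap K))$ is respected because $m\in M\subseteq K$ (first slot, where one uses $m^{-1}\in K$) and $m\in M\subseteq M\langle\tau\rangle$ (second slot). $G$-equivariance is immediate from left multiplication in the first argument.

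For surjectivity, given $(xK,yM\langle\tau\rangle)$, I would apply Proposition~\ref{prop:Iwasawa} to write $y^{-1}x=\bar u_0\tau^{j_0}k_0$. Setting $g\coloneqq y\tau^{j_0}$ and $\bar u\coloneqq\tau^{-j_0}\bar u_0\tau^{j_0}\in B_{\omega_-}$ (using $\langle\tau\rangle$-normalization), one directly verifies $g\bar u K=y\bar u_0\tau^{j_0}k_0K=xK$ and $gM\langle\tau\rangle=yM\langle\tau\rangle$, so $\psi([g,\bar u(B_{\omega_-}\cap K)])$ hits the given point.

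For injectivity, suppose the two classes have equal images. Equality in $G/M\langle\tau\rangle$ yields $g_2=g_1m\tau^j$ with $m\in M$, $j\in\Z$. Substituting this into $g_1\bar u_1K=g_2\bar u_2K$ and successively conjugating $\bar u_1$ by $m$ (using $M$-normalization) and by $\tau^j$ (using $\langle\tau\rangle$-normalization), I would rearrange the relation into the form $\tilde u=k\tau^j$ with $\tilde u\coloneqq\bar u_2^{-1}\tau^{-j}(m^{-1}\bar u_1m)\tau^j\in B_{\omega_-}$ and some $k\in K$. Comparing the two presentations $e\cdot\tilde u\cdot\tau^0$ and $k\cdot e\cdot\tau^j$ in $KB_{\omega_-}\langle\tau\rangle$, the uniqueness in Corollary~\ref{cor:KNA} forces $j=0$ and $\tilde u\in B_{\omega_-}\cap K$. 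Unwinding the definition of $\tilde u$ with $j=0$ gives $\bar u_2(B_{\omega_-}\cap K)=m^{-1}\bar u_1m(B_{\omega_-}\cap K)$ together with $g_2=g_1m$, which is exactly the defining $M$-equivalence $(g_1,\bar u_1(B_{\omega_-}\cap K))\sim(g_1m,m^{-1}\bar u_1m(B_{\omega_-}\cap K))$.

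The main obstacle I expect is the algebraic bookkeeping in the injectivity step: the factors $m$ and $\tau^j$ have to be shuttled past $\bar u_1$ and $\bar u_2$ in the correct order so that the resulting identity sits inside the $KB_{\omega_-}\langle\tau\rangle$ template of Corollary~\ref{cor:KNA}; once this is arranged, the Iwasawa-type uniqueness delivers $j=0$ and the $B_{\omega_-}\cap K$-containment in one stroke, and the rest is just tracing definitions.
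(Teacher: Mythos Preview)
Your proof is correct and rests on the same ingredients as the paper's: the Iwasawa-type decomposition of Proposition~\ref{prop:Iwasawa}/Corollary~\ref{cor:KNA} together with the normalization of $B_{\omega_-}$ by $M$ and $\langle\tau\rangle$. The only organizational difference is that the paper packages your surjectivity construction as an explicit candidate inverse $\check\psi(hK,gM\langle\tau\rangle)=[g\tau^j,\tau^{-j}\bar u\tau^j(B_{\omega_-}\cap K)]$ (with $g^{-1}h=\bar u\tau^jk$), checks that $\check\psi$ is well-defined and $G$-equivariant, and then verifies $\check\psi=\psi^{-1}$ at a single base point; your injectivity argument via the uniqueness clause of Corollary~\ref{cor:KNA} plays the same role as the paper's well-definedness check for $\check\psi$.
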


\begin{proof}
It is immediate to check from the definitions that $\psi$ is well-defined. The equivariance is then clear.

We prove that $\psi$ is bijective by explicitly describing its inverse. Our candidate for the inverse is
\[\check \psi: G/K \times  G/M\langle \tau\rangle  \to   G\times_M  B_{\omega_-}/(B_{\omega_-}\cap K) ,
\quad (hK,gM\langle \tau\rangle)  \mapsto [g\tau^j, \tau^{-j}\bar{u}\tau^{j}(B_{\omega_-}\cap K)],\]
where we use Proposition~\ref{prop:Iwasawa} to write $g^{-1}h=\bar{u}\tau^jk$. First we have to check that $\check \psi$ is well-defined. Suppose that $g^{-1}h=\bar{u}'\tau^jk'$ is another Iwasawa decomposition of $g^{-1}h$. Then there exists some $b\in B_{\omega_-}\cap K$ such that $k=bk'$ and $\bar{u}=\bar{u}'\tau^jb^{-1}\tau^{-j}$ so that
\[[g\tau^j,\tau^{-j}\bar{u}\tau^j(B_{\omega_-}\cap K) ]=[g\tau^j,\tau^{-j}\bar{u}'\tau^jb^{-1}\tau^{-j}\tau^j(B_{\omega_-}\cap K)]=[g\tau^j,\tau^{-j}\bar{u}'\tau^{j}(B_{\omega_-}\cap K) ].\]
Now, if $h'=hk'$ and $g'=gm\tau^\ell$ with $k'\in K$, $m\in M$ and $\ell\in \mathbb Z$, then
\[(g')^{-1}h'=\tau^{-\ell}m^{-1}g^{-1}hk'=(\tau^{-\ell}m^{-1}\bar{u}\tau^\ell)\tau^{j-\ell}(kk')\]
is an Iwasawa decomposition of $(g')^{-1}h'$. Thus
\begin{align*}
[g'\tau^{j-\ell}, \tau^{\ell-j}(\tau^{-\ell}m^{-1}\bar{u}\tau^\ell)\tau^{j-\ell} (B_{\omega_-}\cap K)]
&=[gm\tau^j, \tau^{-j}m^{-1}\bar{u}\tau^j (B_{\omega_-}\cap K)]\\
&=[g\tau^j(\tau^{-j}m\tau^j), (\tau^{-j}m^{-1}\tau^j)\tau^{-j}\bar{u}\tau^j (B_{\omega_-}\cap K)]\\
&=[g\tau^j, \tau^{-j}\bar{u}\tau^j (B_{\omega_-}\cap K)],
\end{align*}
so $\check \psi$ is indeed well-defined.

Next we verify that $\check \psi$ is $G$-equivariant. Fix $\gamma\in G$ and write $g^{-1}h=\bar{u}\tau^jk$ as before. Then
$\gamma\cdot (hK, gM\langle \tau\rangle)=(\gamma hK, \gamma gM\langle \tau\rangle)$
and $(\gamma g)^{-1}(\gamma h)=g^{-1}h=\bar{u}\tau^j k$ imply
\[\check \psi(\gamma hK, \gamma gM\langle \tau\rangle)
= [\gamma g \tau^j,\tau^{-j}\bar{u}\tau^j(B_{\omega_-}\cap K)]
= \gamma \cdot [g \tau^j,\tau^{-j}\bar{u}\tau^j(B_{\omega_-}\cap K)]
= \gamma \cdot \check \psi(hK, gM\langle \tau\rangle),
\]
i.e.\@ the $G$-equivariance of $\check \psi$.

In view of $\psi([\mathbf 1,\bar{u}M])=(\bar{u}K,M\langle \tau\rangle)$ and $\check\psi(\bar{u}K,M\langle \tau\rangle)=[\mathbf 1,\bar{u}M]$ the $G$-equivariance of $\psi$ and $\check \psi$ shows that $\check \psi=\psi^{-1}$, proving the proposition.
\end{proof}

\section{Coverings and deck transformations}\label{sec:coverings}

In order to use the results from Section~\ref{sec:trees}, we first relate $\mathfrak{G}$ to a tree, its \emph{universal cover} $\widetilde{\mathfrak{G}}=(\widetilde{\mathfrak{X}},\widetilde{\mathfrak{E}})$, by which we mean the graph obtained from taking the simply connected covering of the undirected version of $\mf G$ (identifying each edge with its opposite) and then replacing each unoriented edge by two opposite oriented edges. Then $\widetilde{\mathfrak{G}}$ is a $(q+1)$-regular tree. In the following we denote objects on the universal cover with a tilde. We write $\Gamma\leq G=\mathrm{Aut}(\widetilde{\mathfrak{X}})$ for the group of deck transformations and denote the canonical projection from $\widetilde{\mathfrak{X}}$ to $\mathfrak{X}$ by $\pi_{\mathfrak{X}}$. Note that the action of $G$ on $\widetilde{\mathfrak{X}}$ extends to actions on $\widetilde{\mathfrak{E}}$ and $\widetilde{{\mf P}}^{\pm}$ by acting on each vertex contained in the edge, respectively chain. Similarly, $\pi_{\mathfrak{X}}$ induces a projection $\pi_{\mathfrak{E}}\colon\widetilde{\mathfrak{E}}\to \mathfrak{E}$ such that $\iota\circ\pi_{\mathfrak{E}}=\pi_{\mathfrak{X}}\circ\tilde{\iota}$ and $\tau\circ\pi_{\mathfrak{E}}=\pi_{\mathfrak{X}}\circ\tilde{\tau}$, where $\iota,\tilde{\iota}$ resp.\@ $\tau,\tilde{\tau}$ denote the initial resp.\@ end point projections of $\mathfrak{G}$ and $\widetilde{\mathfrak{G}}$. Moreover,
\begin{gather*}
    \pi_{{\mf P}^+}(\vec{e}_1,\vec{e}_2,\ldots)\coloneqq(\pi_{\mathfrak{E}}(\vec{e}_1),\pi_{\mathfrak{E}}(\vec{e}_2),\ldots))\quad\text{and}\quad
    \pi_{{\mf P}^-}(\ldots,\vec{e}_2,\vec{e}_1)\coloneqq(\ldots,\pi_{\mathfrak{E}}(\vec{e}_2),\pi_{\mathfrak{E}}(\vec{e}_1))
\end{gather*}
define projections $\pi_{{\mf P}^{\pm}}\colon\widetilde{{\mf P}}^{\pm}\to {\mf P}^{\pm}$ as we do not allow multiple edges in $\mathfrak{G}$. Note that $\pi_{{\mf P}^{\pm}}$ is continuous with respect to the topology on the shift spaces specified in Definition~\ref{def:ShiftSpace}. Finally, we let $G$ act on $\mathrm{Maps}(\widetilde{{\mf P}}^{\pm},\mathbb{C})$ by the left regular representation and on its algebraic dual $\mathrm{Maps}(\widetilde{{\mf P}}^{\pm},\mathbb{C})'$ by duality.

For technical purposes, we also fix a fundamental domain $\mathcal{F}\subseteq\widetilde{\mathfrak{X}}$ for the $\Gamma$-action on $\widetilde{\mathfrak{X}}$ and denote the associated section, which maps $x$ to the unique vertex in $\pi_{\mathfrak{X}}^{-1}(\{x\})\cap \mathcal{F}$, by $\sigma_{\mathcal{F}}\colon \mathfrak{X}\to\widetilde{\mathfrak{X}}$. We remark that all computations are independent of the chosen fundamental domain.

In this section we will always assume that $\mf G$ is a finite graph.

\subsection{The lifting of pairings}
We now describe a connection between (co)resonant states on the universal cover $\widetilde{\mathfrak{G}}$ and those on $\mathfrak{G}$. This leads to a relation between the pairings on $\widetilde{\mathfrak{G}}$ and those on $\mathfrak{G}$ and in turn allows us to use the tools we developed in Section \ref{sec:trees}.

We start by identifying the (co)resonant states. Note first that for functions the pullback $\pi_{{\mf P}^{\pm}}^*\colon \mathrm{Maps}({\mf P}^{\pm},\mathbb{C})\to\mathrm{Maps}(\widetilde{{\mf P}}^{\pm},\mathbb{C})^{\Gamma}$ is bijective and maps $C^{\mathrm{lc}}({\mf P}^{\pm})$ onto $C^{\mathrm{lc}}(\widetilde{{\mf P}}^{\pm})^{\Gamma}$ since $\pi_{{\mf P}^{\pm}}$ is continuous and preserves districts. For the dual version of this isomorphism we define the $\Gamma$-invariant pushforward
\begin{gather*}
 \pi_{{\mf P}^{\pm},\star}\colon\left\{\begin{array}{ccc}
               C_c^{\mathrm{lc}}(\widetilde{\mf P}^{\pm}, \mathbb C) &\to & C^{\mathrm{lc}}({\mf P}^\pm, \mathbb C)\\
               f & \mapsto & (\pi_{{\mf P}^{\pm}}^*)^{-1}\left( \sum_{\gamma\in\Gamma} \gamma\cdot f\right)\\
              \end{array}\right..
\end{gather*}

The following result ensures that (co)resonance states of the finite graph $\mathfrak{G}$ correspond to $\Gamma$-invariant (co)resonant states of $\widetilde{\mathfrak{G}}$.

\begin{proposition}[{See \cite[La.\@ 3.1, Prop.\@ 3.2]{AFH23}}]\label{prop:res_upstairs_downstairs}
The pullback map
\begin{gather*}
 \pi_{{\mf P}^{\pm}}^\star\colon \mc D'({\mf P}^\pm) \to (\mc D'(\widetilde{\mf P}^\pm))^\Gamma,\quad u \mapsto u\circ \pi_{{\mf P}^{\pm},\star}
\end{gather*}
is a well-defined bijection with inverse
\begin{gather*}
  (\pi_{{\mf P}^{\pm}}^\star)^{-1}\colon \mc D'(\widetilde{\mf P}^\pm)^\Gamma \to \mc D'({\mf P}^\pm),\quad \langle(\pi_{{\mf P}^{\pm}}^\star)^{-1}(U),F\rangle=\langle U,(F\circ \pi_{{\mf P}^{+}})\cdot(\mathbbm{1}_{\mathcal{F}}\circ \pi_{\pm})\rangle.
\end{gather*}
Moreover, we have
\begin{gather*}
    \pi_{{\mf P}^{\pm}}^*\circ\mathcal{L}_{\pm}=\tilde{\mathcal{L}}_{\pm}\circ\pi_{{\mf P}^{\pm}}^*,\quad \mathcal{L}_{\pm}\circ\pi_{{\mf P}^{\pm},\star}=\pi_{{\mf P}^{\pm},\star}\circ\tilde{\mathcal{L}}_{\pm}\quad\text{and}\quad\pi_{{\mf P}^{\pm}}^\star\circ\mathcal{L}_{\pm}'=\tilde{\mathcal{L}}_{\pm}'\circ\pi_{{\mf P}^{\pm}}^\star.
\end{gather*}
In particular, $\pi_{{\mf P}^{\pm}}^\star$ induces an isomorphism $\mathcal E_z({\mathcal L}_\pm';\mc D'({\mf P}^\pm))\cong\mathcal E_z({\tilde{\mathcal L}}_\pm';\mc D'(\widetilde{\mf P}^\pm)^{\Gamma})$ for every $z\in\C$.
\end{proposition}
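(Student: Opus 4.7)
The plan is to prove this in four stages, all hinging on the fact that $\pi_{\mf P^\pm}\colon \widetilde{\mf P}^\pm \to \mf P^\pm$ is a covering map in a very strong sense: since $\mathfrak G$ has no multiple edges, $\pi_{\mathfrak E}$ restricts to a bijection between the stars at $\tilde x$ and at $\pi_{\mathfrak X}(\tilde x)$ for every $\tilde x \in \widetilde{\mathfrak X}$, and consequently $\pi_{\mf P^\pm}$ maps each district in $\widetilde{\mf P}^\pm$ bijectively onto the corresponding district in $\mf P^\pm$. In particular, $\pi_{\mf P^\pm}^*$ identifies $C^{\mathrm{lc}}(\mf P^\pm)$ with $C^{\mathrm{lc}}(\widetilde{\mf P}^\pm)^\Gamma$.

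First I would verify well-definedness of $\pi_{\mf P^\pm,\star}$: for $f\in C_c^{\mathrm{lc}}(\widetilde{\mf P}^\pm)$ the sum $\sum_{\gamma\in\Gamma}\gamma\cdot f$ is locally finite because $\Gamma$ acts properly discontinuously on $\widetilde{\mathfrak X}$ and hence on $\widetilde{\mf P}^\pm$, and the result is $\Gamma$-invariant and locally constant, so applying $(\pi_{\mf P^\pm}^*)^{-1}$ is legitimate and lands in $C^{\mathrm{lc}}(\mf P^\pm)$. Dualizing, $\pi_{\mf P^\pm}^\star=(\pi_{\mf P^\pm,\star})'$ is automatically a well-defined linear map $\mc D'(\mf P^\pm)\to\mc D'(\widetilde{\mf P}^\pm)$; its image lies in the $\Gamma$-invariants because $f$ and $\gamma\cdot f$ have equal pushforward.

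Second, I would establish bijectivity by exhibiting the stated inverse. Given $U\in\mc D'(\widetilde{\mf P}^\pm)^\Gamma$, set $\langle u,F\rangle\coloneqq \langle U,(F\circ \pi_{\mf P^\pm})\cdot (\mathbbm 1_{\mathcal F}\circ \tilde\pi_\pm)\rangle$. The cut-off makes sense because $\widetilde{\mathfrak X}$ is discrete, and because $\mathcal F$ is a fundamental domain the pointwise identity $\sum_{\gamma\in\Gamma}\gamma\cdot\bigl[(F\circ\pi_{\mf P^\pm})\cdot(\mathbbm 1_{\mathcal F}\circ\tilde\pi_\pm)\bigr]=F\circ\pi_{\mf P^\pm}=\pi_{\mf P^\pm}^* F$ holds. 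Combined with $\Gamma$-invariance of $U$, this identity shows that both composites of $\pi_{\mf P^\pm}^\star$ with the candidate inverse are the identity, proving bijectivity.

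Third, the intertwining $\pi_{\mf P^\pm}^*\circ\mathcal L_\pm = \tilde{\mathcal L}_\pm\circ\pi_{\mf P^\pm}^*$ follows from the district bijection applied to the definition \eqref{eq:RuelleTO}: preimages of a chain under $\tilde\sigma$ correspond bijectively via $\pi_{\mf P^\pm}$ to preimages of its image under $\sigma$. The companion formula $\mathcal L_\pm\circ\pi_{\mf P^\pm,\star} = \pi_{\mf P^\pm,\star}\circ\tilde{\mathcal L}_\pm$ then follows by commuting $\tilde{\mathcal L}_\pm$ past the $\Gamma$-sum, using that $\Gamma$-translates commute with $\tilde{\mathcal L}_\pm$; dualizing it gives $\pi_{\mf P^\pm}^\star\circ\mathcal L_\pm' = \tilde{\mathcal L}_\pm'\circ\pi_{\mf P^\pm}^\star$, which together with the bijection from the previous step restricts to the claimed eigenspace isomorphism. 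The main obstacle is the second step: one must carefully track which space each projection lives on and verify that $\Gamma$-averaging kills the fundamental-domain cut-off while leaving $F\circ\pi_{\mf P^\pm}$ intact. The remaining points are routine dualizations of the vertex-level covering formalism of \cite[Prop.\@ 3.2]{AFH23}.
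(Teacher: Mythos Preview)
The paper does not prove this proposition; it is quoted verbatim from \cite[La.~3.1, Prop.~3.2]{AFH23} without argument. Your sketch is correct and is the expected approach: the district-level bijection induced by $\pi_{\mf P^\pm}$ identifies $C^{\mathrm{lc}}(\mf P^\pm)$ with $C^{\mathrm{lc}}(\widetilde{\mf P}^\pm)^\Gamma$, the fundamental-domain cut-off furnishes the inverse on distributions, and the star bijection at each vertex gives the transfer-operator intertwining, from which the eigenspace isomorphism follows by duality.

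One point worth spelling out more carefully is the second composite $\pi_{\mf P^\pm}^\star\circ(\pi_{\mf P^\pm}^\star)^{-1}=\mathrm{id}$. Your displayed identity handles only the first composite. For the second you need to unfold $\langle U,(\pi_{\mf P^\pm,\star}f\circ\pi_{\mf P^\pm})\cdot(\mathbbm 1_{\mathcal F}\circ\tilde\pi_\pm)\rangle$, use $\Gamma$-invariance of $U$ to move each $\gamma$ from $\gamma\cdot f$ onto the cut-off, and then invoke the partition $\sum_{\gamma}\gamma^{-1}\cdot(\mathbbm 1_{\mathcal F}\circ\tilde\pi_\pm)=\mathbbm 1$. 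You mention $\Gamma$-invariance of $U$, so the ingredients are there, but as written the single identity you display does not literally cover both directions.
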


The isomorphism $\pi_{{\mf P}^{\pm}}^\star$ from Proposition \ref{prop:res_upstairs_downstairs} is compatible with the maps $\pi_{\pm,*}:\mc D'({\mf P}^\pm)\to  \mathrm{Maps}({\mathfrak X},\C)'$ and $\tilde{\pi}_{\pm,*}:\mc D'(\widetilde{\mf P}^\pm)\to  \mathrm{Maps}_c(\tilde{\mathfrak X},\C)'$ from Proposition~\ref{prop:push-forwards_dist_proj} in the following sense.

\begin{lemma}\label{la:pi_star_upstairs_downstairs}
Let $u \in \mc D'({\mf P}^\pm)$ and $\tilde u\coloneqq \pi_{{\mf P}^{\pm}}^\star(u )$. Then, for each $\varphi\in \mathrm{Maps}(\mathfrak{X},\mathbb{C})$,
\begin{gather*}
    \langle \pi_{\pm,*}u ,\varphi\rangle=\langle\tilde{\pi}_{\pm,*}\tilde u,\mathbbm{1}_{\mathcal{F}}\cdot (\varphi\circ \pi_{\mathfrak{X}})\rangle.
\end{gather*}
In particular, the associated functions $\pi_{\pm,*}u \in\mathrm{Maps}(\mf X,\C)$ and $\tilde{\pi}_{\pm,*}\tilde u\in\mathrm{Maps}(\tilde{\mf X},\C)$ defined by Remark~\ref{rem:push_forward_restr} fulfill
\begin{gather*}
    \pi_{\pm,*}u (\pi_{\mathfrak{X}}(x))=\tilde{\pi}_{\pm,*}\tilde u(x) \qquad \mbox{for all }x\in\widetilde{\mf X}.
\end{gather*}
\end{lemma}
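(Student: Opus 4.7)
The plan is to simply unwind the definitions, using the explicit inverse of $\pi_{{\mf P}^{\pm}}^\star$ given in Proposition~\ref{prop:res_upstairs_downstairs}. Since $\mf G$ is finite, every $\varphi\in\mathrm{Maps}(\mf X,\C)$ has finite (hence compact) support, so for such $\varphi$ and with $u=(\pi_{{\mf P}^{\pm}}^\star)^{-1}(\tilde u)$ one has
$$
\langle\pi_{\pm,*}u,\varphi\rangle
=\langle u,\varphi\circ\pi_\pm\rangle
=\bigl\langle\tilde u,\bigl((\varphi\circ\pi_\pm)\circ\pi_{{\mf P}^\pm}\bigr)\cdot(\mathbbm{1}_{\mathcal F}\circ\tilde\pi_\pm)\bigr\rangle.
$$
The key geometric input is the commutation relation $\pi_\pm\circ\pi_{{\mf P}^\pm}=\pi_{\mf X}\circ\tilde\pi_\pm$, which is immediate from the definition of the base projections and the lifted shift-space projections at the start of Section~\ref{sec:coverings}. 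Using it, the argument on the right factors as $\bigl((\varphi\circ\pi_{\mf X})\cdot\mathbbm 1_{\mathcal F}\bigr)\circ\tilde\pi_\pm$, and pairing against $\tilde u$ against this pullback gives $\langle\tilde\pi_{\pm,*}\tilde u,\mathbbm 1_{\mathcal F}\cdot(\varphi\circ\pi_{\mf X})\rangle$, which is the first claimed identity.

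For the pointwise statement I would first specialize to $\varphi=\delta_x$ with $x\in\mf X$. Then $\mathbbm 1_{\mathcal F}\cdot(\delta_x\circ\pi_{\mf X})=\delta_{\sigma_{\mathcal F}(x)}$, so the first part yields
$$
\pi_{\pm,*}u(x)=\tilde\pi_{\pm,*}\tilde u(\sigma_{\mathcal F}(x)),
$$
which proves the identity at the representative $\tilde x=\sigma_{\mathcal F}(x)$. To extend it to every $\tilde x\in\widetilde{\mf X}$ I would write $\tilde x=\gamma\,\sigma_{\mathcal F}(\pi_{\mf X}(\tilde x))$ for some $\gamma\in\Gamma$ and use $\Gamma$-invariance of $\tilde u$: since $\tilde\pi_\pm$ is $G$-equivariant, $\mathbbm 1_{\tilde\pi_\pm^{-1}(\gamma y)}=\gamma\cdot\mathbbm 1_{\tilde\pi_\pm^{-1}(y)}$ in the left regular action, whence
$$
\tilde\pi_{\pm,*}\tilde u(\gamma y)=\langle\tilde u,\gamma\cdot\mathbbm 1_{\tilde\pi_\pm^{-1}(y)}\rangle=\langle\gamma^{-1}\tilde u,\mathbbm 1_{\tilde\pi_\pm^{-1}(y)}\rangle=\tilde\pi_{\pm,*}\tilde u(y).
$$
The two displayed identities combine to give $\pi_{\pm,*}u(\pi_{\mf X}(\tilde x))=\tilde\pi_{\pm,*}\tilde u(\tilde x)$ as required.

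There is no real obstacle here; the only things to be careful about are (i) keeping track of the compact-support hypothesis when evaluating $u$ on $\varphi\circ\pi_\pm$, which is automatic because $\mf G$ is finite, and (ii) moving the indicator $\mathbbm 1_{\mathcal F}\circ\tilde\pi_\pm$ across so that it becomes the test function for $\tilde\pi_{\pm,*}\tilde u$. Both amount to bookkeeping with the definitions, so the lemma is essentially a direct consequence of Proposition~\ref{prop:res_upstairs_downstairs} together with the equivariance of the projections.
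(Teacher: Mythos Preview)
Your proof is correct and takes essentially the same approach as the paper: both unwind the definitions of $\pi_{\pm,*}$ and $\pi_{{\mf P}^\pm}^\star$ and reduce to the commutation $\pi_\pm\circ\pi_{{\mf P}^\pm}=\pi_{\mf X}\circ\tilde\pi_\pm$. The minor difference is directional: you apply the explicit inverse formula from Proposition~\ref{prop:res_upstairs_downstairs} to pass from $u$ to $\tilde u$, whereas the paper uses the forward definition $\tilde u=u\circ\pi_{{\mf P}^\pm,\star}$ and verifies the needed identity $\pi_{{\mf P}^\pm}^*(\varphi\circ\pi_\pm)=\sum_{\gamma\in\Gamma}\gamma\cdot\bigl((\mathbbm 1_{\mathcal F}\cdot(\varphi\circ\pi_{\mf X}))\circ\tilde\pi_\pm\bigr)$ by a pointwise computation. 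Your route is slightly more economical since that identity is precisely what the inverse formula encodes. For the pointwise statement, the paper only spells out the identity at $\sigma_{\mathcal F}(x)$ and leaves the extension to all of $\widetilde{\mf X}$ implicit; your explicit $\Gamma$-invariance argument for $\tilde\pi_{\pm,*}\tilde u$ fills that in cleanly.
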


\begin{proof}
Note first that
\begin{gather*}
    \langle\tilde{\pi}_{\pm,*}\tilde u,\mathbbm{1}_{\mathcal{F}}\cdot (\varphi\circ \pi_{\mathfrak{X}})\rangle
=\langle \tilde u,(\mathbbm{1}_{\mathcal{F}}\cdot (\varphi\circ \pi_{\mathfrak{X}}))\circ\tilde{\pi}_{\pm}\rangle
=\langle u ,\pi_{{\mf P}^{\pm},\star}\left((\mathbbm{1}_{\mathcal{F}}\cdot (\varphi\circ \pi_{\mathfrak{X}}))\circ\tilde{\pi}_{\pm}\right)\rangle.
\end{gather*}
On the other hand, $\langle \pi_{\pm,*}u ,\varphi\rangle=\langle u ,\varphi\circ\pi_{\pm}\rangle$. By definition of $\pi_{{\mf P}^{\pm},\star}$ we thus have to show
\begin{gather*}
    \pi_{{\mf P}^{\pm}}^*(\varphi\circ\pi_{\pm})=\sum_{\gamma\in\Gamma}\gamma\cdot\left((\mathbbm{1}_{\mathcal{F}}\cdot (\varphi\circ \pi_{\mathfrak{X}}))\circ\tilde{\pi}_{\pm}\right).
\end{gather*}
Let us restrict our attention to the case of ${\mf P}^{+}$ and let $(\vec{e}_1,\vec{e}_2,\ldots)\in{\mf P}^{+}$. Then
\begin{align*}
\sum_{\gamma\in\Gamma}\gamma\cdot\left((\mathbbm{1}_{\mathcal{F}}\cdot (\varphi\circ \pi_{\mathfrak{X}}))\circ\tilde{\pi}_{\pm}\right)(\vec{e}_1,\vec{e}_2,\ldots)
&=\sum_{\gamma\in\Gamma}\left((\mathbbm{1}_{\mathcal{F}}\cdot (\varphi\circ \pi_{\mathfrak{X}}))\circ\tilde{\pi}_{\pm}\right)(\gamma^{-1}\vec{e}_1,\gamma^{-1}\vec{e}_2,\ldots)\\
&=\sum_{\gamma\in\Gamma}\mathbbm{1}_{\mathcal{F}}(\gamma^{-1}\tilde{\iota}(\vec{e}_1))(\varphi\circ\pi_{\mathfrak{X}}\circ\tilde{\iota})(\gamma^{-1}\vec{e}_1)\\
&=(\varphi\circ\pi_{\mathfrak{X}}\circ\tilde{\iota})(\gamma_{0}^{-1}\vec{e}_1),
\end{align*}
where $\gamma_0\in\Gamma$ denotes the unique element with $\gamma_{0}^{-1}\tilde{\iota}(\vec{e}_1)\in\mathcal{F}$. Since $\pi_{\mathfrak{X}}$ is $\Gamma$-invariant we further obtain
\begin{gather*}
   (\varphi\circ\pi_{\mathfrak{X}}\circ\tilde{\iota})(\gamma_{0}^{-1}\vec{e}_1)=(\varphi\circ\pi_{\mathfrak{X}}\circ\tilde{\iota})(\vec{e}_1)=(\varphi\circ\iota\circ\pi_{\mathfrak{E}})(\vec{e}_1)=(\varphi\circ\pi_{+}\circ\pi_{{\mf P}^{+}})(\vec{e}_1,\vec{e}_2,\ldots)
\end{gather*}
which is given by $\pi_{{\mf P}^{+}}^*(\varphi\circ\pi_{+})$.

Now consider $\pi_{\pm,*}u $ and $\tilde{\pi}_{\pm,*}\tilde u$ as functions. Then
\begin{align*}
\langle\tilde{\pi}_{\pm,*}\tilde u,\mathbbm{1}_{\mathcal{F}}\cdot (\varphi\circ \pi_{\mathfrak{X}})\rangle&=\sum_{x\in\widetilde{\mathfrak{X}}}\tilde{\pi}_{\pm,*}\tilde u(x)\mathbbm{1}_{\mathcal{F}}(x)\varphi(\pi_{\mathfrak{X}}(x))\\
&=\sum_{x\in\widetilde{\mathfrak{X}}}\tilde{\pi}_{\pm,*}\tilde u(\sigma_{\mathcal{F}}(\pi_{\mathfrak{X}}(x)))\mathbbm{1}_{\mathcal{F}}(x)\varphi(\pi_{\mathfrak{X}}(x))\\
&=\sum_{x\in\mathfrak{X}}\tilde{\pi}_{\pm,*}\tilde u(\sigma_{\mathcal{F}}(x))\varphi(x)=\langle\tilde{\pi}_{\pm,*}\tilde u\circ\sigma_{\mathcal{F}},\varphi\rangle_{\mathfrak{X}}
\end{align*}
so that $\pi_{\pm,*}u (x)=\tilde{\pi}_{\pm,*}\tilde u(\sigma_{\mathcal{F}}(x))$ follows from the non-degeneracy of the pairing $\langle\cdot,\cdot\rangle_{\mathfrak{X}}$.
\end{proof}

The reasoning from Lemma \ref{la:pi_star_upstairs_downstairs} also allows to prove a similar result for the pushforwards $\pi^{\mathfrak{E}}_{\pm,*}$ and $\pi^{\widetilde{\mathfrak{E}}}_{\pm,*}$ associated with edges (see Proposition \ref{prop:push-forwards_dist_proj}).

\begin{lemma}\label{la:pi_E_star_upstairs_downstairs}
Let $u \in \mc D'({\mf P}^\pm)$ and $\tilde u\coloneqq \pi_{{\mf P}^{\pm}}^\ast(u )$. Then, for each $\varphi\in \mathrm{Maps}(\mathfrak{E},\mathbb{C})$,
\begin{gather*}
    \langle \pi^{\mathfrak{E}}_{\pm,*}u ,\varphi\rangle=\langle\pi^{\widetilde{\mathfrak{E}}}_{\pm,*}\tilde u,(\mathbbm{1}_{\mathcal{F}}\circ\tilde{\iota})\cdot (\varphi\circ \pi_{\mathfrak{E}})\rangle,
\end{gather*}
In particular, the associated functions $\pi_{\pm,*}^{\mathfrak{E}}u \in\mathrm{Maps}(\mf E,\C)$ and $\pi^{\widetilde{\mathfrak{E}}}_{\pm,*}\tilde u\in\mathrm{Maps}(\widetilde{\mf E},\C)$ defined by Remark~\ref{rem:push_forward_restr} fulfill
\begin{gather*}
    \pi^{\mathfrak{E}}_{\pm,*}u (\pi_{\mathfrak{E}}(\vec{e}))=\pi^{\widetilde{\mathfrak{E}}}_{\pm,*}\tilde u(\vec{e}) \qquad \mbox{for all }\vec{e}\in\widetilde{\mathfrak{E}}.
\end{gather*}
\end{lemma}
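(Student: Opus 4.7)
The plan is to imitate the proof of Lemma~\ref{la:pi_star_upstairs_downstairs} essentially verbatim, replacing the vertex data $\pi_{\pm}$, $\pi_{\mathfrak{X}}$, $\mathbbm{1}_{\mathcal{F}}$ by the edge data $\pi^{\mathfrak{E}}_{\pm}$, $\pi_{\mathfrak{E}}$, and $\mathbbm{1}_{\mathcal{F}}\circ\tilde{\iota}$. Conceptually, the latter function serves as the indicator of a fundamental domain for the $\Gamma$-action on $\widetilde{\mathfrak{E}}$: since the deck transformations act freely and $\tilde{\iota}$ is $\Gamma$-equivariant, the set $\tilde{\iota}^{-1}(\mathcal{F})$ meets every $\Gamma$-orbit in $\widetilde{\mathfrak{E}}$ exactly once (this relies on the absence of multiple edges assumed in Section~\ref{sec:DefiningThePairings}).

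First I would unfold the right-hand side by the definition of $\pi^{\widetilde{\mathfrak{E}}}_{\pm,*}$ and by the defining property $\langle\tilde u,f\rangle=\langle u,\pi_{{\mf P}^{\pm},\star}(f)\rangle$ of $\pi_{{\mf P}^{\pm}}^{\star}$, while rewriting the left-hand side as $\langle u,\varphi\circ\pi^{\mathfrak{E}}_\pm\rangle$. Since $u\in\mc D'({\mf P}^\pm)$ is arbitrary, the first claim reduces to the functional identity
\[
\pi_{{\mf P}^{\pm}}^{*}(\varphi\circ\pi^{\mathfrak{E}}_\pm)\;=\;\sum_{\gamma\in\Gamma}\gamma\cdot\bigl(((\mathbbm{1}_{\mathcal{F}}\circ\tilde{\iota})\cdot(\varphi\circ\pi_{\mathfrak{E}}))\circ\tilde{\pi}^{\widetilde{\mathfrak{E}}}_{\pm}\bigr)
\]
inside $C^{\mathrm{lc}}(\widetilde{\mf P}^{\pm})^{\Gamma}$. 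I would verify this pointwise on a chain $(\vec{e}_1,\vec{e}_2,\ldots)\in\widetilde{\mf P}^{+}$ (the $-$ case being symmetric). On the right, only the summand indexed by the unique $\gamma_{0}\in\Gamma$ with $\gamma_{0}^{-1}\tilde{\iota}(\vec{e}_{1})\in\mathcal{F}$ survives, and the $\Gamma$-invariance $\pi_{\mathfrak{E}}\circ\gamma=\pi_{\mathfrak{E}}$ collapses it to $\varphi(\pi_{\mathfrak{E}}(\vec{e}_{1}))$, matching the left-hand side.

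For the pointwise "In particular" statement I would first observe that the $\Gamma$-invariance of $\tilde u$ passes to $\Gamma$-invariance of $\pi^{\widetilde{\mathfrak{E}}}_{\pm,*}\tilde u$ as a function on $\widetilde{\mathfrak{E}}$, because $\tilde{\pi}^{\widetilde{\mathfrak{E}}}_{\pm}$ is $\Gamma$-equivariant. Hence the sum over $\widetilde{\mathfrak{E}}$ in
\[
\langle\pi^{\widetilde{\mathfrak{E}}}_{\pm,*}\tilde u,(\mathbbm{1}_{\mathcal{F}}\circ\tilde{\iota})\cdot(\varphi\circ\pi_{\mathfrak{E}})\rangle
=\sum_{\vec{e}\in\widetilde{\mathfrak{E}}}\pi^{\widetilde{\mathfrak{E}}}_{\pm,*}\tilde u(\vec{e})\,\mathbbm{1}_{\mathcal{F}}(\tilde{\iota}(\vec{e}))\,\varphi(\pi_{\mathfrak{E}}(\vec{e}))
\]
collapses, via the section $\sigma_{\mathcal{F}}$ lifted to edges, to a single representative per $\Gamma$-orbit; comparing with $\langle\pi^{\mathfrak{E}}_{\pm,*}u,\varphi\rangle_{\mathfrak{E}}=\sum_{\vec e\in\mathfrak{E}}\pi^{\mathfrak{E}}_{\pm,*}u(\vec e)\,\varphi(\vec e)$ and invoking the non-degeneracy of $\langle\bigcdot,\bigcdot\rangle_{\mathfrak{E}}$ upgrades the integral identity to the asserted pointwise one.

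The only obstacle here is bookkeeping: one has to check cleanly that $\tilde{\iota}^{-1}(\mathcal{F})$ is indeed a fundamental domain for $\Gamma$ on $\widetilde{\mathfrak{E}}$, so that the sum over $\widetilde{\mathfrak{E}}$ decomposes as a sum over $\mathfrak{E}$ after applying the lift. No new ideas beyond those already used for Lemma~\ref{la:pi_star_upstairs_downstairs} are needed; the proof is essentially a transcription with $\iota$ inserted in the appropriate places.
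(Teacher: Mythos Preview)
Your proposal is correct and is precisely the approach the paper takes: its entire proof reads ``This follows along the lines of the proof of Lemma~\ref{la:pi_star_upstairs_downstairs}.'' You have simply spelled out that transcription in detail, including the key bookkeeping observation that $\tilde{\iota}^{-1}(\mathcal F)$ serves as a fundamental domain for the $\Gamma$-action on $\widetilde{\mathfrak E}$.
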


\begin{proof}
This follows along the lines of the proof of Lemma \ref{la:pi_star_upstairs_downstairs}.
\end{proof}

Note that Lemma \ref{la:pi_star_upstairs_downstairs} resp.\@ \ref{la:pi_E_star_upstairs_downstairs} in particular show how the vertex resp.\@ edge pairing behaves under lifting. The following lemma proves a similar connection for the geodesic pairing.

\begin{lemma}\label{la:pairing_right_up_and_down}
Let $u_\pm \in \mc D'({\mf P}^\pm)$ and $\tilde u_{\pm}\coloneqq \pi_{{\mf P}^{\pm}}^\ast(u_\pm )$ and consider
\begin{gather*}
\mathcal{F}_{{\mf P}^{\pm}}\coloneqq\{\mathbf{e}\in\widetilde{{\mf P}}^{\pm}\mid \pi_{\pm}(\mathbf{e})\in\mathcal{F}\}\quad\text{and}\quad
\mathcal{F}_{{\mf P}}\coloneqq\{(\mathbf{k},\mathbf{e})\in\widetilde{{\mf P}}^{-}\times\widetilde{{\mf P}}^{+}\mid\mathbf{k}\in\mathcal{F}_{{\mf P}^{-}}\text{ and } \mathbf{e}\in\mathcal{F}_{{\mf P}^{+}}\}.
\end{gather*}
Then, for each $\varphi\in C^{\mathrm{lc}}({\mf P})\subseteq C^{\mathrm{lc}}({\mf P}^{-}\times{\mf P}^{+})$
\begin{gather*}
    \langle u_+\otimes u_-,\varphi\rangle=\langle \tilde u_{+}\otimes \tilde u_{-},\mathbbm{1}_{\mathcal{F}_{{\mf P}}}\cdot(\varphi\circ(\pi_{{\mf P}^{-}}\otimes\pi_{{\mf P}^{+}}))\rangle.
\end{gather*}
\end{lemma}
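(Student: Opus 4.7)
The plan is to reduce the two-variable identity to the one-variable inversion formula of Proposition~\ref{prop:res_upstairs_downstairs}, by writing the tensor product $u_+\otimes u_-$ as an iterated pairing via \eqref{eq:AlternativeDefinitionTensorProduct} and applying that inversion formula twice. Recall that, unpacking the definition $\tilde u_\pm = u_\pm\circ\pi_{{\mf P}^\pm,\star}$ and using $(\mathbbm{1}_{\mathcal{F}}\circ\tilde\pi_\pm)=\mathbbm{1}_{\mathcal{F}_{{\mf P}^\pm}}$, the inversion formula can be written as
\[
\langle u_\pm, F\rangle = \langle \tilde u_\pm,\, (F\circ\pi_{{\mf P}^\pm})\cdot\mathbbm{1}_{\mathcal{F}_{{\mf P}^\pm}}\rangle
\qquad\text{for every }F\in C_c^{\mathrm{lc}}({\mf P}^\pm).
\]

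Starting from
\[
\langle u_+\otimes u_-,\varphi\rangle = \big\langle u_+,\, c_+\mapsto \langle u_-,\varphi(\bigcdot,c_+)\rangle\big\rangle,
\]
I would first apply the displayed identity to the outer pairing (with $u_+$), which transforms it into
\[
\big\langle \tilde u_+,\, \tilde c_+\mapsto \mathbbm{1}_{\mathcal{F}_{{\mf P}^+}}(\tilde c_+)\cdot\langle u_-,\varphi(\bigcdot,\pi_{{\mf P}^+}(\tilde c_+))\rangle\big\rangle.
\]
Next, for each fixed $\tilde c_+$, I would apply the same identity to the inner pairing (with $u_-$), rewriting
\[
\langle u_-,\varphi(\bigcdot,\pi_{{\mf P}^+}(\tilde c_+))\rangle
= \big\langle \tilde u_-,\, \tilde c_-\mapsto \mathbbm{1}_{\mathcal{F}_{{\mf P}^-}}(\tilde c_-)\cdot\varphi(\pi_{{\mf P}^-}(\tilde c_-),\pi_{{\mf P}^+}(\tilde c_+))\big\rangle.
\]
Substituting this back and invoking \eqref{eq:AlternativeDefinitionTensorProduct} in the opposite direction for $\tilde u_+\otimes\tilde u_-$, the two indicator functions combine into $\mathbbm{1}_{\mathcal{F}_{{\mf P}^-}\times\mathcal{F}_{{\mf P}^+}}=\mathbbm{1}_{\mathcal{F}_{\mf P}}$ and the pulled-back test function appears as $\varphi\circ(\pi_{{\mf P}^-}\otimes\pi_{{\mf P}^+})$, giving the claimed identity.

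The only technical point to check is that each intermediate expression is admissible, i.e.\@ lies in $C_c^{\mathrm{lc}}$ of the appropriate shift space. Since $\mathfrak{G}$ is finite, the fundamental domain $\mathcal{F}$ is finite; combined with compactness of the fibers of $\tilde\pi_\pm$ in $\widetilde{{\mf P}}^\pm$, this yields that $\mathcal{F}_{{\mf P}^\pm}$, and hence $\mathcal{F}_{\mf P}$, are compact and have locally constant indicator functions. Local constancy of $\varphi$ on ${\mf P}^-\times{\mf P}^+$ (which is itself compact) ensures that $c_+\mapsto\langle u_-,\varphi(\bigcdot,c_+)\rangle$ is locally constant and compactly supported on ${\mf P}^+$, so the outer application of the inversion formula is legitimate. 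I do not expect a serious obstacle: the argument is essentially two nested applications of Proposition~\ref{prop:res_upstairs_downstairs} together with Fubini for the iterated tensor product.
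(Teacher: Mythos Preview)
Your proposal is correct and follows essentially the same approach as the paper: both proofs reduce the identity to two nested applications of the inversion formula from Proposition~\ref{prop:res_upstairs_downstairs} via the iterated-pairing description \eqref{eq:AlternativeDefinitionTensorProduct} of the tensor product. The only cosmetic difference is direction---the paper starts from the right-hand side and unwinds to the left, while you start from the left and build up to the right.
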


\begin{proof}
We calculate
\begin{align*}
\langle \tilde u_{+}\otimes \tilde u_{-},(\varphi\circ(\pi_{{\mf P}^{-}}\otimes\pi_{{\mf P}^{+}}))\cdot\mathbbm{1}_{\mathcal{F}_{{\mf P}}}\rangle
=\tilde u_{+}(\mathbf{e} \mapsto \tilde u_{-}(\mathbf{k} \mapsto ((\varphi\circ(\pi_{{\mf P}^{-}}\otimes\pi_{{\mf P}^{+}}))\cdot\mathbbm{1}_{\mathcal{F}_{{\mf P}}})(\mathbf{k},\mathbf{e})).
\end{align*}
For fixed $\mathbf{e}\in\widetilde{{\mf P}}^{+}$ we have
\begin{gather*}
    (\mathbf{k} \mapsto ((\varphi\circ(\pi_{{\mf P}^{-}}\otimes\pi_{{\mf P}^{+}}))\cdot\mathbbm{1}_{\mathcal{F}_{{\mf P}}})(\mathbf{k},\mathbf{e}))=((\mathbbm{1}_{\mathcal{F}_{{\mf P}^{+}}}(\mathbf{e})\cdot\varphi\circ(\mathrm{id}\otimes\pi_{{\mf P}^{+}}(\mathbf{e})))\circ\pi_{{\mf P}^{-}})\mathbbm{1}_{\mathcal{F}_{{\mf P}^{-}}}
\end{gather*}
so that the formula for $(\pi_{{\mf P}^{\pm}}^\star)^{-1}$ from Proposition \ref{prop:res_upstairs_downstairs} implies
\begin{gather*}
    \tilde u_-(\mathbf{k} \mapsto ((\varphi\circ(\pi_{{\mf P}^{-}}\otimes\pi_{{\mf P}^{+}}))\cdot\mathbbm{1}_{\mathcal{F}_{{\mf P}}})(\mathbf{k},\mathbf{e}))=\langle u_-,(\mathbbm{1}_{\mathcal{F}_{{\mf P}^{+}}}(\mathbf{e})\cdot\varphi\circ(\mathrm{id}\otimes\pi_{{\mf P}^{+}}(\mathbf{e})))\rangle.
\end{gather*}
Thus,
\begin{align*}
    \langle \tilde u_{+}\otimes \tilde u_{-},(\varphi\circ(\pi_{{\mf P}^{-}}\otimes\pi_{{\mf P}^{+}}))\cdot\mathbbm{1}_{\mathcal{F}_{{\mf P}}}\rangle
    &=\tilde u_{+}(\mathbf{e} \mapsto \langle u_-,(\mathbbm{1}_{\mathcal{F}_{{\mf P}^{+}}}(\mathbf{e})\cdot\varphi\circ(\mathrm{id}\otimes\pi_{{\mf P}^{+}}(\mathbf{e})))\rangle)\\
&=\langle \tilde u_{+},(\mathbf{e} \mapsto \langle u_-,(\varphi\circ(\mathrm{id}\otimes\cdot ))\rangle\circ\pi_{{\mf P}^{+}}(\mathbf{e})\mathbbm{1}_{\mathcal{F}_{{\mf P}^{+}}}(\mathbf{e}))\rangle\\
&=\langle \tilde u_{+},(\langle u_-,\varphi\circ(\mathrm{id}\otimes\cdot)\rangle\circ\pi_{{\mf P}^{+}})\mathbbm{1}_{\mathcal{F}_{{\mf P}^{+}}}\rangle
\end{align*}
so that another application of Proposition \ref{prop:res_upstairs_downstairs} yields
\begin{gather*}
\langle \tilde u_{+}\otimes \tilde u_{-},(\varphi\circ(\pi_{{\mf P}^{-}}\otimes\pi_{{\mf P}^{+}}))\cdot\mathbbm{1}_{\mathcal{F}_{{\mf P}}}\rangle=\langle u_+,\langle u_-,\varphi\circ(\mathrm{id}\otimes\cdot)\rangle\rangle=\langle u_+\otimes u_-,\varphi\rangle.\qedhere
\end{gather*}
\end{proof}

\subsection{An integral formula for the vertex pairing}
Let $u_\pm \in\mathcal E_z({\mathcal L}_\pm';\mc D'({\mf P}^\pm))$ be (co)res\-o\-nant states on the finite graph $\mathfrak{G}$ and denote their $\Gamma$-invariant lifts $\pi_{{\mf P}^{\pm}}^\star(u_\pm )$ by $\tilde u_{\pm}$. We want to express the vertex pairing $\langle u_+,u_-\rangle_{(\mathfrak{X})}$ from Definition~\ref{rem:finite graphs} in terms of $\tilde u_\pm$. For this we first define measures on quotient spaces that are compatible with the pairings on the finite graph $\mathfrak{G}$.

\begin{lemma}\label{la:quotient_measures}
Let $Y$ be a locally compact Hausdorff space with a continuous $G$-action and $p\colon Y\to \widetilde{\mathfrak{X}}$ a continuous $G$-equivariant projection such that $p^{-1}(\mathcal{F})$ is a fundamental domain for the $\Gamma$-action on $Y$. Then, if $\intd y$ is a $G$-invariant Radon measure on $Y$, there is a well-defined Radon measure $\intd \Gamma y$ on $\Gamma\backslash Y$ which is characterized by
\begin{gather*}
\int_{\Gamma\backslash Y}\sum_{\gamma\in\Gamma}f(\gamma y)\intd\Gamma y=\int_Y f(y)\intd y
\end{gather*}
for each $f\in C_c(Y)$.
\end{lemma}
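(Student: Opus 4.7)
The plan is to construct the measure on $\Gamma\backslash Y$ by pushing forward $\intd y|_F$ where $F\coloneqq p^{-1}(\mathcal F)$, then verify the characterising formula via the fundamental-domain decomposition $Y=\bigsqcup_{\gamma\in\Gamma}\gamma F$. First I would check that $\Gamma$ acts properly discontinuously on $Y$: since $p$ is $G$-equivariant and continuous and $\widetilde{\mathfrak X}$ carries the discrete topology, for any compact $K\subseteq Y$ the image $p(K)$ is finite, and any $\gamma\in\Gamma$ with $\gamma K\cap K\neq\emptyset$ sends some vertex in $p(K)$ to another, giving only finitely many such $\gamma$ (using that $\Gamma$ acts freely on $\widetilde{\mathfrak X}$, which is implicit in $\mathcal F$ being a fundamental domain). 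Consequently $\Gamma\backslash Y$ is locally compact Hausdorff and the quotient map $q\colon Y\to \Gamma\backslash Y$ is open. Moreover, because $\mathcal F\subseteq\widetilde{\mathfrak X}$ is simultaneously open and closed, so is $F\subseteq Y$.

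Next I would verify that the averaging map
\[
\Phi\colon C_c(Y)\to C_c(\Gamma\backslash Y),\qquad \Phi(f)(\Gamma y)\coloneqq \sum_{\gamma\in\Gamma}f(\gamma y)
\]
is well-defined (the sum is pointwise finite by proper discontinuity) and surjective (given $\bar h\in C_c(\Gamma\backslash Y)$, lift to a $\Gamma$-invariant continuous function $\tilde h$ on $Y$ and multiply by a compactly supported cutoff $\chi$ with $\sum_\gamma\chi(\gamma\,\cdot\,)\equiv 1$ on the support of $\tilde h$, which exists because $\Gamma$ acts properly discontinuously on a locally compact Hausdorff space). I would then define the positive linear functional
\[
\Lambda\colon C_c(\Gamma\backslash Y)\to\C,\qquad \Lambda(\Phi(f))\coloneqq\int_Y f(y)\intd y.
\]
The key point is that $\Lambda$ is well-defined. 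Using $\int_Y f\intd y=\sum_{\gamma\in\Gamma}\int_{\gamma F}f\intd y=\sum_{\gamma\in\Gamma}\int_F f(\gamma y)\intd y=\int_F\Phi(f)(\Gamma y)\intd y$, where the first equality uses that $F$ is a measurable fundamental domain, the second uses the $G$-invariance of $\intd y$, and the interchange is justified by local finiteness of the sum on $\mathrm{supp}(f)$. In particular, if $\Phi(f)\equiv 0$ then $\int_Y f\intd y=0$, so $\Lambda$ descends. Applying the Riesz representation theorem yields a Radon measure $\intd\Gamma y$ on $\Gamma\backslash Y$ with $\Lambda(\bar h)=\int_{\Gamma\backslash Y}\bar h\intd\Gamma y$, and unwinding the definition of $\Lambda$ produces exactly the claimed identity.

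The main technical obstacle is the surjectivity of $\Phi$ together with the interchange of summation and integration in the well-definedness check; both require proper discontinuity of the $\Gamma$-action and a careful use of compact supports, but they follow cleanly once the discrete structure of $\widetilde{\mathfrak X}$ and the measurability and clopenness of $F=p^{-1}(\mathcal F)$ are in place. Uniqueness of $\intd\Gamma y$ is automatic from Riesz given surjectivity of $\Phi$, since the formula pins down the functional $\Lambda$ on a dense (in fact, all of) $C_c(\Gamma\backslash Y)$.
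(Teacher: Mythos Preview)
Your proposal is correct and follows the same overall skeleton as the paper: define the averaging map $\Phi$, establish surjectivity, and show $\int_Y f\intd y$ depends only on $\Phi(f)$, then invoke Riesz. The details differ in two places, however. For surjectivity, the paper exploits the clopenness of $F=p^{-1}(\mathcal F)$ more directly: given a $\Gamma$-invariant $\varphi$, it simply takes $f=\mathbbm{1}_F\cdot\varphi$ as a preimage, which avoids your partition-of-unity construction. For well-definedness, the paper uses the classical Weil trick rather than your fundamental-domain decomposition: it picks $F_0\in C_c(Y)$ with $\Phi(F_0)\equiv 1$ and, for $f\in\ker\Phi$, computes
\[
0=\int_Y F_0\cdot\Phi(f)\intd y=\sum_{\gamma}\int_Y F_0(y)f(\gamma y)\intd y=\sum_{\gamma}\int_Y F_0(\gamma^{-1}y)f(y)\intd y=\int_Y \Phi(F_0)\cdot f\intd y=\int_Y f\intd y.
\]
Your route has the advantage of identifying the quotient measure explicitly as the pushforward of $\intd y|_F$, while the paper's argument keeps the well-definedness step independent of any particular choice of fundamental domain.
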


\begin{proof}
Let us consider the map
\begin{gather*}
\Phi\colon C_c(Y)\to {}^{\Gamma}C(Y),\quad \Phi(f)(y)\coloneqq\sum_{\gamma\in\Gamma}f(\gamma y).
\end{gather*}
First note that the sum is finite since $\gamma y\in \mathrm{supp}(f)$ implies that $\gamma p(y)=p(\gamma y)$ is contained in the finite set $p(\mathrm{supp}(f))\subseteq\widetilde{\mathfrak{X}}$ and $\Gamma$ acts freely on $\widetilde{\mathfrak{X}}$. Moreover, $\Phi$ is surjective: Indeed, if $\varphi\in {}^{\Gamma}C(Y)$ the function $f\in C_c(Y)$ defined by $f\coloneqq \mathbbm{1}_{p^{-1}(\mathcal{F})}\varphi$ is an inverse image of $\varphi$ under $\Phi$. For the well-definedness we claim that each $f\in \mathrm{ker}(\Phi)$ fulfills $\int_Yf(y)\intd y=0$. For this (using the surjectivity of $\Phi$) let $F\in C_c(Y)$ be such that $\Phi(F)$ is the constant function 1 on $Y$. Then
\begin{align*}
0&=\int_YF(y)\Phi(f)(y)\intd y=\int_YF(y)\sum_{\gamma\in\Gamma}f(\gamma y)\intd y=\sum_{\gamma\in\Gamma}\int_YF(y)f(\gamma y)\intd y\\
&=\sum_{\gamma\in\Gamma}\int_YF(\gamma^{-1}y)f(y)\intd y=\int_Y\Phi(F)(y)f(y)\intd y=\int_Yf(y)\intd y.\qedhere
\end{align*}
\end{proof}

We can now show the desired integration formula for the vertex pairing $\langle u_+,u_-\rangle_{(\mathfrak{X})}$. By Remark~\ref{rem:push_forward_restr} we know that $\tilde{\pi}_{\pm,*}(u_{\pm})$ are given by functions so that Lemma \ref{la:pi_star_upstairs_downstairs} implies
\begin{align}\label{eq:pairing_left}
    \nonumber\langle u_+,u_-\rangle_{(\mathfrak{X})}&=\sum_{x\in \mf X}\big(\pi_{+,*}u_+\big)(x) \big(\pi_{-,*}u_-\big)(x)\\
\nonumber&=\sum_{x\in \mf X}\big(\tilde{\pi}_{+,*}\tilde u_{+}\big)(\sigma_{\mathcal{F}}(x)) \big(\tilde{\pi}_{-,*}\tilde u_{-}\big)(\sigma_{\mathcal{F}}(x))\\
\nonumber&=\sum_{x\in\mathfrak{X}}\langle \tilde u_+,\mathbbm{1}_{\tilde{\pi}_+^{-1}(\sigma_{\mathcal{F}}(x))}\rangle\langle \tilde u_-,\mathbbm{1}_{\tilde{\pi}_-^{-1}(\sigma_{\mathcal{F}}(x))}\rangle\\
\nonumber&=\sum_{x\in\widetilde{\mathfrak{X}}}\mathbbm{1}_{\mathcal{F}}(x)\langle \tilde u_+,\mathbbm{1}_{\tilde{\pi}_+^{-1}(x)}\rangle\langle \tilde u_-,\mathbbm{1}_{\tilde{\pi}_-^{-1}(x)}\rangle\\
&=\sum_{\Gamma x\in\Gamma\backslash\widetilde{\mathfrak{X}}}\langle \tilde u_+,\mathbbm{1}_{\tilde{\pi}_+^{-1}(x)}\rangle\langle \tilde u_-,\mathbbm{1}_{\tilde{\pi}_-^{-1}(x)}\rangle.
\end{align}
By Proposition \ref{prop:cpt_picture} we can write $\tilde u_{\pm}=p_{z,\pm}B_{\pm}^\star(\mu_{\pm})$ for some measures $\mu_{\pm}\in\mathcal{D}'(\Omega)$. This allows us to write
\begin{align*}
   \langle \tilde u_{\pm},\mathbbm{1}_{\tilde{\pi}_\pm^{-1}(x)}\rangle&=\langle p_{z,\pm}B_{\pm}^{\star}(\mu_{\pm}),\mathbbm{1}_{\tilde{\pi}_\pm^{-1}(x)}\rangle\\
&=\mu_{\pm}\left(\omega \mapsto \sum_{y\in\tilde{\mathfrak{X}}}p_{z,\pm}(y,\omega)\mathbbm{1}_{\tilde{\pi}_\pm^{-1}(x)}(y,\omega)=p_{z,\pm}(x,\omega)\right)\\
&=\int_{\Omega}z^{\langle x,\omega\rangle}\intd\mu_{\pm}(\omega)
\end{align*}
so that Equation \eqref{eq:pairing_left} implies
\begin{gather}\label{eq:pairing_left_triple_int_distr}
    \langle u_+,u_-\rangle_{(\mathfrak{X})}=\sum_{\Gamma x\in\Gamma\backslash\widetilde{\mathfrak{X}}}\quad \int_{\Omega}\int_{\Omega}z^{\langle x,\omega_{1}\rangle}z^{\langle x,\omega_{2}\rangle}\intd \mu_{-}(\omega_{1})\intd \mu_{+}(\omega_{2}).
\end{gather}

We now define a distribution $T$ on $\widetilde{\mathfrak{X}}\times\Omega\times\Omega\cong\widetilde{{\mf P}}^{(2)}$ by
\begin{gather*}
    T\colon C_c^{\mathrm{lc}}(\widetilde{{\mf P}}^{(2)})\ni\varphi \mapsto \sum_{x\in\widetilde{\mathfrak{X}}}\int_{\Omega}\int_{\Omega}\varphi(x,\omega_{1},\omega_{2})z^{\langle x,\omega_{1}\rangle}z^{\langle x,\omega_{2}\rangle}\intd \mu_{-}(\omega_{1})\intd \mu_{+}(\omega_{2})\in\mathbb{C}.
\end{gather*}
Note that by Proposition \ref{prop:cpt_picture} the $\Gamma$-invariance of $\tilde u_{\pm}$ implies the $\Gamma$-invariance of $\mu_{\pm}$ with respect to $\pi_{z}$. This leads to the following lemma.

\begin{lemma}\label{la:T_Gamma_inv}
The distribution $T$ is $\Gamma$-invariant, i.e.\@
\begin{gather*}
T(\varphi)=T(\gamma\varphi)
\end{gather*}
for each $\varphi\in C_c^{\mathrm{lc}}(\widetilde{{\mf P}}^{(2)})$ and $\gamma\in\Gamma$.
\end{lemma}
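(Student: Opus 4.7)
The plan is to unwind the definition of $T$ under $\gamma$-translation and use two ingredients: the horocycle identity \eqref{eq:horocycle_id} to control how the Poisson factors $z^{\langle x,\omega\rangle}$ transform, and the $\Gamma$-invariance of $\mu_\pm$ with respect to the representation $\pi_z$ from Proposition~\ref{prop:cpt_picture}, which supplies the compensating cocycle factor. These two pieces are designed to cancel each other exactly.

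In more detail, I would first write out $T(\gamma\varphi)$ by substituting $x=\gamma x'$ in the sum over $\widetilde{\mf X}$ (the sum is $\gamma$-invariant), which yields
\[
T(\gamma\varphi) = \sum_{x'\in\widetilde{\mf X}}\int_\Omega\!\int_\Omega \varphi(x',\gamma^{-1}\omega_1,\gamma^{-1}\omega_2)\, z^{\langle\gamma x',\omega_1\rangle}z^{\langle\gamma x',\omega_2\rangle}\,d\mu_-(\omega_1)\,d\mu_+(\omega_2).
\]
Next, I would use $\pi_z(\gamma)\mu_\pm = \mu_\pm$, which unpacks as $\mu_\pm = \gamma_*(N_z(\gamma,\bigcdot)\mu_\pm)$, i.e.\
\[
\int_\Omega f(\omega)\,d\mu_\pm(\omega) = \int_\Omega f(\gamma\omega')\, z^{-\langle\gamma o,\gamma\omega'\rangle}\,d\mu_\pm(\omega'),
\]
applied separately in the $\omega_1$- and $\omega_2$-variables (with the other variable and $x'$ frozen). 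Substituting $\omega_i = \gamma\omega_i'$ inside the integrals transforms the Poisson factor $z^{\langle\gamma x',\omega_i\rangle}$ into $z^{\langle\gamma x',\gamma\omega_i'\rangle}$, and picks up the factor $z^{-\langle\gamma o,\gamma\omega_i'\rangle}$.

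The key cancellation is then a direct application of the horocycle identity \eqref{eq:horocycle_id} with $g=\gamma$:
\[
\langle\gamma x',\gamma\omega_i'\rangle = \langle x',\omega_i'\rangle + \langle\gamma o,\gamma\omega_i'\rangle,
\]
so that $z^{\langle\gamma x',\gamma\omega_i'\rangle}\cdot z^{-\langle\gamma o,\gamma\omega_i'\rangle}=z^{\langle x',\omega_i'\rangle}$ for $i=1,2$. After this substitution in both boundary variables the integrand becomes $\varphi(x',\omega_1',\omega_2')\,z^{\langle x',\omega_1'\rangle}z^{\langle x',\omega_2'\rangle}$, and summing over $x'\in\widetilde{\mf X}$ gives back $T(\varphi)$.

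I do not expect a serious obstacle: the argument is essentially a bookkeeping exercise in which the cocycle defining $\pi_z$ is precisely the inverse of the horocycle factor produced by translating $x$. The only point requiring some care is to justify that we may apply the $\pi_z$-invariance of $\mu_\pm$ to the (locally constant, compactly supported in the $\Omega$-variables) test functions arising after freezing $x'$ and the other boundary variable, and to interchange the sum over $\widetilde{\mf X}$ with the two boundary integrals; both are immediate because for $\varphi\in C_c^{\mathrm{lc}}(\widetilde{{\mf P}}^{(2)})$ only finitely many $x'\in\widetilde{\mf X}$ contribute, and for each such $x'$ the slice $\varphi(x',\bigcdot,\bigcdot)$ is locally constant on the compact space $\Omega\times\Omega$.
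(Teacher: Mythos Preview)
Your proof is correct and follows essentially the same approach as the paper's: both use the $\pi_z$-invariance of $\mu_\pm$ together with the horocycle identity \eqref{eq:horocycle_id} to cancel the cocycle factors. Your ordering of the substitutions (shifting $x$ first, then the boundary variables) is slightly cleaner, since it lets you apply \eqref{eq:horocycle_id} directly without the auxiliary identity $\langle g o,g\omega\rangle=-\langle g^{-1}o,\omega\rangle$ from Lemma~\ref{la:horo_id_2} that the paper invokes at the end.
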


\begin{proof}
For each $\gamma\in \Gamma$ we have
\begin{align*}
    &\hphantom{{}={}}\sum_{x\in\widetilde{\mathfrak{X}}}\int_{\Omega}\int_{\Omega}\varphi(\gamma x,\gamma\omega_{1},\gamma\omega_{2})z^{\langle x,\omega_{1}\rangle}z^{\langle x,\omega_{2}\rangle}\intd \mu_{-}(\omega_{1})\intd \mu_{+}(\omega_{2})\\
&=\sum_{x\in\widetilde{\mathfrak{X}}}\int_{\Omega}\int_{\Omega}\varphi(\gamma x,\gamma\omega_{1},\gamma\omega_{2})z^{\langle x,\omega_{1}\rangle}z^{\langle \gamma o,\gamma\omega_{1}\rangle}z^{-\langle \gamma o,\gamma\omega_{1}\rangle}z^{\langle x,\omega_{2}\rangle}\intd \mu_{-}(\omega_{1})\intd \mu_{+}(\omega_{2})\\
&=\sum_{x\in\widetilde{\mathfrak{X}}}\int_{\Omega}\int_{\Omega}\varphi(\gamma x,\omega_{1},\gamma\omega_{2})z^{\langle x,\gamma^{-1}\omega_{1}\rangle}z^{\langle \gamma o,\omega_{1}\rangle}z^{\langle x,\omega_{2}\rangle}\intd (\pi_{z}(\gamma)\mu_{-})(\omega_{1})\intd \mu_{+}(\omega_{2})\\
&=\sum_{x\in\widetilde{\mathfrak{X}}}\int_{\Omega}\int_{\Omega}\varphi(\gamma x,\omega_{1},\omega_{2})z^{\langle x,\gamma^{-1}\omega_{1}\rangle}z^{\langle \gamma o,\omega_{1}\rangle}z^{\langle x,\gamma^{-1}\omega_{2}\rangle}z^{\langle \gamma o,\omega_{2}\rangle}\intd (\pi_{z}(\gamma)\mu_{-})(\omega_{1})\intd (\pi_{z}(\gamma)\mu_{+})(\omega_{2})\\
&=\sum_{x\in\widetilde{\mathfrak{X}}}\int_{\Omega}\int_{\Omega}\varphi(\gamma x,\omega_{1},\omega_{2})z^{\langle x,\gamma^{-1}\omega_{1}\rangle}z^{\langle \gamma o,\omega_{1}\rangle}z^{\langle x,\gamma^{-1}\omega_{2}\rangle}z^{\langle \gamma o,\omega_{2}\rangle}\intd \mu_{-}(\omega_{1})\intd \mu_{+}(\omega_{2})\\
&=\sum_{x\in\widetilde{\mathfrak{X}}}\int_{\Omega}\int_{\Omega}\varphi(x,\omega_{1},\omega_{2})z^{\langle \gamma^{-1}x,\gamma^{-1}\omega_{1}\rangle}z^{\langle \gamma o,\omega_{1}\rangle}z^{\langle \gamma^{-1}x,\gamma^{-1}\omega_{2}\rangle}z^{\langle \gamma o,\omega_{2}\rangle}\intd \mu_{-}(\omega_{1})\intd \mu_{+}(\omega_{2}).
\end{align*}
But now the horocycle identity \eqref{eq:horocycle_id} and Lemma \ref{la:horo_id_2} imply that for each $\omega\in\Omega$
\begin{gather*}
    \langle \gamma^{-1}x,\gamma^{-1}\omega\rangle+\langle \gamma o,\omega\rangle=\langle x,\omega\rangle+\langle\gamma^{-1}o,\gamma^{-1}\omega\rangle+\langle \gamma o,\omega\rangle=\langle x,\omega\rangle.\qedhere
\end{gather*}
\end{proof}

Because of its $\Gamma$-invariance, $T$ factors through the quotient $\Gamma\backslash\widetilde{{\mf P}}^{(2)}$ and we may define a corresponding distribution $T_{\Gamma}$ in the following way.

\begin{lemma}
The map
\begin{gather*}
    \Phi\colon C_c^{\mathrm{lc}}(\widetilde{{\mf P}}^{(2)})\to C^{\mathrm{lc}}(\widetilde{{\mf P}}^{(2)})^{\Gamma},\quad \varphi \mapsto \sum_{\gamma\in\Gamma}\gamma\varphi
\end{gather*}
is surjective. Moreover, the map $T_{\Gamma}(\Phi(\varphi))\coloneqq T(\varphi)$ is well-defined.
\end{lemma}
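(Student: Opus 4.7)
The plan is to adapt the argument of Lemma~\ref{la:quotient_measures} to the present setting of locally constant compactly supported functions on $\widetilde{{\mf P}}^{(2)}$. The crucial underlying fact is that $\Gamma$ acts freely on the discrete set $\widetilde{\mathfrak{X}}$ with finite quotient $\mathfrak{X}$, so the chosen fundamental domain $\mathcal{F}$ is a finite, clopen subset of $\widetilde{\mathfrak{X}}$. Under the identification $\widetilde{{\mf P}}^{(2)} \cong \widetilde{\mathfrak{X}} \times \Omega \times \Omega$ induced by the base projection, the set $\mathcal{F}_{(2)} \coloneqq \mathcal{F} \times \Omega \times \Omega$ is therefore a compact-open fundamental domain for the $\Gamma$-action on $\widetilde{{\mf P}}^{(2)}$.

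For the surjectivity of $\Phi$, I would take as preimage of any $\varphi \in C^{\mathrm{lc}}(\widetilde{{\mf P}}^{(2)})^{\Gamma}$ the function $f \coloneqq \mathbbm{1}_{\mathcal{F}_{(2)}} \cdot \varphi$. It is locally constant (as a product of two locally constant functions, $\mathcal{F}_{(2)}$ being clopen) and compactly supported (since $\mathcal{F}_{(2)}$ is compact). Because $\mathcal{F}$ is a fundamental domain, for each $(x,\omega_1,\omega_2) \in \widetilde{{\mf P}}^{(2)}$ there is a unique $\gamma_0 \in \Gamma$ with $\gamma_0^{-1} x \in \mathcal{F}$, and the $\Gamma$-invariance of $\varphi$ gives
$$\Phi(f)(x,\omega_1,\omega_2) = f(\gamma_0^{-1} x, \gamma_0^{-1}\omega_1, \gamma_0^{-1}\omega_2) = \varphi(x,\omega_1,\omega_2).$$
In particular, choosing $F \coloneqq \mathbbm{1}_{\mathcal{F}_{(2)}} \in C_c^{\mathrm{lc}}(\widetilde{{\mf P}}^{(2)})$ gives $\Phi(F) \equiv 1$.

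For the well-definedness of $T_{\Gamma}$, it suffices to check that $\Phi(\varphi) = 0$ forces $T(\varphi) = 0$ for every $\varphi \in C_c^{\mathrm{lc}}(\widetilde{{\mf P}}^{(2)})$. With $F$ as above I would compute
$$T(\varphi) = T(\varphi \cdot \Phi(F)) = \sum_{\gamma \in \Gamma} T(\varphi \cdot \gamma F) = \sum_{\gamma \in \Gamma} T\bigl((\gamma^{-1}\varphi) \cdot F\bigr) = T\Big(F \cdot \sum_{\gamma \in \Gamma} \gamma^{-1}\varphi\Big) = T(F \cdot \Phi(\varphi)) = 0,$$
where the third equality applies the $\Gamma$-invariance of $T$ from Lemma~\ref{la:T_Gamma_inv} termwise, using that $\gamma^{-1}(\varphi \cdot \gamma F) = (\gamma^{-1}\varphi) \cdot F$ under the left regular representation.

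The only real point requiring justification is the interchange of summation and $T$, i.e.\@ that the sums above are in fact finite rather than merely pointwise finite. This reduces to a proper-discontinuity argument: since $\varphi$ has compact support, the image of $\mathrm{supp}(\varphi)$ under the base projection $\widetilde{{\mf P}}^{(2)} \to \widetilde{\mathfrak{X}}$ is a finite subset $S$ of the discrete space $\widetilde{\mathfrak{X}}$, and for each $x \in S$ there is a unique $\gamma \in \Gamma$ with $\gamma^{-1} x \in \mathcal{F}$. Hence $\varphi \cdot \gamma F$ vanishes as a function for all but at most $|S|$ elements $\gamma \in \Gamma$, so the displayed chain becomes a genuine finite sum of evaluations of $T$ on elements of $C_c^{\mathrm{lc}}(\widetilde{{\mf P}}^{(2)})$. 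No further obstacle is expected.
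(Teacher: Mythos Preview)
Your proof is correct and follows essentially the same approach as the paper, which simply refers back to Lemma~\ref{la:quotient_measures}. You have correctly adapted that argument by replacing the $\Gamma$-invariance of the Radon measure there with the $\Gamma$-invariance of $T$ from Lemma~\ref{la:T_Gamma_inv}, and you have also taken care of the extra point that the preimage $\mathbbm{1}_{\mathcal{F}_{(2)}}\cdot\varphi$ is locally constant, not just continuous.
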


\begin{proof}
This follows as in Lemma \ref{la:quotient_measures}.
\end{proof}

We can now express the vertex pairing $\langle u_+,u_-\rangle_{(\mathfrak{X})}$ from Equation \eqref{eq:pairing_left_triple_int_distr} in terms of $T_{\Gamma}$. Indeed, we have
\begin{gather}\label{eq:pairing_right_T_Gamma}
    T_{\Gamma}(\mathbbm{1})=\langle u_+,u_-\rangle_{(\mathfrak{X})}
\end{gather}
since
\begin{gather*}
    T_{\Gamma}(\mathbbm{1})=T(\mathbbm{1}_{\mathcal{F}})=\sum_{x\in\widetilde{\mathfrak{X}}}\mathbbm{1}_{\mathcal{F}}(x)\int_{\Omega}\int_{\Omega}z^{\langle x,\omega_{1}\rangle}z^{\langle x,\omega_{2}\rangle}\intd \mu_{-}(\omega_{1})\intd \mu_{+}(\omega_{2}),
\end{gather*}
which equals the right hand side of Equation \eqref{eq:pairing_left_triple_int_distr}.

The distribution $T_\Gamma$ can also be used to describe the geodesic pairing from Definition~\ref{rem:pairing1}.

\begin{lemma}\label{lem:TGamma-product pairing}
    $T_{\Gamma}(\mathbbm{1}_{\widetilde{{\mf P}}})=\langle u_+,u_-\rangle_\mathrm{geod}$.
\end{lemma}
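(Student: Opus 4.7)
The plan is to compute both sides explicitly and observe they agree. The key idea is to exhibit $\mathbbm{1}_{\widetilde{\mf P}}$ as $\Phi(\varphi)$ for an obvious compactly supported $\varphi$, then evaluate $T(\varphi)$ directly, and finally show that the lifted geodesic pairing from Lemma~\ref{la:pairing_right_up_and_down} reduces to the same expression.

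First I would construct a preimage of $\mathbbm{1}_{\widetilde{\mf P}}$ under the map $\Phi$. Using the identification $\widetilde{{\mf P}}^{(2)}\cong \widetilde{\mathfrak X}\times\Omega\times\Omega$ (a pair of chains joined at a common vertex $x\in\widetilde{\mathfrak X}$ with boundary points $\omega_1,\omega_2$), I claim that
\[
\varphi\coloneqq\mathbbm{1}_{\widetilde{\mf P}}\cdot\bigl((x,\omega_1,\omega_2)\mapsto\mathbbm{1}_{\mathcal F}(x)\bigr)
\]
is a compactly supported locally constant function with $\Phi(\varphi)=\mathbbm{1}_{\widetilde{\mf P}}$. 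Indeed, since $\widetilde{\mf P}\subseteq\widetilde{{\mf P}}^{(2)}$ is $G$-invariant (being a geodesic is automorphism-invariant), one has $\mathbbm{1}_{\widetilde{\mf P}}(\gamma^{-1}x,\gamma^{-1}\omega_1,\gamma^{-1}\omega_2)=\mathbbm{1}_{\widetilde{\mf P}}(x,\omega_1,\omega_2)$, and summing over $\gamma\in\Gamma$ uses that $\mathcal F$ is a fundamental domain. Applying the definition of $T_\Gamma$ therefore gives
\[
T_\Gamma(\mathbbm{1}_{\widetilde{\mf P}})=T(\varphi)=\sum_{x\in\mathcal F}\int_\Omega\int_\Omega \mathbbm{1}_{\widetilde{\mf P}}(x,\omega_1,\omega_2)\,z^{\langle x,\omega_1\rangle}z^{\langle x,\omega_2\rangle}\,\mathrm d\mu_-(\omega_1)\,\mathrm d\mu_+(\omega_2).
\]

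Next I would compute the geodesic pairing. By Lemma~\ref{la:pairing_right_up_and_down} applied to $\varphi=\mathbbm{1}_{\mf P}$,
\[
\langle u_+,u_-\rangle_\mathrm{geod}=\langle \tilde u_+\otimes\tilde u_-,\mathbbm{1}_{\mathcal F_{\mf P}}\cdot(\mathbbm{1}_{\mf P}\circ(\pi_{{\mf P}^-}\otimes\pi_{{\mf P}^+}))\rangle.
\]
I then unfold this using the Poisson kernel realizations $\tilde u_\pm=p_{z,\pm}B_\pm^\star(\mu_\pm)$ and the parametrization $\widetilde{\mf P}^\pm\cong\widetilde{\mathfrak X}\times\Omega$: for a test function $\psi$ on $\widetilde{\mf P}^\pm$,
\[
\langle \tilde u_\pm,\psi\rangle=\int_\Omega\sum_{y\in\widetilde{\mathfrak X}}z^{\langle y,\omega\rangle}\psi(y,\omega)\,\mathrm d\mu_\pm(\omega).
\]
Plugging into the tensor product expression via \eqref{eq:AlternativeDefinitionTensorProduct}, the indicator $\mathbbm{1}_{\mathcal F_{\mf P}}$ forces both endpoint vertices $y_-,y_+\in\widetilde{\mathfrak X}$ to lie in $\mathcal F$, and $\mathbbm{1}_{\mf P}\circ(\pi_{{\mf P}^-}\otimes\pi_{{\mf P}^+})$ forces their projections to $\mathfrak X$ to coincide. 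Since $\pi_{\mathfrak X}|_{\mathcal F}$ is injective, this collapses to $y_-=y_+\eqqcolon y\in\mathcal F$, and the non-backtracking condition downstairs is then equivalent to $(y,\omega_1,\omega_2)\in\widetilde{\mf P}$ upstairs (because $\pi_\mathfrak E$ is a local isomorphism on a tree).

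The main point to check carefully is this last simplification: that the two \emph{a priori} different conditions ``joined into a geodesic downstairs'' and ``joined into a geodesic upstairs'' coincide once we have restricted to the fundamental domain. This is really the only subtlety; it holds because sharing a vertex in $\mathcal F$ forces sharing the same lifted vertex, and the covering $\widetilde{\mathfrak G}\to\mathfrak G$ is locally edge-preserving. Once this is in hand, the geodesic pairing becomes
\[
\langle u_+,u_-\rangle_\mathrm{geod}=\sum_{y\in\mathcal F}\int_\Omega\int_\Omega z^{\langle y,\omega_1\rangle}z^{\langle y,\omega_2\rangle}\mathbbm{1}_{\widetilde{\mf P}}(y,\omega_1,\omega_2)\,\mathrm d\mu_-(\omega_1)\,\mathrm d\mu_+(\omega_2),
\]
which is precisely the expression obtained above for $T_\Gamma(\mathbbm{1}_{\widetilde{\mf P}})$, completing the proof.
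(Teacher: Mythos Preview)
Your proposal is correct and follows essentially the same route as the paper: both use Lemma~\ref{la:pairing_right_up_and_down} to lift the geodesic pairing, unfold via the Poisson realizations $\tilde u_\pm=p_{z,\pm}B_\pm^\star\mu_\pm$, collapse the two vertex variables using $\mathcal F_{\mf P}$ together with the injectivity of $\pi_{\mathfrak X}|_{\mathcal F}$, and identify the result with $T(\mathbbm{1}_{\widetilde{\mf P}}\mathbbm{1}_{\mathcal F})=T_\Gamma(\mathbbm{1}_{\widetilde{\mf P}})$ via $\Phi(\mathbbm{1}_{\widetilde{\mf P}}\mathbbm{1}_{\mathcal F})=\mathbbm{1}_{\widetilde{\mf P}}$. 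You even isolate the same subtlety the paper handles implicitly, namely that on $\mathcal F_{\mf P}$ the downstairs and upstairs geodesic conditions agree, which is exactly the content of the paper's identification $(\mathbbm{1}_{\mf P}\circ(\pi_{{\mf P}^-}\otimes\pi_{{\mf P}^+}))\cdot\mathbbm{1}_{\mathcal F_{\mf P}}=\mathbbm{1}_{\widetilde{\mf P}\cap\mathcal F_{\mf P}}$.
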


\begin{proof}
By Lemma \ref{la:pairing_right_up_and_down} we have
\begin{gather*}
    \langle u_+,u_-\rangle_\mathrm{geod}
    =\langle \tilde u_{+}\otimes \tilde u_{-},(\mathbbm{1}_{{\mf P}}\circ(\pi_{{\mf P}^{-}}\otimes\pi_{{\mf P}^{+}}))\cdot\mathbbm{1}_{\mathcal{F}_{{\mf P}}}\rangle
    =\langle \tilde u_{+}\otimes \tilde u_{-},\mathbbm{1}_{\widetilde{{\mf P}}\cap\mathcal{F}_{{\mf P}}}\rangle.
\end{gather*}
By definition of $u_{\pm}$ the latter pairing is given by
\begin{gather*}
\int_{\Omega}\int_{\Omega}\sum_{x\in\widetilde{\mathfrak{X}}}\sum_{y\in\widetilde{\mathfrak{X}}}z^{\langle x,\omega_{1}\rangle}z^{\langle y,\omega_{2}\rangle}\mathbbm{1}_{\widetilde{{\mf P}}\cap\mathcal{F}_{{\mf P}}}((\omega_{1},x),(y,\omega_{2}))\intd\mu_{+}(\omega_{2})\intd\mu_{-}(\omega_{1})\\
=\int_{\Omega}\int_{\Omega}\sum_{x\in\widetilde{\mathfrak{X}}}z^{\langle x,\omega_{1}\rangle}z^{\langle y,\omega_{2}\rangle}\mathbbm{1}_{\widetilde{{\mf P}}}((\omega_{1},x),(x,\omega_{2}))\mathbbm{1}_{\mathcal{F}}(x)\intd\mu_{+}(\omega_{2})\intd\mu_{-}(\omega_{1})=T(\mathbbm{1}_{\widetilde{{\mf P}}}\mathbbm{1}_{\mathcal{F}}).
\end{gather*}
But now note that for each $(x,\omega_{1},\omega_{2})\in\widetilde{\mathfrak{X}}\times\Omega\times\Omega$
\begin{gather*}
    \sum_{\gamma\in\Gamma}(\mathbbm{1}_{\widetilde{{\mf P}}}\mathbbm{1}_{\mathcal{F}})(\gamma x,\gamma\omega_{1},\gamma\omega_{2})=\mathbbm{1}_{\widetilde{{\mf P}}}(x,\omega_{1},\omega_{2})\sum_{\gamma\in\Gamma}\mathbbm{1}_{\mathcal{F}}(\gamma x)=\mathbbm{1}_{\widetilde{{\mf P}}}(x,\omega_{1},\omega_{2})
\end{gather*}
so that $T(\mathbbm{1}_{\widetilde{{\mf P}}}\mathbbm{1}_{\mathcal{F}})=T_{\Gamma}(\mathbbm{1}_{\widetilde{{\mf P}}})$.
\end{proof}

\section{The pairing formula}\label{sec:pairingformula}

The key to our proof of the pairing formula expressing the vertex pairing of (co)resonant states in terms of the geodesic pairing is a specific cut-off that allows us to split $T(\varphi)$ up into two parts, which can be treated separately.

\begin{definition}\label{def:cutoff}
For $n\in\mathbb{N}$ we consider the sets
\begin{gather*}
    S_{n}\coloneqq\{(x,\omega_{1},\omega_{2})\in\widetilde{\mathfrak{X}}\times\Omega\times\Omega\mid d(x,]\omega_1,\omega_2[)\leq n\},
\end{gather*}
where $d(x,]\omega_1,\omega_2[)$ denotes the distance from $x$ to $]\omega_{1},\omega_{2}[$. Note that each $S_{n}$ is $G$-invariant so that the associated cutoff functions $\mathbbm{1}_{S_{n}}$ and $\mathbbm{1}_{S_{n}^{c}}$ are also $G$-invariant. Moreover, both functions are locally constant.
\end{definition}

We now set
\begin{gather*}
\mathrm{I}_c(\varphi,n)\coloneqq T(\mathbbm{1}_{S_{n}}\varphi)\text{ and }\mathrm{I}_r(\varphi,n)\coloneqq T(\mathbbm{1}_{S_{n}^{c}}\varphi)
\end{gather*}
so that $T(\varphi)=\mathrm{I}_c(\varphi,n)+\mathrm{I}_r(\varphi,n)$. Similarly, for $\tilde\varphi\in C^{\mathrm{lc}}(\widetilde{{\mf P}}^{(2)})^{\Gamma}$, we set
\begin{gather*}
\mathrm{I}_c^{\Gamma}(\tilde\varphi,n)\coloneqq T_{\Gamma}(\mathbbm{1}_{S_{n}}\tilde\varphi)\text{ and }\mathrm{I}_r^{\Gamma}(\tilde\varphi,n)\coloneqq T_{\Gamma}(\mathbbm{1}_{S_{n}^{c}}\tilde\varphi)
\end{gather*}
so that $T_{\Gamma}(\tilde\varphi)=\mathrm{I}_c^{\Gamma}(\tilde\varphi,n)+\mathrm{I}_r^{\Gamma}(\tilde\varphi,n)$ and in particular, by Equation \eqref{eq:pairing_right_T_Gamma},
\begin{gather}\label{eq:decomp_left}
    \langle u_+,u_-\rangle_{(\mathfrak{X})}=\mathrm{I}_c^{\Gamma}(\mathbbm{1},n)+\mathrm{I}_r^{\Gamma}(\mathbbm{1},n).
\end{gather}

\subsection{Equivariant coordinates and the calculation of \texorpdfstring{$\mathrm{I}_c^{\Gamma}(\mathbbm{1},n)$}{IcGamma(1,n)}}
\label{sec:coord}

Combining Propositions~\ref{prop:G-coord1} and \ref{prop:G-coord2} we obtain a $G$-equivariant bijection
\[\widetilde{\mathcal{A}}\coloneqq\psi^{-1}\circ (\mathrm{id},\phi^{-1})\colon \widetilde{\mathfrak X}\times(\Omega\times \Omega)\setminus \mathrm{diag}(\Omega)\to G\times_M  B_{\omega_-}/(B_{\omega_-}\cap K),\]
where we identify $G/K\cong\widetilde{\mathfrak{X}}$ by $gK \mapsto go$. Restricting $\psi$ to $G/M\cong G\times_{M}\{1\}$ also gives rise to coordinates on $\widetilde{{\mf P}}$:
\begin{gather*}
    (\mathrm{id},\phi)\circ\psi|_{G\times_M\{1\}} \colon G/M\to\widetilde{{\mf P}},\qquad gM \mapsto (go,g\omega_{-},g\omega_{+}).
\end{gather*}
 The coordinates also fit well with the cutoff functions $\mathbbm{1}_{S_{n}}$ from Definition \ref{def:cutoff}:

\begin{lemma}\label{la:cutoff_pushforward}
Write $]\omega_{-},o]:=(\ldots,x_{-1},x_{0})$ and let $\chi_{n}\coloneqq\mathbbm{1}_{G\times_MB_{\omega_{-},n}/(B_{\omega_{-},n}\cap K)}$ with $B_{\omega_{-},n}\coloneqq \mathrm{Stab}_{B_{\omega_{-}}}(x_{-n})$. Then $\widetilde{\mathcal{A}}_{*}(\mathbbm{1}_{S_{n}})\coloneqq\mathbbm{1}_{S_{n}}\circ\widetilde{\mathcal{A}}^{-1}=\chi_n$.
\end{lemma}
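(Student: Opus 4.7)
The approach is to use the $G$-equivariance of $\widetilde{\mathcal{A}}$ together with the $G$-invariance of both sides to reduce the identity to a single tree-geometric computation in $B_{\omega_-}$. Observe first that $S_n$ is $G$-invariant, as $G$ acts by isometries on $\widetilde{\mathfrak{X}}$ and diagonally on $\Omega\times\Omega$, while the set $G\times_M B_{\omega_-,n}/(B_{\omega_-}\cap K)$ is manifestly $G$-invariant under left multiplication on the first factor. A preliminary consistency check is needed: the subset $B_{\omega_-,n}/(B_{\omega_-}\cap K)\subseteq B_{\omega_-}/(B_{\omega_-}\cap K)$ is well-defined and $M$-stable. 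Indeed, any $k\in B_{\omega_-}\cap K$ fixes both $o$ and $\omega_-$, hence the whole ray $[o,\omega_-[\,\ni x_{-n}$, so $B_{\omega_-}\cap K\subseteq B_{\omega_-,n}$; and any $m\in M$ fixes the reference geodesic pointwise, so $m B_{\omega_-,n} m^{-1}=B_{\omega_-,n}$.

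Next, recall that $\widetilde{\mathcal{A}}^{-1}=(\mathrm{id},\phi)\circ\psi$ sends a representative $[g,\bar{u}(B_{\omega_-}\cap K)]$ to the triple $(g\bar{u}o,\,g\omega_-,\,g\omega_+)$. By $G$-invariance of the tree metric,
\[
d\bigl(g\bar{u}o,\,]g\omega_-, g\omega_+[\bigr) \;=\; d\bigl(\bar{u}o,\,]\omega_-,\omega_+[\bigr),
\]
so the lemma reduces to the intrinsic equivalence $d(\bar{u}o,\,]\omega_-,\omega_+[)\leq n \iff \bar{u}\in B_{\omega_-,n}$, uniform in $\bar{u}\in B_{\omega_-}$.

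The central geometric computation I will carry out is the identity $d(\bar{u}o,\,]\omega_-,\omega_+[) = k_0(\bar{u})$, where $k_0(\bar{u})$ is the smallest $k\geq 0$ with $\bar{u}x_{-k}=x_{-k}$. Since $\bar{u}$ stabilizes $\omega_-$, the rays $[o,\omega_-[$ and $\bar{u}[o,\omega_-[\,=\,[\bar{u}o,\omega_-[$ both converge to $\omega_-$ and hence share a common tail; by definition $x_{-k_0}$ is the first vertex of this tail. The geodesic $[\bar{u}o,\omega_-[$ then reads $\bar{u}o,\bar{u}x_{-1},\ldots,\bar{u}x_{-k_0+1},x_{-k_0},x_{-k_0-1},\ldots$, yielding $d(\bar{u}o,x_{-k_0})=k_0$. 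Any other vertex on $]\omega_-,\omega_+[$ is reached from $\bar{u}o$ via the unique tree-geodesic that must pass through $x_{-k_0}$ (the common-ancestor argument in a tree), so $x_{-k_0}$ realizes the distance.

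Finally, $k_0(\bar{u})\leq n$ if and only if $\bar{u}$ fixes $x_{-n}$: from $\bar{u}x_{-k}=x_{-k}$ for some $k\leq n$ one gets that $\bar{u}$ fixes the entire tail $\{x_{-j}:j\geq k\}\ni x_{-n}$, and the converse is trivial. This is exactly $\bar{u}\in B_{\omega_-,n}$, completing the identification $\widetilde{\mathcal{A}}(S_n)=G\times_M B_{\omega_-,n}/(B_{\omega_-}\cap K)$, equivalently $\widetilde{\mathcal{A}}_*(\mathbbm{1}_{S_n})=\chi_n$. The only real subtlety is the merging-rays picture used to identify the projection of $\bar{u}o$ onto $]\omega_-,\omega_+[$; once that is settled, equivariance and the bijectivity given by Propositions~\ref{prop:G-coord1}--\ref{prop:G-coord2} handle the rest.
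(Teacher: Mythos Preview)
Your proof is correct and follows essentially the same approach as the paper's: both reduce via $G$-equivariance to a tree-geometric statement about $\bar u\in B_{\omega_-}$, then identify the branching vertex of the two rays to $\omega_-$. The only cosmetic difference is that the paper acts by $h=g\bar u$ to fix the basepoint $o$ and phrases the condition as $\bar u^{-1}\omega_+\in\partial_+(x_{-(n+1)},x_{-n})$, whereas you fix the geodesic $]\omega_-,\omega_+[$ and compute $d(\bar u o,\,]\omega_-,\omega_+[)=k_0(\bar u)$ directly; these are equivalent via the isometry $\bar u$.
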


\begin{proof}
Note first that $\widetilde{\mathcal{A}}^{-1}([g,\bar{u}(B_{\omega_{-}}\cap K)])=(g\bar{u}o,g\omega_{-},g\omega_{+})$ and, writing $h\coloneqq g\bar{u}$,
\begin{gather*}
    (g\bar{u}o,g\omega_{-},g\omega_{+})=(ho,h\omega_{-},h\bar{u}^{-1}\omega_{+})=h.(o,\omega_{-},\bar{u}^{-1}\omega_{+}),
\end{gather*}
since $\bar{u}$ stabilizes $\omega_{-}$. We now claim that $\bar{u}^{-1}\omega_{+}\in\partial_{+}(x_{-(n+1)},x_{-n})$ (with $\partial_+\vec e$ as in \eqref{eq:DefinitionPartial+}) if and only if $\bar{u}\in B_{\omega_{-},n}$. Indeed, if $\bar{u}\in B_{\omega_{-},n}$ we have
\begin{gather*}
    \bar{u}^{-1}]\omega_{-},\omega_{+}[=(\ldots,x_{-(n+1)},x_{-n},\bar{u}^{-1}x_{-n+1},\bar{u}^{-1}x_{-n+2},\ldots).
\end{gather*}
Now note that $\bar{u}^{-1}x_{-n+1}$ is a neighbor of $x_{-n}$ different from $x_{-(n+1)}=\bar{u}^{-1}x_{-(n+1)}$. Thus, $\bar{u}^{-1}\omega_{+}\in\partial_{+}(x_{-(n+1)},x_{-n})$.

On the other hand suppose that $\bar{u}\not\in B_{\omega_{-},n}$. Then $\bar{u}\in B_{\omega_{-},m}$ for some $m>n$. Then, denoting $y_{\ell}\coloneqq\bar{u}^{-1}x_{\ell}$ we obtain $y_{\ell}=x_{\ell}$ for $\ell\leq -m$ and for $j\in\{n,n+1\}$
\begin{gather*}
    d(x_{-m},y_{-j})=d(y_{-m},y_{-j})=d(x_{-m},x_{-j})=m-j.
\end{gather*}
Thus, $(x_{-(n+1)},x_{-n})$ and $(y_{-(n+1)},y_{-n})$ are two edges whose initial points have distance $m-n-1$ to $x_{-m}$, but which point in two different directions as $y_{-n}\neq x_{-n}$. Thus, $\partial_{+}(x_{-(n+1)},x_{-n})\cap\partial_{+}(y_{-(n+1)},y_{-n})=\emptyset$. Since $\bar{u}^{-1}\omega_{+}\in\partial_{+}(y_{-(n+1)},y_{-n})$ this proves the claim and thus the lemma since the $G$-action preserves distances.
\end{proof}

We want to apply the coordinates to $\mathrm{I}_{c}(\varphi,n)$. For this we first choose an appropriate, canonical measure and approximate the measures $\mu_{\pm}$ by functions with respect to it. Then we calculate the coordinate transform $\widetilde{\mathcal{A}}$ in terms of the fixed measure and in turn obtain the transform of $\mathrm{I}_{c}(\varphi,n)$.

We first describe the canonical measure which allows us to embed functions into distributions. In this step we need the integration formula for the Iwasawa decomposition provided in the appendix.

\begin{proposition}\label{prop:meas_Sigma2}
For $f\in C_c(\widetilde{{\mf P}}^{(2)})$ we define
\begin{gather*}
    \int_{\widetilde{{\mf P}}^{(2)}}f\intd\mu_{\widetilde{{\mf P}}^{(2)}}\coloneqq \sum_{x\in\widetilde{\mathfrak{X}}}\quad\iint\limits_{(K/K\cap B_{\omega_{+}})^2}f(x,k_{-}\omega_{+},k_{+}\omega_{+})q^{\langle x,k_{+}\omega_{+}\rangle+\langle x,k_{-}\omega_{+}\rangle}\intd k_{-}\intd k_{+}.
\end{gather*}
Then, for each $g\in G$,
\begin{gather}\label{eq:meas_Sigma2_inv}
    \int_{\widetilde{{\mf P}}^{(2)}}g.f\intd\mu_{\widetilde{{\mf P}}^{(2)}}=\int_{\widetilde{{\mf P}}^{(2)}}f\intd\mu_{\widetilde{{\mf P}}^{(2)}}.
\end{gather}
Moreover, the measure $\mu_{\widetilde{{\mf P}}^{(2)}}$ is independent of the choice of $o$ and $\omega_{+}$.
\end{proposition}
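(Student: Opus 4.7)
The plan is to rewrite $\mu_{\widetilde{{\mf P}}^{(2)}}$ as a tensor product of natural ``Patterson--Sullivan type'' measures on the factors. For each $x\in\widetilde{\mathfrak X}$ define a measure $\nu_x$ on $\Omega$ by
\[
\int_\Omega f\,\intd\nu_x \coloneqq \int_{K/(K\cap B_{\omega_+})} f(k\omega_+)\,q^{\langle x,k\omega_+\rangle}\,\intd k,
\]
where $\intd k$ is the normalized $K$-invariant measure on the compact quotient. Then, under the homeomorphism $\widetilde{{\mf P}}^{(2)}\cong\widetilde{\mathfrak X}\times\Omega\times\Omega$ furnished by $(\pi_{-},B_{-},B_{+})$, the measure $\mu_{\widetilde{{\mf P}}^{(2)}}$ is precisely $\sum_{x\in\widetilde{\mathfrak X}}\delta_x\otimes\nu_x\otimes\nu_x$. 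Since the counting measure on $\widetilde{\mathfrak X}$ is manifestly $G$-invariant, the $G$-invariance of $\mu_{\widetilde{{\mf P}}^{(2)}}$ reduces to the equivariance property $g_\ast\nu_x=\nu_{gx}$ for all $g\in G$ and $x\in\widetilde{\mathfrak X}$.

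By the Iwasawa decomposition of Proposition~\ref{prop:Iwasawa} it suffices to verify $g_\ast\nu_x=\nu_{gx}$ on the three generating families $g\in K$, $g\in B_{\omega_+}$, and $g=\tau$. The case $g\in K$ is immediate from the $K$-invariance of $\intd k$ combined with $\langle kx,k\omega\rangle=\langle x,\omega\rangle$ (which follows from the horocycle identity \eqref{eq:horocycle_id} and $ko=o$). For $g\in B_{\omega_+}$ or $g\in\langle\tau\rangle$ one performs the change of variables $k\mapsto k(gk)$ in the $K$-integral, where $gk=k(gk)\,u(gk)\,\tau^{H(gk)}$ is the Iwasawa decomposition of $gk$; the key point is that $gk\omega_+=k(gk)\omega_+$ since $B_{\omega_+}\langle\tau\rangle$ fixes $\omega_+$. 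The resulting Jacobian of this change of variables is exactly $q^{H(gk)}$, which by Lemma~\ref{la:horo_H} equals $q^{-\langle go,k(gk)\omega_+\rangle}$. Combined with the horocycle identity this cancels precisely against the change in $q^{\langle x,k\omega_+\rangle}\to q^{\langle gx,k(gk)\omega_+\rangle}$, yielding the equivariance.

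For independence of $\omega_+$: any other choice $\omega_+'=k_0\omega_+\in\Omega$ ($k_0\in K$) changes the coset space to $K/(K\cap B_{\omega_+'})=K/(k_0(K\cap B_{\omega_+})k_0^{-1})$; the bijection $k\mapsto kk_0$ combined with the $K$-invariance of Haar measure shows that the resulting $\nu_x$ is unaffected. For independence of $o$: replacing $o$ by $o'=g_0 o$ replaces $K$ by $g_0 K g_0^{-1}$ and modifies the horocycle bracket by $\langle x,\omega\rangle_{o'}=\langle x,\omega\rangle_o+\langle o,\omega\rangle_{o'}$ (by \eqref{eq:horocycle_id}); the additional constant factor $q^{\langle o,\omega\rangle_{o'}}$ is exactly cancelled by the Jacobian of the coordinate change on $K$ given by the Iwasawa integration formula of Appendix~\ref{sec:integralformulas}.

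The main obstacle is to make precise the interplay of the Iwasawa change of variables with the horocycle weight $q^{\langle x,k\omega_+\rangle}$; the exponent $q$ (as opposed to any other base) is forced upon us because $\tau$ moves by one step along the reference geodesic and expands neighborhoods of $\omega_+$ in $\Omega$ by a factor of $q$, making $q^{H(g)}$ the correct modular factor. This is where we must rely on the integration formulas collected in the appendix rather than directly verify the cancellation from scratch.
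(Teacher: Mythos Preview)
Your approach is essentially the same as the paper's: both arguments hinge on the change of variables $k\mapsto k(gk)$ on $K/(K\cap B_{\omega_+})$ (which is exactly Lemma~\ref{la:integral_K_shift}), combined with Lemma~\ref{la:horo_H} and the horocycle identity~\eqref{eq:horocycle_id}. Your reformulation via the family $\nu_x$ and the equivariance $g_*\nu_x=\nu_{gx}$ is a clean conceptual packaging of the same computation; the paper carries out the identical change of variables directly on the double integral without naming $\nu_x$.

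Two points to tighten. First, there is a sign slip: Lemma~\ref{la:horo_H} gives $\langle go,k(gk)\omega_+\rangle=-H(g^{-1}k(gk))=+H(gk)$, so the Jacobian $q^{H(gk)}$ equals $q^{+\langle go,k(gk)\omega_+\rangle}$, not $q^{-\langle go,k(gk)\omega_+\rangle}$. With the correct sign the horocycle identity yields $\langle x,k\omega_+\rangle+\langle go,k(gk)\omega_+\rangle=\langle gx,k(gk)\omega_+\rangle$, and the cancellation goes through; with the sign you wrote it would not. Second, the case split into $K$, $B_{\omega_+}$, $\tau$ is unnecessary: the change of variables $k\mapsto k(gk)$ works uniformly for all $g\in G$ (and the $K$ case is the trivial instance $H(gk)=0$). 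For the independence of $o$, your phrase ``constant factor'' is misleading since $q^{\langle o,\omega\rangle_{o'}}$ depends on $\omega$; the paper instead uses the relation $\langle gx,g\omega\rangle_{go}=\langle x,\omega\rangle_o$ and reduces to the already-proven $G$-invariance and independence of $\omega_+$, which is cleaner than tracking a separate Jacobian.
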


\begin{proof}
We first prove the $G$-invariance. Using $k(g^{-1}k(gk))=k(K\cap B_{\omega_{+}})$ and Lemma \ref{la:integral_K_shift} we infer
\begin{align*}
&\hphantom{{}={}}\sum_{x\in\widetilde{\mathfrak{X}}}\quad \iint\limits_{(K/K\cap B_{\omega_{+}})^2}\!f(gx,gk_{-}\omega_{+},gk_{+}\omega_{+})q^{\langle x,k_{+}\omega_{+}\rangle+\langle x,k_{-}\omega_{+}\rangle}\intd k_{-}\intd k_{+}\\
&=\sum_{x\in\widetilde{\mathfrak{X}}}\quad\iint\limits_{(K/K\cap B_{\omega_{+}})^2}\!f(gx,k(gk_{-})\omega_{+},k(gk_{+})\omega_{+})q^{\langle x,k(g^{-1}k(gk_{+}))\omega_{+}\rangle+\langle x,k(g^{-1}k(gk_{-}))\omega_{+}\rangle}\intd k_{-}\intd k_{+}\\
&=\sum_{x\in\widetilde{\mathfrak{X}}}\quad\iint\limits_{(K/K\cap B_{\omega_{+}})^2}\!f(gx,k_{-}\omega_{+},k_{+}\omega_{+})q_{x,g}(k_{-})q_{x,g}(k_{+})\intd k_{-}\intd k_{+},
\end{align*}
where $q_{x,g}(k_{\pm})\coloneqq q^{\langle x,k(g^{-1}k_{\pm})\omega_{+}\rangle-H(g^{-1}k_{\pm})}$.
But now Lemma \ref{la:horo_H} and the horocycle identity imply
\begin{gather*}
    \langle g^{-1}x,k(g^{-1}k_{\pm})\omega_{+}\rangle-H(g^{-1}k_{\pm})=\langle g^{-1}x,g^{-1}k_{\pm}\omega_{+}\rangle+\langle go,k_{\pm}\omega_{+}\rangle=\langle x,k_{\pm}\omega_{+}\rangle,
\end{gather*}
proving \eqref{eq:meas_Sigma2_inv}. For the independence we first consider another choice $k_0\omega_{+},\ k_0\in K,$ instead of $\omega_{+}$. Then we have
\begin{gather*}
    \iint\limits_{(K/K\cap B_{k_{0}\omega_{+}})^2}f(x,k_{-}k_{0}\omega_{+},k_{+}k_{0}\omega_{+})q^{\langle x,k_{+}k_{0}\omega_{+}\rangle+\langle x,k_{-}k_{0}\omega_{+}\rangle}\intd k_{-}\intd k_{+}=\\
    \iint\limits_{(K/K\cap B_{\omega_{+}})^2}f(x,k_{-}\omega_{+},k_{+}\omega_{+})q^{\langle x,k_{+}\omega_{+}\rangle+\langle x,k_{-}\omega_{+}\rangle}\intd k_{-}\intd k_{+}
\end{gather*}
by the $K$-invariance of the Haar measure on $K$ and since $B_{k_{0}\omega_{+}}=k_{0}B_{\omega_{+}}k_{0}^{-1}$ so that $K\cap B_{k_{0}\omega_{+}}$ and $K\cap B_{\omega_{+}}$ have the same measure. Another choice $\tilde{o}=go$ of $o$ not only changes the group $K=\mathrm{Stab}_{G}(o)$ to $K_{go}=gKg^{-1}$ but also changes the horocycle bracket. We denote the horocycle bracket attached to $\tilde{o}$ by $\langle\cdot,\cdot\rangle_{\tilde{o}}$ and obtain $\langle gx,g\omega\rangle_{\tilde{o}}=\langle x,\omega\rangle$ for each $x\in\widetilde{\mathfrak{X}}$ and $\omega\in \Omega$. The Haar measure on $K_{go}$ with full measure one is given by
\begin{gather*}
    \int_{K_{go}}\varphi(k)\intd k=\int_K\varphi(gkg^{-1})\intd k.
\end{gather*}
We thus infer that the volume of $K_{go}\cap B_{\omega_{+}}$ equals the volume of $K\cap g^{-1}B_{\omega_{+}}g=K\cap B_{g^{-1}\omega_{+}}$. Therefore we obtain
\begin{align*}
&\hphantom{{}={}}\sum_{x\in\widetilde{\mathfrak{X}}}\quad\iint\limits_{(K_{go}/K_{go}\cap B_{\omega_{+}})^2}f(x,k_{-}\omega_{+},k_{+}\omega_{+})q^{\langle x,k_{+}\omega_{+}\rangle_{go}+\langle x,k_{-}\omega_{+}\rangle_{go}}\intd k_{-}\intd k_{+}\\
&=\sum_{x\in\widetilde{\mathfrak{X}}}\quad\iint\limits_{(K/K\cap B_{g^{-1}\omega_{+}})^2}f(x,gk_{-}g^{-1}\omega_{+},gk_{+}g^{-1}\omega_{+})q^{\langle x,gk_{+}g^{-1}\omega_{+}\rangle_{\tilde{o}}+\langle x,gk_{-}g^{-1}\omega_{+}\rangle_{\tilde{o}}}\intd k_{-}\intd k_{+}\\
&=\sum_{x\in\widetilde{\mathfrak{X}}}\quad\iint\limits_{(K/K\cap B_{g^{-1}\omega_{+}})^2}f(x,gk_{-}g^{-1}\omega_{+},gk_{+}g^{-1}\omega_{+})q^{\langle g^{-1}x,k_{+}g^{-1}\omega_{+}\rangle+\langle g^{-1}x,k_{-}g^{-1}\omega_{+}\rangle}\intd k_{-}\intd k_{+}\\
&=\sum_{x\in\widetilde{\mathfrak{X}}}\quad\iint\limits_{(K/K\cap B_{g^{-1}\omega_{+}})^2}f(gx,gk_{-}g^{-1}\omega_{+},gk_{+}g^{-1}\omega_{+})q^{\langle x,k_{+}g^{-1}\omega_{+}\rangle+\langle x,k_{-}g^{-1}\omega_{+}\rangle}\intd k_{-}\intd k_{+}\\
&=\sum_{x\in\widetilde{\mathfrak{X}}}\quad\iint\limits_{(K/K\cap B_{\omega_{+}})^2}f(gx,gk_{-}\omega_{+},gk_{+}\omega_{+})q^{\langle x,k_{+}\omega_{+}\rangle+\langle x,k_{-}\omega_{+}\rangle}\intd k_{-}\intd k_{+}=\int_{\widetilde{{\mf P}}^{(2)}}f\intd\mu_{\widetilde{{\mf P}}^{(2)}},
\end{align*}
where we used the independence of the choice of $\omega_{+}$ in the penultimate step.
\end{proof}

The following result allows us to weakly approximate the measures $\mu_{\pm}\in\mathcal{D}'(\Omega)$ by locally constant functions multiplied with the Haar measure $\intd k$ on $K$.

\begin{lemma}[{see \cite[Lemma 5.7]{AFH23}}]\label{la:meas_approx}
For every $\mu\in\mathcal{M}_{\textup{fa}}(\Omega)$ there exists a sequence $\{\chi_n\}_n\subseteq C^{\mathrm{lc}}(\Omega)$ such that
$$ \int_Kf(k\omega_0)\chi_n(k\omega_0)\intd k \to \int_\Omega f\intd\mu \qquad \mbox{for all }f\in C^{\mathrm{lc}}(\mathcal{O}),\omega_0\in\Omega. $$
\end{lemma}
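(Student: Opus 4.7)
The strategy is to approximate $\mu$ by its conditional expectations with respect to the natural filtration of $\Omega$ by basic clopen sets, rescaled so that they become densities with respect to the pushforward of the Haar measure on $K$.

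Fix a base vertex $o\in\widetilde{\mathfrak{X}}$ and let $\nu$ denote the pushforward to $\Omega$ of the normalised Haar measure on $K$ under the map $k\mapsto k\omega_0$. Since $K$ acts transitively on $\Omega$, this measure is independent of $\omega_0$ and is the unique $K$-invariant probability measure on $\Omega$; in particular it assigns strictly positive weight $\tfrac{1}{(q+1)q^{n-1}}$ to every basic set $\partial_+\vec e$ with $\tau(\vec e)$ at distance $n$ from $o$.

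For each $n\in\N$, consider the finite partition
\[\mathcal{P}_n=\{\Omega_y:y\in\widetilde{\mathfrak{X}},\,d(o,y)=n\},\qquad \Omega_y\coloneqq\{\omega\in\Omega:y\in[o,\omega[\},\]
of $\Omega$ into basic clopen sets. The plan is to set
\[\chi_n\coloneqq\sum_{y:d(o,y)=n}\frac{\mu(\Omega_y)}{\nu(\Omega_y)}\,\mathbbm{1}_{\Omega_y},\]
which manifestly lies in $C^{\mathrm{lc}}(\Omega)$. I would then verify two things. First, if $f=\mathbbm{1}_{\partial_+\vec e}$ for some edge $\vec e$ with $\tau(\vec e)$ at distance $m$ from $o$, then for every $n\geq m$ each set $\Omega_y$ of $\mathcal{P}_n$ is either contained in $\partial_+\vec e$ or disjoint from it, so finite additivity of $\mu$ gives
\[\int_K f(k\omega_0)\chi_n(k\omega_0)\intd k=\sum_{\Omega_y\subseteq\partial_+\vec e}\frac{\mu(\Omega_y)}{\nu(\Omega_y)}\nu(\Omega_y)=\mu(\partial_+\vec e)=\int_\Omega f\intd\mu.\]
Second, every $f\in C^{\mathrm{lc}}(\Omega)$ is a finite linear combination of such indicator functions $\mathbbm{1}_{\partial_+\vec e}$ (this is precisely the definition of the topology on $\Omega$), so by linearity the equality $\int_K f(k\omega_0)\chi_n(k\omega_0)\intd k=\int_\Omega f\intd\mu$ holds for $n$ larger than the maximal level occurring in the decomposition of $f$, which in particular implies the claimed convergence.

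There is no real obstacle: once the right filtration and the right normalization are chosen, the argument reduces to finite additivity of $\mu$. The only point that needs care is the verification that $\nu(\Omega_y)>0$ for every $y$, which follows from the transitivity of the $K$-action on $\Omega$ and the $K$-invariance of $\nu$; this ensures that the ratios $\mu(\Omega_y)/\nu(\Omega_y)$ are well defined.
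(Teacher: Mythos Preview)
The paper does not give its own proof of this lemma; it simply cites \cite[Lemma~5.7]{AFH23}. Your argument via conditional expectations with respect to the filtration $\{\mathcal{P}_n\}$ is correct and is the standard way to prove such an approximation statement; it is almost certainly what the cited reference does as well.

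One small point of care: your claim that ``for every $n\geq m$ each set $\Omega_y$ of $\mathcal{P}_n$ is either contained in $\partial_+\vec e$ or disjoint from it'' is only literally true when $\vec e$ points away from $o$ (so that $\partial_+\vec e=\Omega_{\tau(\vec e)}$). For an edge $\vec e=(a,b)$ pointing towards $o$ one has $\partial_+\vec e=\Omega\setminus\Omega_a$, and then one needs $n\geq d(o,a)=m+1$ rather than $n\geq m$. This does not affect the conclusion, and in fact you can sidestep the issue entirely: the sets $\Omega_y$ themselves already form a basis of clopen sets for the topology on $\Omega$, so by compactness every $f\in C^{\mathrm{lc}}(\Omega)$ is constant on the atoms of $\mathcal{P}_n$ for all sufficiently large $n$, which is all you need.
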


For the transformation of the measure $\mu_{\widetilde{{\mf P}}^{(2)}}$ we have the following result which uses the computation of pushforward measures and in particular the integration formula for the Bruhat decomposition provided in the appendix.

\begin{lemma}\label{la:pushforward_meas}
There is a constant $c>0$ such that
\begin{gather*}
    \widetilde{\mathcal{A}}_*(q^{\langle x,k_+\omega_+\rangle}\intd k_+q^{\langle x,k_-\omega_+\rangle}\intd k_-\intd x)=c\cdot q^{\langle\bar{u}o,\omega_+\rangle}(\widetilde{\mathrm{pr}}_M)_*(\intd g\intd\bar{u}),
\end{gather*}
i.e.\@, for each $\varphi\in C_c^{\mathrm{lc}}(\widetilde{{\mf P}}^{(2)})$,
\begin{gather*}
\int_{\widetilde{{\mf P}}^{(2)}}\varphi\intd\mu_{\widetilde{{\mf P}}^{(2)}}=c\cdot\int_{G}\int_{B_{\omega_{-}}/B_{\omega_{-}}\cap K}(\varphi\circ\widetilde{\mathcal{A}}^{-1})\cdot q^{\langle\bar{u}o,\omega_+\rangle}\intd\bar{u}\intd g.
\end{gather*}
Moreover, if $\varphi\in C_c^{\mathrm{lc}}(\widetilde{{\mf P}})\subseteq C_c^{\mathrm{lc}}(\widetilde{{\mf P}}^{(2)})$,
\begin{gather*}
\int_{\widetilde{{\mf P}}^{(2)}}\varphi\intd\mu_{\widetilde{{\mf P}}^{(2)}}=c\cdot\int_{G/M}(\varphi\circ\widetilde{\mathcal{A}}^{-1})([g,e])\intd gM=c\cdot\int_{G/M}\varphi(go,g\omega_{-},g\omega_{+})\intd gM.
\end{gather*}
\end{lemma}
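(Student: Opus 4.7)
The strategy is to push the measure $\mu_{\widetilde{{\mf P}}^{(2)}}$ through $\widetilde{\mathcal A}$ by an explicit change of variables, and verify that the result equals the right-hand side up to an overall positive constant. Both sides are $G$-invariant Radon measures on $\widetilde{{\mf P}}^{(2)}\cong G\times_M B_{\omega_-}/(B_{\omega_-}\cap K)$---the left by Proposition~\ref{prop:meas_Sigma2} and the right by left-invariance of $\intd g$ together with the $G$-equivariance of $\widetilde{\mathcal A}$ and the fact that $q^{\langle\bar u o,\omega_+\rangle}$ depends only on the $B_{\omega_-}$-factor---so the identity amounts to computing the $M$-invariant measure on $B_{\omega_-}/(B_{\omega_-}\cap K)$ governing the $G$-orbit decomposition.

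First I would identify $\widetilde{\mathfrak X}\cong G/K$ via $gK\mapsto go$, turning the sum over $x$ in $\mu_{\widetilde{{\mf P}}^{(2)}}$ into an integral over $G/K$. Next, I would expand $\intd g$ using the Iwasawa decomposition $G=B_{\omega_-}\langle\tau\rangle K$ of Proposition~\ref{prop:Iwasawa} and the corresponding integration formula from Appendix~\ref{sec:integralformulas}, producing an integral over $B_{\omega_-}/(B_{\omega_-}\cap K)\times\langle\tau\rangle$ with a $q^{-j}$-type Jacobian. In parallel, the two $\Omega$-integrals over $k_\pm\in K/(K\cap B_{\omega_+})$ would be transformed by the Bruhat-type integral formula from the appendix, which reparametrizes $\Omega\setminus\{\omega_-\}$ through $B_{\omega_-}$ with a further $q$-power Jacobian. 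Using Lemma~\ref{la:horo_H} to identify $\langle go,k\omega_+\rangle=-H(g^{-1}k)$ and the horocycle identities~\eqref{eq:horocycle_id} and Lemma~\ref{la:horo_id_2}, the combined $q$-exponent should collapse to exactly $q^{\langle\bar u o,\omega_+\rangle}$, the $\tau^j$-direction should be absorbed into the $\times_M$-quotient, and the constant $c$ will collect the Haar volumes of $K$, $K\cap B_{\omega_+}$ and $B_{\omega_-}\cap K$.

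For the second formula, observe that $(g\bar u o,g\omega_-,g\omega_+)\in\widetilde{{\mf P}}$ if and only if $\bar u o$ lies on the reference geodesic $]\omega_-,\omega_+[$. Any $\bar u\in B_{\omega_-}$ preserves the horocycle $H_{\omega_-}(o)$, and this horocycle meets $]\omega_-,\omega_+[$ only at $o$, so the condition forces $\bar u o=o$, i.e.\ $\bar u\in B_{\omega_-}\cap K$. Thus only the identity coset in $B_{\omega_-}/(B_{\omega_-}\cap K)$ contributes to the first formula, on which $q^{\langle\bar u o,\omega_+\rangle}=1$; the remaining integrand $g\mapsto\varphi(go,g\omega_-,g\omega_+)$ is invariant under $g\mapsto gm$ for $m\in M$ (since $mo=o$ and $m\omega_\pm=\omega_\pm$), so the $\intd g$-integral descends to $G/M$, yielding the stated simplification.

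The main obstacle is the bookkeeping of the $q$-exponents during the coordinate change: one has to verify that the Iwasawa and Bruhat Jacobians combine with the symmetric weight $q^{\langle x,k_+\omega_+\rangle+\langle x,k_-\omega_+\rangle}$ to leave exactly the factor $q^{\langle\bar u o,\omega_+\rangle}$, with no residual $g$- or $\tau^j$-dependence. The required simplifications should follow from iterated application of the horocycle identity~\eqref{eq:horocycle_id} together with Lemma~\ref{la:horo_id_2}, but they have to be tracked carefully through each step to ensure the final expression is of the claimed form.
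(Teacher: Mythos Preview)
Your outline is sound and uses the same essential ingredients as the paper, but the paper organizes the computation differently. Rather than performing one direct change of variables, it exploits the factorization $\widetilde{\mathcal A}=\psi^{-1}\circ(\mathrm{id},\phi^{-1})$ and applies the two precomputed pushforward results from the appendix, Propositions~\ref{prop:pushforward_phi} and~\ref{prop:pushforward_psi}, in sequence. The first step turns $dk_+\,dk_-$ on $(\Omega\times\Omega)\setminus\mathrm{diag}(\Omega)$ into $d(g\omega_+,g\omega_-)^{-1}\,d(gM\langle\tau\rangle)$ on $G/M\langle\tau\rangle$; the second turns $dx\otimes d(gM\langle\tau\rangle)$ on $G/K\times G/M\langle\tau\rangle$ into $(\widetilde{\mathrm{pr}}_M)_*(dg\,d\bar u)$. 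What remains is then a single exponent computation, namely showing that $d(g\omega_+,g\omega_-)^{-1}q^{\langle g\bar u o,g\omega_+\rangle+\langle g\bar u o,g\omega_-\rangle}=q^{\langle\bar u o,\omega_+\rangle}$, and this is where the horocycle identities you cite (Lemma~\ref{la:horo_H}, \eqref{eq:horocycle_id}, Lemma~\ref{la:horo_id_2}) enter, together with $\langle\bar u o,\omega_-\rangle=0$. Your direct approach would have to reproduce the content of both appendix propositions inline, so the bookkeeping you flag as the main obstacle is genuinely heavier; the paper's modular route isolates the two Jacobian computations and leaves only a short simplification at the end. Your argument for the second assertion---that $\varphi$ supported on $\widetilde{{\mf P}}$ forces $\bar u o=o$ because $H_{\omega_-}(o)\cap{]\omega_-,\omega_+[}=\{o\}$---is exactly the paper's, phrased a bit more geometrically.
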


\begin{proof}
By Proposition \ref{prop:pushforward_phi} we first obtain
\begin{gather*}
    c_\varphi(\varphi^{-1})_* (\intd k_+(K\cap B_{\omega_+})\otimes \intd k_-(K\cap B_{\omega_+}))|_{(\Omega\times\Omega)\setminus \mathrm{diag}(\Omega)}=d(g\omega_+,g\omega_-)^{-1}\intd(gM\langle\tau\rangle)
\end{gather*}
such that
\begin{gather*}
c_{\varphi}(\mathrm{id},\varphi^{-1})_*(\intd x\,q^{\langle x,k_+\omega_+\rangle}\intd k_+\,q^{\langle x,k_-\omega_+\rangle}\intd k_-)=d(g\omega_+,g\omega_-)^{-1}q^{\langle x,g\omega_+\rangle}q^{\langle x,g\omega_-\rangle}\intd x\intd(gM\langle\tau\rangle)
\end{gather*}
off the diagonal in $\Omega$. Moreover, Proposition \ref{prop:pushforward_psi} implies that
\begin{gather*}
    (\psi^{-1})_*(d(g\omega_+,g\omega_-)^{-1}q^{\langle x,g\omega_+\rangle}q^{\langle x,g\omega_-\rangle}\intd x\intd(gM\langle\tau\rangle))=\\
c_{\psi}d(g\omega_+,g\omega_-)^{-1}q^{\langle g\bar{u}o,g\omega_+\rangle}q^{\langle g\bar{u}o,g\omega_-\rangle}\intd g \intd \bar{u}.
\end{gather*}
By definition of $d$ (see Definition \ref{def:d}) and Lemma \ref{la:horo_H} the latter expression is given by
\begin{gather*}
    q^{-H(g)-H(gs)+\langle g\bar{u}o,g\omega_+\rangle+\langle g\bar{u}o,g\omega_-\rangle}=q^{\langle g^{-1}o,\omega_+\rangle+\langle g^{-1}o,s\omega_+\rangle+\langle g\bar{u}o,g\omega_+\rangle+\langle g\bar{u}o,g\omega_-\rangle}.
\end{gather*}
Note that $s\omega_+=\omega_-$ makes this formula symmetric in $\pm$. We claim that $\langle g\bar{u}o,g\omega_{\pm}\rangle+\langle g^{-1}o,\omega_{\pm}\rangle=\langle\bar{u}o,\omega_{\pm}\rangle$. Indeed, Lemma \ref{la:horo_id_2} implies $\langle g^{-1}o,\omega_{\pm}\rangle=-\langle go,g\omega_{\pm}\rangle$ so that the horocycle identity \eqref{eq:horocycle_id} yields
\begin{gather*}
    \langle g\bar{u}o,g\omega_{\pm}\rangle+\langle g^{-1}o,\omega_{\pm}\rangle=\langle\bar{u}o,\omega_{\pm}\rangle+\langle go,g\omega_{\pm}\rangle-\langle go,g\omega_{\pm}\rangle=\langle\bar{u}o,\omega_{\pm}\rangle.
\end{gather*}
Finally, note that $\langle\bar{u}o,\omega_-\rangle=\langle o,\omega_-\rangle=0$ (see e.g.\@ \cite[La.\@ 3.1]{Ve02}).

If $\varphi\in C_c^{\mathrm{lc}}(\widetilde{{\mf P}})$ the right hand side of our integral formula simplifies since
\begin{align*}
&\hphantom{{}={}}\int_{G}\int_{B_{\omega_{-}}/B_{\omega_{-}}\cap K}(\varphi\circ\widetilde{\mathcal{A}}^{-1})\cdot q^{\langle\bar{u}o,\omega_+\rangle}\intd\bar{u}\intd g\\
&=\int_{G}\int_{B_{\omega_{-}}/B_{\omega_{-}}\cap K}\varphi(g\bar{u}o,g\omega_{-},g\omega_{+})\cdot q^{\langle\bar{u}o,\omega_+\rangle}\intd\bar{u}\intd g\\
&=\int_{G}\int_{B_{\omega_{-}}}\varphi(g\bar{u}o,g\omega_{-},g\omega_{+})\cdot q^{\langle\bar{u}o,\omega_+\rangle}\intd\bar{u}\intd g\\
&=\int_{G}\int_{B_{\omega_{-}}\cap K}\varphi(g\bar{u}o,g\omega_{-},g\omega_{+})\cdot q^{\langle\bar{u}o,\omega_+\rangle}\intd\bar{u}\intd g\\
&=\int_{G}\varphi(go,g\omega_{-},g\omega_{+})\intd\bar{u}\intd g,
\end{align*}
where we used $\intd\bar{u}(B_{\omega_{-}}\cap K)=q^{\langle o,\omega_{-}\rangle}=1$ (see Subsection \ref{subsec:int_formulas}) in the second and last step, and the fact that $\varphi\in C_c^{\mathrm{lc}}(\widetilde{{\mf P}})$ implies $\varphi(g\bar{u}o,g\omega_{-},g\omega_{+})=0$ if $\bar{u}o\neq o$ (if and only if $\bar{u}\not\in K$) in the penultimate step.
\end{proof}

We need to know how the integrand in $\mathrm{I}_{c}(\varphi,n)$ behaves under the coordinate change $\widetilde{\mathcal{A}}$.

\begin{lemma}\label{la:transform_function}
Let $s\in\C$ and, for $\varphi_{\pm}\in C^{\mathrm{lc}}(\Omega)$, define the function
\begin{gather*}
   f_{\varphi_{-},\varphi_{+},s}\colon \widetilde{{\mf P}}^{(2)}\to\mathbb{C},\quad (x,\omega_{1},\omega_{2}) \mapsto q^{s\langle x,\omega_{1}\rangle+s\langle x,\omega_{2}\rangle}\varphi_{+}(\omega_{2})\varphi_{-}(\omega_{1}).
\end{gather*}
Then, for all $g\in G$ and $\bar{u}\in B_{\omega_{-}}$,
\begin{gather*}
(f_{\varphi_{-},\varphi_{+},s}\circ\widetilde{\mathcal{A}}^{-1})([g,\bar{u}(B_{\omega_{-}}\cap K)])=(f_{\varphi_{-},\varphi_{+},s}\circ\widetilde{\mathcal{A}}^{-1})([g,e(B_{\omega_{-}}\cap K)])\cdot q^{s\langle\bar{u}o,\omega_{+}\rangle}.
\end{gather*}
\end{lemma}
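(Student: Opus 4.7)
The plan is to reduce the claim to a scalar identity about horocycle brackets, by unfolding the definitions using the explicit formula for $\widetilde{\mathcal{A}}^{-1}$ that was established within the proof of Lemma~\ref{la:pushforward_meas}. Recall from there that
\begin{equation*}
\widetilde{\mathcal{A}}^{-1}([g,\bar{u}(B_{\omega_-}\cap K)]) = (g\bar{u}o,\,g\omega_-,\,g\omega_+),
\end{equation*}
so, inserting this into $f_{\varphi_-,\varphi_+,s}$, one obtains
\begin{equation*}
(f_{\varphi_-,\varphi_+,s}\circ\widetilde{\mathcal{A}}^{-1})([g,\bar{u}(B_{\omega_-}\cap K)]) = q^{s\langle g\bar{u}o,g\omega_-\rangle + s\langle g\bar{u}o,g\omega_+\rangle}\,\varphi_-(g\omega_-)\,\varphi_+(g\omega_+).
\end{equation*}
The $\varphi_\pm$--factors depend only on $g\omega_\pm$ and are therefore identical in the two sides of the claimed equality, so after cancellation the problem collapses to verifying the additive identity
\begin{equation*}
\langle g\bar{u}o,g\omega_-\rangle + \langle g\bar{u}o,g\omega_+\rangle = \langle go,g\omega_-\rangle + \langle go,g\omega_+\rangle + \langle\bar{u}o,\omega_+\rangle.
\end{equation*}

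The first step I would carry out is a single application of the horocycle identity~\eqref{eq:horocycle_id} with $x=\bar{u}o$ and $\omega=\omega_\pm$, which gives $\langle g\bar{u}o,g\omega_\pm\rangle = \langle\bar{u}o,\omega_\pm\rangle + \langle go,g\omega_\pm\rangle$. Substituting on the left-hand side and cancelling $\langle go,g\omega_-\rangle$, $\langle go,g\omega_+\rangle$, and $\langle\bar{u}o,\omega_+\rangle$ reduces the identity to the single equation $\langle\bar{u}o,\omega_-\rangle = 0$.

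The second step is to check this remaining vanishing by exploiting the defining property of $B_{\omega_-}$: the element $\bar{u}$ fixes $\omega_-$ and some vertex $y_0\in\widetilde{\mf X}$. Applying~\eqref{eq:horocycle_id} once more, now with $x=y_0$ and $\omega=\omega_-$, and using $\bar{u}\omega_-=\omega_-$ and $\bar{u}y_0=y_0$, one gets $\langle y_0,\omega_-\rangle = \langle y_0,\omega_-\rangle + \langle\bar{u}o,\omega_-\rangle$, forcing $\langle\bar{u}o,\omega_-\rangle = 0$. This is the same fact already invoked at the end of the proof of Lemma~\ref{la:pushforward_meas} (cf.\@ \cite[La.~3.1]{Ve02}).

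I do not anticipate a real obstacle: the computation is essentially two applications of the horocycle identity. The only thing to be careful about is that the asymmetry between $\omega_-$ and $\omega_+$ on the right-hand side of the lemma is genuine and reflects the fact that the $\bar{u}o$--contribution disappears only on the $\omega_-$--side, because $B_{\omega_-}$ (as opposed to $B_{\omega_+}$) stabilizes $\omega_-$ together with a vertex.
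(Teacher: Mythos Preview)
Your proof is correct and follows essentially the same route as the paper: compute $\widetilde{\mathcal{A}}^{-1}$ explicitly, apply the horocycle identity~\eqref{eq:horocycle_id} to strip off the $g$-action, and conclude by $\langle\bar{u}o,\omega_-\rangle=0$. The only cosmetic difference is that the paper cites \cite[La.~3.1]{Ve02} for this last vanishing, whereas you derive it directly from~\eqref{eq:horocycle_id} using a fixed point of $\bar{u}$---both are fine.
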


\begin{proof}
We calculate
\begin{align*}
(f_{\varphi_{-},\varphi_{+},s}\circ\widetilde{\mathcal{A}}^{-1})([g,\bar{u}(B_{\omega_{-}}\cap K)])&=q^{s\langle g\bar{u}o,g\omega_{-}\rangle+s\langle g\bar{u}o,g\omega_{+}\rangle}\varphi_{+}(g\omega_{+})\varphi_{-}(g\omega_{-})\\
&=q^{s\langle go,g\omega_{-}\rangle+s\langle go,g\omega_{+}\rangle}\varphi_{+}(g\omega_{+})\varphi_{-}(g\omega_{-})q^{s\langle\bar{u}o,\omega_{-}\rangle+s\langle\bar{u}o,\omega_{+}\rangle}\\
&=(f_{\varphi_{-},\varphi_{+},s}\circ\widetilde{\mathcal{A}}^{-1})([g,e(B_{\omega_{-}}\cap K)])q^{s\langle\bar{u}o,\omega_{-}\rangle+s\langle\bar{u}o,\omega_{+}\rangle}\\
&=(f_{\varphi_{-},\varphi_{+},s}\circ\widetilde{\mathcal{A}}^{-1})([g,e(B_{\omega_{-}}\cap K)])q^{s\langle\bar{u}o,\omega_{+}\rangle},
\end{align*}
where we used the horocycle identity and the equality $\langle\bar{u}o,\omega_{-}\rangle=\langle o,\omega_{-}\rangle=0$ (see e.g.\@ \cite[La.\@ 3.1]{Ve02}).
\end{proof}

Before calculating $\mathrm{I}_c(\varphi,n)$ we introduce the projection
\begin{gather*}
    \pi\colon G\times_M  B_{\omega_-}/(B_{\omega_-}\cap K)\to G/M,\quad [g,\bar{u}]\mapsto gM.
\end{gather*}

\begin{proposition}\label{prop:coordinate_change}
    For each $\varphi\in C_c^{\mathrm{lc}}(\widetilde{{\mf P}}^{(2)})$ and each $n\in\mathbb{N}$ we have
\begin{gather*}
    \mathrm{I}_c(\varphi,n)=T(\mathbbm{1}_{S_{n}}\varphi)=\int_{B_{\omega_{-},n}/B_{\omega_{-},n}\cap K}T(\psi_{\varphi,\bar{u}})z^{\langle\bar{u}o,\omega_{+}\rangle}\intd\bar{u},
\end{gather*}
where $\psi_{\varphi,\bar{u}}$ denotes the locally constant function
\begin{gather*}
    \psi_{\varphi,\bar{u}}(x,\omega_{1},\omega_{2})\coloneqq \mathbbm{1}_{\widetilde{{\mf P}}}(x,\omega_{1},\omega_{2})\int_M\varphi(\pi(\widetilde{\mathcal{A}}(x,\omega_{1},\omega_{2}))m\bar{u}o,\omega_{1},\omega_{2})\intd m.
\end{gather*}
\end{proposition}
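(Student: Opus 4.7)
The plan is to perform the coordinate change $\widetilde{\mathcal{A}}$ from Section~\ref{sec:coord} explicitly on the triple integral defining $T(\mathbbm{1}_{S_n}\varphi)$, to factor out the $\bar u$-dependence via Lemma~\ref{la:transform_function}, and then to recognize the resulting inner integral as $T(\psi_{\varphi,\bar u})$. Since $T$ is defined through general distributions $\mu_\pm\in\mathcal{D}'(\Omega)$ (arising from $\tilde u_\pm = p_{z,\pm}B_\pm^\star(\mu_\pm)$ via Proposition~\ref{prop:cpt_picture}), my first step is to use Lemma~\ref{la:meas_approx} to approximate each $\mu_\pm$ weakly by locally constant densities $\chi_\pm(k\omega_+)\intd k$ so that the integrand of $T$ becomes an honest function and the coordinate change can be applied termwise.

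With $\mu_\pm$ replaced by such densities, the integrand of $T(\mathbbm{1}_{S_n}\varphi)$ becomes an integrand against the canonical measure $\mu_{\widetilde{{\mf P}}^{(2)}}$ of Proposition~\ref{prop:meas_Sigma2} after compensating the $z$-kernel against the $q$-kernel of $\mu_{\widetilde{{\mf P}}^{(2)}}$; the result has the form $\mathbbm{1}_{S_n}\varphi\cdot f_{\chi_-,\chi_+,s-1}$, in the notation of Lemma~\ref{la:transform_function}, for $s$ with $q^s = z$. The first pushforward formula in Lemma~\ref{la:pushforward_meas} converts this into an integral over $(h,\bar u)\in G\times_M B_{\omega_-}/(B_{\omega_-}\cap K)$ weighted by $c\cdot q^{\langle \bar u o,\omega_+\rangle}$, and Lemma~\ref{la:cutoff_pushforward} restricts the $\bar u$-range to $B_{\omega_-,n}/(B_{\omega_-,n}\cap K)$.

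Next, Lemma~\ref{la:transform_function} says that the $\bar u$-dependence of $f_{\chi_-,\chi_+,s-1}\circ\widetilde{\mathcal{A}}^{-1}([h,\bar u])$ is precisely $q^{(s-1)\langle \bar u o,\omega_+\rangle}$, so combined with the measure weight $q^{\langle \bar u o,\omega_+\rangle}$ the total $\bar u$-dependent factor collapses to $z^{\langle\bar u o,\omega_+\rangle}$, exactly the prefactor stated in the proposition. The $\bar u$-integration now pulls outside, and the remaining inner integral runs over $h\in G$ modulo $M$, against the value at $\bar u = e$ of the integrand shifted from $ho$ to $h\bar u o$. Passing from $G$ to $G/M\cong\widetilde{{\mf P}}$ by the second formula of Lemma~\ref{la:pushforward_meas} introduces exactly the $M$-average $\int_M \varphi(hm\bar u o,\bigcdot,\bigcdot)\intd m$ and the support condition $\mathbbm{1}_{\widetilde{{\mf P}}}$ from the definition of $\psi_{\varphi,\bar u}$; so the inner integral is precisely $T(\psi_{\varphi,\bar u})$ in the approximated setting. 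A limiting argument in $\chi_\pm$ then transfers the identity to the original distributions.

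The main obstacle I expect is the book-keeping between the canonical $q$-kernel of $\mu_{\widetilde{{\mf P}}^{(2)}}$ and the $z$-kernel of $T$: the insertion and cancellation of $q^{\langle x,\omega\rangle}$ and $z^{\langle x,\omega\rangle}$ must be done carefully so that Lemma~\ref{la:transform_function} applies at the right exponent and the residual $\bar u$-weight comes out as $z^{\langle \bar u o,\omega_+\rangle}$ rather than a mismatched power of $q$. A second delicate point is checking that the $M$-average built into $\psi_{\varphi,\bar u}$ is exactly what is produced by the $M$-quotient $G\to G/M$ in the pushforward identity for $\widetilde{{\mf P}}$; this relies crucially on the fact that $M$ fixes both $\omega_\pm$ and $o$, so that the Poisson kernel and the $\omega$-arguments of $\varphi$ are $M$-invariant and only the vertex-argument $h\bar u o$ inherits the $m$-dependence through $hm\bar u o$.
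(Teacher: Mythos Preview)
Your proposal is correct and follows essentially the same route as the paper's proof: approximate $\mu_\pm$ by locally constant densities via Lemma~\ref{la:meas_approx}, use Lemma~\ref{la:pushforward_meas} together with Lemmas~\ref{la:cutoff_pushforward} and~\ref{la:transform_function} to factor the $\bar u$-dependence as $z^{\langle\bar u o,\omega_+\rangle}$, then pass to $G/M$ to obtain the $M$-average in $\psi_{\varphi,\bar u}$ and apply the second formula of Lemma~\ref{la:pushforward_meas} to identify the inner integral with the approximated $T(\psi_{\varphi,\bar u})$ before taking limits. The only cosmetic difference is your choice of exponent convention $q^s=z$ (so $f_{\chi_-,\chi_+,s-1}$) versus the paper's $z=q^{s+1}$ (so $f_{\chi_\ell^-,\chi_j^+,s}$), which is a harmless relabeling.
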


\begin{proof}
Let $\{\chi_{j}^{\pm}\}_{j}\subseteq C^{\mathrm{lc}}(\Omega)$ be sequences approximating $\mu_{\pm}$ with respect to the Haar measure $\intd k$ as in Lemma~\ref{la:meas_approx}. Then we have, for $j,\ell\to\infty$,
\begin{gather*}
   \sum_{x\in\widetilde{\mathfrak{X}}}\quad\iint\limits_{(K/K\cap B_{\omega_{+}})^2}\mathbbm{1}_{S_{n}}\varphi(x,k_{-}\omega_{+},k_{+}\omega_{+})z^{\langle x,k_{-}\omega_{+}\rangle}z^{\langle x,k_{+}\omega_{+}\rangle}\chi_{j}^{+}(k_{+}\omega_{+})\intd k_{+}\chi_{\ell}^{-}(k_{-}\omega_{+})\intd k_{-}\\
\to T(\mathbbm{1}_{S_{n}}\varphi)=\mathrm{I}_c(\varphi,n).
\end{gather*}
Let $z=q^{s+1}$ and fix $j,\ell$. We use Lemma \ref{la:pushforward_meas}, \ref{la:cutoff_pushforward} and \ref{la:transform_function} to rewrite the left hand side as
\begin{align*}
&\int_{\widetilde{{\mf P}}^{(2)}}\mathbbm{1}_{S_{n}}\varphi(x,k_{-}\omega_{+},k_{+}\omega_{+})q^{s\langle x,k_{-}\omega_{+}\rangle}q^{s\langle x,k_{+}\omega_{+}\rangle}\chi_{j}^{+}(k_{+}\omega_{+})\chi_{\ell}^{-}(k_{-}\omega_{+})\intd\mu_{\widetilde{{\mf P}}^{(2)}}\\
&=c\int_{G}\int_{B_{\omega_{-}}/B_{\omega_{-}}\cap K}((\mathbbm{1}_{S_{n}}\varphi)\circ\widetilde{\mathcal{A}}^{-1})(f_{\chi_{\ell}^{-},\chi_{j}^{+},s}\circ\widetilde{\mathcal{A}}^{-1})\cdot q^{\langle\bar{u}o,\omega_+\rangle}\intd\bar{u}\intd g\\
&=c\int_{G}\int_{B_{\omega_{-},n}/B_{\omega_{-},n}\cap K}(\varphi\circ\widetilde{\mathcal{A}}^{-1})(f_{\chi_{\ell}^{-},\chi_{j}^{+},s}\circ\widetilde{\mathcal{A}}^{-1})\cdot q^{\langle\bar{u}o,\omega_+\rangle}\intd\bar{u}\intd g\\
&=c\int_{G}\int_{B_{\omega_{-},n}/B_{\omega_{-},n}\cap K}(\varphi\circ\widetilde{\mathcal{A}}^{-1})([g,\bar{u}])(f_{\chi_{\ell}^{-},\chi_{j}^{+},s}\circ\widetilde{\mathcal{A}}^{-1})([g,e])\cdot z^{\langle\bar{u}o,\omega_+\rangle}\intd\bar{u}\intd g\\
&=c\int_{B_{\omega_{-},n}/B_{\omega_{-},n}\cap K}z^{\langle\bar{u}o,\omega_+\rangle}\int_{G}(\varphi\circ\widetilde{\mathcal{A}}^{-1})([g,\bar{u}])(f_{\chi_{\ell}^{-},\chi_{j}^{+},s}\circ\widetilde{\mathcal{A}}^{-1})([g,e])\intd g\intd\bar{u}\\
&=c\int_{B_{\omega_{-},n}/B_{\omega_{-},n}\cap K}z^{\langle\bar{u}o,\omega_+\rangle}\int_{G/M}\int_M(\varphi\circ\widetilde{\mathcal{A}}^{-1})([gm,\bar{u}])\intd m\,(f_{\chi_{\ell}^{-},\chi_{j}^{+},s}\circ\widetilde{\mathcal{A}}^{-1})([g,e])\intd gM\intd\bar{u}.
\end{align*}
In this expression we now apply the second integral formula from Lemma \ref{la:pushforward_meas} to
\begin{gather*}
    \Phi(gM)\coloneqq\int_M(\varphi\circ\widetilde{\mathcal{A}}^{-1})([gm,\bar{u}])\intd m\cdot (f_{\chi_{\ell}^{-},\chi_{j}^{+},s}\circ\widetilde{\mathcal{A}}^{-1})([g,e])
\end{gather*}
and claim that we end up with
\begin{gather}\label{eq:proof_Ic}
\int_{B_{\omega_{-},n}/B_{\omega_{-},n}\cap K}z^{\langle\bar{u}o,\omega_+\rangle}\int_{\widetilde{{\mf P}}^{(2)}}(\psi_{\varphi,\bar{u}}\cdot f_{\chi_{\ell}^{-},\chi_{j}^{+},s})(x,k_{-}\omega_{+},k_{+}\omega_{+})\intd\mu_{\widetilde{{\mf P}}^{(2)}}\intd\bar{u}.
\end{gather}
Indeed, we obtain the following identity for $\psi\coloneqq\psi_{\varphi,\bar{u}}\cdot f_{\chi_{\ell}^{-},\chi_{j}^{+},s}$:
\begin{align*}
    \psi(go,g\omega_{-},g\omega_{+})&=\int_M\varphi(gm\bar{u}o,g\omega_{-},g\omega_{+})\intd m\cdot f_{\chi_{\ell}^{-},\chi_{j}^{+},s}(go,g\omega_{-},g\omega_{+})\\
&=\int_M(\varphi\circ\widetilde{\mathcal{A}}^{-1})([gm,\bar{u}])\intd m\cdot (f_{\chi_{\ell}^{-},\chi_{j}^{+},s}\circ\widetilde{\mathcal{A}}^{-1})([g,e])=\Phi(gM).
\end{align*}
Now note that, for $j,\ell\to\infty$, we have
\begin{gather*}
\int_{\widetilde{{\mf P}}^{(2)}}(\psi_{\varphi,\bar{u}}\cdot f_{\chi_{\ell}^{-},\chi_{j}^{+},s})(x,k_{-}\omega_{+},k_{+}\omega_{+})\intd\mu_{\widetilde{{\mf P}}^{(2)}}\to T(\psi_{\varphi,\bar{u}})
\end{gather*}
so that taking the limits in the integral from \eqref{eq:proof_Ic} yields the desired result.
\end{proof}

\begin{proposition}\label{prop:Ic_Gamma}
    \begin{gather*}
        \mathrm{I}_c^{\Gamma}(\mathbbm{1},n)=T_{\Gamma}(\mathbbm{1}_{\widetilde{{\mf P}}})\int_{B_{\omega_{-},n}/B_{\omega_{-},n}\cap K}z^{\langle\bar{u}o,\omega_{+}\rangle}\intd\bar{u}.
    \end{gather*}
\end{proposition}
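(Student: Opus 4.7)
The plan is to apply Proposition \ref{prop:coordinate_change} to a lifted test function and then use the $\Gamma$-invariance of $T$ (Lemma \ref{la:T_Gamma_inv}) to identify the inner distributional integral. First I will use that $\mathcal{F}\times\Omega\times\Omega$ is a fundamental domain for the diagonal $\Gamma$-action on $\widetilde{\mf P}^{(2)}$ and that $\mathbbm{1}_{S_{n}}$ is $G$-invariant to write
\[
\mathrm{I}_c^{\Gamma}(\mathbbm{1},n)=T_\Gamma(\mathbbm{1}_{S_n})=T(\mathbbm{1}_{\mathcal{F}\times\Omega^{2}}\mathbbm{1}_{S_n})=\mathrm{I}_c(\mathbbm{1}_{\mathcal{F}\times\Omega^{2}},n),
\]
and then apply Proposition \ref{prop:coordinate_change} with $\varphi=\mathbbm{1}_{\mathcal{F}\times\Omega^{2}}$ (which lies in $C_c^{\mathrm{lc}}(\widetilde{\mf P}^{(2)})$ since $\mathcal{F}$ is finite) to get
\[
\mathrm{I}_c^{\Gamma}(\mathbbm{1},n)=\int_{B_{\omega_{-},n}/B_{\omega_{-},n}\cap K}T(\psi_{\mathbbm{1}_{\mathcal{F}\times\Omega^{2}},\bar{u}})\,z^{\langle\bar{u}o,\omega_+\rangle}\intd\bar{u}.
\]
Thus it remains to prove $T(\psi_{\mathbbm{1}_{\mathcal{F}\times\Omega^{2}},\bar{u}})=T_\Gamma(\mathbbm{1}_{\widetilde{\mf P}})$ for every $\bar{u}\in B_{\omega_-,n}$, after which the claim follows by pulling the resulting constant out of the integral.

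For the key identity I set $\rho_0\coloneqq\mathbbm{1}_{(\mathcal{F}\times\Omega^2)\cap\widetilde{\mf P}}$ and $\rho_1\coloneqq\psi_{\mathbbm{1}_{\mathcal{F}\times\Omega^2},\bar{u}}$, both elements of $C^{\mathrm{lc}}_c(\widetilde{\mf P}^{(2)})$ supported in $\widetilde{\mf P}$. Compactness of the support of $\rho_1$ follows because for $(x,\omega_1,\omega_2)\in\widetilde{\mf P}$ the displacement $d(gm\bar{u}o,x)=d(m\bar{u}o,o)$ stays uniformly bounded in $m\in M$, forcing $x$ into a finite neighborhood of $\mathcal{F}$ whenever $\mathbbm{1}_{\mathcal{F}}(gm\bar{u}o)\neq 0$; local constancy follows from that of $\mathbbm{1}_{\widetilde{\mf P}}$ together with local constancy of the coordinate $g(x,\omega_1,\omega_2)M\in G/M$ in the district topology. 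The same fundamental-domain argument as in the first step shows $T(\rho_0)=T_\Gamma(\mathbbm{1}_{\widetilde{\mf P}})$, so the task reduces to $T(\rho_0)=T(\rho_1)$. The crucial fact is that both $\rho_0$ and $\rho_1$ are $\Gamma$-partitions of $\mathbbm{1}_{\widetilde{\mf P}}$: for $\rho_0$ this is just $\mathcal{F}$ being a fundamental domain, and for $\rho_1$ it is the computation
\[
\sum_{\gamma\in\Gamma}\gamma\rho_1(x,\omega_1,\omega_2)=\mathbbm{1}_{\widetilde{\mf P}}(x,\omega_1,\omega_2)\int_M\Bigl(\sum_{\gamma\in\Gamma}\mathbbm{1}_{\mathcal{F}}(\gamma^{-1}gm\bar{u}o)\Bigr)\intd m=\mathbbm{1}_{\widetilde{\mf P}}(x,\omega_1,\omega_2),
\]
where the inner sum equals $1$ pointwise because $\mathcal{F}$ is a fundamental domain for the free $\Gamma$-action on $\widetilde{\mathfrak{X}}$.

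To finish, I observe that by properness of the $\Gamma$-action on $\widetilde{\mathfrak{X}}$ and compactness of $\mathrm{supp}(\rho_i)$, only finitely many $\gamma\in\Gamma$ contribute pointwise to the products $\rho_0\cdot\gamma\rho_1$ and $(\gamma^{-1}\rho_0)\cdot\rho_1$, so summation and $T$ interchange freely by $\mathbb{C}$-linearity. Combining the partition identities with the $\Gamma$-invariance of $T$ yields
\[
T(\rho_0)=T\Bigl(\rho_0\sum_{\gamma\in\Gamma}\gamma\rho_1\Bigr)=\sum_{\gamma\in\Gamma}T(\rho_0\cdot\gamma\rho_1)=\sum_{\gamma\in\Gamma}T\bigl((\gamma^{-1}\rho_0)\cdot\rho_1\bigr)=T\Bigl(\rho_1\sum_{\gamma\in\Gamma}\gamma^{-1}\rho_0\Bigr)=T(\rho_1),
\]
which is the key identity. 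The main obstacle is the verification of the compactness of $\mathrm{supp}(\rho_1)$ and the $\Gamma$-partition identity for $\rho_1$; both ultimately rest on the explicit form of $\psi_{\varphi,\bar{u}}$ in Proposition \ref{prop:coordinate_change} together with $\mathcal{F}$ being a fundamental domain for the free $\Gamma$-action on $\widetilde{\mathfrak{X}}$.
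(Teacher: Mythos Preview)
Your proof is correct and follows essentially the same route as the paper: you specialize $\varphi=\mathbbm{1}_{\mathcal{F}\times\Omega^{2}}$ (the paper takes any $\varphi$ with $\sum_{\gamma}\gamma\varphi=\mathbbm{1}$), apply Proposition~\ref{prop:coordinate_change}, and then verify the key identity $\sum_{\gamma\in\Gamma}\gamma\psi_{\varphi,\bar u}=\mathbbm{1}_{\widetilde{\mf P}}$ via the $G$-equivariance of $\widetilde{\mathcal A}$ and $\pi$ together with the fundamental-domain property of $\mathcal F$ --- exactly the paper's computation. The only difference is that your final partition-swap argument $T(\rho_0)=T(\rho_1)$ re-derives, in this particular instance, the well-definedness of $T_\Gamma$ already recorded in the lemma preceding \eqref{eq:pairing_right_T_Gamma}; once you have $\sum_\gamma\gamma\rho_1=\mathbbm{1}_{\widetilde{\mf P}}$ you may conclude $T(\rho_1)=T_\Gamma(\mathbbm{1}_{\widetilde{\mf P}})$ directly from that definition, so the swap step, while correct, is not needed.
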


\begin{proof}
Let $\varphi\in C_c^{\mathrm{lc}}(\widetilde{{\mf P}}^{(2)})$ be such that $\sum_{\gamma\in\Gamma}\gamma\varphi=\mathbbm{1}$. Then, for the function $\psi_{\varphi,\bar{u}}$ from Proposition \ref{prop:coordinate_change} we have $\sum_{\gamma\in\Gamma}\gamma\psi_{\varphi,\bar{u}}=\mathbbm{1}_{\widetilde{{\mf P}}}$ since
\begin{align*}
    &\hphantom{{}={}}\sum_{\gamma\in\Gamma}\int_M\varphi(\pi(\widetilde{\mathcal{A}}(\gamma x,\gamma\omega_{1},\gamma\omega_{2}))m\bar{u}o,\gamma\omega_{1},\gamma\omega_{2})\intd m\\
&=\sum_{\gamma\in\Gamma}\int_M\varphi(\gamma\pi(\widetilde{\mathcal{A}}(x,\omega_{1},\omega_{2}))m\bar{u}o,\gamma\omega_{1},\gamma\omega_{2})\intd m=\int_M\mathbbm{1}\intd m=1.\qedhere
\end{align*}
\end{proof}

The right hand side in Proposition \ref{prop:Ic_Gamma} is closely related to the geodesic pairing of $u_+$ and $u_-$. In fact, Lemma~\ref{lem:TGamma-product pairing} immediately implies the following corollary of Proposition \ref{prop:Ic_Gamma}:

\begin{corollary}\label{cor:Ic_Gamma}
   \begin{gather*}
    \mathrm{I}_c^{\Gamma}(\mathbbm{1},n)=\langle u_+,u_-\rangle_\mathrm{geod}\cdot\int_{B_{\omega_{-},n}/(B_{\omega_{-},n}\cap K)}z^{\langle\bar{u}o,\omega_{+}\rangle}\intd\bar{u}.
   \end{gather*}
\end{corollary}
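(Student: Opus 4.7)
The plan is to observe that Corollary~\ref{cor:Ic_Gamma} is a direct consequence of the two results immediately preceding it, namely Proposition~\ref{prop:Ic_Gamma} and Lemma~\ref{lem:TGamma-product pairing}. Concretely, Proposition~\ref{prop:Ic_Gamma} already gives the factorization
\[
\mathrm{I}_c^{\Gamma}(\mathbbm{1},n)=T_{\Gamma}(\mathbbm{1}_{\widetilde{{\mf P}}})\cdot\int_{B_{\omega_{-},n}/(B_{\omega_{-},n}\cap K)}z^{\langle\bar{u}o,\omega_{+}\rangle}\intd\bar{u},
\]
so everything hinges on identifying the scalar prefactor $T_{\Gamma}(\mathbbm{1}_{\widetilde{{\mf P}}})$.

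For this, I would simply invoke Lemma~\ref{lem:TGamma-product pairing}, which explicitly states $T_{\Gamma}(\mathbbm{1}_{\widetilde{{\mf P}}})=\langle u_+,u_-\rangle_{\mathrm{geod}}$. Substituting this equality into the formula above yields the claim of the corollary verbatim. No further computation is required, since the nontrivial content — the appearance of the indicator $\mathbbm{1}_{\widetilde{\mf P}}$ inside $\psi_{\varphi,\bar{u}}$, the $M$-averaging and the use of the $\Gamma$-summation trick — has already been absorbed into the proofs of Proposition~\ref{prop:Ic_Gamma} and Lemma~\ref{lem:TGamma-product pairing}.

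There is essentially no obstacle here: the corollary is a one-line assembly of two previously established identities, and is presented separately only to isolate the geometric interpretation of the prefactor as the geodesic pairing, which is the form in which the result will be used in the proof of the pairing formula.
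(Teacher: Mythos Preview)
Your proposal is correct and matches the paper's approach exactly: the paper states that Lemma~\ref{lem:TGamma-product pairing} immediately implies Corollary~\ref{cor:Ic_Gamma} from Proposition~\ref{prop:Ic_Gamma}, which is precisely the substitution you describe.
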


Finally, we can also explicitly compute the integral in Corollary \ref{cor:Ic_Gamma}:

\begin{lemma}\label{la:N_bar_int}
For each $n\in\mathbb{N}$ we have
\begin{gather*}
    \int_{B_{\omega_{-},n}/(B_{\omega_{-},n}\cap K)}z^{\langle\bar{u}o,\omega_{+}\rangle}\intd\bar{u}=\int_{B_{\omega_-,n}}z^{\langle\bar{u}o,\omega_+\rangle}\intd\bar{u}=(1-z^{-2})\sum_{j=0}^{n-1}\left(\frac{q}{z^2}\right)^j+\left(\frac{q}{z^2}\right)^n,
\end{gather*}
where $\intd\bar{u}$ is normalized such that $\intd\bar{u}(\mathrm{Stab}_{B_{\omega_-}}(x))=q^{\langle x,\omega_-\rangle}$ (see Appendix \ref{subsec:int_formulas}).

\noindent In particular, we obtain
\begin{gather*}
\int_{B_{\omega_-}}z^{\langle\bar{u}o,\omega_+\rangle}\intd\bar{u}=\frac{z^2-1}{z^2-q}
\end{gather*}
for $\abs{z}>\sqrt{q}$.
\end{lemma}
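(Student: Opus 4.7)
The plan is to reduce the integral to a sum over the filtration
\[B_{\omega_-,0}\subseteq B_{\omega_-,1}\subseteq\ldots\subseteq B_{\omega_-,n}\]
of pointwise stabilizers of deeper and deeper vertices on the reference geodesic $]\omega_-,o]=(\ldots,x_{-1},x_0=o)$. The inclusions hold because an isometry fixing $\omega_-$ and $x_{-(m-1)}$ must fix the entire geodesic from $x_{-(m-1)}$ to $\omega_-$, hence $x_{-m}$ as well. Also $B_{\omega_-,n}\cap K=B_{\omega_-}\cap K$ for every $n$, since any $k\in B_{\omega_-}\cap K$ fixes the geodesic $[o,\omega_-[$ pointwise, including $x_{-n}$. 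Consequently the integrand $\bar u\mapsto z^{\langle\bar uo,\omega_+\rangle}$ is right $(B_{\omega_-,n}\cap K)$-invariant (if $k\in K$ then $ko=o$), so
\[\int_{B_{\omega_-,n}/(B_{\omega_-,n}\cap K)}z^{\langle\bar uo,\omega_+\rangle}\intd\bar u=\int_{B_{\omega_-,n}}z^{\langle\bar uo,\omega_+\rangle}\intd\bar u\]
using that $\intd\bar u(B_{\omega_-}\cap K)=q^{\langle o,\omega_-\rangle}=1$.

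Next I would write $B_{\omega_-,n}=\bigsqcup_{m=0}^{n}(B_{\omega_-,m}\setminus B_{\omega_-,m-1})$ (with $B_{\omega_-,-1}=\emptyset$) and compute the integrand and the measure on each stratum. The main geometric observation is that for $\bar u\in B_{\omega_-,m}\setminus B_{\omega_-,m-1}$, the element $\bar u$ fixes $x_{-m}$ but moves $x_{-m+1}$, so $\bar u o$ lies in the subtree hanging off $x_{-m}$ on the $\omega_+$-side, at distance $m$ from $x_{-m}$, in a branch different from the one containing $x_{-m+1}$. The geodesic from $\bar u o$ to $\omega_+$ is therefore forced to run
\[\bar uo,\bar ux_{-1},\ldots,\bar ux_{-(m-1)},x_{-m},x_{-(m-1)},\ldots,x_0=o,x_1,x_2,\ldots,\]
so its confluence with $[o,\omega_+[=(o,x_1,x_2,\ldots)$ is at $y=o$, and $d(\bar uo,o)=2m$. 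By the definition of the horocycle bracket, $\langle\bar uo,\omega_+\rangle=d(o,o)-d(\bar uo,o)=-2m$, hence the integrand equals $z^{-2m}$ on this stratum. For the measure, the normalization gives $\intd\bar u(B_{\omega_-,m})=q^{\langle x_{-m},\omega_-\rangle}=q^m$ (the confluence of $[o,\omega_-[$ and $[x_{-m},\omega_-[$ is $x_{-m}$ itself), so $\intd\bar u(B_{\omega_-,m}\setminus B_{\omega_-,m-1})=q^m-q^{m-1}$ for $m\geq 1$, and the innermost piece has mass $1$.

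Putting this together,
\[\int_{B_{\omega_-,n}}z^{\langle\bar uo,\omega_+\rangle}\intd\bar u=1+\sum_{m=1}^{n}(q^m-q^{m-1})z^{-2m},\]
and a direct rearrangement (collecting the two telescoping shifts of $\sum_j(q/z^2)^j$) shows this equals $(1-z^{-2})\sum_{j=0}^{n-1}(q/z^2)^j+(q/z^2)^n$, proving the stated finite-$n$ formula. For the second claim, note that $B_{\omega_-}=\bigcup_n B_{\omega_-,n}$: every $\bar u\in B_{\omega_-}$ fixes some vertex $y$, and the geodesic $[y,\omega_-[$ meets $[o,\omega_-[$ in some $x_{-n}$, which is then fixed by $\bar u$. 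For $|z|>\sqrt q$ the absolute contributions $(q-1)q^{m-1}|z|^{-2m}$ are summable, so dominated convergence allows us to pass to the limit $n\to\infty$, in which $(q/z^2)^n\to 0$ and the geometric series yields
\[\int_{B_{\omega_-}}z^{\langle\bar uo,\omega_+\rangle}\intd\bar u=\frac{1-z^{-2}}{1-q/z^2}=\frac{z^2-1}{z^2-q}.\]
The only nontrivial step is the horocycle-bracket computation in the middle paragraph; everything else is bookkeeping with the given normalization.
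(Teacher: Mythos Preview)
Your proposal is correct and follows essentially the same approach as the paper: both stratify $B_{\omega_-,n}$ by the shells $B_{\omega_-,m}\setminus B_{\omega_-,m-1}$, observe that the integrand is constantly $z^{-2m}$ on the $m$-th shell via the horocycle computation $\langle\bar uo,\omega_+\rangle=-2m$, and use the normalization to get shell measure $q^m-q^{m-1}$. The only cosmetic difference is that the paper packages the summation as an induction on $n$, whereas you sum the strata directly; you also spell out a few points (e.g.\ $B_{\omega_-,n}\cap K=B_{\omega_-}\cap K$ and the reason for $B_{\omega_-}=\bigcup_n B_{\omega_-,n}$) that the paper leaves implicit.
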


\begin{proof}
Recall that $]\omega_-,\omega_+[=(\ldots,x_{-1},x_0,x_1,\ldots)$ with $x_0=o$ and $B_{\omega_-,n}=\mathrm{Stab}_{B_{\omega_-}}(x_{-n})$. Then we may write $B_{\omega_-}=\bigcup_{n\in\N_0}B_{\omega_-,n}$. Since each $\bar{u}\in B_{\omega_-,n}$ fixes the whole one-sided geodesic $(\omega_-,x_{-n}]=(\ldots,x_{-n-1},x_{-n})$ we obtain $B_{\omega_-,n}\subseteq B_{\omega_-,n+1}$ for each $n\in\N_0$. We now claim that
\begin{gather*}
    \int_{B_{\omega_-,n}}z^{\langle\bar{u}o,\omega_+\rangle}\intd\bar{u}=(1-z^{-2})\sum_{j=0}^{n-1}\left(\frac{q}{z^2}\right)^j+\left(\frac{q}{z^2}\right)^n.
\end{gather*}
Indeed, for $n=0$ we have
\begin{gather*}
    \int_{B_{\omega_-,0}}z^{\langle\bar{u}o,\omega_+\rangle}\intd\bar{u}=\int_{B_{\omega_-,0}}1\intd\bar{u}=q^{\langle o,\omega_-\rangle}=1=\left(\frac{q}{z^2}\right)^0.
\end{gather*}
For the induction step we write $B_{\omega_-,n}=B_{\omega_-,n}\setminus B_{\omega_-,n-1}\cup B_{\omega_-,n-1}$. Now note that for each $\bar{u}\in B_{\omega_-,n}\setminus B_{\omega_-,n-1}$ we have (see Figure \ref{fig:horo_distance})
\begin{gather*}
    \langle\bar{u}o,\omega_+\rangle=d(o,o)-d(\bar{u}o,o)=-d(\bar{u}o,o)=-2n.
\end{gather*}

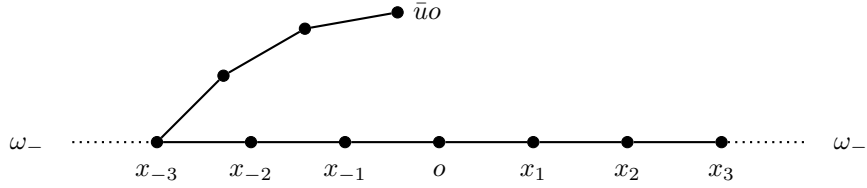
\begin{figure}
\tikzstyle{circle}=[shape=circle,draw,inner sep=1.5pt]
\begin{tikzpicture}[scale=2.5]
\node (0) at (0,0) [label=left:$\omega_-$]{};
\draw (0.5,0) node[circle,fill=black,label=below:$x_{-3}$] (1) {};
\draw[dotted,thick] (0) to (1);
\path (1) ++(0:0.5) node (2) [circle,inner sep=1.5pt, fill=black,label=below:$x_{-2}$] {};
\path (2) ++(0:0.5) node (3) [circle,inner sep=1.5pt, fill=black,label=below:$x_{-1}$] {};
\path (3) ++(0:0.5) node (4) [circle,inner sep=1.5pt, fill=black,label=below:$o$] {};
\path (4) ++(0:0.5) node (5) [circle,inner sep=1.5pt, fill=black,label=below:$x_{1}$] {};
\path (5) ++(0:0.5) node (6) [circle,inner sep=1.5pt, fill=black,label=below:$x_{2}$] {};
\path (6) ++(0:0.5) node (7) [circle,inner sep=1.5pt, fill=black,label=below:$x_{3}$] {};
\draw[-,thick] (1) to (2);
\draw[-,thick] (2) to (3);
\draw[-,thick] (3) to (4);
\draw[-,thick] (4) to (5);
\draw[-,thick] (5) to (6);
\draw[-,thick] (6) to (7);
\path (1) ++(45:0.5) node (9) [circle,inner sep=1.5pt, fill=black] {};
\path (9) ++(30:0.5) node (10) [circle,inner sep=1.5pt, fill=black] {};
\path (10) ++(10:0.5) node (11) [circle,inner sep=1.5pt, fill=black,label=right:$\bar{u}o$] {};
\draw[-,thick] (1) to (9);
\draw[-,thick] (9) to (10);
\draw[-,thick] (10) to (11);
%

\node (8) at (4,0) [label=right:$\omega_-$]{};
\draw[dotted,thick] (7) to (8);
\end{tikzpicture}
\caption{Action of $\bar{u}\in B_{\omega_-,3}\setminus B_{\omega_-,2}$ on the reference geodesic}
\label{fig:horo_distance}
\end{figure}

Thus, using the induction hypothesis,
\begin{align*}
    \int_{B_{\omega_-,n}}z^{\langle\bar{u}o,\omega_+\rangle}\intd\bar{u}&=\int_{B_{\omega_-,n}\setminus B_{\omega_-,n-1}}z^{-2n}\intd\bar{u}+(1-z^{-2})\sum_{j=0}^{n-2}\left(\frac{q}{z^2}\right)^j+\left(\frac{q}{z^2}\right)^{n-1}\\
&=(q^{\langle x_{-n},\omega_-\rangle}-q^{\langle x_{-n+1},\omega_-\rangle})z^{-2n}+(1-z^{-2})\sum_{j=0}^{n-2}\left(\frac{q}{z^2}\right)^j+\left(\frac{q}{z^2}\right)^{n-1}\\
&=(q^{n}-q^{n-1})z^{-2n}+(1-z^{-2})\sum_{j=0}^{n-2}\left(\frac{q}{z^2}\right)^j+\left(\frac{q}{z^2}\right)^{n-1}\\
&=\left(\frac{q}{z^2}\right)^n-z^{-2}\left(\frac{q}{z^2}\right)^{n-1}+(1-z^{-2})\sum_{j=0}^{n-2}\left(\frac{q}{z^2}\right)^j+\left(\frac{q}{z^2}\right)^{n-1}\\
&=(1-z^{-2})\sum_{j=0}^{n-1}\left(\frac{q}{z^2}\right)^j+\left(\frac{q}{z^2}\right)^n.
\end{align*}
For $\abs{z}>\sqrt{q}$ this sequence converges to
\begin{gather*}
    \int_{B_{\omega_-}}z^{\langle\bar{u}o,\omega_+\rangle}\intd\bar{u}=(1-z^{-2})\frac{1}{1-\frac{q}{z^2}}=\frac{z^2-1}{z^2-q}.\qedhere
\end{gather*}
\end{proof}

We conclude this subsection with the final expression we obtained for $\mathrm{I}_c^{\Gamma}(\mathbbm{1},n)$.
\begin{proposition}\label{prop:pairing_first_part}
    \begin{gather*}
        \mathrm{I}_c^{\Gamma}(\mathbbm{1},n)=\langle u_+,u_-\rangle_\mathrm{geod}\cdot\left((1-z^{-2})\sum_{j=0}^{n-1}\left(\frac{q}{z^2}\right)^j+\left(\frac{q}{z^2}\right)^n\right).
    \end{gather*}
\end{proposition}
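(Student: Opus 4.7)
The statement is essentially the concatenation of two results already established in the preceding development, so the plan is simply to combine them. First I would invoke Corollary~\ref{cor:Ic_Gamma}, which writes
$$ \mathrm{I}_c^{\Gamma}(\mathbbm{1},n) = \langle u_+,u_-\rangle_\mathrm{geod}\cdot\int_{B_{\omega_{-},n}/(B_{\omega_{-},n}\cap K)}z^{\langle\bar{u}o,\omega_{+}\rangle}\intd\bar{u}, $$
thereby separating the dynamical content (the geodesic pairing) from the purely group-theoretic integral on the horocyclic subgroup stabilizer $B_{\omega_-,n}$.

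Next I would substitute the explicit evaluation of that integral given by Lemma~\ref{la:N_bar_int}, namely
$$ \int_{B_{\omega_{-},n}/(B_{\omega_{-},n}\cap K)}z^{\langle\bar{u}o,\omega_{+}\rangle}\intd\bar{u} = (1-z^{-2})\sum_{j=0}^{n-1}\left(\frac{q}{z^2}\right)^j + \left(\frac{q}{z^2}\right)^n, $$
which was obtained there by stratifying $B_{\omega_-,n}$ via the nested sequence $B_{\omega_-,0}\subseteq B_{\omega_-,1}\subseteq \cdots \subseteq B_{\omega_-,n}$ and using that an element $\bar u\in B_{\omega_-,j}\setminus B_{\omega_-,j-1}$ satisfies $\langle \bar u o,\omega_+\rangle = -2j$ together with the normalization of the Haar measure.

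Plugging the second display into the first yields the claimed identity directly. Since all the real work has already been done in Corollary~\ref{cor:Ic_Gamma} (which in turn rested on Proposition~\ref{prop:coordinate_change}, the coordinate change via $\widetilde{\mathcal A}$, and Proposition~\ref{prop:Ic_Gamma}) and in Lemma~\ref{la:N_bar_int}, there is no genuine obstacle to overcome here; the proposition serves as a clean bookkeeping statement that packages the two ingredients into the form needed for the limiting argument $n\to\infty$ that will drive the pairing formula in the regime $|z|>\sqrt{q}$.
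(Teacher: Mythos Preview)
Your proposal is correct and matches the paper's approach exactly: the proposition is simply the combination of Corollary~\ref{cor:Ic_Gamma} with the explicit evaluation in Lemma~\ref{la:N_bar_int}, and the paper states it without further proof for precisely this reason.
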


\subsection{The integral \texorpdfstring{$\mathrm{I}_r^{\Gamma}(\mathbbm{1},n)$}{Ir(1,n)}}
The goal of this section is to compute $\mathrm{I}_r^{\Gamma}(\mathbbm{1},n)$ and relate it to $\langle u_+,u_-\rangle_\mathrm{geod}$. For this we first prove that $\mathrm{I}_r^{\Gamma}(\mathbbm{1},n)=\left(\frac{q}{z^2}\right)^{n}\langle u_+,u_-\rangle_{(\mathrm{op},\mf E)}$ and then use the pairing formula from Equation~\eqref{eq:pairing_right_edge} to establish the connection to $\langle u_+,u_-\rangle_\mathrm{geod}$.

We start by relating the sets $S_{n}^c$ to paths. For this, let $\mathcal{P}_{n}$ be the set of all finite edge chains $\mathbf{p}=(\vec{e}_{0},\ldots,\vec{e}_{n})$ of length $n+1$ in $\widetilde{\mathfrak{G}}$. Moreover, we denote the vertices on $\mathbf{p}\in\mathcal{P}_{n}$ by $p_0\coloneqq\iota(\vec{e}_{0}),\ldots,p_{n}\coloneqq\iota(\vec{e}_{n})$ and $p_{n+1}\coloneqq\tau(\vec{e}_{n})$. Now note that
\begin{gather}\label{eq:Snc_paths}
    (x,\omega_{1},\omega_{2})\in S_{n}^{c}\Leftrightarrow \exists!\,\mathbf{p}\in\mathcal{P}_{n}\colon p_0=x\text{ and }\omega_{1},\omega_{2}\in\partial_{+}(p_{n},p_{n+1})
\end{gather}
so that $\mathbbm{1}_{S_{n}^{c}}=\sum_{\mathbf{p}\in\mathcal{P}_{n}}\mathbbm{1}_{\mathbf{p}}$ with
\begin{gather*}
    \mathbbm{1}_{\mathbf{p}}(x,\omega_{1},\omega_{2})\coloneqq
\begin{cases}
    1\quad&\colon p_0=x\text{ and }\omega_{1},\omega_{2}\in\partial_{+}(p_{n},p_{n+1})\\
    0\quad&\colon\text{else}
\end{cases}.
\end{gather*}

This observation allows us to rewrite $\mathrm{I}_r(\varphi,n)$, for $\varphi\in C_c^{\mathrm{lc}}(\widetilde{{\mf P}}^{(2)})$, in terms of sums over paths in the following way.

\begin{lemma}\label{la:Ir_first_form}For each $\varphi\in C_c^{\mathrm{lc}}(\widetilde{{\mf P}}^{(2)})$ we have
\begin{gather*}
    \mathrm{I}_r(\varphi,n)=z^{-2n}\sum_{\mathbf{p}\in\mathcal{P}_{n}}\int_{\partial_{+}(p_{n},p_{n+1})}\int_{\partial_{+}(p_{n},p_{n+1})}\varphi(p_{0},\omega_{1},\omega_{2})z^{\langle p_{n},\omega_{1}\rangle}z^{\langle p_{n},\omega_{2}\rangle}\intd\mu_{-}(\omega_{1})\intd\mu_{+}(\omega_{2}).
\end{gather*}
\end{lemma}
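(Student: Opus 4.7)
The plan is to unpack the definition of $T(\mathbbm{1}_{S_n^c}\varphi)$ in terms of $\mu_\pm$, decompose $\mathbbm{1}_{S_n^c}$ using the bijection between $S_n^c$ and paths of length $n+1$ stated in~\eqref{eq:Snc_paths}, and then exploit the fact that the horocycle bracket changes in a controlled way when one shifts the base vertex along a geodesic ray.

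More precisely, I would first write, by the very definition of $T$,
\begin{equation*}
\mathrm{I}_r(\varphi,n) = \sum_{x\in\widetilde{\mathfrak{X}}}\int_\Omega\int_\Omega \mathbbm{1}_{S_n^c}(x,\omega_1,\omega_2)\,\varphi(x,\omega_1,\omega_2)\,z^{\langle x,\omega_1\rangle}z^{\langle x,\omega_2\rangle}\,\mathrm{d}\mu_-(\omega_1)\,\mathrm{d}\mu_+(\omega_2).
\end{equation*}
Then, thanks to~\eqref{eq:Snc_paths}, we have the disjoint decomposition $\mathbbm{1}_{S_n^c}=\sum_{\mathbf{p}\in\mathcal{P}_n}\mathbbm{1}_{\mathbf{p}}$, where $\mathbbm{1}_{\mathbf{p}}(x,\omega_1,\omega_2)=1$ precisely when $x=p_0$ and $\omega_1,\omega_2\in\partial_+(p_n,p_{n+1})$. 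Substituting and exchanging sums/integrals (legal since for a given $\varphi\in C_c^{\mathrm{lc}}$ only finitely many $\mathbf{p}$ contribute), the sum over $x\in\widetilde{\mathfrak{X}}$ collapses to $x=p_0$, and the $\Omega$-integrals are restricted to $\partial_+(p_n,p_{n+1})$.

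The remaining step is to replace the base vertex $p_0$ with $p_n$ in the horocycle brackets. For any $\omega\in\partial_+(p_n,p_{n+1})$, the non-backtracking path $(p_0,\ldots,p_n,p_{n+1})$ is an initial segment of the unique geodesic ray $[p_0,\omega[$. Consequently, for every $y$ sufficiently far along $[p_n,\omega[$, one has $d(p_0,y)=n+d(p_n,y)$, so the definition of the horocycle bracket yields
\begin{equation*}
\langle p_0,\omega\rangle = d(o,y)-d(p_0,y) = d(o,y)-d(p_n,y)-n = \langle p_n,\omega\rangle - n.
\end{equation*}
Therefore $z^{\langle p_0,\omega_1\rangle}z^{\langle p_0,\omega_2\rangle}=z^{-2n}z^{\langle p_n,\omega_1\rangle}z^{\langle p_n,\omega_2\rangle}$, and the constant factor $z^{-2n}$ can be pulled in front of the sum, producing the claimed identity.

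I do not expect any real obstacle here: the decomposition of $S_n^c$ into cylinder sets indexed by $\mathcal{P}_n$ is already recorded in~\eqref{eq:Snc_paths}, and the horocycle shift $\langle p_0,\omega\rangle=\langle p_n,\omega\rangle-n$ is a direct consequence of $(p_0,\ldots,p_{n+1})$ being an initial segment of $[p_0,\omega[$. The only mild point worth mentioning is that each $\omega\in\partial_+(p_n,p_{n+1})$ determines $\mathbf{p}$ uniquely given $x=p_0$, which is precisely the uniqueness statement in~\eqref{eq:Snc_paths} and ensures that the decomposition $\mathbbm{1}_{S_n^c}=\sum_{\mathbf{p}}\mathbbm{1}_{\mathbf{p}}$ is non-overlapping.
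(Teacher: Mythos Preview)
Your proposal is correct and follows essentially the same route as the paper: expand $T(\mathbbm{1}_{S_n^c}\varphi)$, use the disjoint decomposition $\mathbbm{1}_{S_n^c}=\sum_{\mathbf{p}\in\mathcal{P}_n}\mathbbm{1}_{\mathbf{p}}$ from~\eqref{eq:Snc_paths} to collapse the $x$-sum and restrict the $\Omega$-integrals, and then shift the horocycle brackets from $p_0$ to $p_n$ via $\langle p_0,\omega\rangle=\langle p_n,\omega\rangle-n$. The only cosmetic difference is that the paper computes this last shift by picking the branch point $\xi\in{]o,\omega[}\cap{]p_n,\omega[}$ and using $d(p_0,\xi)=d(p_0,p_n)+d(p_n,\xi)$, whereas you take $y$ far along $[p_n,\omega[$; both arguments are equivalent.
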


\begin{proof}
By definition we have $\mathrm{I}_r(\varphi,n)=T(\mathbbm{1}_{S_{n}^{c}}\varphi)$. But Equation \eqref{eq:Snc_paths} implies that
\begin{align*}
T(\mathbbm{1}_{S_{n}^{c}}\varphi)&=\sum_{x\in\widetilde{\mathfrak{X}}}\int_{\Omega}\int_{\Omega}(\mathbbm{1}_{S_{n}^{c}}\varphi)(x,\omega_{1},\omega_{2})z^{\langle x,\omega_{1}\rangle}z^{\langle x,\omega_{2}\rangle}\intd \mu_{-}(\omega_{1})\intd \mu_{+}(\omega_{2})\\
&=\sum_{\mathbf{p}\in\mathcal{P}_{n}}\sum_{x\in\widetilde{\mathfrak{X}}}\int_{\Omega}\int_{\Omega}(\mathbbm{1}_{\mathbf{p}}\varphi)(x,\omega_{1},\omega_{2})z^{\langle x,\omega_{1}\rangle}z^{\langle x,\omega_{2}\rangle}\intd \mu_{-}(\omega_{1})\intd \mu_{+}(\omega_{2})\\
&=\sum_{\mathbf{p}\in\mathcal{P}_{n}}\int_{\Omega}\int_{\Omega}(\mathbbm{1}_{\mathbf{p}}\varphi)(p_{0},\omega_{1},\omega_{2})z^{\langle p_{0},\omega_{1}\rangle}z^{\langle p_{0},\omega_{2}\rangle}\intd \mu_{-}(\omega_{1})\intd \mu_{+}(\omega_{2})\\
&=\sum_{\mathbf{p}\in\mathcal{P}_{n}}\int_{\partial_{+}(p_{n},p_{n+1})}\int_{\partial_{+}(p_{n},p_{n+1})}\varphi(p_{0},\omega_{1},\omega_{2})z^{\langle p_{0},\omega_{1}\rangle}z^{\langle p_{0},\omega_{2}\rangle}\intd \mu_{-}(\omega_{1})\intd \mu_{+}(\omega_{2}).
\end{align*}
We finally claim that $\langle p_{0},\omega\rangle=\langle p_{n},\omega\rangle-n$, for each $\mathbf{p}\in\mathcal{P}_{n}$ and $\omega\in\partial_{+}(p_{n},p_{n+1})$, which in turn finishes the proof. Indeed, we calculate that for $\xi\in]o,\omega[\cap]p_{n},\omega[$
\begin{gather*}
    \langle p_{0},\omega\rangle=d(o,\xi)-d(p_{0},\xi)=d(o,\xi)-(d(p_{0},p_{n})+d(p_{n},\xi))=\langle p_{n},\omega\rangle-d(p_{0},p_{n}).\qedhere
\end{gather*}
\end{proof}

Lemma \ref{la:Ir_first_form} immediately implies the following identity for $\mathrm{I}_r^{\Gamma}(\mathbbm{1},n)$.

\begin{lemma}\label{la:Ir_second_form}
\begin{gather*}
\mathrm{I}_r^{\Gamma}(\mathbbm{1},n)=z^{-2n}\sum_{\Gamma\mathbf{p}\in\Gamma\backslash\mathcal{P}_{n}}\int_{\partial_{+}(p_{n},p_{n+1})}z^{\langle p_{n},\omega_{1}\rangle}\intd\mu_{-}(\omega_{1})\cdot\int_{\partial_{+}(p_{n},p_{n+1})}z^{\langle p_{n},\omega_{2}\rangle}\intd\mu_{+}(\omega_{2}).
\end{gather*}
\end{lemma}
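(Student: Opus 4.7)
The plan is to deduce the lemma from Lemma~\ref{la:Ir_first_form} by a reorganization of the sum over $\mathcal{P}_{n}$ into a sum over $\Gamma$-orbits, using the twisted $\Gamma$-equivariance of $\mu_{\pm}$ (via $\pi_{z}$) to perform the relevant change of variables.

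First I would choose $\varphi\in C_c^{\mathrm{lc}}(\widetilde{{\mf P}}^{(2)})$ satisfying $\sum_{\gamma\in\Gamma}\gamma\varphi=\mathbbm{1}$ (which exists by the proof of well-definedness of $T_{\Gamma}$). Since $\mathbbm{1}_{S_{n}^{c}}$ is $\Gamma$-invariant, we have $\sum_{\gamma\in\Gamma}\gamma(\mathbbm{1}_{S_{n}^{c}}\varphi)=\mathbbm{1}_{S_{n}^{c}}$, hence $\mathrm{I}_{r}^{\Gamma}(\mathbbm{1},n)=T_{\Gamma}(\mathbbm{1}_{S_{n}^{c}})=T(\mathbbm{1}_{S_{n}^{c}}\varphi)=\mathrm{I}_{r}(\varphi,n)$, and Lemma~\ref{la:Ir_first_form} rewrites the latter as $z^{-2n}\sum_{\mathbf p\in\mathcal{P}_{n}}F_{n}(\mathbf p,\varphi)$, where
\[F_{n}(\mathbf p,\psi)\coloneqq\iint_{\partial_{+}(p_{n},p_{n+1})^{2}}\psi(p_{0},\omega_{1},\omega_{2})\,z^{\langle p_{n},\omega_{1}\rangle}z^{\langle p_{n},\omega_{2}\rangle}\intd\mu_{-}(\omega_{1})\intd\mu_{+}(\omega_{2}).\]

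Next I would use that $\Gamma$ acts freely on $\widetilde{\mathfrak X}$, hence freely on $\mathcal{P}_{n}$, to split $\sum_{\mathbf p\in\mathcal{P}_{n}}=\sum_{\Gamma\mathbf p\in\Gamma\backslash\mathcal{P}_{n}}\sum_{\gamma\in\Gamma}$ after choosing orbit representatives. The key step is then the identity $F_{n}(\gamma\mathbf p,\varphi)=F_{n}(\mathbf p,\gamma^{-1}\varphi)$, which is proved by substituting $\omega_{i}=\gamma\tilde\omega_{i}$ and exploiting the $\pi_{z}$-invariance $\pi_{z}(\gamma)\mu_{\pm}=\mu_{\pm}$ in the form $\intd\mu_{\pm}(\gamma\tilde\omega)=z^{\langle\gamma o,\gamma\tilde\omega\rangle}\intd\mu_{\pm}(\tilde\omega)$, together with the horocycle identity \eqref{eq:horocycle_id} which yields $\langle\gamma p_{n},\gamma\tilde\omega\rangle-\langle\gamma o,\gamma\tilde\omega\rangle=\langle p_{n},\tilde\omega\rangle$. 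This is the same computation already carried out in Lemma~\ref{la:T_Gamma_inv} and is the only nontrivial point in the argument. Note also that $\partial_{+}(\gamma p_{n},\gamma p_{n+1})=\gamma\cdot\partial_{+}(p_{n},p_{n+1})$, so the domain of integration transports correctly.

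Summing over $\gamma\in\Gamma$ then gives
\[\sum_{\gamma\in\Gamma}F_{n}(\gamma\mathbf p,\varphi)=F_{n}\Bigl(\mathbf p,\sum_{\gamma\in\Gamma}\gamma^{-1}\varphi\Bigr)=F_{n}(\mathbf p,\mathbbm{1}),\]
where the interchange of sum and integral is legitimate because, for fixed $\mathbf p$, only finitely many $\gamma\in\Gamma$ contribute (those with $\gamma p_{0}$ in the compact support of $\pi_{\widetilde{\mathfrak X}}(\mathrm{supp}\,\varphi)$). Since $F_{n}(\mathbf p,\mathbbm{1})$ involves a constant integrand in the $p_{0}$-slot, Fubini factors it as the product
\[\int_{\partial_{+}(p_{n},p_{n+1})}z^{\langle p_{n},\omega_{1}\rangle}\intd\mu_{-}(\omega_{1})\cdot\int_{\partial_{+}(p_{n},p_{n+1})}z^{\langle p_{n},\omega_{2}\rangle}\intd\mu_{+}(\omega_{2}),\]
which, after restoring the prefactor $z^{-2n}$ and the sum over $\Gamma\backslash\mathcal{P}_{n}$, gives the claimed formula. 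The main obstacle is really only the equivariance computation in step three, which is a reprise of the argument in Lemma~\ref{la:T_Gamma_inv}; the rest is bookkeeping.
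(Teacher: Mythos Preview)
Your argument is correct and is essentially the paper's: both apply Lemma~\ref{la:Ir_first_form} and then the $\Gamma$-equivariance reasoning of Lemma~\ref{la:T_Gamma_inv}; the paper simply makes the concrete choice $\varphi=\mathbbm{1}_{\mathcal{F}}$, so that $\sum_{\mathbf p}\mathbbm{1}_{\mathcal{F}}(p_0)(\cdots)$ becomes a sum over $\Gamma\backslash\mathcal{P}_n$ directly, without your explicit $\gamma$-summation. One cosmetic slip: the $\pi_z$-invariance reads $\intd\mu_{\pm}(\gamma\tilde\omega)=z^{-\langle\gamma o,\gamma\tilde\omega\rangle}\intd\mu_{\pm}(\tilde\omega)$ (minus sign), which is precisely what your subsequent combination $\langle\gamma p_n,\gamma\tilde\omega\rangle-\langle\gamma o,\gamma\tilde\omega\rangle=\langle p_n,\tilde\omega\rangle$ actually uses.
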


\begin{proof}
Recall that $\mathrm{I}_r^{\Gamma}(\mathbbm{1},n)=T_{\Gamma}(\mathbbm{1}_{S_{n}^{c}})$. Now let $\mathcal{F}$ denote a fundamental domain for the $\Gamma$-action on $\widetilde{\mathfrak{X}}$ and observe that $\mathrm{I}_r(\mathbbm{1}_{\mathcal{F}},n)=T(\mathbbm{1}_{S_{n}^{c}}\mathbbm{1}_{\mathcal{F}})=T_{\Gamma}(\mathbbm{1}_{S_{n}^{c}})$. Thus, Lemma \ref{la:Ir_first_form} implies
\begin{align*}
    \mathrm{I}_r^{\Gamma}(\mathbbm{1},n)&=z^{-2n}\sum_{\mathbf{p}\in\mathcal{P}_{n}}\mathbbm{1}_{\mathcal{F}}(p_{0})\int_{\partial_{+}(p_{n},p_{n+1})}\int_{\partial_{+}(p_{n},p_{n+1})}z^{\langle p_{n},\omega_{1}\rangle}z^{\langle p_{n},\omega_{2}\rangle}\intd\mu_{-}(\omega_{1})\intd\mu_{+}(\omega_{2})\\
&=z^{-2n}\sum_{\Gamma\mathbf{p}\in\Gamma\backslash\mathcal{P}_{n}}\int_{\partial_{+}(p_{n},p_{n+1})}\int_{\partial_{+}(p_{n},p_{n+1})}z^{\langle p_{n},\omega_{1}\rangle}z^{\langle p_{n},\omega_{2}\rangle}\intd\mu_{-}(\omega_{1})\intd\mu_{+}(\omega_{2}),
\end{align*}
where we obtain the $\Gamma$-invariance in $\mathbf{p}$ by the same reasoning as in Lemma \ref{la:T_Gamma_inv}.
\end{proof}

We now observe that the summand in Lemma \ref{la:Ir_second_form} only depends on the last two vertices $(p_{n},p_{n+1})$ on the path $\mathbf{p}\in\mathcal{P}_{n}$. This leads to the following simplification.

\begin{lemma}\label{la:Ir_third_form}
\begin{gather*}
\mathrm{I}_r^{\Gamma}(\mathbbm{1},n)=\left(\frac{q}{z^2}\right)^{n}\sum_{\Gamma\vec{e}\in\Gamma\backslash\widetilde{\mathfrak{E}}}\int_{\partial_{+}\vec{e}}z^{\langle \iota(\vec{e}),\omega_{1}\rangle}\intd\mu_{-}(\omega_{1})\cdot\int_{\partial_{+}\vec{e}}z^{\langle \iota(\vec{e}),\omega_{2}\rangle}\intd\mu_{+}(\omega_{2}).
\end{gather*}
\end{lemma}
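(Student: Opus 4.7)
The plan is to start from Lemma~\ref{la:Ir_second_form} and observe that the summand depends only on the last edge $\vec{e}_n = (p_n,p_{n+1})$ of the non-backtracking chain $\mathbf{p}=(\vec{e}_0,\ldots,\vec{e}_n)\in\mathcal{P}_n$, since $p_n=\iota(\vec{e}_n)$ and $\partial_+(p_n,p_{n+1})=\partial_+\vec{e}_n$. Hence one should collect the sum over $\Gamma\backslash\mathcal{P}_n$ according to the $\Gamma$-orbit of this last edge, via the $\Gamma$-equivariant map $\mathrm{last}\colon\mathcal{P}_n\to\widetilde{\mathfrak{E}}$, $\mathbf{p}\mapsto\vec{e}_n$.

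The key combinatorial input is that for a fixed terminal edge $\vec{e}\in\widetilde{\mathfrak{E}}$, the fiber $\mathrm{last}^{-1}(\vec{e})$ of length $n+1$ non-backtracking chains ending at $\vec{e}$ has exactly $q^n$ elements: at each of the $n$ previous steps there are $q+1$ neighbors of the current initial vertex, but one is excluded by non-backtracking. Next, because $\Gamma$ acts freely on $\widetilde{\mathfrak{E}}$ and hence on $\mathcal{P}_n$, every $\mathrm{last}^{-1}(\vec{e})$ injects into $\Gamma\backslash\mathcal{P}_n$: if $\gamma\mathbf{p}=\mathbf{p}'$ for two elements of the same fiber then $\gamma$ fixes $\vec{e}$, so $\gamma=1$. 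Moreover, each $\Gamma$-orbit of chains with last edge in $\Gamma\vec{e}$ meets $\mathrm{last}^{-1}(\vec{e})$ in exactly one point (using a $\gamma\in\Gamma$ sending the last edge to $\vec{e}$, again unique by freeness). Therefore the fiber of the induced map $\Gamma\backslash\mathcal{P}_n\to\Gamma\backslash\widetilde{\mathfrak{E}}$ over $\Gamma\vec{e}$ is naturally in bijection with $\mathrm{last}^{-1}(\vec{e})$ and thus has cardinality $q^n$.

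Since the summand of Lemma~\ref{la:Ir_second_form} depends only on $\vec{e}_n$, the orbit sum over $\Gamma\backslash\mathcal{P}_n$ reorganizes to
\begin{align*}
\sum_{\Gamma\mathbf{p}\in\Gamma\backslash\mathcal{P}_n}
\int_{\partial_+(p_n,p_{n+1})}\!z^{\langle p_n,\omega_1\rangle}\intd\mu_-(\omega_1)
\int_{\partial_+(p_n,p_{n+1})}\!z^{\langle p_n,\omega_2\rangle}\intd\mu_+(\omega_2)
\\
=q^n\sum_{\Gamma\vec{e}\in\Gamma\backslash\widetilde{\mathfrak{E}}}
\int_{\partial_+\vec{e}}z^{\langle\iota(\vec{e}),\omega_1\rangle}\intd\mu_-(\omega_1)
\int_{\partial_+\vec{e}}z^{\langle\iota(\vec{e}),\omega_2\rangle}\intd\mu_+(\omega_2).
\end{align*}
Multiplying by $z^{-2n}$ and using $z^{-2n}q^n=(q/z^2)^n$ yields the claimed identity. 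The main (really the only) non-routine point is the free-action bookkeeping that justifies passing from the fiber count $q^n$ in $\mathcal{P}_n$ to the same count in $\Gamma\backslash\mathcal{P}_n$; everything else is a direct reorganization of Lemma~\ref{la:Ir_second_form}.
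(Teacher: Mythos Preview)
Your proof is correct and follows essentially the same approach as the paper: both start from Lemma~\ref{la:Ir_second_form}, observe that the summand depends only on the last edge, and use that each edge is the terminal edge of exactly $q^n$ chains in $\mathcal{P}_n$. The only cosmetic difference is that the paper carries out the bookkeeping via a fundamental domain (rewriting the $\Gamma$-orbit sum as a sum over $\mathbf{p}\in\mathcal{P}_n$ with $\mathbbm{1}_{\mathcal{F}}(p_n)$, then as a sum over $\vec e$ with $\iota(\vec e)\in\mathcal{F}$), whereas you argue directly with the free $\Gamma$-action to identify the fibers of $\Gamma\backslash\mathcal{P}_n\to\Gamma\backslash\widetilde{\mathfrak{E}}$; the two are equivalent.
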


\begin{proof}
Let $\mathcal{F}$ denote a fundamental domain for the $\Gamma$-action on $\widetilde{\mathfrak{X}}$. Then, by Lemma \ref{la:Ir_second_form},
\begin{align*}
\mathrm{I}_r^{\Gamma}(\mathbbm{1},n)&=z^{-2n}\sum_{\Gamma\mathbf{p}\in\Gamma\backslash\mathcal{P}_{n}}\int_{\partial_{+}(p_{n},p_{n+1})}z^{\langle p_{n},\omega_{1}\rangle}\intd\mu_{-}(\omega_{1})\cdot\int_{\partial_{+}(p_{n},p_{n+1})}z^{\langle p_{n},\omega_{2}\rangle}\intd\mu_{+}(\omega_{2})\\
&=z^{-2n}\sum_{\mathbf{p}\in\mathcal{P}_{n}}\mathbbm{1}_{\mathcal{F}}(p_{n})\int_{\partial_{+}(p_{n},p_{n+1})}z^{\langle p_{n},\omega_{1}\rangle}\intd\mu_{-}(\omega_{1})\cdot\int_{\partial_{+}(p_{n},p_{n+1})}z^{\langle p_{n},\omega_{2}\rangle}\intd\mu_{+}(\omega_{2}).
\end{align*}
Now note that, for a fixed edge $\vec{e}\in\widetilde{\mathfrak{E}}$, there exist $q^{n}$ paths $\mathbf{p}\in\mathcal{P}_{n}$ with $(p_{n},p_{n+1})=\vec{e}$. Therefore, $\mathrm{I}_r^{\Gamma}(\mathbbm{1},n)$ equals
\begin{align*}
&z^{-2n}\sum_{\vec{e}\in\widetilde{\mathfrak{E}}}\sum_{\substack{\mathbf{p}\in\mathcal{P}_{n}\\ (p_{n},p_{n+1})=\vec{e}}}\mathbbm{1}_{\mathcal{F}}(p_{n})\int_{\partial_{+}(p_{n},p_{n+1})}z^{\langle p_{n},\omega_{1}\rangle}\intd\mu_{-}(\omega_{1})\cdot\int_{\partial_{+}(p_{n},p_{n+1})}z^{\langle p_{n},\omega_{2}\rangle}\intd\mu_{+}(\omega_{2})\\
&=z^{-2n}q^{n}\sum_{\vec{e}\in\widetilde{\mathfrak{E}}}\mathbbm{1}_{\mathcal{F}}(\iota(\vec{e}))\int_{\partial_{+}\vec{e}}z^{\langle \iota(\vec{e}),\omega_{1}\rangle}\intd\mu_{-}(\omega_{1})\cdot\int_{\partial_{+}\vec{e}}z^{\langle \iota(\vec{e}),\omega_{2}\rangle}\intd\mu_{+}(\omega_{2})\\
&=\left(\frac{q}{z^2}\right)^{n}\sum_{\Gamma\vec{e}\in\Gamma\backslash\widetilde{\mathfrak{E}}}\int_{\partial_{+}\vec{e}}z^{\langle \iota(\vec{e}),\omega_{1}\rangle}\intd\mu_{-}(\omega_{1})\cdot\int_{\partial_{+}\vec{e}}z^{\langle \iota(\vec{e}),\omega_{2}\rangle}\intd\mu_{+}(\omega_{2}).\qedhere
\end{align*}
\end{proof}

Recall from \cite[Def.\@ 2.1]{AFH23} that the function
\begin{gather*}
    \widetilde{\mathfrak{E}}\to\mathbb{C},\quad \vec{e} \mapsto \int_{\partial_{+}\vec{e}}z^{\langle\iota(\vec{e}),\omega\rangle}\intd\mu_{+}(\omega)
\end{gather*}
is called the \emph{edge Poisson transform} $\mathcal{P}_{z}^{\mathrm{e}}(\mu_{+})$ of $\mu_{+}$. Moreover, by \cite[Prop.\@~2.11]{AFH23},
\begin{gather*}
    \mathcal{P}_{z}^{\mathrm{e}}(\mu_{+})=(\pi^{\widetilde{\mf E}}_{+,*}\circ p_{z,+}B_{+}^{\star})(\mu_{+})=\pi^{\widetilde{\mf E}}_{+,*}(\tilde u_{+}).
\end{gather*}
Similarly, we obtain
\begin{gather*}
    \int_{\partial_{-}\vec{e}}z^{\langle\tau(\vec{e}),\omega\rangle}\intd\mu_{-}(\omega)=\pi^{\widetilde{\mf E}}_{-,*}(\tilde u_{-})(\vec{e})
\end{gather*}
so that Lemma \ref{la:Ir_third_form} implies
\begin{gather*}
    \mathrm{I}_r^{\Gamma}(\mathbbm{1},n)=\left(\frac{q}{z^{2}}\right)^{n}\sum_{\Gamma\vec{e}\in\Gamma\backslash\widetilde{\mathfrak{E}}}\pi^{\widetilde{\mf E}}_{+,*}(\tilde u_{+})(\vec{e})\pi^{\widetilde{\mf E}}_{-,*}(\tilde u_{-})(\eop{e}).
\end{gather*}
But now Lemma \ref{la:pi_E_star_upstairs_downstairs} yields
\begin{proposition}\label{prop:pairing_second_part}
\begin{gather*}
    \mathrm{I}_r^{\Gamma}(\mathbbm{1},n)=\left(\frac{q}{z^{2}}\right)^{n}\sum_{\Gamma\vec{e}\in\Gamma\backslash\widetilde{\mathfrak{E}}}\pi^{\mathfrak{E}}_{+,*}u_+(\pi_{\mathfrak{E}}(\vec{e}))\pi^{\mathfrak{E}}_{-,*}u_-(\pi_{\mathfrak{E}}(\eop{e}))=\left(\frac{q}{z^{2}}\right)^{n}\langle u_+,u_-\rangle_{(\mathrm{op},\mf E)}.
\end{gather*}
\end{proposition}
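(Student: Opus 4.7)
The plan is to start from the identity for $\mathrm{I}_r^{\Gamma}(\mathbbm{1},n)$ proved in Lemma~\ref{la:Ir_third_form} and rewrite each of the two inner integrals as an edge Poisson transform of $\mu_\pm$, thereby expressing the sum as a pairing of edge pushforwards of $\tilde u_\pm$. The final step is then to descend this pairing on $\widetilde{\mathfrak{E}}$ to $\mathfrak{E}$ via Lemma~\ref{la:pi_E_star_upstairs_downstairs}, matching it with the modified edge pairing from Definition~\ref{rem:finite graphs}.

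More concretely, I would first observe that by the very definition of $\mathcal P_z^{\mathrm e}(\mu_+)$ in \cite[Def.~2.1]{AFH23} and the identity $\mathcal P_z^{\mathrm e}(\mu_+)=\pi^{\widetilde{\mf E}}_{+,*}(\tilde u_+)$ from \cite[Prop.~2.11]{AFH23}, the factor
\[ \int_{\partial_+\vec e}z^{\langle \iota(\vec e),\omega_2\rangle}\intd\mu_+(\omega_2) = \pi^{\widetilde{\mf E}}_{+,*}(\tilde u_+)(\vec e). \]
For the other factor the key point is that $\partial_+\vec e$ equals, up to orientation, $\partial_-\eop{e}$, and $\iota(\vec e)=\tau(\eop e)$. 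Hence the analogous formula on the $(-)$-side (obtained by reversing orientation) gives
\[ \int_{\partial_+\vec e}z^{\langle \iota(\vec e),\omega_1\rangle}\intd\mu_-(\omega_1) = \int_{\partial_-\eop e}z^{\langle\tau(\eop e),\omega_1\rangle}\intd\mu_-(\omega_1) = \pi^{\widetilde{\mf E}}_{-,*}(\tilde u_-)(\eop e). \]
Substituting both identities into Lemma~\ref{la:Ir_third_form} produces
\[ \mathrm{I}_r^{\Gamma}(\mathbbm{1},n) = \left(\frac{q}{z^2}\right)^n\sum_{\Gamma\vec e\in\Gamma\backslash\widetilde{\mf E}}\pi^{\widetilde{\mf E}}_{+,*}(\tilde u_+)(\vec e)\,\pi^{\widetilde{\mf E}}_{-,*}(\tilde u_-)(\eop e). \]

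Next, I would apply Lemma~\ref{la:pi_E_star_upstairs_downstairs}, which identifies the lifted edge pushforwards pointwise with the original ones: $\pi^{\widetilde{\mf E}}_{\pm,*}(\tilde u_\pm)(\vec e) = \pi^{\mf E}_{\pm,*}u_\pm(\pi_{\mf E}(\vec e))$. This converts the summand to $\pi^{\mf E}_{+,*}u_+(\pi_{\mf E}(\vec e))\cdot\pi^{\mf E}_{-,*}u_-(\pi_{\mf E}(\eop e))$. Since $\pi_{\mf E}$ induces a bijection between $\Gamma\backslash\widetilde{\mf E}$ and $\mf E$, and since the orientation reversal commutes with the covering projection in the sense that $\pi_{\mf E}(\eop e)=\mathrm{op}(\pi_{\mf E}(\vec e))$, the sum becomes
\[ \sum_{\vec e\in\mf E}\pi^{\mf E}_{+,*}u_+(\vec e)\,\pi^{\mf E}_{-,*}u_-(\mathrm{op}(\vec e)) = \langle \mathrm{op}_*\pi^{\mf E}_{+,*}u_+,\pi^{\mf E}_{-,*}u_-\rangle_{\mf E} = \langle u_+,u_-\rangle_{(\mathrm{op},\mf E)}, \]
which completes the identification.

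The only genuine subtlety is the correct bookkeeping of the orientation swap, i.e.\ verifying that integration against $\mu_-$ over $\partial_+\vec e$ (rather than over $\partial_-$-type districts) really equals the opposite edge pushforward evaluated at $\eop e$; once this is confirmed from the $(-)$-analog of \cite[Prop.~2.11]{AFH23}, the rest is a direct substitution and appeal to Lemma~\ref{la:pi_E_star_upstairs_downstairs}. No further analysis is needed since $\mathfrak G$ is finite and all sums are thus finite.
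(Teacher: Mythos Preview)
Your proposal is correct and follows essentially the same route as the paper: identify the two integrals in Lemma~\ref{la:Ir_third_form} as edge Poisson transforms via \cite[Def.~2.1, Prop.~2.11]{AFH23} (using $\partial_+\vec e=\partial_-\eop e$ and $\iota(\vec e)=\tau(\eop e)$ for the $\mu_-$ factor), then descend to $\mf E$ with Lemma~\ref{la:pi_E_star_upstairs_downstairs}. The paper presents exactly this argument in the paragraph preceding the proposition, and your handling of the orientation bookkeeping is the only nontrivial point, which you address correctly.
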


\subsection{The pairing formula}
We can now combine Propositions \ref{prop:pairing_first_part} and \ref{prop:pairing_second_part} to prove the pairing formula.
\begin{theorem}\label{thm:pairing formula}
Let $0\neq z\in\mathbb{C}$ and $u_\pm \in\mathcal E_z({\mathcal L}_\pm';\mc D'({\mf P}^\pm))$ be (co)resonant states on the finite graph $\mathfrak{G}$. Then
\begin{gather*}
    (z^{2}-q)\langle u_+,u_-\rangle_{(\mathfrak{X})}=(z^{2}-1)\langle u_+,u_-\rangle_\mathrm{geod}.
\end{gather*}
\end{theorem}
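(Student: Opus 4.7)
My plan is to derive the pairing formula as a direct algebraic consequence of the cut-off decomposition already set up. All of the serious analysis has been packaged into Propositions~\ref{prop:pairing_first_part} and~\ref{prop:pairing_second_part}, so what remains is essentially bookkeeping.

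The three ingredients are the splitting~\eqref{eq:decomp_left}, namely $\langle u_+,u_-\rangle_{(\mathfrak{X})} = \mathrm{I}_c^\Gamma(\mathbbm{1},n) + \mathrm{I}_r^\Gamma(\mathbbm{1},n)$; the closed-form expressions for these two contributions provided by Propositions~\ref{prop:pairing_first_part} and~\ref{prop:pairing_second_part}; and the identity~\eqref{eq:pairing_right_edge}, which allows me to eliminate the modified edge pairing via $\langle u_+,u_-\rangle_{(\mathrm{op},\mf E)} = \langle u_+,u_-\rangle_{(\mathfrak{X})} - \langle u_+,u_-\rangle_\mathrm{geod}$. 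Substituting the first two ingredients into the decomposition and then using the third to remove $\langle u_+,u_-\rangle_{(\mathrm{op},\mf E)}$, and finally collecting terms, produces for every $n\in\mathbb{N}$ the clean identity
\[
\left(1-\left(\tfrac{q}{z^2}\right)^n\right)\langle u_+,u_-\rangle_{(\mathfrak{X})} = (1-z^{-2})\sum_{j=0}^{n-1}\left(\tfrac{q}{z^2}\right)^j\cdot\langle u_+,u_-\rangle_\mathrm{geod}.
\]

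For $z^2\neq q$, the geometric sum evaluates to $\frac{1-(q/z^2)^n}{1-q/z^2}$, so the right-hand side becomes $\frac{(z^2-1)(1-(q/z^2)^n)}{z^2-q}\langle u_+,u_-\rangle_\mathrm{geod}$; cancelling the common factor $1-(q/z^2)^n$ (choosing $n$ so that it does not vanish) yields the claimed pairing formula. The degenerate case $z^2=q$ is handled separately: then the left-hand side vanishes identically in $n$ while the right-hand side equals $n(1-z^{-2})\langle u_+,u_-\rangle_\mathrm{geod}$, so independence of $n$ together with $q\geq 2$ forces $\langle u_+,u_-\rangle_\mathrm{geod}=0$, at which point both sides of the claimed formula vanish. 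I do not expect any real obstacle at this stage---the substantial analytic work was already carried out in the cut-off decomposition and the equivariant-coordinate computations of the preceding two propositions; what is left is genuinely algebraic.
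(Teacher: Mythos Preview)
Your proposal is correct and follows essentially the same route as the paper's own proof: combine the splitting~\eqref{eq:decomp_left} with Propositions~\ref{prop:pairing_first_part} and~\ref{prop:pairing_second_part}, eliminate the modified edge pairing via~\eqref{eq:pairing_right_edge}, and then separate the cases $z^2=q$ and $z^2\neq q$. Your remark about choosing $n$ so that $1-(q/z^2)^n\neq 0$ is a slight refinement over the paper's presentation; conversely, your appeal to ``$q\geq 2$'' in the degenerate case is stronger than needed (one only needs $z^2\neq 1$, and when $z^2=q=1$ both sides of the formula vanish trivially).
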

\begin{proof}
By Equation \eqref{eq:decomp_left} we have $\langle u_+,u_-\rangle_{(\mathfrak{X})}=\mathrm{I}_c^{\Gamma}(\mathbbm{1},n)+\mathrm{I}_r^{\Gamma}(\mathbbm{1},n)$. Thus, Proposition~\ref{prop:pairing_first_part} and Proposition~\ref{prop:pairing_second_part} imply
\begin{align*}
\langle u_+,u_-\rangle_{(\mathfrak{X})}&=\mathrm{I}_c^{\Gamma}(\mathbbm{1},n)+\mathrm{I}_r^{\Gamma}(\mathbbm{1},n)\\
&=\langle u_+,u_-\rangle_\mathrm{geod}\cdot\left((1-z^{-2})\sum_{j=0}^{n-1}\left(\frac{q}{z^2}\right)^j+\left(\frac{q}{z^2}\right)^n\right)+\\
&\qquad+\left(\frac{q}{z^{2}}\right)^{n}\langle u_+,u_-\rangle_{(\mathrm{op},\mf E)}
\end{align*}
so that Equation \eqref{eq:pairing_right_edge} implies
\begin{align*}
 \langle u_+,u_-\rangle_{(\mathfrak{X})}&=\langle u_+,u_-\rangle_\mathrm{geod}\cdot\left((1-z^{-2})\sum_{j=0}^{n-1}\left(\frac{q}{z^2}\right)^j+\left(\frac{q}{z^2}\right)^n\right)\\
&\qquad+\left(\frac{q}{z^{2}}\right)^{n}(\langle u_+,u_-\rangle_{(\mathfrak{X})}-\langle u_+,u_-\rangle_\mathrm{geod})\\
&=\langle u_+,u_-\rangle_\mathrm{geod}\cdot\left((1-z^{-2})\sum_{j=0}^{n-1}\left(\frac{q}{z^2}\right)^j\right)+\left(\frac{q}{z^{2}}\right)^{n}\langle u_+,u_-\rangle_{(\mathfrak{X})}.
\end{align*}
For $z^{2}=q$ we infer
\begin{gather*}
  0=\langle u_+,u_-\rangle_\mathrm{geod}\cdot(1-z^{-2})n,
\end{gather*}
so that the theorem holds for this case. On the other hand, for $q\neq z^{2}$, we obtain
\begin{align*}
  \left(1-\left(\frac{q}{z^{2}}\right)^{n}\right)\langle u_+,u_-\rangle_{(\mathfrak{X})}=\langle u_+,u_-\rangle_\mathrm{geod}\cdot (1-z^{-2})\frac{1-\left(\frac{q}{z^2}\right)^n}{1-\frac{q}{z^2}},
\end{align*}
which implies the theorem after multiplying by $\frac{z^2-q}{1-\left(\frac{q}{z^2}\right)^{n}}$.
\end{proof}

\begin{remark}
\begin{enumerate}
\item[(i)] The factor in Theorem \ref{thm:pairing formula} is closely related to the $\mathrm{c}$-function: Indeed, by \cite[Prop.\@~2.4]{FTN91}, this function, in dependence of the principal series parameter $s$, is given by
\begin{gather*}
\frac{1}{q+1}(q^{1-s}-q^{s-1})(q^{-s}-q^{s-1})^{-1}.
\end{gather*}
In our parametrization we have $z=q^{-s+\frac{1}{2}}$ and thus, for $z^{2}\not\in\{0,1\}$,
\begin{gather*}
        \mathrm{c}(z)=\frac{1}{q+1}(q^{\frac{1}{2}}z-q^{-\frac{1}{2}}z^{-1})(q^{-\frac{1}{2}}z-q^{-\frac{1}{2}}z^{-1})^{-1}=\frac{1}{q+1}\frac{qz-z^{-1}}{z-z^{-1}}=\frac{1}{q+1}\frac{z^{-2}-q}{z^{-2}-1}.
\end{gather*}
Using this expression, Theorem \ref{thm:pairing formula} takes the form
\begin{gather*}
    (q+1)\mathrm{c}(z^{-1})\langle u_+,u_-\rangle_{(\mathfrak{X})}=\langle u_+,u_-\rangle_\mathrm{geod}.
\end{gather*}
\item[(ii)] Note that, by Proposition \ref{prop:pairing_first_part}, Lemma \ref{la:N_bar_int} and Theorem \ref{thm:pairing formula} we have
\begin{gather*}
    \lim_{n\to\infty}\mathrm{I}_c^{\Gamma}(\mathbbm{1},n)=\langle u_+,u_-\rangle_\mathrm{geod} \frac{z^2-1}{z^2-q}=\langle u_+,u_-\rangle_{(\mathfrak{X})}
\end{gather*}
whenever $\abs{z}>\sqrt{q}$. Since $\langle u_+,u_-\rangle_{(\mathfrak{X})}=\mathrm{I}_c^{\Gamma}(\mathbbm{1},n)+\mathrm{I}_r^{\Gamma}(\mathbbm{1},n)$ this also follows from Proposition \ref{prop:pairing_second_part}. For $\abs{z}<\sqrt{q}$ neither $\mathrm{I}_c^{\Gamma}(\mathbbm{1},n)$ nor $\mathrm{I}_r^{\Gamma}(\mathbbm{1},n)$ converges.
\item[(iii)] Theorem \ref{thm:pairing formula} in particular implies that for $z^{2}\not\in\{0,1\}$ the geodesic pairing only depends on the base projections $\pi_{\pm,*}u_\pm $ of $u_\pm $.
\end{enumerate}
\end{remark}

\appendix

\section{Integral formulas and computations of pushforward measures}\label{sec:integralformulas}

Note that this appendix is used in the main text only from Section~\ref{sec:coverings} on. It is therefore not a problem that we use results from the earlier sections here.

\subsection{Integral formulas}\label{subsec:int_formulas}
Decompositions of $G$ naturally lead to integral formulas for the corresponding Haar measures. In order to state them, first note that $G$ is a locally compact unimodular group (see e.g.\@ \cite[p.\@~155]{Ve02}) so that we may fix a bi-invariant measure $\intd g$ on $G$ which will be normalized later. Moreover, by \cite[La.\@~3.3]{Ve02}, $B_{\omega_+}$ (resp.\@ $B_{\omega_-}$) is unimodular and its invariant measure $\intd u$ (resp.\@ $\intd\bar{u}$) can be normalized such that $\intd u(B_{\omega_+}\cap \mathrm{Stab}_G(x))=q^{\langle x,\omega_+\rangle}$ (resp.\@ $\intd\bar{u}(B_{\omega_-}\cap \mathrm{Stab}_G(x))=q^{\langle x,\omega_-\rangle}$) for each $x\in\mathfrak{X}$. Let $s\in K$ be an element with $s^2=\mathrm{id}$ and $s\tau^js^{-1}=\tau^{-j}$ for each $j\in\mathbb{Z}$. Then the map
\begin{gather*}
    B_{\omega_-}\to B_{\omega_+},\qquad \bar{u}\mapsto s\bar{u}s^{-1}
\end{gather*}
is measure-preserving: Indeed, if $\xi\in[o,\omega_+[\cap[x,\omega_+]$ we have $s\xi\in[o,\omega_-[\cap[sx,\omega_-[$ and thus
\begin{gather*}
    \langle x,\omega_+\rangle=d(o,\xi)-d(x,\xi)=d(so,s\xi)-d(sx,s\xi)=d(o,s\xi)-d(sx,s\xi)=\langle sx,\omega_-\rangle.
\end{gather*}
Finally, the compact group $K$ is unimodular and we denote a Haar measure on it by $\intd k$.

We can now describe the integral formulas corresponding to the Iwasawa decompositions from Proposition \ref{prop:Iwasawa} and Corollary \ref{cor:KNA}.

\begin{proposition}[{See \cite[Thm.\@ 3.5]{Ve02}}]\label{prop:NAK_integral}
   The Haar measure $\intd g$ on $G$ can be normalized such that for each $f\in C_c(G)$
\begin{gather*}
    \int_Gf(g)\intd g=\int_{B_{\omega_+}}\sum_{j\in\mathbb{Z}}\int_Kf(u\tau^jk)q^{-j}\intd k\intd u.
\end{gather*}
\end{proposition}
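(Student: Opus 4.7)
The plan is to recognise the right-hand side as (a specific normalisation of) the left Haar measure on $G$, by factoring $G=H\cdot K$ with $H\coloneqq B_{\omega_+}\langle\tau\rangle$ and applying the general integration formula for such a product decomposition. The only non-trivial ingredient will be the computation of the modular character of $\langle\tau\rangle$ acting on $B_{\omega_+}$ by conjugation, which will produce the weight $q^{-j}$.

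First I would analyse the closed subgroup $H=B_{\omega_+}\langle\tau\rangle$. Since $\langle\tau\rangle$ normalises $B_{\omega_+}$ (see the discussion following Proposition~\ref{prop:Iwasawa} in Section~\ref{subsec:tree_autom}), $H$ is a semidirect product $B_{\omega_+}\rtimes\langle\tau\rangle$. The key calculation is that, with respect to the normalisation $\intd u(\mathrm{Stab}_{B_{\omega_+}}(x))=q^{\langle x,\omega_+\rangle}$ fixed in Subsection~\ref{subsec:int_formulas}, conjugation by $\tau$ multiplies $\intd u$ by $q$: indeed $\tau\,\mathrm{Stab}_{B_{\omega_+}}(x)\,\tau^{-1}=\mathrm{Stab}_{B_{\omega_+}}(\tau x)$, and $\langle\tau x,\omega_+\rangle=\langle x,\omega_+\rangle+1$ by the horocycle identity~\eqref{eq:horocycle_id} combined with $\langle\tau o,\omega_+\rangle=1$, both provided via Lemma~\ref{la:horo_H}. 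The standard expression for Haar measure on a semidirect product in terms of its modular character then yields the left Haar measure on $H$ as $\intd h=q^{-j}\,\intd u\,\intd j$, where $\intd j$ is counting measure on $\mathbb{Z}$.

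Next I would apply the integration formula for $G=HK$ from Proposition~\ref{prop:Iwasawa}. Since $K$ is compact and $H\cap K=B_{\omega_+}\cap K$ has $\intd u$-measure $q^{\langle o,\omega_+\rangle}=1$, normalising $\intd k$ so that $\intd k(B_{\omega_+}\cap K)=1$ ensures a unique normalisation of $\intd g$ for which $\int_G f(g)\intd g=\int_H\int_K f(hk)\intd k\intd h$; inserting the formula for $\intd h$ from the previous step then produces exactly the claimed identity.

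The main obstacle is not conceptual but organisational: one has to verify that the Haar measures on $B_{\omega_+}\cap K$ induced from $B_{\omega_+}$ and from $K$ are consistently normalised, and that the resulting $\intd g$ on $G$ is the one referenced by the statement. An alternative approach, avoiding any general product-decomposition theorem, would be to define $\nu$ as the right-hand side and check left $G$-invariance by hand, using that $G$ is generated by $B_{\omega_+}$, $\langle\tau\rangle$, and $K$: left-$B_{\omega_+}$-invariance is immediate from the left-invariance of $\intd u$; left-$\tau$-invariance follows from the substitution $u\mapsto\tau^{-1}u\tau$, which produces a factor of $q$ that cancels the $q^{-j}$ weight after shifting the summation index $j\mapsto j+1$; and left-$K$-invariance can then be obtained from the right-$K$-invariance (clear from the parametrisation) together with the unimodularity of $G$ noted at the start of Subsection~\ref{subsec:int_formulas} and the symmetric Iwasawa decomposition $G=KB_{\omega_+}\langle\tau\rangle$ of Corollary~\ref{cor:KNA}.
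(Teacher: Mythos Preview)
The paper does not prove this proposition at all: it is quoted verbatim from \cite[Thm.~3.5]{Ve02}, so there is no ``paper's own proof'' to compare against.

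Your main approach is sound. The identification $H=B_{\omega_+}\rtimes\langle\tau\rangle$ and the computation of the conjugation modulus via $\tau\,\mathrm{Stab}_{B_{\omega_+}}(x)\,\tau^{-1}=\mathrm{Stab}_{B_{\omega_+}}(\tau x)$ together with $\langle\tau x,\omega_+\rangle=\langle x,\omega_+\rangle+1$ are correct and give left Haar measure $q^{-j}\intd u\,\intd j$ on $H$. For the passage from $H$ to $G=HK$ you should be explicit that one does \emph{not} need $H$ unimodular (it is not: $\Delta_H(\tau)=q$); the clean justification is that $G/K$ carries a unique $H$-invariant Radon measure (because $H$ acts transitively with compact stabiliser $H\cap K$), and this coincides with the unique $G$-invariant one, so the pushforward of $q^{-j}\intd u\,\intd j$ to $G/K\cong H/(H\cap K)$ must be a scalar multiple of the $G$-invariant measure.

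One caution about the alternative approach: the step ``left-$K$-invariance follows from right-$K$-invariance plus unimodularity of $G$ plus Corollary~\ref{cor:KNA}'' is not a valid inference as stated. Right-$K$-invariance of $\nu$ and unimodularity of $G$ do not by themselves force left-$K$-invariance of $\nu$ unless you already know $\nu$ is Haar. The correct way to close this gap is again the uniqueness argument above: a Radon measure that is left-$H$-invariant and right-$K$-invariant descends to an $H$-invariant measure on $G/K$, which is unique up to scalar and in particular $G$-invariant, whence $\nu$ is left Haar. Neither unimodularity of $G$ nor Corollary~\ref{cor:KNA} is actually needed for that step.
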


\begin{proposition}\label{prop:KNA_integral}
   For each $f\in C_c(G)$ we have
\begin{gather*}
    \int_Gf(g)\intd g=\int_K\int_{B_{\omega_+}}\sum_{j\in\mathbb{Z}}f(ku\tau^j)\intd u\intd k.
\end{gather*}
\end{proposition}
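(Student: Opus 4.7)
The strategy is to derive the $KNA$-type integration formula from the $NAK$ formula in Proposition~\ref{prop:NAK_integral} by using the unimodularity of $G$ together with the inversion $g\mapsto g^{-1}$. Since $G$ is unimodular (see \cite[p.~155]{Ve02}), we have $\int_G f(g)\intd g=\int_G f(g^{-1})\intd g$, and applying Proposition~\ref{prop:NAK_integral} to the function $g\mapsto f(g^{-1})$ yields
\begin{gather*}
\int_G f(g)\intd g
=\int_{B_{\omega_+}}\sum_{j\in\Z}\int_K f\bigl((u\tau^jk)^{-1}\bigr)\,q^{-j}\intd k\intd u
=\int_{B_{\omega_+}}\sum_{j\in\Z}\int_K f(k^{-1}\tau^{-j}u^{-1})\,q^{-j}\intd k\intd u.
\end{gather*}
Substituting $k\mapsto k^{-1}$ (using that $K$ is compact and hence unimodular), $u\mapsto u^{-1}$ (using unimodularity of $B_{\omega_+}$ from \cite[La.~3.3]{Ve02}), and $j\mapsto -j$ in the sum transforms the right-hand side into $\int_{B_{\omega_+}}\sum_{j}\int_K f(k\tau^j u)\,q^{j}\intd k\intd u$.

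Next I would commute $u$ past $\tau^j$ by writing $\tau^j u=(\tau^j u\tau^{-j})\tau^j$, which is legitimate because $\tau$ normalizes $B_{\omega_+}$ (cf.\@ the paragraph following Proposition~\ref{prop:Iwasawa}). The key computation is the Jacobian of the substitution $v\coloneqq\tau^j u\tau^{-j}$: conjugation by $\tau^j$ is an automorphism of $B_{\omega_+}$ sending $B_{\omega_+}\cap\mathrm{Stab}_G(x)$ to $B_{\omega_+}\cap\mathrm{Stab}_G(\tau^j x)$, so the normalization $\intd u(B_{\omega_+}\cap\mathrm{Stab}_G(x))=q^{\langle x,\omega_+\rangle}$ combined with $\langle\tau^j x,\omega_+\rangle=\langle x,\omega_+\rangle+j$ (which follows from the horocycle identity \eqref{eq:horocycle_id} together with $\tau\omega_+=\omega_+$ and $\langle\tau^j o,\omega_+\rangle=j$, cf.\@ the proof of Lemma~\ref{la:horo_H}) gives $\intd v=q^{j}\intd u$. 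Substituting $v$ thus absorbs the factor $q^j$ exactly, and Fubini then yields
\begin{gather*}
\int_G f(g)\intd g=\int_K\int_{B_{\omega_+}}\sum_{j\in\Z} f(kv\tau^j)\intd v\intd k,
\end{gather*}
which is the claimed identity after renaming $v$ back to $u$.

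The only real obstacle is the bookkeeping of the rescaling factor under conjugation by $\tau^j$; it must cancel the $q^{-j}$ coming from Proposition~\ref{prop:NAK_integral}, which it does precisely because the normalization of the Haar measure on $B_{\omega_+}$ is calibrated to the horocycle bracket. Everything else in the argument is automatic from the unimodularity of $G$, $K$, and $B_{\omega_+}$.
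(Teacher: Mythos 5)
Your proof is correct and follows essentially the same route as the paper: unimodularity of $G$, inversion in Proposition~\ref{prop:NAK_integral}, and then conjugating $u$ past $\tau^j$ so that the Jacobian of $u\mapsto\tau^ju\tau^{-j}$ absorbs the factor $q^{j}$. The only cosmetic difference is that you verify this Jacobian directly from the normalization $\intd u(B_{\omega_+}\cap\mathrm{Stab}_G(x))=q^{\langle x,\omega_+\rangle}$ and the horocycle identity, whereas the paper cites \cite[La.\@~3.8]{Ve02} for the same fact.
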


\begin{proof}
Since $G$ is unimodular we have $\int_G f(g)\intd g=\int_G f(g^{-1})\intd g$. Thus, Proposition \ref{prop:NAK_integral} allows us to write
\begin{gather*}
    \int_G f(g)\intd g=\int_{B_{\omega_+}}\sum_{j\in\mathbb{Z}}\int_K f(k^{-1}\tau^{-j}u^{-1})q^{-j}\intd k\intd u=\int_{B_{\omega_+}}\sum_{j\in\mathbb{Z}}\int_K f(k\tau^{j}u)q^{j}\intd k\intd u
\end{gather*}
since $K$ and $B_{\omega_+}$ are unimodular. Now \cite[La.\@~3.8]{Ve02} allows us to rewrite this integral as
\begin{gather*}
    \int_K\int_{B_{\omega_+}}\sum_{j\in\mathbb{Z}}f(k\tau^j\tau^{-j}u\tau^j)\intd u\intd k=\int_K\int_{B_{\omega_+}}\sum_{j\in\mathbb{Z}}f(ku\tau^j)\intd u\intd k.\qedhere
\end{gather*}
\end{proof}

We now normalize the Haar measure on the compact group $M$ by $\mathrm{vol}(M)=1$ and denote by $d(gM\langle\tau\rangle)$ the unique $G$-invariant measure on $G/M\langle\tau\rangle$ such that
\begin{gather}\label{eq:measure_GMt}
   \forall f\in C_c(G)\colon \int_G f(g)\intd g=\int_{G/M\langle\tau\rangle}\int_M\sum_{j\in\mathbb{Z}}f(gm\tau^j)\intd m\intd (gM\langle\tau\rangle).
\end{gather}
Then we can express the measure on $G/M\langle\tau\rangle$ in terms of the Iwasawa decomposition in the following way.

\begin{lemma}\label{la:int_G_KN}
For all $f\in C_c(G/M\langle\tau\rangle)$ we have
\begin{gather*}
    \int_{G/M\langle\tau\rangle} f(gM\langle\tau\rangle)\intd(gM\langle\tau\rangle)=\int_K\int_{B_{\omega_+}/M}f(kuM\langle\tau\rangle)\intd u\intd k.
\end{gather*}
\end{lemma}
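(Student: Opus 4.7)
The plan is to reduce this to Proposition \ref{prop:KNA_integral} by lifting $f \in C_c(G/M\langle\tau\rangle)$ to a test function on $G$ and then factoring the $B_{\omega_+}$-integral through $M$.

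First I would invoke the standard fact that the averaging map
\[ C_c(G)\to C_c(G/M\langle\tau\rangle),\quad F\mapsto\Bigl(gM\langle\tau\rangle\mapsto\int_M\sum_{j\in\Z}F(gm\tau^j)\intd m\Bigr) \]
is surjective (this uses that $M\langle\tau\rangle$ is a closed subgroup of $G$ with an invariant measure, here built from the normalized Haar measure on the compact group $M$ together with counting measure on $\langle\tau\rangle$; note that $\tau$ normalizes $M$ because $M$ is the pointwise stabilizer of the reference geodesic which $\tau$ preserves). Thus any $f\in C_c(G/M\langle\tau\rangle)$ equals this average of some $F\in C_c(G)$, and by the defining property \eqref{eq:measure_GMt} of $\intd(gM\langle\tau\rangle)$ we have
\[ \int_G F(g)\intd g=\int_{G/M\langle\tau\rangle}f(gM\langle\tau\rangle)\intd(gM\langle\tau\rangle). \]

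Next I would apply Proposition \ref{prop:KNA_integral} to $F$ to obtain
\[ \int_G F(g)\intd g=\int_K\int_{B_{\omega_+}}\sum_{j\in\Z}F(ku\tau^j)\intd u\intd k. \]
The key step is now to factor the $\intd u$ integral through $M$. Since $M\subseteq B_{\omega_+}$ (as $M$ fixes $\omega_+$ and a vertex), both groups are unimodular, and $M$ is compact with $\mathrm{vol}(M)=1$, Weil's integration formula yields, for $h\in C_c(B_{\omega_+})$,
\[ \int_{B_{\omega_+}}h(u)\intd u=\int_{B_{\omega_+}/M}\int_M h(u'm)\intd m\intd u'. \]
Applying this to $h(u)=\sum_j F(ku\tau^j)$ and recognizing the inner integral as $f(ku'M\langle\tau\rangle)$ gives
\[ \int_G F(g)\intd g=\int_K\int_{B_{\omega_+}/M}f(ku'M\langle\tau\rangle)\intd u'\intd k. \]
Comparing the two expressions for $\int_G F\intd g$ and using surjectivity of the averaging map proves the lemma.

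The only mildly subtle points are verifying that $\tau$ normalizes $M$ (so that $M\langle\tau\rangle$ is a genuine subgroup and the averaging formula makes sense) and checking that the Weil factorization is compatible with the normalizations chosen in Subsection \ref{subsec:int_formulas}; both are routine once one recalls that $\tau$ acts on the reference geodesic $]\omega_-,\omega_+[$ and $M$ fixes each vertex of it pointwise. No delicate limiting argument or integral computation is needed beyond these standard invariance facts.
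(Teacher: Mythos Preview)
Your proof is correct and follows essentially the same strategy as the paper: lift $f$ to a function $F\in C_c(G)$ via the averaging map, apply Proposition~\ref{prop:KNA_integral}, and descend back to $G/M\langle\tau\rangle$. The only organizational difference is that the paper constructs $F$ explicitly as $\tilde f(\mathrm{pr}_{KB_{\omega_+}}(g))\,\chi(\mathrm{pr}_{\langle\tau\rangle}(g))$ using the projections from Corollary~\ref{cor:KNA} and a cutoff $\chi$ on $\langle\tau\rangle$, so that the inner integral comes out as $f(kuM\langle\tau\rangle)$ directly, whereas you invoke surjectivity of the averaging map abstractly and then factor the $B_{\omega_+}$-integral through $M$ via Weil's formula; both routes are equally valid.
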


\begin{proof}
Let $\chi\in C_c(\langle\tau\rangle)$ such that $\sum_{j\in\mathbb{Z}}\chi(\tau^j)=1$. By Corollary \ref{cor:KNA} each $g\in G$ can be written as $ku\tau^j$, where $\mathrm{pr}_{KB_{\omega_+}}(g)\coloneqq ku$ and $\mathrm{pr}_{\langle\tau\rangle}(g)\coloneqq \tau^j$ are uniquely defined. Therefore, there exists a right $M$-invariant function $\tilde{f}\in C_c(KB_{\omega_+})^M$ such that $f(gM\langle\tau\rangle)=\tilde{f}(\mathrm{pr}_{KB_{\omega_+}}(g))$ for all $g\in G$. Then we calculate
\begin{align*}
\int_{G/M\langle\tau\rangle}f(gM\langle\tau\rangle)&=\int_{G/M\langle\tau\rangle}\tilde{f}(\mathrm{pr}_{KB_{\omega_+}}(g))\intd (gM\langle\tau\rangle)\\
&=\int_{G/M\langle\tau\rangle}\int_M\sum_{j\in\mathbb{Z}}\tilde{f}(\mathrm{pr}_{KB_{\omega_+}}(g))\chi(\mathrm{pr}_{\langle\tau\rangle}(g)\tau^j)\intd m\intd (gM\langle\tau\rangle)\\
&=\int_{G/M\langle\tau\rangle}\int_M\sum_{j\in\mathbb{Z}}\tilde{f}(\mathrm{pr}_{KB_{\omega_+}}(gm\tau^j))\chi(\mathrm{pr}_{\langle\tau\rangle}(gm\tau^j))\intd m\intd (gM\langle\tau\rangle),
\end{align*}
where the last step follows from writing $g=ku\tau^{j_1}$ and observing that
\begin{gather*}
    gm\tau^j=ku\tau^{j_1}m\tau^j=ku\tau^{j_1}m\tau^{-j_1}\tau^{j_1}\tau^j
\end{gather*}
with $\tau^{j_1}m\tau^{-j_1}\in M\subseteq B_{\omega_+}$ so that $\mathrm{pr}_{KB_{\omega_+}}(gm\tau^j)\equiv\mathrm{pr}_{KB_{\omega_+}}(g)\,\mathrm{mod}\,M$ and $\mathrm{pr}_{\langle\tau\rangle}(gm\tau^j)=\mathrm{pr}_{\langle\tau\rangle}(g)\tau^j$. By \eqref{eq:measure_GMt} and Proposition \ref{prop:KNA_integral} we infer
\begin{align*}
\int_{G/M\langle\tau\rangle}f(gM\langle\tau\rangle)&=\int_G\tilde{f}(\mathrm{pr}_{KB_{\omega_+}}(g))\chi(\mathrm{pr}_{\langle\tau\rangle}(g))\intd g\\
&=\int_K\int_{B_{\omega_+}}\sum_{j\in\mathbb{Z}}\tilde{f}(ku)\chi(\tau^j)\intd u\intd k\\
&=\int_K\int_{B_{\omega_+}}\tilde{f}(ku)\intd u\intd k\\
&=\int_K\int_{B_{\omega_+}}f(kuM\langle\tau\rangle)\intd u\intd k.\qedhere
\end{align*}
\end{proof}

We can also use the Iwasawa decomposition to analyze the $G$-action on the boundary $G/\mathrm{Stab}_G(\omega_+)\cong K/K\cap \mathrm{Stab}_G(\omega_+)=K/K\cap B_{\omega_+}\cong\Omega$ of the tree.

\begin{lemma}\label{la:integral_K_shift}
Let $x\in G$. Then, for each $F\in C(K/K\cap B_{\omega_+})$,
\begin{gather*}
    \int_{K/K\cap B_{\omega_+}}F(k(x^{-1}k))\intd k=\int_{K/K\cap B_{\omega_+}}F(k)q^{-H(xk)}\intd k.
\end{gather*}
\end{lemma}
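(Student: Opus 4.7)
The plan is to derive the formula from the bi-invariance of Haar measure on $G$ by comparing two expressions for $\int_{G}\tilde F(xg)\intd g$ for a suitable auxiliary function $\tilde F$. First I would pick $\chi\in C_c(B_{\omega_+})$ that is $(K\cap B_{\omega_+})$-left-invariant and normalized so that $\int_{B_{\omega_+}}\chi\intd u=1$, and $\psi\in C_c(\langle\tau\rangle)$ with $\sum_j\psi(\tau^j)=1$, and set $\tilde F(g):=F(k(g))\,\chi(u(g))\,\psi(\tau^{H(g)})$, using the decomposition from Corollary~\ref{cor:KNA}. The uniqueness modulo $K\cap B_{\omega_+}$ in that corollary, together with the invariance hypotheses on $F,\chi,\psi$, makes $\tilde F$ well-defined, and Proposition~\ref{prop:KNA_integral} immediately gives $\int_G\tilde F\intd g=\int_K F(k)\intd k$.

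Next I would compute $\int_G\tilde F(xg)\intd g$ using the same KNA decomposition $g=ku\tau^j$. Writing $xk=k(xk)u(xk)\tau^{H(xk)}$ and using that $\langle\tau\rangle$ normalizes $B_{\omega_+}$, so that $c_{\tau^n}(u):=\tau^n u\tau^{-n}\in B_{\omega_+}$, one obtains
\[ xku\tau^j=k(xk)\cdot\bigl[u(xk)\,c_{\tau^{H(xk)}}(u)\bigr]\cdot\tau^{H(xk)+j}, \]
so that $k(xku\tau^j)=k(xk)$ and $H(xku\tau^j)=H(xk)+j$. The key auxiliary computation is the scaling of Haar measure on $B_{\omega_+}$ under the automorphism $c_{\tau^n}$: the normalization $\intd u(B_{\omega_+}\cap\mathrm{Stab}_G(x))=q^{\langle x,\omega_+\rangle}$ forces $\intd u(c_{\tau^{-n}}(K\cap B_{\omega_+}))=\intd u(B_{\omega_+}\cap\mathrm{Stab}_G(\tau^{-n}o))=q^{-n}$, hence $(c_{\tau^n})_*\intd u=q^{-n}\intd u$, so that $\int_{B_{\omega_+}}\chi(u_0\,c_{\tau^n}(u))\intd u=q^{-n}$ for every $u_0\in B_{\omega_+}$. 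Combined with $\sum_j\psi(\tau^{H(xk)+j})=1$, this yields the preliminary identity
\[ \int_K F(k(xk))\,q^{-H(xk)}\intd k=\int_K F(k)\intd k. \]

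To extract the lemma, I would then replace $x$ by $x^{-1}$ and substitute $F(\ell):=G(\ell)\,q^{-H(x\ell)}$. The crucial input is the identity $H(x\,k(x^{-1}k))=-H(x^{-1}k)$, which falls out of rearranging $x^{-1}k=k(x^{-1}k)\,u(x^{-1}k)\,\tau^{H(x^{-1}k)}$ into $x\,k(x^{-1}k)=k\,\tau^{-H(x^{-1}k)}u(x^{-1}k)^{-1}$ and once more using that $\tau$ normalizes $B_{\omega_+}$. With this in hand, the two factors $q^{-H(x\,k(x^{-1}k))}$ and $q^{-H(x^{-1}k)}$ on the left-hand side cancel, producing precisely
\[ \int_K G(k(x^{-1}k))\intd k=\int_K G(k)\,q^{-H(xk)}\intd k, \]
which is the lemma.

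The step I expect to require the most care is pinning down the correct sign in the Jacobian $q^{-n}$ of conjugation by $\tau^n$: distinguishing $(c_{\tau^n})_*\intd u=q^{-n}\intd u$ from $q^{+n}\intd u$ is sensitive to the direction in which the change of variables is read and to the chosen normalization of $\intd u$. As a sanity check, identifying $\Omega\cong K/(K\cap B_{\omega_+})$ via $k\mapsto k\omega_+$ and invoking Lemmas~\ref{la:horo_H} and~\ref{la:horo_id_2} turns the statement into a Radon--Nikodym formula for the pushforward of the $K$-invariant probability measure under the $x$-action on $\Omega$, and this geometric interpretation fixes all signs unambiguously.
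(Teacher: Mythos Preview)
Your proof is correct and follows essentially the same route as the paper: build an auxiliary function on $G$ via the $KB_{\omega_+}\langle\tau\rangle$ decomposition, use bi-invariance of Haar measure together with Proposition~\ref{prop:KNA_integral} to obtain the preliminary identity $\int_K F(k(xk))\,q^{-H(xk)}\intd k=\int_K F(k)\intd k$, then extract the lemma. The only minor difference is in the last step: the paper applies the preliminary identity directly to $\tilde F(k):=F(k(x^{-1}k))$ and uses $k(x^{-1}k(xk))=k$, which sidesteps your substitution $F(\ell)=G(\ell)q^{-H(x\ell)}$ and the auxiliary identity $H(x\,k(x^{-1}k))=-H(x^{-1}k)$.
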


\begin{proof}
We first pick some left $(B_{\omega_+}\cap K)$-invariant function $F_1\in C_c(B_{\omega_+})$ and $\chi\in C_c(\langle\tau\rangle)$ which both integrate to $1$. Then, writing $F(k)$ for $F(k(K\cap B_{\omega_+}))$,
\begin{gather*}
    f(ku\tau^j)\coloneqq F(k)F_1(u)\chi(\tau^j)
\end{gather*}
defines a compactly supported function on $G$. Moreover, Proposition \ref{prop:KNA_integral} implies
\begin{align*}
\int_{K}F(k)\intd k&=\int_K\sum_{j\in\mathbb{Z}}F(k)\chi(\tau^j)\intd k=\int_K\int_{B_{\omega_+}}\sum_{j\in\mathbb{Z}}F(k)\chi(\tau^j)F_1(u)\intd u\intd k\\
&=\int_Gf(g)\intd g=\int_Gf(xg)\intd g.
\end{align*}
Now we write $g=ku\tau^j$ and
\begin{gather*}
    xg=xku\tau^j=k(xk)u_+(xk)\tau^{H(xk)}u\tau^j=k(xk)u_+(xk)\tau^{H(xk)}u\tau^{-H(xk)}\tau^{H(xk)+j}
\end{gather*}
with $\tau^{H(xk)}u\tau^{-H(xk)}\in B_{\omega_+}$. Thus, Proposition \ref{prop:KNA_integral} implies
\begin{align*}
\int_Gf(xg)\intd g&=\int_K\int_{B_{\omega_+}}\sum_{j\in\mathbb{Z}}f(xku\tau^j)\intd u\intd k\\
&=\int_K\int_{B_{\omega_+}}\sum_{j\in\mathbb{Z}}F(k(xk))F_1(u_+(xk)\tau^{H(xk)}u\tau^{-H(xk)})\chi(\tau^{H(xk)+j})\intd u\intd k\\
&=\int_K\int_{B_{\omega_+}}F(k(xk))F_1(u_+(xk)\tau^{H(xk)}u\tau^{-H(xk)})\intd u\intd k.
\end{align*}
Using now \cite[La.\@ 3.8]{Ve02} we find
\begin{align*}
\int_Gf(xg)\intd g&=\int_K\int_{B_{\omega_+}}F(k(xk))F_1(u_+(xk)u)q^{-H(xk)}\intd u\intd k\\
&=\int_KF(k(xk))q^{-H(xk)}\intd k\int_{B_{\omega_+}}F_1(u)\intd u\\
&=\int_KF(k(xk))q^{-H(xk)}\intd k.
\end{align*}
Altogether we arrive at
\begin{gather}\label{eq:pf_int_K}
    \int_KF(k(xk))q^{-H(xk)}\intd k=\int_KF(k)\intd k.
\end{gather}
Note that $k(x^{-1}k(xk))=k(kk^{-1}x^{-1}k(xk))=k$ since $(xk)^{-1}k(xk)\in B_{\omega_+}\langle\tau\rangle$. Thus, applying \eqref{eq:pf_int_K} to $\tilde{F}(k)\coloneqq F(k(x^{-1}k))$ yields
\begin{gather*}
    \int_KF(k)q^{-H(xk)}\intd k=\int_K\tilde{F}(k)\intd k=\int_KF(k(x^{-1}k))\intd k.\qedhere
\end{gather*}
\end{proof}

We can also decompose $G$ using both of the subgroups $B_{\omega_\pm}$. In this case we however get a disjoint union of two sets.

\begin{proposition}[Bruhat ``decomposition‘‘]\label{prop:Bruhat}
   We can write
\begin{gather*}
    G=B_{\omega_\pm}B_{\omega_\mp}\langle\tau\rangle\sqcup s \mathrm{Stab}_G(\omega_\pm).
\end{gather*}
Moreover, $u_1\tilde u_1\tau^{j_1}=u_2\tilde u_2\tau^{j_2}$ implies
\begin{gather*}
 j_1=j_2\qquad\text{ and }\qquad\exists\, m\in M\colon\quad u_1=u_2m,\ m\tilde u_1=\tilde u_2.
\end{gather*}
\end{proposition}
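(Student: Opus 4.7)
The plan is to reduce both assertions to consequences of the $G$-action on the boundary $\Omega$, using the transitivity of $B_{\omega_\pm}$ on $\Omega \setminus \{\omega_\pm\}$ together with the Iwasawa decomposition of Proposition~\ref{prop:Iwasawa}. A useful preliminary identity is
\begin{equation*}
\mathrm{Stab}_G(\omega_\pm) = B_{\omega_\pm}\langle\tau\rangle,
\end{equation*}
obtained by writing any $g \in \mathrm{Stab}_G(\omega_\pm)$ in Iwasawa form $g = u\tau^j k$ with $u \in B_{\omega_\pm}$ and $k \in K$, and noting that $g\omega_\pm = \omega_\pm$ forces $k\omega_\pm = \omega_\pm$, hence $k \in K \cap B_{\omega_\pm} \subseteq B_{\omega_\pm}$; the reverse inclusion is immediate.

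For the decomposition, I distinguish two cases based on whether $g\omega_\mp$ equals $\omega_\pm$. In the case $g\omega_\mp \neq \omega_\pm$, by transitivity of $B_{\omega_\pm}$ on $\Omega \setminus \{\omega_\pm\}$ there exists $u \in B_{\omega_\pm}$ with $u\omega_\mp = g\omega_\mp$; then $u^{-1}g$ fixes $\omega_\mp$, so by the preliminary identity $u^{-1}g \in B_{\omega_\mp}\langle\tau\rangle$, placing $g \in B_{\omega_\pm}B_{\omega_\mp}\langle\tau\rangle$. In the case $g\omega_\mp = \omega_\pm = s\omega_\mp$, the element $s^{-1}g$ fixes $\omega_\mp$, so $g \in s\,\mathrm{Stab}_G(\omega_\mp)$. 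Disjointness of the two sets follows from the same dichotomy: for $g = u\tilde u\tau^j$ with $u \in B_{\omega_\pm}$ and $\tilde u \in B_{\omega_\mp}$ one has $g\omega_\mp = u\omega_\mp$, and $u^{-1}\omega_\pm = \omega_\pm \neq \omega_\mp$ forces $u\omega_\mp \neq \omega_\pm$.

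For the uniqueness claim, I would apply $u_1\tilde u_1 \tau^{j_1} = u_2\tilde u_2 \tau^{j_2}$ to $\omega_\mp$; since each $\tilde u_i \in B_{\omega_\mp}$ and $\tau$ fix $\omega_\mp$, this collapses to $u_1\omega_\mp = u_2\omega_\mp$, so that $u_2^{-1}u_1$ lies in $B_{\omega_\pm} \cap \mathrm{Stab}_G(\omega_\mp)$. The crucial geometric lemma, which I expect to be the main obstacle, is the identification of this intersection with $M$: any element fixing both $\omega_+$ and $\omega_-$ acts on the reference geodesic $]\omega_-,\omega_+[$ as some power $\tau^\ell$, and the additional presence of a fixed vertex (guaranteed by membership in $B_{\omega_\pm}$) forces $\ell = 0$, so the whole geodesic is pointwise fixed and the element lies in $M$. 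Writing $u_1 = u_2 m$ with $m \in M$ and substituting yields $m\tilde u_1 = \tilde u_2\tau^{j_2 - j_1}$. Since $m \in M \subseteq B_{\omega_\mp}$ the left-hand side lies in $B_{\omega_\mp}$, and since $\tau$ fixes no vertex, $\tau^k \in B_{\omega_\mp}$ only for $k = 0$. This forces $j_1 = j_2$ and $m\tilde u_1 = \tilde u_2$, completing the argument.
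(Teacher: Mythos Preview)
Your proof is correct and follows essentially the same route as the paper's: both characterize the big Bruhat cell as $\{g : g\omega_\mp \neq \omega_\pm\}$ via the transitive action of $B_{\omega_\pm}$ on $\Omega\setminus\{\omega_\pm\}$ together with the identity $\mathrm{Stab}_G(\omega_\mp)=B_{\omega_\mp}\langle\tau\rangle$, and both obtain uniqueness by showing $u_2^{-1}u_1\in B_{\omega_\pm}\cap\mathrm{Stab}_G(\omega_\mp)=M$ through the same fixed-point argument on the reference geodesic, then using $B_{\omega_\mp}\cap\langle\tau\rangle=\{\mathrm{id}\}$. The only cosmetic difference is that you derive the stabilizer identity from the Iwasawa decomposition, whereas the paper argues directly that an element fixing $\omega_-$ eventually fixes a tail of $]\omega_-,\omega_+[$ and hence differs from some $\tau^i$ by an element of $B_{\omega_-}$.
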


\begin{proof}
Note first that $B_{\omega_+} \mathrm{Stab}_G(\omega_-)=\{g\in G\mid g\omega_-\neq\omega_+\}$: For $g\in G$ with $g\omega_-\neq\omega_+$ there exists some $u\in B_{\omega_+}$ such that $ug\omega_-=\omega_-$ and thus $ug\in \mathrm{Stab}_G(\omega_-)$. On the other hand $B_{\omega_+}\mathrm{Stab}_G(\omega_-)\omega_-=B_{\omega_+}\omega_-=\Omega\setminus\{\omega_+\}$. Moreover, we have $\mathrm{Stab}_G(\omega_-)=B_{\omega_-}\langle\tau\rangle$. Indeed, let $g\in\mathrm{Stab}_G(\omega_-)$ and write $]\omega_-,\omega_+[=(\ldots,x_{-1},x_0,x_1,\ldots)$. Then there exists some $i\in\mathbb{Z}$ such that $gx_{-j}=x_{-j+i}$ for large enough $j$. Thus, $g\tau^{-i}$ is contained in $B_{\omega_-}$. In particular we obtain $B_{\omega_-}B_{\omega_+}\langle\tau\rangle=\{g\in G\mid g\omega_-\neq\omega_+\}$ and thus $G\setminus B_{\omega_-}B_{\omega_+}\langle\tau\rangle=s \mathrm{Stab}_G(\omega_-)$.

Let now $u_1\tilde u_1\tau^{j_1}=u_2\tilde u_2\tau^{j_2}\in B_{\omega_\pm}B_{\omega_\mp}\langle\tau\rangle$. Then $B_{\omega_\pm}\ni u_2^{-1}u_1=\tilde u_2\tau^{j_2-j_1}\tilde u_1^{-1}$ where the left hand side fixes $\omega_{\pm}$ and some $x\in\mathfrak{X}$, whereas the right hand side fixes $\omega_{\mp}$. Thus, $u_2^{-1}u_1$ fixes $]\omega_-,\omega_+[$ pointwise and there exists $m\in M$ such that $u_1=u_2m$. We thus infer $m\tilde u_1\tau^{j_1}=\tilde u_2\tau^{j_2}$. But $\tilde u_2^{-1}m\tilde u_1=\tau^{j_2-j_1}\in B_{\omega_\pm}\cap\langle\tau\rangle=\{\mathrm{id}\}$ implies $j_1=j_2$ and $m\tilde u_1=\tilde u_2$.
\end{proof}

The Bruhat decomposition from Proposition \ref{prop:Bruhat} also yields integration formulas. We start by describing the Haar measure on $B_{\omega_-}\langle\tau\rangle$.

\begin{lemma}\label{la:measure_NA}
The Haar measure on $B_{\omega_-}\langle\tau\rangle$ can be normalized such that
\begin{gather*}
    \forall f\in C_c(B_{\omega_-}\langle\tau\rangle)\colon\quad\int_{B_{\omega_-}\langle\tau\rangle} f(u\tau^j)\intd(u\tau^j)=\int_{B_{\omega_-}}\sum_{j\in\mathbb{Z}}f(u\tau^j)q^{-j}\intd u.
\end{gather*}
\end{lemma}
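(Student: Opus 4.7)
The plan is to verify that the right-hand side of the claimed identity defines a left Haar measure on $B_{\omega_-}\langle\tau\rangle$ and then appeal to the uniqueness of the Haar measure up to a positive scalar. First I would record that $B_{\omega_-}\langle\tau\rangle$ is an internal semidirect product $B_{\omega_-}\rtimes\langle\tau\rangle$: the discussion following Proposition~\ref{prop:Iwasawa} shows that $\langle\tau\rangle$ normalizes $B_{\omega_-}$, and $B_{\omega_-}\cap\langle\tau\rangle=\{\mathrm{id}\}$ since any non-trivial power of $\tau$ translates the reference geodesic $]\omega_-,\omega_+[$ and hence fixes no vertex, whereas each element of $B_{\omega_-}$ fixes some vertex by definition. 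Consequently every $g\in B_{\omega_-}\langle\tau\rangle$ has a unique decomposition $g=u\tau^j$ with $u\in B_{\omega_-}$ and $j\in\mathbb{Z}$, so the right-hand side unambiguously defines a positive Radon functional $I$ on $C_c(B_{\omega_-}\langle\tau\rangle)$ (the sum over $j$ is finite for fixed $u$ by compact support of $f$).

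The main step is then to verify left-invariance of $I$, and by the semidirect product structure it suffices to check this on the two generating subgroups $B_{\omega_-}$ and $\langle\tau\rangle$. Invariance under left-translation by $v\in B_{\omega_-}$ is immediate from the left-invariance of $\intd u$. For $\tau^k$ the key identity is $\tau^{-k}u\tau^j=c_{-k}(u)\,\tau^{j-k}$, where $c_\ell(u)\coloneqq\tau^\ell u\tau^{-\ell}$ denotes the conjugation action of $\langle\tau\rangle$ on $B_{\omega_-}$. Plugging this into $I(L_{\tau^k}f)$, the substitution $u'=c_{-k}(u)$ on $B_{\omega_-}$ together with the reindexing $j\mapsto j+k$ in the summation must compensate the weight $q^{-j}$. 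The Jacobian of the substitution $u\mapsto c_{\ell}(u)$ is a power of $q$, computed from the normalization $\intd u(B_{\omega_-}\cap\mathrm{Stab}_G(x))=q^{\langle x,\omega_-\rangle}$ together with the identity $c_\ell(\mathrm{Stab}_{B_{\omega_-}}(x))=\mathrm{Stab}_{B_{\omega_-}}(\tau^\ell x)$; its value is determined by $\langle\tau o,\omega_-\rangle$, which in turn is read off from the computation of $\langle\tau^j o,\omega_\pm\rangle$ appearing in the proof of Lemma~\ref{la:horo_H}.

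The hard part will be the careful bookkeeping of these two compensating powers of $q$; once they are shown to cancel exactly, $I$ is a left Haar measure on $B_{\omega_-}\langle\tau\rangle$. Since $B_{\omega_-}$ is locally compact and $\langle\tau\rangle$ carries the discrete topology, $B_{\omega_-}\langle\tau\rangle$ is locally compact, so the uniqueness of the Haar measure up to a positive scalar identifies $I$ with an appropriate normalization of $\intd(u\tau^j)$, yielding the lemma. The appearance of the factor $q^{-j}$ in the formula is a manifestation of the non-unimodularity of $B_{\omega_-}\langle\tau\rangle$, encoded in its modular function evaluated at $\tau^j$, in direct analogy with the weight appearing in the Iwasawa integration formula for $B_{\omega_+}\langle\tau\rangle$ recorded in Proposition~\ref{prop:NAK_integral}.
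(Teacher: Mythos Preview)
Your proposal is correct and follows essentially the same route as the paper: verify directly that the right-hand side defines a left-invariant positive functional on $C_c(B_{\omega_-}\langle\tau\rangle)$ and then invoke the uniqueness of Haar measure. The only difference is cosmetic---the paper checks invariance under a general left translate $u_1\tau^\ell$ in one pass and cites \cite[La.~3.8]{Ve02} for the Jacobian of conjugation by $\tau^\ell$ on $B_{\omega_-}$, whereas you split the verification over the two generating subgroups and propose to extract that same Jacobian from the stabilizer normalization of $\intd\bar u$.
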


\begin{proof}
Let $f\in C_c(B_{\omega_-}\langle\tau\rangle),\ u_1\in B_{\omega_-}$ and $\tau^i\in\langle\tau\rangle$. Then we have
\begin{align*}
\int_{B_{\omega_-}}\sum_{j\in\mathbb{Z}} f(u_1\tau^\ell u\tau^j)q^{-j}\intd u&=\int_{B_{\omega_-}}\sum_{j\in\mathbb{Z}}f(u_1\tau^\ell u\tau^{-\ell}\tau^{j+\ell})q^{-j}\intd u\\
&=\int_{B_{\omega_-}}\sum_{j\in\mathbb{Z}}f(u_1\tau^\ell u\tau^{-\ell}\tau^{j})q^{\ell-j}\intd u\\
&=\int_{B_{\omega_-}}\sum_{j\in\mathbb{Z}}f(u_1u\tau^{j})q^{-j}\intd u\\
&=\int_{B_{\omega_-}}\sum_{j\in\mathbb{Z}}f(u\tau^{j})q^{-j}\intd u,
\end{align*}
where we used \cite[La.\@ 3.8]{Ve02} in the penultimate and the right-invariance of $\intd u$ in the last step. Thus, we obtain a left-invariant Haar measure on $B_{\omega_-}\langle\tau\rangle$ and the lemma follows from the uniqueness of Haar measures.
\end{proof}

\begin{lemma}\label{la:modular_fct_NA}
The modular function of $B_{\omega_-}\langle\tau\rangle$ is given by $\Delta_{B_{\omega_-}\langle\tau\rangle}(\bar{u}\tau^j)=q^{j}$ so that $\intd \bar{u}\intd\tau^j$, where $\intd\tau^j$ denotes the counting measure, defines a right-invariant Haar measure.
\end{lemma}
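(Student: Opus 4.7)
The plan is to compute the effect of right translation on the left-invariant Haar measure given by Lemma~\ref{la:measure_NA} and read off the modular function from the resulting scalar. The right-invariant measure will then be identified via the general identity $\intd\mu_R(g)=\Delta_{B_{\omega_-}\langle\tau\rangle}(g)\intd\mu_L(g)$ (or, equivalently, verified by the same calculation without the $q^{-j}$ factor).

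Fix $h=\bar u_0\tau^{j_0}\in B_{\omega_-}\langle\tau\rangle$ and $f\in C_c(B_{\omega_-}\langle\tau\rangle)$. The starting point is the factorization
\[
u\tau^j\cdot\bar u_0\tau^{j_0}=u\,(\tau^j\bar u_0\tau^{-j})\,\tau^{j+j_0},
\]
together with the fact that $\tau^j\bar u_0\tau^{-j}\in B_{\omega_-}$, cf.\@ \cite[La.\@~3.8]{Ve02}. Since $B_{\omega_-}$ is unimodular, the substitution $v=u\cdot(\tau^j\bar u_0\tau^{-j})$ preserves $\intd u$. Applying these two observations inside Lemma~\ref{la:measure_NA} gives
\begin{align*}
\int_{B_{\omega_-}\langle\tau\rangle} f(g\,\bar u_0\tau^{j_0})\intd\mu_L(g)
&=\int_{B_{\omega_-}}\sum_{j\in\mathbb Z} f\bigl(u\,(\tau^j\bar u_0\tau^{-j})\,\tau^{j+j_0}\bigr)\,q^{-j}\intd u\\
&=\int_{B_{\omega_-}}\sum_{j\in\mathbb Z} f(v\,\tau^{j+j_0})\,q^{-j}\intd v\\
&=q^{j_0}\int_{B_{\omega_-}\langle\tau\rangle} f(g)\intd\mu_L(g),
\end{align*}
where the last step reindexes the sum via $k=j+j_0$.

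This is exactly the defining relation of the modular function under the convention $\intd\mu_R=\Delta\intd\mu_L$, hence $\Delta_{B_{\omega_-}\langle\tau\rangle}(\bar u\tau^j)=q^j$. Multiplying the left Haar measure $q^{-j}\intd\bar u\intd\tau^j$ by $\Delta(\bar u\tau^j)=q^j$ yields $\intd\bar u\intd\tau^j$, which is therefore right-invariant. Alternatively, repeating the calculation above with $q^{-j}$ replaced by $1$ shows directly that $\intd\bar u\intd\tau^j$ is right-invariant. There is no real obstacle here beyond bookkeeping: everything follows from Lemma~\ref{la:measure_NA}, the normalization of $\langle\tau\rangle$ by $B_{\omega_-}$, and unimodularity of $B_{\omega_-}$; the only point requiring some care is keeping track of the shift $j\mapsto j+j_0$ in the counting measure, which produces the factor $q^{j_0}$ that becomes $\Delta$.
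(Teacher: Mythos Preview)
Your proof is correct and is essentially the same as the paper's: both compute the right translate of the left Haar measure from Lemma~\ref{la:measure_NA} by writing $u\tau^j\bar u_0\tau^{j_0}=u(\tau^j\bar u_0\tau^{-j})\tau^{j+j_0}$, absorbing $\tau^j\bar u_0\tau^{-j}\in B_{\omega_-}$ into $u$ via the (right) invariance of $\intd u$, and reindexing the sum to pick up the factor $q^{j_0}$. Your write-up is slightly more explicit about deducing the right-invariance of $\intd\bar u\intd\tau^j$ from $\Delta\cdot\intd\mu_L$, but the argument is otherwise identical.
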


\begin{proof}
Let $u_1\in B_{\omega_-}$ and $\ell\in\mathbb{Z}$. Then Lemma \ref{la:measure_NA} implies
\begin{align*}
\int_{B_{\omega_-}\langle\tau\rangle} f(u\tau^ju_1\tau^\ell)\intd(u\tau^j)&=\int_{B_{\omega_-}}\sum_{j\in\mathbb{Z}}f(u\tau^ju_1\tau^\ell)q^{-j}\intd u\\
&=\int_{B_{\omega_-}}\sum_{j\in\mathbb{Z}}f(u\tau^ju_1\tau^{-j}\tau^{j+\ell})q^{-j}\intd u\\
&=\int_{B_{\omega_-}}\sum_{j\in\mathbb{Z}}f(u\tau^{j+\ell})q^{-j}\intd u\\
&=q^\ell\int_{B_{\omega_-}}\sum_{j\in\mathbb{Z}}f(u\tau^{j})q^{-j}\intd u.\qedhere
\end{align*}
\end{proof}

Recall that the Bruhat decomposition is built up by two disjoint sets or \emph{cells}. However, one of these cells carries the full measure while the other one has measure zero:
\begin{lemma}\label{la:Bruhat_measure_zero}
   The Bruhat cell $G\setminus B_{\omega_-}B_{\omega_+}\langle\tau\rangle=s \mathrm{Stab}_G(\omega_-)$ has $\intd g$-measure zero.
\end{lemma}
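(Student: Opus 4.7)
The plan is to work through the Iwasawa-type integration formula of Proposition~\ref{prop:NAK_integral} ($G = B_{\omega_+}\langle\tau\rangle K$) and reduce the problem to a measure-theoretic statement about the action of $K$ on the boundary. Since $s\omega_- = \omega_+$, an element $g\in G$ lies in $s\mathrm{Stab}_G(\omega_-)$ if and only if $g\omega_- = \omega_+$. In the parametrisation $g = u\tau^j k$ with $u\in B_{\omega_+}$, $j\in\mathbb Z$, $k\in K$, one has $u\omega_+ = \omega_+$ and $\tau^j\omega_+ = \omega_+$, so the condition $u\tau^j k\omega_- = \omega_+$ collapses to $k\omega_- = \omega_+$, a condition depending only on the $K$-component.

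The next step is to show that $E\coloneqq\{k\in K : k\omega_- = \omega_+\}$ has $\intd k$-measure zero. This set is the single left coset $s(K\cap B_{\omega_-})$ of the compact subgroup $K\cap B_{\omega_-} = \mathrm{Stab}_K(\omega_-)$. I would argue that the orbit map $k\mapsto k\omega_-$ pushes the normalised Haar measure $\intd k$ forward to a $K$-invariant probability measure on $\Omega \cong K/(K\cap B_{\omega_-})$. Since $K$ acts transitively on $\Omega$ and $\Omega$ is uncountable (this is the only point where $q\geq 2$ enters), $K$-invariance together with countable additivity forces every singleton to have measure zero, whence $\intd k(K\cap B_{\omega_-}) = 0$, so $\intd k(E) = 0$.

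To conclude, pick any $f\in C_c(G)$. By Proposition~\ref{prop:NAK_integral},
\[
\int_G f(g)\,\mathbbm{1}_{s\mathrm{Stab}_G(\omega_-)}(g)\intd g = \int_{B_{\omega_+}}\sum_{j\in\mathbb{Z}}\int_K f(u\tau^j k)\,\mathbbm{1}_E(k)\,q^{-j}\intd k\intd u.
\]
Since $E$ is $\intd k$-null, the innermost integral vanishes for every fixed $u$ and $j$. Compact support of $f$ makes the sum over $j$ finite for each $u$, so Fubini yields zero. This establishes that the Bruhat cell $s\mathrm{Stab}_G(\omega_-)$ has Haar measure zero.

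The main (and really the only) technical obstacle is the measure-theoretic step that $K\cap B_{\omega_-}$ is $\intd k$-null inside $K$, after which the algebraic reduction to a condition on $K$ alone does all the work. It would also be worth flagging the standing assumption $q\geq 2$, since for $q=1$ the boundary $\Omega$ has only two points and the cell in question genuinely has positive Haar measure.
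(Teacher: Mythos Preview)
Your argument is correct and takes a genuinely different route from the paper's own proof. The paper argues by contradiction: since $\mathrm{Stab}_G(\omega_-)$ is a closed subgroup, if it had positive Haar measure then a classical result of Weil would force it to be open, and the paper then exhibits, for any $g_0\in\mathrm{Stab}_G(\omega_-)$ and any basic neighbourhood $U_F(g_0)$, an explicit element of $U_F(g_0)\setminus\mathrm{Stab}_G(\omega_-)$ by swapping a geodesic ray to $\omega_-$ with one to some other boundary point. Your approach instead pushes the problem through the Iwasawa parametrisation $g=u\tau^jk$ and observes that membership in the small cell depends only on the $K$-coordinate, reducing everything to the statement that a single coset of $K\cap B_{\omega_-}$ in $K$ is $\intd k$-null, which in turn is the fact that the unique $K$-invariant probability measure on the uncountable boundary $\Omega$ gives no mass to points. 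This is more elementary and entirely self-contained within the integration formulas already developed in the appendix, whereas the paper's proof imports an external structural result about locally compact groups. Your remark isolating the hypothesis $q\geq 2$ (so that $\Omega$ is infinite) is a genuine clarification that the paper leaves implicit.

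One small technical point worth making explicit: Proposition~\ref{prop:NAK_integral} is stated for $f\in C_c(G)$, but you apply it to $f\cdot\mathbbm{1}_{s\mathrm{Stab}_G(\omega_-)}$, which is only Borel. This is harmless---two Radon measures agreeing on $C_c(G)$ agree on Borel sets, so the Iwasawa disintegration extends to nonnegative Borel functions by Tonelli---but it would be cleaner either to say this, or simply to compute $\intd g(s\mathrm{Stab}_G(\omega_-)\cap C)$ directly for compact $C$ and use $\sigma$-finiteness. Once $\intd k(E)=0$, the inner $K$-integral vanishes identically and no Fubini-type justification is needed for the outer integrals.
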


\begin{proof}
By the left invariance of the Haar measure, $s\mathrm{Stab}_G(\omega_-)$ has the same measure as $\mathrm{Stab}_G(\omega_-)$. If $\mathrm{Stab}_G(\omega_-)$ had positive measure, \cite[p.\@ 50]{W42} would imply that it contains an interior point $g_0$. However, (see \cite[I.4]{FTN91}) a basis of open neighborhoods of $g_0$ in $G$ is given by the sets
\begin{gather*}
    U_F(g_0)\coloneqq\{h\in G\mid\forall x\in F\colon g_0x=hx\},
\end{gather*}
where $F$ denotes a finite subset of $\mathfrak{X}$. By definition, if $\omega_-=[(x_0,x_1,\ldots)]$, there exists some $j$ such that $g_0x_{i}=x_{i+j}$ for large $i\geq n_0$. Now let $F\subseteq\mathfrak{X}$ be finite and denote by $\vec{e}$ an edge with $\omega_-\in\partial_+\vec{e}$ and such that $\eop{e}$ points towards each $x\in F$. Pick a geodesic ray from $\tau(\vec{e})$ to some $\omega\neq\omega_-$ and an automorphism $y$ that interchanges this ray with the ray from $\tau(\vec{e})$ to $\omega_-$ and leaves all points that can be reached from $\eop{e}$ fixed. Then $g_0y$ is in $U_F(g_0)$ but not in $\mathrm{Stab}_G(\omega_-)$ so that $g_0$ was not an interior point.
\end{proof}

Now we have all tools at hand to prove the integral formula for the Bruhat decomposition.

\begin{proposition}\label{prop:Bruhat_integral}
    The Haar measure on $G$ can be normalized such that
\begin{gather*}
    \forall f\in C_c(G)\colon\quad \int_Gf(g)\intd g=\int_{B_{\omega_+}}\int_{B_{\omega_-}}\sum_{j\in\mathbb{Z}}f(u\bar{u}\tau^j)\intd\bar{u}\intd u.
\end{gather*}
\end{proposition}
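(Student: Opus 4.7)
The plan is to show that the right-hand side defines a left-invariant Radon measure $\lambda$ on $G$ and then to invoke uniqueness of Haar measure, absorbing a positive scalar into the normalization of $\intd g$. By Lemma~\ref{la:Bruhat_measure_zero}, the complement $s\mathrm{Stab}_G(\omega_-)$ of the big Bruhat cell is $\intd g$-null, so the identity only needs to hold for $f$ supported in the open dense set $B_{\omega_+}B_{\omega_-}\langle\tau\rangle$. On that cell, Proposition~\ref{prop:Bruhat} exhibits $(u,\bar u,\tau^j)\mapsto u\bar u\tau^j$ as the quotient by the free $M$-action $m\cdot(u,\bar u,\tau^j)=(um^{-1},m\bar u,\tau^j)$, so combined with $\mathrm{vol}(M)=1$ this makes
\[ \lambda(f)\coloneqq\int_{B_{\omega_+}}\int_{B_{\omega_-}}\sum_{j\in\Z}f(u\bar u\tau^j)\intd\bar u\intd u \]
a well-defined, nonzero, positive Radon measure on $G$.

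To verify left-invariance, I would use that the set of $g\in G$ under which $\lambda$ is invariant is closed (by continuity of left translation on $C_c(G)$), so it suffices to check invariance on the three subgroups $B_{\omega_+}$, $B_{\omega_-}$, $\langle\tau\rangle$, whose generated subgroup is dense in $G$ (their product $B_{\omega_+}B_{\omega_-}\langle\tau\rangle$ is already the open dense big Bruhat cell). Invariance under $B_{\omega_+}$ is immediate from the left-invariance of $\intd u$. For $\tau^\ell\in\langle\tau\rangle$, the rewriting
\[ \tau^{-\ell}u\bar u\tau^j=(\tau^{-\ell}u\tau^\ell)(\tau^{-\ell}\bar u\tau^\ell)\tau^{j-\ell} \]
combined with the conjugation Jacobians $\intd(\tau^{-\ell}u\tau^\ell)=q^{-\ell}\intd u$ on $B_{\omega_+}$ and $\intd(\tau^{-\ell}\bar u\tau^\ell)=q^{\ell}\intd\bar u$ on $B_{\omega_-}$ (both consequences of \cite[La.\@~3.8]{Ve02}, as already used in the proof of Proposition~\ref{prop:KNA_integral}) makes the $q^{\pm\ell}$ factors cancel.

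The main obstacle is invariance under $\bar u_0\in B_{\omega_-}$. Here I exploit that $\bar u_0^{-1}\omega_+\neq\omega_-$ (else $\omega_+=\bar u_0\omega_-=\omega_-$, a contradiction), so $\bar u_0^{-1}u$ always lies in the opposite Bruhat cell $B_{\omega_-}B_{\omega_+}\langle\tau\rangle$; write $\bar u_0^{-1}u=\bar v(u)u'(u)\tau^{j_1(u)}$. A second application of Proposition~\ref{prop:Bruhat}, now to the product $\bar v(u)u'(u)$, rearranges $\bar u_0^{-1}u\bar u\tau^j$ into the required form $u''\bar u''\tau^{j''}$, failing only on the single $M$-coset $\{u\in B_{\omega_+}:u\omega_-=\bar u_0\omega_+\}$. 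This exceptional locus is $\intd u$-null because $\mathrm{Stab}_{B_{\omega_+}}(\omega_-)=M$ and the Cantor-like quotient $(K\cap B_{\omega_+})/M$ is uncountable, which forces each $M$-coset in the unit-mass subgroup $K\cap B_{\omega_+}$ to have measure zero. A Fubini-style change of variables, combining the Bruhat rearrangement with the $\tau$-conjugation Jacobians just established, then completes the verification. Uniqueness of Haar measure finally identifies $\lambda$ with $c\cdot\intd g$ for some $c>0$, absorbed into the normalization of $\intd g$.
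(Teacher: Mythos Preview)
Your strategy is genuinely different from the paper's. The paper argues in the reverse direction: it defines a measure $\tilde\mu$ on $B_{\omega_+}\times B_{\omega_-}\langle\tau\rangle$ by pushing forward the \emph{known} Haar measure $\intd g$ along the (essentially bijective) Bruhat parametrization, and then checks that $\tilde\mu$ is left-$B_{\omega_+}$-invariant in the first factor and right-$(B_{\omega_-}\langle\tau\rangle)$-invariant in the second. Both invariances are immediate from the bi-invariance of $\intd g$, and uniqueness of Haar measure on the factors then forces $\tilde\mu=c\,\intd u\otimes\intd\bar u\,\intd\tau^j$. This sidesteps exactly the difficulty you run into.

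The serious gap in your argument is the left-$B_{\omega_-}$-invariance of $\lambda$. Your two applications of Proposition~\ref{prop:Bruhat} do produce a bijection (off a null set) of the parameter space, but you never verify that this bijection is measure-preserving; the phrase ``a Fubini-style change of variables'' hides the whole content. Concretely, writing $\bar u_0 u=u_1(u)\,\bar v(u)\,\tau^{j_0(u)}$ and substituting in the inner $(\bar u,j)$-integral leaves you with
\[
\lambda(\bar u_0\cdot f)=\int_{B_{\omega_+}}q^{-j_0(u)}\,F\big(u_1(u)\big)\intd u,
\qquad F(u_1)=\int_{B_{\omega_-}}\sum_{j}f(u_1\bar u_1\tau^{j})\intd\bar u_1,
\]
so you still need that the map $u\mapsto u_1(u)$ on $B_{\omega_+}$ has Jacobian $q^{j_0(u)}$. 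That is a nontrivial statement about how Bruhat coordinates interact with left translation by $B_{\omega_-}$; it is essentially equivalent to the Jacobian results the paper proves elsewhere in the appendix (e.g.\ Proposition~\ref{prop:int_KM_N}, Lemma~\ref{la:Jacobian_N}), and it does not follow from what you have written. By contrast, in the paper's approach the only invariances needed are those that are free from bi-invariance of $\intd g$.

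A secondary issue: your density claim for $\langle B_{\omega_+},B_{\omega_-},\tau\rangle$ is not justified by the parenthetical remark. Containing the open big cell makes the generated subgroup open, hence closed, but $G$ is totally disconnected, so ``open and closed'' does not force it to be all of $G$; you would still need to show, for instance, that the Weyl element $s$ lies in it.
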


\begin{proof}
Since $G\setminus B_{\omega_-}B_{\omega_+}\langle\tau\rangle=s \mathrm{Stab}_G(\omega_-)$ has measure zero by Lemma \ref{la:Bruhat_measure_zero} it is enough to integrate over $B_{\omega_+}B_{\omega_-}\langle\tau\rangle$ on the left hand side. Now set, for $f\in C_c(B_{\omega_+}\times B_{\omega_-}\langle\tau\rangle)$,
\begin{gather*}
    \int_{B_{\omega_+}B_{\omega_-}\langle\tau\rangle}f\intd\tilde\mu\coloneqq\int_{B_{\omega_+}\langle\tau\rangle B_{\omega_-}}\int_{M}f(n(g)m,m^{-1}a(g)\bar{n}(g))\intd m\intd g,
\end{gather*}
where $g=n(g)a(g)\bar{n}(g)\in B_{\omega_+}\langle\tau\rangle B_{\omega_-}$ denotes a fixed decomposition of $g\in B_{\omega_+}\langle\tau\rangle B_{\omega_-}$. Note that this integral does not depend on the chosen decomposition: If $n_1a_1\bar{n}_1=n_2a_2\bar{n}_2$ we infer that $n_2^{-1}n_1$ is contained in $B_{\omega_+}$ and $\langle\tau\rangle B_{\omega_-}$. Then $n_2^{-1}n_1$ fixes $\omega_+,\ \omega_-$ and some $x\in\widetilde{\mathfrak{X}}$. Therefore, it fixes the geodesic rays $]\omega_-,x]$ and $[x,\omega_+[$ pointwise and thus $n_2^{-1}n_1|_{]\omega_-,\omega_+[}=\mathrm{id}$ so that $n_2^{-1}n_1\eqqcolon m_0\in M$. In particular, $(n_1,a_1\bar{n}_1)=(n_2m_0,m_0^{-1}a_2\bar{n}_2)$ so that
\begin{gather*}
    \int_Mf(n_1m,m^{-1}a_1\bar{n}_1)\intd m=\int_Mf(n_2m_0m,m^{-1}m_0^{-1}a_2\bar{n}_2)\intd m=\int_Mf(n_2m,m^{-1}a_2\bar{n}_2)\intd m.
\end{gather*}
Let $E\subseteq B_{\omega_+}$ and $F\subseteq \langle\tau\rangle B_{\omega_-}$ be measurable. Then, for $\bar{u}\in B_{\omega_-}$,
\begin{align*}
    \int_{B_{\omega_+}B_{\omega_-}\langle\tau\rangle}\mathbbm{1}_{E\times F\bar{u}^{-1}}\intd\tilde\mu&=\int_{B_{\omega_+}\langle\tau\rangle B_{\omega_-}}\int_{M}\mathbbm{1}_{E}(n(g)m)\mathbbm{1}_{F}(m^{-1}a(g)\bar{n}(g)\bar{u})\intd m\intd g\\
&=\int_{B_{\omega_+}\langle\tau\rangle B_{\omega_-}}\int_{M}\mathbbm{1}_{E}(n(g\bar{u})m)\mathbbm{1}_{F}(m^{-1}a(g\bar{u})\bar{n}(g\bar{u}))\intd m\intd g\\
&=\int_{B_{\omega_+}\langle\tau\rangle B_{\omega_-}}\int_{M}\mathbbm{1}_{E}(n(g)m)\mathbbm{1}_{F}(m^{-1}a(g)\bar{n}(g))\intd m\intd g\\
&=\int_{B_{\omega_+}B_{\omega_-}\langle\tau\rangle}\mathbbm{1}_{E\times F}\intd\tilde\mu,
\end{align*}
where we used that $n(g\bar{u})=n(g),\, a(g\bar{u})=a(g)$ and $\bar{n}(g\bar{u})=\bar{n}(g)\bar{u}$ is a possible decomposition of $g\bar{u}$ in the second step and the right invariance of $\intd g$ in the third step. For each $a\in\langle\tau\rangle$ we may similarly chose $n(ga)=n(g),\, a(ga)=a(g)a$ and $\bar{n}(ga)=a^{-1}\bar{n}(g)a$ so that $a(g)\bar{n}(g)a=a(ga)\bar{n}(ga)$ and we infer that $\tilde\mu(E\times Fa)=\tilde\mu(E\times F)$. Thus, for fixed $E$, $\tilde\mu(E\times\bigcdot)$ is a right $B_{\omega_-}\langle\tau\rangle$-invariant measure so that Lemma \ref{la:modular_fct_NA} implies
\begin{gather*}
    \exists\ c(E)>0\colon\qquad\tilde\mu(E\times F)=c(E)\cdot\intd\bar{u}\intd\tau^j(F).
\end{gather*}
On the other hand, we have $c(uE)\cdot\intd\bar{u}\intd\tau^j(F)=\tilde\mu(uE\times F)=\tilde\mu(E\times F)=c(E)\cdot\intd\bar{u}\intd\tau^j(F)$ for each $u\in B_{\omega_+}$ so that $c(\bigcdot)$ defines a left-invariant measure on $B_{\omega_+}$. Thus, there exists some constant $c$ with $c(E)=c\cdot\intd u(E)$. Altogether we obtain $\tilde\mu(E\times F)=c\cdot\intd u(E)\intd\bar{u}\intd\tau^j(F)$ and since $B_{\omega_\pm}=\bigcup_{x\in\mathfrak{X}}B_{\omega_\pm}\cap \mathrm{Stab}_G(x)$ are $\sigma$-finite by \cite[La.\@ 3.3]{Ve02} this uniquely extends to a measure on the product space $B_{\omega_+}\times B_{\omega_-}\langle\tau\rangle$ so that $\tilde\mu=c\cdot\intd u\intd\bar{u}\intd\tau^j$. Since $B_{\omega_+}\langle\tau\rangle B_{\omega_-}\cong B_{\omega_+}\times_M \langle\tau\rangle B_{\omega_-}$, where $m.(u,\tau^j\bar{u})\coloneqq (um,m^{-1}\tau^j\bar{u})$, functions on $B_{\omega_+}\langle\tau\rangle B_{\omega_-}$ correspond to $M$-invariant functions on $B_{\omega_+}\times B_{\omega_-}\langle\tau\rangle$. However, for such functions we have
\begin{gather*}
    \int_{B_{\omega_+}B_{\omega_-}\langle\tau\rangle}f\intd\tilde\mu=\mathrm{vol}(M)\int_{B_{\omega_+}\langle\tau\rangle B_{\omega_-}}f(n(g),a(g)\bar{n}(g))\intd g=\mathrm{vol}(M)\int_{B_{\omega_+}\langle\tau\rangle B_{\omega_-}}f(g)\intd g
\end{gather*}
which finishes the proof.
\end{proof}

Similar to the case of the Iwasawa decomposition we can also express the measure on $G/M\langle\tau\rangle$ in terms of the Bruhat decomposition.

\begin{lemma}\label{la:int_GMNN_-}
There exists a constant $c>0$ such that for each $f\in C_c(G/M\langle\tau\rangle)$
\begin{gather*}
\int_{G/M\langle\tau\rangle}f(gM\langle\tau\rangle)\intd(gM\langle\tau\rangle)=c\int_{B_{\omega_+}}\int_{B_{\omega_-}}f(u\bar{u}M\langle\tau\rangle)\intd\bar{u}\intd u.
\end{gather*}
\end{lemma}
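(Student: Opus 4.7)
The strategy parallels the proof of Lemma~\ref{la:int_G_KN}, substituting the Bruhat integration formula from Proposition~\ref{prop:Bruhat_integral} for the Iwasawa version from Proposition~\ref{prop:KNA_integral}. First I would fix a cutoff $\chi\in C_c(\langle\tau\rangle)$ with $\sum_{j\in\Z}\chi(\tau^j)=1$. By Proposition~\ref{prop:Bruhat}, every $g$ in the open Bruhat cell $B_{\omega_+}B_{\omega_-}\langle\tau\rangle$ has a well-defined $\langle\tau\rangle$-component $\mathrm{pr}_{\langle\tau\rangle}(g)=\tau^j$, and the product $\mathrm{pr}_{B_{\omega_+}B_{\omega_-}}(g)=u\bar u$ is also unambiguous (the $M$-ambiguity in the factorization merely shifts the factor between $u$ and $\bar u$). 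Using this, I would lift $f$ to the function
$$\tilde f(g)\coloneqq f\bigl(\mathrm{pr}_{B_{\omega_+}B_{\omega_-}}(g)M\langle\tau\rangle\bigr)\cdot\chi(\mathrm{pr}_{\langle\tau\rangle}(g))$$
on the open cell, extended by zero (which is consistent, since the complement has $G$-measure zero by Lemma~\ref{la:Bruhat_measure_zero}). As in the proof of Lemma~\ref{la:int_G_KN}, one verifies that $\tilde f$ has compact support: it factors through the section $B_{\omega_+}B_{\omega_-}/M\hookrightarrow G/M\langle\tau\rangle$ of the quotient, and the $\langle\tau\rangle$-component is restricted to the finite set $\mathrm{supp}(\chi)$.

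Next I would evaluate $\int_G\tilde f\intd g$ in two ways. On the one hand, Proposition~\ref{prop:Bruhat_integral} together with $\sum_j\chi(\tau^j)=1$ and the right $M\langle\tau\rangle$-invariance of $f$ yields
$$\int_G\tilde f\intd g=\int_{B_{\omega_+}}\int_{B_{\omega_-}}\sum_{j\in\Z}f(u\bar u M\langle\tau\rangle)\chi(\tau^j)\intd\bar u\intd u=\int_{B_{\omega_+}}\int_{B_{\omega_-}}f(u\bar u M\langle\tau\rangle)\intd\bar u\intd u.$$
On the other hand, applying \eqref{eq:measure_GMt} requires computing $\tilde f(gm\tau^j)$ for $g=u\bar u\tau^{j_0}$ in the open cell. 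Here I would invoke three normalizer facts: $\tau$ normalizes $M$ (since $\tau M\tau^{-1}$ still fixes the reference geodesic pointwise), $M$ normalizes $B_{\omega_-}$ (since $M$ fixes $\omega_-$ and conjugates stabilizers of vertices to stabilizers of vertices), and $M\subseteq K\cap\mathrm{Stab}_G(\omega_+)\subseteq B_{\omega_+}$. These let one rewrite
$$gm\tau^j=u\bar u\tau^{j_0}m\tau^j=u(\tau^{j_0}m\tau^{-j_0})\cdot\bar u'\cdot\tau^{j_0+j}\in B_{\omega_+}B_{\omega_-}\langle\tau\rangle,$$
with $\bar u'\in B_{\omega_-}$ obtained by conjugating $\bar u$ past the $M$-element. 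Hence $\mathrm{pr}_{\langle\tau\rangle}(gm\tau^j)=\tau^{j_0+j}$ and $gm\tau^j M\langle\tau\rangle=gM\langle\tau\rangle$, giving $\tilde f(gm\tau^j)=f(gM\langle\tau\rangle)\chi(\tau^{j_0+j})$. Together with $\mathrm{vol}(M)=1$ and $\sum_j\chi(\tau^{j_0+j})=1$, this collapses to $\int_G\tilde f\intd g=\int_{G/M\langle\tau\rangle}f(gM\langle\tau\rangle)\intd(gM\langle\tau\rangle)$.

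Equating the two expressions for $\int_G\tilde f\intd g$ yields the lemma; the positive constant $c$ absorbs any discrepancy between the Haar normalizations used in Proposition~\ref{prop:Bruhat_integral} and in~\eqref{eq:measure_GMt}. The main obstacle is the commutation step above, where one must track the $\langle\tau\rangle$- and Bruhat-components of $gm\tau^j$ using the three normalizer relations; once this is in place, the rest is routine comparison of integrals, and the measure-zero complement of the open cell can be discarded throughout.
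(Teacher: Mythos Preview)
Your proposal is correct and follows essentially the same route as the paper: lift $f$ to $G$ using a cutoff $\chi$ on $\langle\tau\rangle$ and the Bruhat projection, then evaluate $\int_G\tilde f$ both via Proposition~\ref{prop:Bruhat_integral} and via~\eqref{eq:measure_GMt}, discarding the measure-zero small cell by Lemma~\ref{la:Bruhat_measure_zero}. The paper phrases the commutation step slightly differently (it first restricts to $f$ supported on the open cell and checks well-definedness of the lift directly), but your explicit use of the normalizer relations $\tau M\tau^{-1}=M$, $MB_{\omega_-}M^{-1}=B_{\omega_-}$, $M\subseteq B_{\omega_+}$ accomplishes the same thing.
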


\begin{proof}
Since $B_{\omega_-}B_{\omega_+}\langle\tau\rangle$ is dense in $G$ with complement of measure zero, it suffices to prove the lemma for $f\in C_c(B_{\omega_-}B_{\omega_+}\langle\tau\rangle/M\langle\tau\rangle)$. For such $f$ we define $\tilde{f}\in C_c(B_{\omega_-}B_{\omega_+})^M$ by $\tilde{f}(\bar{u}u)\coloneqq f(\bar{u}uM\langle\tau\rangle)$. Moreover, for some $\chi\in C_c(\langle\tau\rangle)$ we set $F(\bar{u}u\tau^j)\coloneqq \tilde{f}(\bar{u}u)\chi(\tau^j)$ on $B_{\omega_-}B_{\omega_+}\langle\tau\rangle$ and $F(g)\coloneqq 0$ on $G\setminus B_{\omega_-}B_{\omega_+}\langle\tau\rangle$. Note that $F$ is well-defined: If $\bar{u}u\tau^j=\bar{u}_1u_1\tau^{j_1}$,
\begin{gather*}
   \bar{u}^{-1}_1\bar{u}=u_1\tau^{j_1-j}u^{-1}\in B_{\omega_-}\cap \mathrm{Stab}_G(\omega_+)\subseteq M
\end{gather*}
implies the existence of some $m\in M$ such that $\bar{u}=\bar{u}_1m$. Thus, $mu\tau^j=u_1\tau^{j_1}$ so that Corollary \ref{cor:KNA} implies $j=j_1$ and in particular $\bar{u}_1u_1=\bar{u}m^{-1}mu=\bar{u}u$. Finally we obtain
\begingroup
\allowdisplaybreaks
\begin{align*}
\int_{G/M\langle\tau\rangle}f(gM\langle\tau\rangle)\intd(gM\langle\tau\rangle)&=
\int_{B_{\omega_-}B_{\omega_+}\langle\tau\rangle/M\langle\tau\rangle}\tilde{f}(\bar{u}u)\intd(\bar{u}uM\langle\tau\rangle)\\
&=\int_{B_{\omega_-}B_{\omega_+}\langle\tau\rangle/M\langle\tau\rangle}\int_M\sum_{j\in\mathbb{Z}}\tilde{f}(\bar{u}u)\chi(\tau^j)\intd(\bar{u}uM\langle\tau\rangle)\\
&=\int_{B_{\omega_-}B_{\omega_+}\langle\tau\rangle/M\langle\tau\rangle}\int_M\sum_{j\in\mathbb{Z}}F(\bar{u}um\tau^j)\intd(\bar{u}uM\langle\tau\rangle)\\
&=\int_{G/M\langle\tau\rangle}\int_M\sum_{j\in\mathbb{Z}}F(gm\tau^j)\intd(gM\langle\tau\rangle)\\
&=\int_{G}F(g)\intd g\\
&=c\int_{B_{\omega_+}}\int_{B_{\omega_-}}\sum_{j\in\mathbb{Z}}F(u\bar{u}\tau^j)\intd\bar{u}\intd u\\
&=c\int_{B_{\omega_+}}\int_{B_{\omega_-}}f(\bar{u}uM\langle\tau\rangle),
\end{align*}
\endgroup
where $c$ denotes the constant from Proposition \ref{prop:Bruhat_integral}.
\end{proof}

We can also use the Bruhat decomposition to describe the measure on the boundary. For this we first claim that
\begin{gather*}
    B_{\omega_-}/M\to K/K\cap B_{\omega_+},\qquad \bar{u}M \mapsto k(\bar{u})(K\cap B_{\omega_+})
\end{gather*}
defines an isomorphism onto an open subset of $K/K\cap B_{\omega_+}$ whose complement has measure zero.
For the injectivity, let $\bar{u}_1=ku_1\tau^{j_1}$ and $\bar{u}_2=ku_2\tau^{j_2}$ be two elements of $B_{\omega_-}$ with $k(\bar{u}_1)=k(\bar{u}_2)$. Then
\begin{gather*}
    B_{\omega_-}\ni\bar{u}_1^{-1}\bar{u}_2=u_1^{-1}\tau^{j_2-j_1}u_2\in B_{\omega_+}\langle\tau\rangle
\end{gather*}
is contained in $M$. For the surjectivity note first that $K/K\cap B_{\omega_+}=K/K\cap \mathrm{Stab}_G(\omega_+)\cong\Omega$ by $k(K\cap B_{\omega_+}) \mapsto k\omega_+$. Thus, we have to show that
\begin{gather*}
    B_{\omega_-}/M\to\Omega,\qquad\bar{u}M \mapsto k(\bar{u})\omega_+
\end{gather*}
has open image of full measure. However, since $k(\bar{u})\omega_+=\bar{u}\omega_+$, this follows from the transitive action of $B_{\omega_-}$ on $\Omega\setminus\{\omega_-\}$. We now relate the corresponding measures.

\begin{proposition}\label{prop:int_KM_N}
    Normalizing the measures on $K/K\cap B_{\omega_+}$ and $B_{\omega_-}/M$ by
\begin{gather*}
    \int_{K/K\cap B_{\omega_+}}\intd k=\int_{B_{\omega_-}/M}q^{-H(\bar{u})}\intd\bar{u}=1
\end{gather*}
we have
\begin{gather*}
    \int_{K/K\cap B_{\omega_+}}F(k(K\cap B_{\omega_+}))\intd k=\int_{B_{\omega_-}/M}F(k(\bar{u}))q^{-H(\bar{u})}\intd\bar{u}
\end{gather*}
for each $F\in C(K/K\cap B_{\omega_+})$.
\end{proposition}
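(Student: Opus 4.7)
The plan is to compute the integral of a suitably chosen test function on $G$ in two ways. Given $F\in C(K/(K\cap B_{\omega_+}))$, I fix auxiliary cutoffs $\chi\in C_c(B_{\omega_+})$ (left $(K\cap B_{\omega_+})$-invariant) and $\eta\in C_c(\langle\tau\rangle)$ normalized by $\int_{B_{\omega_+}}\chi(u)\intd u=\sum_j\eta(\tau^j)=1$, and set
$$\Phi(g)\coloneqq F(k(g))\,\chi(u(g))\,\eta(\tau^{H(g)})\in C_c(G),$$
where $k(g), u(g), \tau^{H(g)}$ come from the Iwasawa decomposition of Corollary~\ref{cor:KNA}. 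Applying Proposition~\ref{prop:KNA_integral} directly yields $\int_G\Phi(g)\intd g=\int_K F(k)\intd k$.

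For the second evaluation I invoke the $s$-conjugate of Proposition~\ref{prop:Bruhat_integral}: since $s$ swaps $\omega_+$ and $\omega_-$ and conjugates $\tau^j$ to $\tau^{-j}$, applying Proposition~\ref{prop:Bruhat_integral} to $g\mapsto f(sgs)$ and reindexing yields
$$\int_G f(g)\intd g=\int_{B_{\omega_-}}\int_{B_{\omega_+}}\sum_{j\in\Z} f(\bar u\, u\tau^j)\intd u\intd\bar u.$$
For $g=\bar u\, u\tau^j$, decomposing $\bar u=k(\bar u)u(\bar u)\tau^{H(\bar u)}$ and commuting $\tau^{H(\bar u)}$ past $u$ (which is legal because $\tau$ normalizes $B_{\omega_+}$) produces
$$g=k(\bar u)\bigl[u(\bar u)\cdot\tau^{H(\bar u)}u\tau^{-H(\bar u)}\bigr]\,\tau^{H(\bar u)+j},$$
already of Iwasawa $KB_{\omega_+}\langle\tau\rangle$-form. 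Consequently $k(g)\equiv k(\bar u)\pmod{K\cap B_{\omega_+}}$ and $H(g)=H(\bar u)+j$. Summing over $j$ translates the $\eta$-argument and contributes $1$, and the inner $B_{\omega_+}$-integral evaluates to
$$\int_{B_{\omega_+}}\chi\bigl(u(\bar u)\cdot\tau^{H(\bar u)}u\tau^{-H(\bar u)}\bigr)\intd u=q^{-H(\bar u)}\int_{B_{\omega_+}}\chi(v)\intd v=q^{-H(\bar u)},$$
using the conjugation Jacobian $\int f(\tau^\ell u\tau^{-\ell})\intd u=q^{-\ell}\int f(u)\intd u$ on $B_{\omega_+}$ (the $\omega_+$-counterpart of the formula from \cite[La.~3.8]{Ve02} already invoked in Lemmas~\ref{la:measure_NA} and \ref{la:modular_fct_NA}), together with left-invariance of Haar on $B_{\omega_+}$ to absorb $u(\bar u)$.

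The resulting integrand $F(k(\bar u))q^{-H(\bar u)}$ is right $M$-invariant, because $M\subseteq K\cap B_{\omega_+}$ is normalized by $\langle\tau\rangle$, so that $k(\bar u m)\equiv k(\bar u)$ and $H(\bar u m)=H(\bar u)$; since $\mathrm{vol}(M)=1$, the $B_{\omega_-}$-integral descends to one over $B_{\omega_-}/M$. Equating the two evaluations gives
$$\int_{K/(K\cap B_{\omega_+})} F(k)\intd k=c\int_{B_{\omega_-}/M} F(k(\bar u))\,q^{-H(\bar u)}\intd\bar u$$
for a single multiplicative constant $c>0$ depending only on the choice of Haar on $G$; testing against $F\equiv 1$ and imposing the normalizations in the proposition forces $c=1$. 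The main obstacle is bookkeeping: tracking the sign of the $\tau$-conjugation Jacobian (it is $q^{-\ell}$ on $B_{\omega_+}$ but $q^{+\ell}$ on $B_{\omega_-}$, because $\tau$ moves vertices toward $\omega_+$) and keeping the Haar normalizations for the Iwasawa and Bruhat integration formulas mutually consistent; both points are controlled by the horocycle-bracket computations already available in Subsection~\ref{subsec:int_formulas}.
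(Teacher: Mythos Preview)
Your argument is correct and is essentially identical to the paper's own proof: the same auxiliary function $\Phi(g)=F(k(g))\chi(u(g))\eta(\tau^{H(g)})$, the same two evaluations via Propositions~\ref{prop:KNA_integral} and~\ref{prop:Bruhat_integral}, the same Iwasawa decomposition of $\bar u\,u\tau^j$ and Jacobian computation from \cite[La.~3.8]{Ve02}, and the same descent to $B_{\omega_-}/M$ at the end. The only cosmetic difference is that you spell out how to obtain the ordering $\bar u\,u\tau^j$ in the Bruhat integral formula via $s$-conjugation, whereas the paper simply invokes Proposition~\ref{prop:Bruhat_integral} ``up to a constant''.
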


\begin{proof}
Let $f_1\in C(K/(K\cap B_{\omega_+})),\ f_2\in C_c((K\cap B_{\omega_+})\backslash B_{\omega_+})$ with $\int_{B_{\omega_+}}f_2((K\cap B_{\omega_+})u)\intd u=1$ and $\chi\in C_c(\langle\tau\rangle)$ with $\sum_{j\in\mathbb{Z}}\chi(\tau^j)=1$. Moreover, set $f(g)\coloneqq f_1(k(g))f_2(u_+(g))\chi(H(g))$ for each $g\in G$. Then, by Proposition \ref{prop:KNA_integral},
\begin{gather*}
    \int_Gf(g)\intd g=\int_Kf_1(k(K\cap B_{\omega_+}))\intd k.
\end{gather*}
On the other hand, Proposition \ref{prop:Bruhat_integral} implies (up to a constant)
\begin{gather*}
\int_Gf(g)\intd g=\int_{B_{\omega_-}}\int_{B_{\omega_+}}\sum_{j\in\mathbb{Z}}f(\bar{u}u\tau^j)\intd u\intd\bar{u}.
\end{gather*}
Now, if $\bar{u}=k_{\bar{u}}n_{\bar{u}}\tau^{H(\bar{u})}\in KB_{\omega_+}\langle\tau\rangle$, we have $\bar{u}u\tau^{j}=k_{\bar{u}}n_{\bar{u}}\tau^{H(\bar{u})}u\tau^{-H(\bar{u})}\tau^{j+H(\bar{u})}$ so that $k(\bar{u}u\tau^{j})=k(\bar{u}),\ u_+(\bar{u}u\tau^j)=u_+(\bar{u})\tau^{j_u}u\tau^{-j_u}$ and $H(\bar{u}u\tau^j)=j+H(\bar{u})$. Thus,
\begingroup
\allowdisplaybreaks
\begin{align*}
\int_Gf(g)\intd g&=\int_{B_{\omega_-}}\int_{B_{\omega_+}}\sum_{j\in\mathbb{Z}}f_1(k(\bar{u}))f_2(u_+(\bar{u})\tau^{H(\bar{u})}u\tau^{-H(\bar{u})})\chi(\tau^{j+H(\bar{u})})\intd u\intd\bar{u}\\
&=\int_{B_{\omega_-}}\int_{B_{\omega_+}}f_1(k(\bar{u}))f_2(u_+(\bar{u})\tau^{H(\bar{u})}u\tau^{-H(\bar{u})})\intd u\intd\bar{u}\\
&=\int_{B_{\omega_-}}\int_{B_{\omega_+}}f_1(k(\bar{u}))f_2(u_+(\bar{u})u)q^{-H(\bar{u})}\intd u\intd\bar{u}\\
&=\int_{B_{\omega_-}}\int_{B_{\omega_+}}f_1(k(\bar{u}))f_2((K\cap B_{\omega_+})u)q^{-H(\bar{u})}\intd u\intd\bar{u}\\
&=\int_{B_{\omega_-}}f_1(k(\bar{u}))q^{-H(\bar{u})}\intd\bar{u},
\end{align*}
where we used \cite[La.\@ 3.8]{Ve02} in the third step and the left-invariance of $\intd u$ in the fourth step. Finally we have $f_1(k(\bar{u}m))q^{-H(\bar{u}m)}=f_1(k(\bar{u}))q^{-H(\bar{u})}$ for each $m\in M\subseteq B_{\omega_+}$ and the normalization follows by evaluating both sides at $f_1\equiv 1$.
\endgroup
\end{proof}

\subsection{Computations of pushforward measures}\label{app:pushforwards}

In this section we compute the pushforwards of measures along the maps $\phi$ from Proposition~\ref{prop:G-coord1} and $\psi$ from Proposition \ref{prop:G-coord2}.

\begin{definition}\label{def:d}
   With the coordinates from Proposition \ref{prop:G-coord1} the function
\begin{gather*}
    d\colon (\Omega\times\Omega)\setminus \mathrm{diag}(\Omega)\to\R_{>0},\quad (g\omega_-,g\omega_+)\mapsto q^{H(g)+H(gs)}=q^{\langle go,k(g)\omega_+\rangle+\langle go,k(gs)\omega_+\rangle}
\end{gather*}
is well-defined.
\end{definition}
\begin{proof}
We first show that $d$ is independent of the representative in $G/M\langle\tau\rangle$. Let $g=ku\tau^j\in KB_{\omega_+}\langle\tau\rangle$, $m\in M$ and $\tau^i\in\langle\tau\rangle$. Then
\begin{gather*}
    H(gm\tau^i)=H(ku\tau^jm\tau^i)=H(ku\tau^jm\tau^{-j}\tau^{j+i})=j+i=H(g)+i
\end{gather*}
whereas (recall $\tau^is=s\tau^{-i}$)
\begin{align*}
    H(gm\tau^is)&=H(gs(s^{-1}ms)\tau^{-i})\\
&=H(k(gs)u_+(gs)\tau^{H(gs)}(s^{-1}ms)\tau^{-H(gs)}\tau^{H(gs)-i})=H(gs)-i,
\end{align*}
where we used the fact that $s^{-1}ms\in M\subseteq B_{\omega_+}$ since
\begin{gather*}
    (s^{-1}ms)(\tau^\ell o)=s^{-1}m\tau^{-\ell}o=s^{-1}\tau^{-\ell}o=\tau^{\ell}o
\end{gather*}
for each $\ell\in\mathbb{Z}$. Moreover,
\begin{gather*}
    \langle go,k(g)\omega_+\rangle=-H(g^{-1}k(g))=-H(\tau^{-H(g)}u_+(g)^{-1})=H(g)
\end{gather*}
and
\begin{align*}
    \langle go,k(gs)\omega_+\rangle&=-H(ss^{-1}g^{-1}k(gs))=-H(s\tau^{-H(gs)}u_+(gs)^{-1})\\
&=-H(s\tau^{-H(gs)}u_+(gs)^{-1}\tau^{H(gs)}\tau^{-H(gs)})=H(gs).\qedhere
\end{align*}
\end{proof}

\begin{proposition}\label{prop:pushforward_phi}
    Recall the map $\phi$ from Proposition~\ref{prop:G-coord1}. There exists a constant $c_{\phi}$ such that
\begin{gather*}
    \phi_*\intd(gM\langle\tau\rangle)=c_\phi d(k\omega_+,k'\omega_+)(\intd k(K\cap B_{\omega_+})\otimes \intd k'(K\cap B_{\omega_+}))|_{(\Omega\times\Omega)\setminus \mathrm{diag}(\Omega)}.
\end{gather*}
\end{proposition}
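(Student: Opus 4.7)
The approach is to exploit the uniqueness (up to scalar) of $G$-invariant Radon measures on the transitive $G$-space $(\Omega\times\Omega)\setminus\mathrm{diag}(\Omega)$. By Proposition~\ref{prop:G-coord1}, the map $\phi$ is a $G$-equivariant bijection and $G$ acts transitively on the target with stabilizer $M\langle\tau\rangle$; this stabilizer is unimodular (since $M$ is compact and $\tau$ normalizes $M$), so the space of $G$-invariant Radon measures on $(\Omega\times\Omega)\setminus\mathrm{diag}(\Omega)$ is one-dimensional. The left-hand side $\phi_*\intd(gM\langle\tau\rangle)$ is $G$-invariant by construction, so it remains only to show that the right-hand side is a non-zero $G$-invariant measure; proportionality will then yield the desired constant $c_\phi$.

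The first preparatory step is to compute the quasi-invariance of $\intd k$ on $\Omega\cong K/(K\cap B_{\omega_+})$ under the $G$-action. Starting from Lemma~\ref{la:integral_K_shift} and using $\langle xo,k\omega_+\rangle=-H(x^{-1}k)$ from Lemma~\ref{la:horo_H}, a direct change of variables on $\Omega$ produces
\[
\intd(g\omega)=q^{-\langle go,g\omega\rangle}\,\intd\omega,
\]
so that the diagonal $G$-action on $\Omega\times\Omega$ transforms the product measure by the factor $q^{-\langle go,g\omega_1\rangle-\langle go,g\omega_2\rangle}$. The crucial step is to verify that $d$ absorbs exactly this factor. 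By Definition~\ref{def:d}, writing $(\omega_1,\omega_2)=(g_0\omega_-,g_0\omega_+)$ one has $d(\omega_1,\omega_2)=q^{\langle g_0o,\omega_1\rangle+\langle g_0o,\omega_2\rangle}$; applying the horocycle identity \eqref{eq:horocycle_id} with $x=g_0 o$ gives
\[
\langle hg_0o,h\omega_i\rangle=\langle g_0 o,\omega_i\rangle+\langle ho,h\omega_i\rangle\qquad(i=1,2),
\]
from which
\[
d(h\omega_1,h\omega_2)=d(\omega_1,\omega_2)\,q^{\langle ho,h\omega_1\rangle+\langle ho,h\omega_2\rangle}.
\]
Combined with the Jacobian formula, this proves that $d(\omega_1,\omega_2)\,\intd\omega_1\,\intd\omega_2$ is $G$-invariant on the off-diagonal.

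With both measures $G$-invariant and non-zero, uniqueness yields a constant $c_\phi>0$ with the claimed identity. The main obstacle is essentially bookkeeping: tracking the horocycle-bracket signs correctly in the change-of-variables computation on $\Omega$ and in the verification of the transformation rule for $d$. Both computations are routine applications of Lemmas~\ref{la:horo_H} and~\ref{la:horo_id_2} together with the horocycle identity \eqref{eq:horocycle_id}, but one must be careful that $d$ is defined independently of the choice of representative $g$ in $gM\langle\tau\rangle$, which has already been checked in the proof of Definition~\ref{def:d}.
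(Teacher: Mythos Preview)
Your proof is correct and takes a genuinely different route from the paper. The paper proceeds by direct computation: it integrates $fd^{-1}$ against $\phi_*\intd(gM\langle\tau\rangle)$, rewrites this via Lemma~\ref{la:int_G_KN} as an integral over $K\times B_{\omega_+}/M$, then uses the involution $\eta\colon g\mapsto sgs^{-1}$ to convert the $B_{\omega_+}/M$-integral into a $B_{\omega_-}/M$-integral, and finally applies Proposition~\ref{prop:int_KM_N} to turn this into a $K/(K\cap B_{\omega_+})$-integral, arriving explicitly at the product measure. Your argument instead exploits that $(\Omega\times\Omega)\setminus\mathrm{diag}(\Omega)$ is a transitive $G$-space with unimodular stabilizer $M\langle\tau\rangle$, so $G$-invariant Radon measures are unique up to scalar; you then only need to verify the $G$-invariance of the right-hand side, which reduces to checking that the Jacobian of the $G$-action on $\Omega$ (extracted from Lemma~\ref{la:integral_K_shift}) is exactly compensated by the transformation law for $d$ obtained from the horocycle identity~\eqref{eq:horocycle_id}. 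Your approach is shorter and more conceptual, and makes transparent why the factor $d$ is exactly what is needed; the paper's computation is more hands-on and threads the constant through the chain of integral formulas of Appendix~\ref{subsec:int_formulas}, although in the end neither proof pins down $c_\phi$ explicitly.
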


\begin{proof}
Let $f\in C_c((\Omega\times\Omega)\setminus \mathrm{diag}(\Omega))$. Then Lemma \ref{la:int_G_KN} implies
\begin{align*}
\int_{(\Omega\times\Omega)\setminus \mathrm{diag}(\Omega)}fd^{-1}\phi_*\intd(gM\langle\tau\rangle)&=\int_{G/M\langle\tau\rangle}f(g\omega_-,g\omega_+)d(g\omega_-,g\omega_+)^{-1}\intd(gM\langle\tau\rangle)\\
&=\int_K\int_{B_{\omega_+}/M}f(ku\omega_-,ku\omega_+)d(ku\omega_-,ku\omega_+)^{-1}\intd u\intd k\\
&=\int_K\int_{B_{\omega_+}/M}f(ku\omega_-,k\omega_+)q^{-H(kus)}\intd u\intd k
\end{align*}
since $H(ku)=0$. We now note that $\eta\colon G\to G,\ g \mapsto sgs^{-1}$ is an involutive automorphism of $G$ which maps $B_{\omega_-}$ onto $B_{\omega_+}$ and preserves $M$. Thus, $\eta_*(\intd \bar{u}M)$ defines a left-invariant Haar measure on $B_{\omega_+}/M$ so that there exists a constant $c$ with
\begin{align*}
\int_K\int_{B_{\omega_+}/M}f(ku\omega_-,k\omega_+)q^{-H(kus)}\intd u\intd k&=c\int_K\int_{B_{\omega_-}/M}f(ku\omega_-,k\omega_+)q^{-H(kus)}\eta_*(\intd \bar{u})\intd k\\
&=c\int_K\int_{B_{\omega_-}/M}f(ks\bar{u}s^{-1}\omega_-,k\omega_+)q^{-H(ks\bar{u})}\intd\bar{u}\intd k\\
&=c\int_K\int_{B_{\omega_-}/M}f(ksk(\bar{u})\omega_+,k\omega_+)q^{-H(ks\bar{u})}\intd\bar{u}\intd k\\
&=c\int_K\int_{B_{\omega_-}/M}f(ksk(\bar{u})\omega_+,k\omega_+)q^{-H(\bar{u})}\intd\bar{u}\intd k.
\end{align*}
Now Proposition \ref{prop:int_KM_N} implies that this equals (with a possibly different $c$)
\begin{gather*}
c\int_K\int_{K/K\cap B_{\omega_+}}f(ksk'\omega_+,k\omega_+)\intd k'\intd k=c\int_K\int_{K/K\cap B_{\omega_+}}f(k'\omega_+,k\omega_+)\intd k'\intd k.\qedhere
\end{gather*}
\end{proof}

\begin{lemma}\label{la:Jacobian_N}
Let
\begin{gather*}
    \nu\colon B_{\omega_-}/(B_{\omega_-}\cap K)\to B_{\omega_-}/(B_{\omega_-}\cap K),\qquad\bar{u}(B_{\omega_-}\cap K)\mapsto\bar{u}_{\langle\tau\rangle B_{\omega_-}K}(s\bar{u}s^{-1}),
\end{gather*}
where $\bar{u}_{\langle\tau\rangle B_{\omega_-}K}(g)=\bar{u}(B_{\omega_-}\cap K)$ if $g=\tau^j\bar{u}k\in\langle\tau\rangle B_{\omega_-}K$. Then
\begin{gather*}
    \int_{B_{\omega_-}/(B_{\omega_-}\cap K)}f(\bar{u})\intd \bar{u}=\int_{B_{\omega_-}/(B_{\omega_-}\cap K)}f(\nu(\bar{u}))\intd \bar{u}
\end{gather*}
for each integrable function $f$ on $B_{\omega_-}/(B_{\omega_-}\cap K)$.
\end{lemma}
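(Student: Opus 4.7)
The strategy is to transfer the statement to the horocycle $H_{\omega_-}(o)$, on which $\nu$ turns out to be an involution; since the measure on $B_{\omega_-}/(B_{\omega_-}\cap K)$ transports to the counting measure on $H_{\omega_-}(o)$, the lemma will then follow by reindexing. Concretely, $B_{\omega_-}$ acts transitively on the horocycle $H_{\omega_-}(o)$ with stabilizer of $o$ equal to $B_{\omega_-}\cap K$, so the orbit map
\[
\iota\colon B_{\omega_-}/(B_{\omega_-}\cap K)\to H_{\omega_-}(o),\qquad \bar u(B_{\omega_-}\cap K)\mapsto \bar u o,
\]
is a bijection. Since $\langle y,\omega_-\rangle=0$ for every $y\in H_{\omega_-}(o)$, the normalization $\intd\bar u(B_{\omega_-}\cap\mathrm{Stab}_G(y))=q^{\langle y,\omega_-\rangle}$ is identically $1$ on $H_{\omega_-}(o)$, so $\iota$ pushes $\intd\bar u$ to the counting measure.

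Next I describe $\nu$ in these coordinates. Write $s\bar u s^{-1}=\tau^j\bar u' k\in\langle\tau\rangle B_{\omega_-}K$, where $j$ and $\bar u'(B_{\omega_-}\cap K)$ are unique (apply the equation to $o$ and use that the horocycles $H_{\omega_-}(x_i)$ are disjoint for distinct $i$). With $y\coloneqq \bar u o$, applying both sides to $o$ yields $sy=\tau^j\bar u' o$, hence $\iota\bigl(\nu(\bar u(B_{\omega_-}\cap K))\bigr)=\tau^{-j}sy$. The condition $\tau^{-j}sy\in H_{\omega_-}(o)$ pins down $j$: the horocycle identity \eqref{eq:horocycle_id} together with the direct computation $\langle\tau^{-j}o,\omega_-\rangle=j$ gives $\langle\tau^{-j}sy,\omega_-\rangle=\langle sy,\omega_-\rangle+j$; a second application of \eqref{eq:horocycle_id}, using $so=o$ and $s\omega_+=\omega_-$, yields $\langle sy,\omega_-\rangle=\langle y,\omega_+\rangle$; and for $y\in H_{\omega_-}(o)$ the geodesic $[y,\omega_+[$ runs through $o$, so $\langle y,\omega_+\rangle=-d(y,o)$. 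Setting $2n(y)\coloneqq d(y,o)$, we conclude $j=2n(y)$, so that via $\iota$ the map $\nu$ corresponds to $y\mapsto\tau^{-2n(y)}sy$ on $H_{\omega_-}(o)$.

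It remains to verify that this map is an involution; any bijection then preserves counting measure, giving the lemma. Since $s$ is an isometry fixing $o$ and $[sy,\omega_+[$ meets $]\omega_-,\omega_+[$ at $x_{n(y)}$ before continuing through $x_{2n(y)}$, we have $d(\tau^{-2n(y)}sy,o)=d(sy,x_{2n(y)})=2n(y)=d(y,o)$, so $n(\tau^{-2n(y)}sy)=n(y)$. Combined with $s\tau^{-2n}s^{-1}=\tau^{2n}$ (a consequence of $s\tau s^{-1}=\tau^{-1}$) and $s^2=\mathrm{id}$, this gives
\[
\tau^{-2n(y)}s\bigl(\tau^{-2n(y)}sy\bigr)=\tau^{-2n(y)}\tau^{2n(y)}s^2 y=y.
\]
The main obstacle is simply the horocycle-bracket bookkeeping required to arrive at the clean formula $\nu(y)=\tau^{-2n(y)}sy$; once this identification is made, the involution property, and hence the measure-preservation statement of the lemma, is immediate.
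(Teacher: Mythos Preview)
Your proof is correct and takes a genuinely different route from the paper's. The paper argues via the Haar measure on $G$: it extends $f$ to a function $F(\tau^j\bar u k)=\chi(\tau^j)f(\bar u)f_1(k)$ on $G$ using auxiliary bump functions, computes $\int_G F\,\intd g=\int_{B_{\omega_-}}f\,\intd\bar u$ from the integral formula for the decomposition $G=\langle\tau\rangle B_{\omega_-}K$, and then uses left invariance of $\intd g$ under $s$ to get $\int_G F\,\intd g=\int_G F(s\,\cdot)\,\intd g=\int_{B_{\omega_-}}f(\nu(\bar u))\,\intd\bar u$. No explicit description of $\nu$ is needed.

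Your argument instead identifies $B_{\omega_-}/(B_{\omega_-}\cap K)$ with the horocycle $H_{\omega_-}(o)$ carrying counting measure, works out the concrete formula $\nu(y)=\tau^{-2n(y)}sy$ with $2n(y)=d(y,o)$, and checks that this is an involution. This is more elementary in that it avoids the Haar-measure machinery on $G$ and the auxiliary functions $\chi,f_1$, and it yields extra information (the explicit description of $\nu$ and the fact that it is an involution, not merely measure-preserving). The paper's approach, on the other hand, is the kind of argument that transfers verbatim to the Archimedean setting and to more general groups with an Iwasawa-type decomposition; your geometric computation is specific to the tree.
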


\begin{proof}
Taking inverses in Corollary \ref{cor:KNA} we first obtain $G=\langle\tau\rangle B_{\omega_-}K$, where the $B_{\omega_-}$ (resp.\@ $K$) part is unique up to right (resp.\@ left) multiplication by $B_{\omega_-}\cap K$. In particular, $\bar{u}_{\langle\tau\rangle B_{\omega_-}K}(s\bar{u}s^{-1})=\bar{u}_{\langle\tau\rangle B_{\omega_-}K}(s\bar{u})$ is well-defined. Now let $f\in C_c(B_{\omega_-})$ be right $B_{\omega_-}\cap K$-invariant, $f_1\in C_c(K)$ be left $B_{\omega_-}\cap K$-invariant with integral one and $\chi\in C_c(\langle\tau\rangle)$ with sum one. By interchanging the role of $\omega_+$ and $\omega_-$, taking inverses and using the unimodularity of $K,B_{\omega_-}$ and $\mathbb{Z}$ Proposition \ref{prop:KNA_integral} implies for $F(\tau^j\bar{u}k)\coloneqq\chi(\tau^j)f(\bar{u})f_1(k)$
\begin{gather*}
    \int_GF(g)\intd g=\sum_{j\in\mathbb{Z}}\int_{B_{\omega_-}}\int_KF(\tau^j\bar{u}k)\intd k\intd\bar{u}=\int_{B_{\omega_-}}f(\bar{u})\intd\bar{u}.
\end{gather*}
Now we observe that for $g=\tau^j\bar{u}k$ and $s\bar{u}=\tau^{j_{s\bar{u}}}\bar{u}_{s\bar{u}}k_{s\bar{u}}$
\begin{gather*}
 sg=s\tau^j\bar{u}k=\tau^{-j}s\bar{u}k=\tau^{j_{s\bar{u}}-j}\bar{u}_{s\bar{u}}k_{s\bar{u}}k
\end{gather*}
gives rise to a decomposition of $sg$. Thus,
\begin{align*}
    \int_GF(g)\intd g&=\int_GF(sg)\intd g=\sum_{j\in\mathbb{Z}}\int_{B_{\omega_-}}\int_KF(s\tau^j\bar{u}k)\intd k\intd\bar{u}\\
&=\sum_{j\in\mathbb{Z}}\int_{B_{\omega_-}}\int_KF(\tau^{j_{s\bar{u}}-j}\bar{u}_{s\bar{u}}k_{s\bar{u}}k)\intd k\intd\bar{u}\\
&=\sum_{j\in\mathbb{Z}}\int_{B_{\omega_-}}\int_K\chi(\tau^{j})f(\bar{u}_{s\bar{u}})f_1(k)\intd k\intd\bar{u}\\
&=\int_{B_{\omega_-}}f(\bar{u}_{s\bar{u}})\intd\bar{u}.\qedhere
\end{align*}
\end{proof}

\begin{proposition}\label{prop:pushforward_psi}
    Recall the map $\psi$ from Proposition \ref{prop:G-coord2}. There exists a constant $c_{\psi}>0$ such that
\begin{gather*}
    (\psi^{-1})_*(\intd(gK)\otimes\intd(g'M\langle\tau\rangle))=c_{\psi}(\widetilde{\mathrm{pr}}_M)_*(\intd g\otimes\intd\bar{u}(B_{\omega_-}\cap K)),
\end{gather*}
where $\widetilde{\mathrm{pr}}_M\colon G\times B_{\omega_-}/(B_{\omega_-}\cap K)\to G\times_M  B_{\omega_-}/(B_{\omega_-}\cap K)$ denotes the canonical projection.
\end{proposition}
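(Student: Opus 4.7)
The plan is to verify the identity by pairing both measures against an arbitrary test function $\phi\in C_c(G\times_M B_{\omega_-}/(B_{\omega_-}\cap K))$. For the LHS I will unfold $\int_{G/K}(\phi\circ\psi^{-1})(hK,g'M\langle\tau\rangle)\,d(hK)$ using the left $G$-invariance of $d(hK)$ to substitute $h\mapsto g'h$, so the integrand becomes $\phi([g'\tau^j,\tau^{-j}\bar u\tau^j(B_{\omega_-}\cap K)])$ with $h=\bar u\tau^j k$ the Iwasawa decomposition. The necessary integration formula for $G=B_{\omega_-}\langle\tau\rangle K$ is obtained by applying the inner automorphism $\Ad(s)$ to Proposition~\ref{prop:NAK_integral}: since $\Ad(s)$ swaps $B_{\omega_+}\leftrightarrow B_{\omega_-}$, sends $\tau\mapsto\tau^{-1}$, and preserves the normalized Haar measures on these subgroups (matching the normalization $d\bar u(B_{\omega_-}\cap\mathrm{Stab}_G(x))=q^{\langle x,\omega_-\rangle}$), it yields $\int_G f(g)\,dg=\int_{B_{\omega_-}}\sum_{j\in\Z}\int_K f(\bar u\tau^j k)\,q^{j}\,dk\,d\bar u$, and consequently $d(hK)=\sum_j q^{j}\,d\bar u$ on the fibers $\{\bar u\tau^j K\}$ via the bijection $G/K\cong B_{\omega_-}\langle\tau\rangle$.

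Next I change variables $\bar u'=\tau^{-j}\bar u\tau^j$ inside the $B_{\omega_-}$-integral. The Jacobian of conjugation $C_{\tau^j}$, computed from the normalization of $d\bar u$ above combined with the horocycle identity \eqref{eq:horocycle_id} to give $\langle\tau^{-j}x,\omega_-\rangle=\langle x,\omega_-\rangle+j$, evaluates to $(C_{\tau^j})_*d\bar u=q^{j}\,d\bar u$; hence $d\bar u=q^{-j}\,d\bar u'$. The factors $q^{j}$ from the Iwasawa formula and $q^{-j}$ from this Jacobian cancel, reducing the LHS to
$$\int_{G/M\langle\tau\rangle}\sum_{j\in\Z}\int_{B_{\omega_-}/(B_{\omega_-}\cap K)}\phi([g'\tau^j,\bar u'(B_{\omega_-}\cap K)])\,d\bar u'\,d(g'M\langle\tau\rangle).$$

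For the RHS I unfold $\int_G\,dg$ via \eqref{eq:measure_GMt} as $\int_{G/M\langle\tau\rangle}\int_M\sum_j(\cdot)\,dm\,d(g'M\langle\tau\rangle)$. The crucial algebraic input is $M\subseteq B_{\omega_-}\cap K$ (elements of $M$ fix both $o$ and $\omega_-$) together with the fact that $\langle\tau\rangle$ normalizes $M$, which gives $[g'm\tau^j,\bar u]=[g'\tau^j,(\tau^{-j}m\tau^j)\bar u]$. Since $\tau^{-j}m\tau^j\in B_{\omega_-}\cap K$ and the bi-invariant measure $d\bar u$ on the unimodular group $B_{\omega_-}$ descends to a left-$B_{\omega_-}$-invariant measure on the quotient, left translation by this element is measure-preserving, so the $dm$ averaging collapses to a factor $\mathrm{vol}(M)$. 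Matching both sides then yields the identity with $c_\psi=1/\mathrm{vol}(M)$. The main obstacle will be the careful bookkeeping of Jacobians across the several changes of coordinates; in particular the cancellation $q^{j}\cdot q^{-j}=1$ between the Iwasawa weight and the conjugation Jacobian must be verified with consistent sign conventions for $\tau$ and the horocycle bracket.
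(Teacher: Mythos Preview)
Your proof is correct and takes a genuinely different, more economical route than the paper's argument.

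The paper proceeds by decomposing \emph{both} factors: it writes $\int_{G/M\langle\tau\rangle}$ in Bruhat coordinates via Lemma~\ref{la:int_GMNN_-} (hence Proposition~\ref{prop:Bruhat_integral}) and $\int_{G/K}$ via the $\langle\tau\rangle B_{\omega_+}K$ decomposition, then pushes the explicit formula for $\psi^{-1}$ through the resulting four-fold integral. This forces several nontrivial changes of variable, most notably the auxiliary Lemma~\ref{la:Jacobian_N} on the map $\nu(\bar u)=\bar u_{\langle\tau\rangle B_{\omega_-}K}(s\bar u s^{-1})$, before one can reassemble everything into $\int_G\,dg$ via the Bruhat integral formula. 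By contrast, your key move---translating $h\mapsto g'h$ first so that $\psi^{-1}(g'hK,g'M\langle\tau\rangle)$ depends only on the Iwasawa data of $h$---means you never have to decompose $G/M\langle\tau\rangle$ at all: the relation \eqref{eq:measure_GMt} suffices on that side, and on the $G/K$ side a single application of the $s$-conjugated Proposition~\ref{prop:NAK_integral} plus the conjugation Jacobian does the job. The cancellation $q^{j}\cdot q^{-j}$ you flag is indeed the crux, and your computation $(C_{\tau^{j}})_*d\bar u=q^{j}\,d\bar u$ via $\langle\tau^{-j}x,\omega_-\rangle=\langle x,\omega_-\rangle+j$ is consistent with the paper's conventions. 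What your approach buys is that it bypasses the Bruhat machinery and Lemma~\ref{la:Jacobian_N} entirely; what the paper's approach buys is that it stays in coordinates already set up for Proposition~\ref{prop:pushforward_phi} and Lemma~\ref{la:pushforward_meas}. One minor remark: the precise value $c_\psi=1/\mathrm{vol}(M)$ depends on how $d(gK)$ is normalized relative to $dg$ and $dk$, which the paper does not pin down, so it is safer to absorb this into the unspecified constant.
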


\begin{proof}
Note first that $\psi^{-1}(G/K\times B_{\omega_+}B_{\omega_-}\langle\tau\rangle/M\langle\tau\rangle)=B_{\omega_+}B_{\omega_-}\langle\tau\rangle\times_MB_{\omega_-}/(B_{\omega_-}\cap K)$ since
\begin{gather*}
    \psi(u\bar{u}\tau^j,\bar{u}'(B_{\omega_-}\cap K))=(u\bar{u}\tau^j\bar{u}'K,u\bar{u}M\langle\tau\rangle)
\end{gather*}
where $\tau^j\bar{u}'K$ runs through all points in $G/K\cong\mathfrak{X}=\bigcup_{j\in\mathbb{Z}}H_{\omega_-}(\tau^jo)$ (see Definition~\ref{def:horocycle}). For each function $h\in C_c(G/K\times B_{\omega_+}B_{\omega_-}\langle\tau\rangle/M\langle\tau\rangle)=C_c(G/K\times B_{\omega_+}B_{\omega_-}/M)$ Lemma~\ref{la:int_GMNN_-} and Proposition~\ref{prop:KNA_integral} (by taking inverses) imply
\begin{align}
\nonumber\int_{G/K\times B_{\omega_+}B_{\omega_-}\langle\tau\rangle/M\langle\tau\rangle}&h(gK,g'M\langle\tau\rangle)\intd gK\intd g'M\langle\tau\rangle\\
\nonumber&=c\int_{G/K}\int_{B_{\omega_+}}\int_{B_{\omega_-}}h(gK,u\bar{u}M\langle\tau\rangle)\intd\bar{u}\intd u\intd gK\\
&=c\sum_{j\in\mathbb{Z}}\int_{B_{\omega_+}}\int_{B_{\omega_+}}\int_{B_{\omega_-}}h(\tau^ju'K,u\bar{u}M\langle\tau\rangle)\intd\bar{u}\intd u\intd u'.\label{eq:pf_psi_const}
\end{align}
Now let $\tilde h\in C_c(B_{\omega_+}B_{\omega_-}\langle\tau\rangle\times B_{\omega_-}/(B_{\omega_-}\cap K))^M\cong C_c(B_{\omega_+}B_{\omega_-}\langle\tau\rangle\times_M B_{\omega_-}/(B_{\omega_-}\cap K))$. Using Equation \eqref{eq:pf_psi_const} we infer
\begingroup
\allowdisplaybreaks
\begin{align*}
\int_{B_{\omega_+}B_{\omega_-}\langle\tau\rangle\times_M B_{\omega_-}/(B_{\omega_-}\cap K)}&\tilde h(\psi^{-1})_*(\intd(gK)\otimes\intd(g'M\langle\tau\rangle))\\
&=\int_{G/K\times B_{\omega_+}B_{\omega_-}\langle\tau\rangle/M\langle\tau\rangle}\tilde h\circ\psi^{-1}\intd(gK)\intd(g'M\langle\tau\rangle)\\
&=c\sum_{j\in\mathbb{Z}}\int_{B_{\omega_+}}\int_{B_{\omega_+}}\int_{B_{\omega_-}}(\tilde{h}\circ\psi^{-1})(\tau^ju'K,u\bar{u}M\langle\tau\rangle)\intd\bar{u}\intd u\intd u'.
\end{align*}
We now use the explicit form of the inverse $\psi^{-1}$ from the proof of Proposition~\ref{prop:G-coord2} and rewrite it as
\begin{gather*}
\psi^{-1}\colon G/K \times  G/M\langle \tau\rangle  \to   G\times_M  B_{\omega_-}/(B_{\omega_-}\cap K) ,
\quad (hK,gM\langle \tau\rangle)  \mapsto [g\tau^j, \bar{u}(B_{\omega_-}\cap K)],
\end{gather*}
with $g^{-1}h=\tau^j\bar{u}k\in \langle\tau\rangle B_{\omega_-}K$ (in contrast to $(\tau^j\bar{u}\tau^{-j})\tau^jk\in B_{\omega_-}\langle\tau\rangle K$).
Then
\begin{align*}
\psi^{-1}(\tau^ju'K,u\bar{u}M\langle\tau\rangle)=(u\bar{u}\tau_{\langle\tau\rangle B_{\omega_-}K}((u\bar{u})^{-1}\tau^ju'),\bar{u}_{\langle\tau\rangle B_{\omega_-}K}((u\bar{u})^{-1}\tau^ju'))
\end{align*}
where $g=\tau_{\langle\tau\rangle B_{\omega_-}K}(g)\bar{u}_{\langle\tau\rangle B_{\omega_-}K}k\in\langle\tau\rangle B_{\omega_-}K$ denotes some decomposition of $g\in G$. Since $u^{-1}\tau^ju'=\tau^j\tau^{-j}u^{-1}\tau^ju'$ with $\tau^{-j}u^{-1}\tau^j\in B_{\omega_+}$ we can use the invariance of $\intd u'$ to restrict to
\begin{gather*}
    (u\bar{u}\tau_{\langle\tau\rangle B_{\omega_-}K}(\bar{u}^{-1}\tau^ju'),\bar{u}_{\langle\tau\rangle B_{\omega_-}K}(\bar{u}^{-1}\tau^ju')).
\end{gather*}
\endgroup
Now let $u'=\tau^{\tilde j}\tilde{\bar{u}}\bar{k}\in \langle\tau\rangle B_{\omega_-}K$ and observe that
\begin{gather*}
    \bar{u}^{-1}\tau^ju'=\tau^{j+\tilde{j}}(\tau^{-(j+\tilde{j})}\bar{u}^{-1}\tau^{j+\tilde{j}}\tilde{\bar{u}})\tilde{k}\in\langle\tau\rangle B_{\omega_-}K.
\end{gather*}
Altogether we obtain (where the constant may change from line to line)
\begin{align*}
&\hphantom{{}={}}\int_{B_{\omega_+}B_{\omega_-}\langle\tau\rangle\times_M B_{\omega_-}/(B_{\omega_-}\cap K)}\tilde h(\psi^{-1})_*(\intd(gK)\otimes\intd(g'M\langle\tau\rangle))\\
&=c\sum_{j\in\mathbb{Z}}\int_{B_{\omega_+}}\int_{B_{\omega_+}}\int_{B_{\omega_-}}\tilde{h}(u\bar{u}\tau^{j+\tilde{j}},\tau^{-(j+\tilde{j})}\bar{u}^{-1}\tau^{j+\tilde{j}}\tilde{\bar{u}})\intd\bar{u}\intd u\intd u'\\
&=c\sum_{j\in\mathbb{Z}}\int_{B_{\omega_+}}\int_{B_{\omega_+}}\int_{B_{\omega_-}}\tilde{h}(u\bar{u}\tau^{j},\tau^{-j}\bar{u}^{-1}\tau^{j}\tilde{\bar{u}})\intd\bar{u}\intd u\intd u'\\
&=c\sum_{j\in\mathbb{Z}}\int_{B_{\omega_-}}\int_{B_{\omega_+}}\int_{B_{\omega_-}}\tilde{h}(u\bar{u}\tau^{j},\tau^{-j}\bar{u}^{-1}\tau^{j}\bar{u}_{\langle\tau\rangle B_{\omega_-}K}(s\bar{u}'s^{-1}))\intd\bar{u}\intd u\intd \bar{u}'\\
&\stackrel{\mathclap{\ref{la:Jacobian_N}}}{=}c\sum_{j\in\mathbb{Z}}\int_{B_{\omega_-}}\int_{B_{\omega_+}}\int_{B_{\omega_-}}\tilde{h}(u\bar{u}\tau^{j},\tau^{-j}\bar{u}^{-1}\tau^{j}\bar{u}')\intd\bar{u}\intd u\intd \bar{u}'\\
&=c\sum_{j\in\mathbb{Z}}\int_{B_{\omega_-}}\int_{B_{\omega_+}}\int_{B_{\omega_-}}\tilde{h}(u\bar{u}\tau^{j},\bar{u}')\intd\bar{u}\intd u\intd \bar{u}'\\
&\stackrel{\mathclap{\ref{prop:Bruhat_integral}}}{=}c\int_G\int_{B_{\omega_-}/(B_{\omega_-}\cap K)}\tilde{h}(g,\bar{u}')\intd \bar{u}'\intd g,
\end{align*}
where we used that the homomorphism $B_{\omega_-}\to B_{\omega_+},\quad \bar{u} \mapsto s\bar{u}s^{-1}$ is measure-preserving. Thus, we end up with
\begin{align*}
    \int_{B_{\omega_+}B_{\omega_-}\langle\tau\rangle\times_M B_{\omega_-}/(B_{\omega_-}\cap K)}\tilde h(\psi^{-1})_*(\intd(gK)&\otimes\intd(g'M\langle\tau\rangle))\\
&=c\int_G\int_{B_{\omega_-}/(B_{\omega_-}\cap K)}\tilde{h}(g,\bar{u}')\intd \bar{u}'\intd g
\end{align*}
which finishes the proof since $G\setminus B_{\omega_+}B_{\omega_-}\langle\tau\rangle$ has measure zero.
\end{proof}

\bibliographystyle{amsalpha}
\bibliography{Literatur}

\end{document}